\documentclass[envcountsect]{svjour3}                     
\smartqed  
\usepackage{amsmath}
\usepackage{amsfonts}
\usepackage{eucal}	
%
\newcommand{\defeq}{:=}

\journalname{Constructive Approximation}

\numberwithin{equation}{section}



\begin{document}

\title{Associated Legendre Functions and Spherical Harmonics of Fractional Degree and Order}

\titlerunning{Legendre Functions of Fractional Degree and Order}

\author{Robert S. Maier}

\institute{Robert S. Maier \at
              Depts.\ of Mathematics and Physics \\
              University of Arizona \\
              Tucson, AZ\ \ 85721\\
              Tel.: +1 520 621 2617\\
              Fax:  +1 520 621 8322\\
              \email{rsm@math.arizona.edu}}

\date{Received: date / Accepted: date}

\maketitle

\begin{abstract}
Trigonometric formulas are derived for certain families of associated
Legendre functions of fractional degree and order, for use in
approximation theory.  These functions are algebraic, and when viewed
as Gauss hypergeometric functions, belong to types classified by
Schwarz, with dihedral, tetrahedral, or octahedral monodromy.  The
dihedral Legendre functions are expressed in terms of Jacobi
polynomials.  For the last two monodromy types, an underlying
`octahedral' polynomial, indexed by the degree and order and having a
non-classical kind of orthogonality, is identified, and recurrences
for~it are worked~out.  It is a (generalized) Heun polynomial, not a
hypergeometric one.  For each of these families of algebraic
associated Legendre functions, a representation of the
rank\nobreakdash-$2$ Lie algebra $\mathfrak{so}(5,\mathbb{C})$ is
generated by the ladder operators that shift the degree
and order of the corresponding solid harmonics.  All such
representations of $\mathfrak{so}(5,\mathbb{C})$ are shown to have a
common value for each of its two Casimir invariants.  The Dirac
singleton representations of $\mathfrak{so}(3,2)$ are included.
\keywords{Associated Legendre function \and Algebraic function \and
  Spherical harmonic \and Solid harmonic \and Jacobi polynomial \and
  Heun polynomial \and Ladder operator}
\subclass{33C45 \and 33C47 \and 33C55 \and 22E70}
\end{abstract}

\section{Introduction}
\label{sec:intro}

The first-kind associated Legendre functions $P_\nu^\mu(z)$, or the
Ferrers versions ${\rm P}_\nu^\mu(z)$, are classical.  ($P_\nu^\mu(z)$
and ${\rm P}_\nu^\mu(z)$ are continuations of each other, with
respective real domains $z\in(1,\infty)$ and $z\in(-1,1)$.)  The roles
they play when the degree~$\nu$ and order~$\mu$ equal integers $n,m$
are familiar. The Legendre, or Ferrers polynomials ${\rm P}_n(z)\defeq
{\rm P}_n^0(z)$, $n=0,1,2,\dots$, are orthogonal on~$[-1,1]$ and are
used in series expansions.  The spherical harmonics
$Y_n^m(\theta,\phi)\propto {\rm P}_n^m(\cos\theta){\rm e}^{{\rm
    i}m\phi}$ are orthogonal on the symmetric space $S^2=SO(3)/SO(2)$,
and appear in harmonic analysis based on the Lie group~$SO(3)$.

It is less well known that Ferrers functions ${\rm P}_\nu^\mu(z)$ of a
fixed order~$\mu$, and degrees that may be non-integral but are spaced
by integers, can also be used in series expansions.  The fundamental
relation, due to Love and Hunter~\cite{Love92}, is one of
biorthogonality:
\begin{equation}
\label{eq:orthgonalityLove}
  \int_{-1}^1 {\rm P}_{\nu}^\mu(z) {\rm P}_{\nu'}^{-\mu}(-z) \,{\rm d}z =0,
\end{equation}
which holds if (i) ${\rm Re}\,\mu\in(-1,1)$, and (ii)~the degrees
$\nu,\nu'$ differ by a nonzero even integer and are not
half-odd-integers.  For suitable $\nu_0,\mu\in\mathbb{C}$, this makes
possible bilateral expansions of the form
\begin{equation}
\label{eq:seriesLove}
  f(z) = \sum_{n=-\infty}^\infty c_{n}{\rm P}_{\nu_0+2n}^\mu(z), \qquad z\in(-1,1),
\end{equation}
and in particular, the calculation of the coefficients~$c_n$ as inner
products in $L^2[-1,1]$.  (This is the usual Legendre expansion if
$(\nu_0,\mu)=(0,0)$, as ${\rm P}_{-\nu-1}={\rm P}_\nu$ for all~$\nu$.)  For
conditions on~$f$ sufficient for (interior) pointwise convergence,
see~\cite{Love94,Love92}.

The restriction to ${\rm Re}\,\mu\in(-1,1)$ comes from the requirement
that the expansion functions lie in $L^2[-1,1]$.  If the order~$\mu$
is not a positive integer, ${\rm P}_\nu^\mu(z)$ will have leading
behavior as~$z\to1^-$ proportional to $(1-\nobreak z)^{-\mu/2}$, but
in~general its leading behavior as~$z\to(-1)^+$ comprises two terms:
one proportional to $(1+\nobreak z)^{-\mu/2}$, and one to $(1+\nobreak
z)^{+\mu/2}$.  The implications for convergence of the integral
in~(\ref{eq:orthgonalityLove}) are obvious.  These asymptotics have
motivated the suggestion by Pinsky~\cite{Pinsky99} that when ${{\rm
    Re}\,\mu<0}$, the series~(\ref{eq:seriesLove}) should really be
viewed as an expansion of $[(1-\nobreak z)/\allowbreak (1+\nobreak
  z)]^{\mu/2}f(z)$ in the functions $[(1-\nobreak z)/\allowbreak
  (1+\nobreak z)]^{\mu/2}\,\allowbreak{\rm P}_{\nu_0+2n}^\mu(z)$.
This enables a discussion of endpoint convergence, because the latter
functions do not diverge as $z\to1^-\!,(-1)^+$.

It is not usually the case that ${\rm P}_\nu^\mu(z)$ and ${P}_\nu^\mu(z)$
are elementary functions, unless of course $\nu$ and~$\mu$ are integers.
This may be why such expansions as~(\ref{eq:seriesLove}) have been used
infrequently.  In this paper, we derive explicit, trigonometrically
parametrized formulas for several families of Legendre functions,
expressing ${\rm P}_\nu^\mu(z)$, ${P}_\nu^\mu(z)$, and their second-kind
counterparts ${\rm Q}_\nu^\mu(z)$, ${Q}_\nu^\mu(z)$, as elementary
functions.  In each family, $\nu,\mu$~are non-integral but are spaced by
integers: $(\nu,\mu)\in(\nu_0,\mu_0)+\mathbb{Z}^2$ for some
fractional~$\nu_0,\mu_0$.

The simplest example is
\begin{align}
\label{eq:formulaone}
  {\rm P}_{-\frac16+{n}}^{\frac14+{m}}(\cos\theta) &= 2^{-2{m}-3{n}} \Gamma(\tfrac34-{m})^{-1}\\
&\qquad{}\times(\sin\theta)^{-\frac14-{m}} \,B_+^{\frac14+3{m}+3{n}} \,r^{m}_{n}(B_-/B_+),\qquad \theta\in(0,\pi),\nonumber\\
B_\pm &= B_\pm(\theta) \defeq \cos(\theta/3)\pm \sqrt{\frac{4\,\cos^2(\theta/3)-1}3},\nonumber
\end{align}
where $({n},{m})\in\mathbb{Z}^2$.  Here, $r^{m}_{n}=r^{m}_{n}(u)$ is
an `octahedral' rational function that if $n,m\ge0$ is a polynomial of
degree $3{n}+2{m}$ in~$u$; in the base case ${n}={m}=0$, it equals
unity.  It satisfies differential recurrences on ${n}$ and~${m}$, and
three-term non-differential recurrences, as~well.  

The function $r^{m}_{0}(u)$ has a hypergeometric representation in the
Gauss function~${}_2F_1$: it equals
${}_2F_1\left(-2{m},-\frac14-\nobreak3{m};\allowbreak\frac34-\nobreak
{m}\bigm| u\right)$.  But $r^{0}_{n}(u)$, which according
to~(\ref{eq:formulaone}), appears in series of the
form~(\ref{eq:seriesLove}) when $\mu=\frac14$, is less classical.  It
satisfies a second-order differential equation on the Riemann $u$-sphere
with four singular points, not three; so (if ${n}\ge0$) it is a \emph{Heun
  polynomial}, not a hypergeometric one.  The functions
$\{r^{0}_{n}(u)\}_{{n}\in\mathbb{Z}}$ are mutually orthogonal on the
$u$-interval $[0,1]$, in a sense that follows
from~(\ref{eq:orthgonalityLove}), but the orthogonality is of an unusual
Sturm--Liouville kind.

It is clear from (\ref{eq:formulaone}) that for any $n,m\in\mathbb{Z}$, the
function ${\rm P}_{-\frac16+{n}}^{\frac14+{m}}(z=\cos\theta)$ depends
\emph{algebraically} on~$z$, and can be evaluated using radicals.  Each of
the function families considered in this paper is similarly algebraic, and
because any Legendre function can be written in~terms of~${}_2F_1$, the
results below are really trigonometric parametrizations of families of
algebraic~${}_2F_1$'s.  To see a link to prior work, recall from Frobenius
theory that each Legendre function of degree~$\nu$ and order~$\mu$
satisfies a differential equation on the Riemann sphere with three singular
points, the characteristic exponent differences at which are
$\mu,\mu,2\nu+\nobreak1$.  It is a classical result of Schwarz
(see~\cite{Schwarz1873}, and for more recent expositions,
\cite[\S\,2.7.2]{Erdelyi53}, \cite[chap.~VII]{Poole36}
and~\cite{Matsuda85}) that any such equation will have \emph{only algebraic
  solutions} only if the (unordered, unsigned) triple of exponent
differences falls into one of several classes.  The triples from
$(\nu,\mu)=(-\frac16,\frac14)+\allowbreak({n},{m})$, as
in~(\ref{eq:formulaone}), are
$(\frac14,\frac14,\frac23)+\allowbreak({m},{m},2{n})$, and they lie in
Schwarz's octahedral class~V\null.

The families treated below include octahedral ones, with
$(\nu+\nobreak\frac12,\mu)\in\allowbreak(\pm\frac13,\pm\frac14)+\nobreak\mathbb{Z}^2$,
and tetrahedral ones, with
$(\nu+\frac12,\mu)\in\allowbreak(\pm\frac14,\pm\frac13)+\nobreak\mathbb{Z}^2$
or $(\pm\frac13,\pm\frac13)+\nobreak\mathbb{Z}^2$; the Schwarz classes for
the latter being II and~III\null.  The resulting Legendre functions are
octahedral or tetrahedral in the sense that their defining differential
equation, on the Riemann $z$-sphere, has as its projective monodromy group
a finite subgroup of the M\"obius group $PSL(2,\mathbb{R})$, which is
octahedral or tetrahedral.  This will not be developed at length, but there
is a strong geometric reason why $\{r^{m}_{n}(u)\}_{n,m\in\mathbb{Z}}$
deserve to be called octahedral functions, or (when ${n},{m}\ge0$)
polynomials.  For general ${n},{m}$, the lifted function $\tilde r^{m}_{n}
= \tilde r^{m}_{n}(s)\defeq r^{m}_{n}(u=s^4)$ turns~out to satisfy an
equation on the Riemann $s$-sphere with $14$~singular points.  These
include $s=0,\pm1,\pm{\rm i},\infty$, which are the six vertices of an
octahedron inscribed in the sphere; and also, the centers of its eight
faces.

Up to normalization, the doubly indexed functions $r^{m}_{n}(u)$ are
identical to specializations of triply-indexed ones introduced by
Ochiai and Yoshida in their groundbreaking work on algebraic
${}_2F_1$'s~\cite{Ochiai2004}.  For Schwarz classes such as the
octahedral and tetrahedral, they considered the effects of displacing
the triple of exponent differences, not by $({m},{m},2{n})$ as in the
Legendre case, but by general elements of~$\mathbb{Z}^3$.  It is a key
result of the present paper that in the Legendre case, when the triple
has only two degrees of freedom, it is far easier to derive and solve
recurrences on exponent displacements.

Schwarz's classification of algebraic ${}_2F_1$'s also includes a dihedral
class (class~I) and a related `cyclic' class (unnumbered but called class~O
here).  Legendre functions lie in class~I when the order~$\mu$ is a
half-odd-integer, and in class~O when the degree~$\nu$ is an integer.  We
obtain explicit formulas for the Legendre (and Ferrers) functions in the
respective families, of the first and second kinds.  The simplest dihedral
example is
\begin{align}
\label{eq:Jacobirep}
{\rm P}_{-\frac12+\alpha}^{\frac12+m}(\cos\theta) &= \sqrt{\frac2\pi}\:{m}!\\
&\qquad {}\times (\sin\theta)^{-1/2}\,
\left\{{\rm i}^m{\rm e}^{{\rm i}\alpha\theta}\, P_{m}^{(\alpha,-\alpha)}({\rm i}\,\cot\theta)\right\}_{\alpha,+}, \qquad \theta\in(0,\pi),\nonumber
\end{align}
where $m=0,1,2,\dots$, and $\alpha\in\mathbb{C}$ is arbitrary.  Here,
$P_m^{(\alpha,-\alpha)}$ is the Jacobi polynomial of degree~$m$, and
$\{\cdot\}_{\alpha,+}$ signifies the even part under
$\alpha\mapsto-\alpha$.

When ${m}=0$, this becomes a trigonometric version of a well-known
algebraic formula~\cite[3.6(12)]{Erdelyi53}; and when $\alpha=\frac12$, it
expresses ${\rm P}_0^{\frac12+{m}}$ in~terms of the ${m}$th Chebyshev
polynomial of the third kind.  But the general Jacobi
representation~(\ref{eq:Jacobirep}) is new.  There is a significant
literature on `dihedral polynomials' appearing in dihedrally symmetric
${}_2F_1$'s \cite{Ochiai2004,Vidunas2011}, and Vid\=unas has shown they can
be expressed as terminating Appell series~\cite{Vidunas2011}.  Focusing on
the Legendre case, when two of the three exponent differences are equal,
leads to such simpler formulas as~(\ref{eq:Jacobirep}), for both the
dihedral and cyclic families.

Constructing bilateral Ferrers series of the form~(\ref{eq:seriesLove}) is
facilitated by the explicit formulas derived below for the Legendre and
Ferrers functions in the several families.  But the functions $\{ {\rm
  P}^{\mu_0+m}_{\nu_0+n}(z=\nobreak\cos\theta) \}$, $(n,m)\in\mathbb{Z}^2$,
and the corresponding spherical harmonics $\{
Y^{\mu_0+m}_{\nu_0+n}(\theta,\phi) \}$, do not fit into conventional
$SO(3)$-based harmonic analysis unless $(\nu_0,\mu_0)=(0,0)$, when the
latter are the usual surface harmonics on $S^2=SO(3)/SO(2)$.  In the
octahedral and tetrahedral families (and also the dihedral and cyclic, if
$\nu_0$ resp.~$\mu_0$ is rational), it is nonetheless the case that each
spherical harmonic can be viewed as a \emph{finite-valued} function
on~$S^2$.  This is due both to $\nu_0,\mu_0$ being rational, and to the
algebraicity of ${\rm P}_{\nu_0+n}^{\mu_0+m}(z)$ in its argument~$z$, as
seen in~(\ref{eq:formulaone}).

To begin to relate the present results to harmonic analysis, we interpret
in Lie-theoretic terms the recurrences satisfied by any family $\{{\rm
  P}_{\nu_0+n}^{\mu_0+m}(z)\}$ or $\{Y_{\nu_0+n}^{\mu_0+m}(\theta,\phi)\}$,
${(n,m)\in\mathbb{Z}^2}$, which are based on first-order differential
operators that perform ladder operations.  It is well known that for any
$(\nu_0,\mu_0)$, there is an infinite-dimensional representation of the Lie
algebra $\mathfrak{so}(3,\mathbb{R})$ on the span of
$\{Y_{\nu_0}^{\mu_0+m}(z)\}_{m\in\mathbb{Z}}$.  (In the case when
$(\nu_0,\mu_0)\in\mathbb{Z}_\ge\times\mathbb{Z}$, this includes as an
irreducible constituent the familiar $(2\nu_0+\nobreak1)$-dimensional
representation carried by the span of
$Y_{\nu_0}^{-\nu_0},\dots,Y_{\nu_0}^{\nu_0}$.)  There is also a
representation of $\mathfrak{so}(2,1)$ on the span of
$\{Y_{\nu_0+n}^{\mu_0}(z)\}_{n\in\mathbb{Z}}$.  The real algebras
$\mathfrak{so}(3,\mathbb{R}),\allowbreak \mathfrak{so}(2,1)$ are real forms
of the complex Lie algebra $\mathfrak{so}(3,\mathbb{C})$.

As we explain, these `order' and `degree' algebras generate
over~$\mathbb{C}$ a 10\nobreakdash-dimensional, rank\nobreakdash-2 complex
Lie algebra isomorphic to $\mathfrak{so}(5,\mathbb{C})$, which for any
$(\nu_0,\mu_0)$, acts differentially on the family
$\{r^{\nu_0+n}Y_{\nu_0+n}^{\mu_0+m}(\theta,\phi)\}$,
${(n,m)\in\mathbb{Z}^2}$, of \emph{generalized solid harmonics} on
$\mathbb{R}^3$.  The root system of $\mathfrak{so}(5,\mathbb{C})$, of
type~$B_2$, comprises the eight displacement vectors
$\Delta(\nu,\mu)=(0,\nobreak\pm1),\allowbreak(\pm1,\nobreak0),\allowbreak
(\pm1,\nobreak\pm1)$, which yield four ladders on~$(\nu,\mu)$; and for each
ladder, there are differential operators for raising and lowering, a
differential recurrence satisfied by~${\rm P}_\nu^\mu(z=\cos\theta)$, and a
three-term non-differential recurrence.  The ones coming from the roots
$(\pm1,\pm1)$, such as the `diagonal' recurrences
\begin{multline}
  \sqrt{1-z^2}\:\,{\rm P}_{\nu\pm1}^{\mu+1}(z)
  +
  \bigl[\pm(2\nu+1)(1-z^2) + 2\mu\bigr]\,{\rm P}_\nu^\mu(z)
  \\
  +
  \bigl[(\nu+\tfrac12)\pm(\mu-\tfrac12)\bigr]\,
  \bigl[(\nu+\tfrac12)\pm(\mu-\tfrac32)\bigr]\,
  \sqrt{1-z^2}\:\,{\rm P}_{\nu\mp1}^{\mu-1}(z)
  =0,
\end{multline}
may be given here for the first time.  (In this identity, ${\rm P}$ may be
replaced by~${\rm Q}$.)

Connections between associated Legendre/Ferrers functions, or spherical
harmonics, and the complex Lie algebra $\mathfrak{so}(5,\mathbb{C})$ [or
  its real forms $\mathfrak{so}(3,2),\mathfrak{so}(4,1)$
  and~$\mathfrak{so}(5,\mathbb{R})$] are known to exist.  (See
\cite{Miller73a} and \cite[Chaps.~3,4]{Miller77}, and
\cite{Celeghini2013,Durand2003b} in the physics literature; also
\cite{Kyriakopoulos68} for hyperspherical extensions.)  But, most work has
focused on functions of integral degree and order.  The octahedral,
tetrahedral, dihedral, and cyclic families yield explicit
infinite-dimensional representations of $\mathfrak{so}(5,\mathbb{C})$ and
its real forms, which are carried by finite-valued solid harmonics
on~$\mathbb{R}^3$.  When $(\nu_0,\mu_0)=(\tfrac12,\tfrac12)$ or~$(0,0)$,
the representation of $\mathfrak{so}(3,2)$ turns~out to include a known
skew-Hermitian one, of the Dirac singleton type (the `Di' or the `Rac' one,
respectively).  But in general, these Lie algebra representations are new,
non-skew-Hermitian ones, which do not integrate to unitary representations
of the corresponding Lie group.  It is shown below that any of these
representations of $\mathfrak{so}(5,\mathbb{C})$ [or~any of its real
  forms], carried by a harmonic family
$\{r^{\nu_0+n}Y_{\nu_0+n}^{\mu_0+m}(\theta,\phi)\}$,
${(n,m)\in\mathbb{Z}^2}$, is of a distinguished kind, in the sense that it
assigns special values to the two Casimir invariants of the algebra, these
values being independent of~$(\nu_0,\mu_0)$;
cf.~\cite{Celeghini2013,Kyriakopoulos68}.

This paper is structured as follows.  In~\S\,\ref{sec:prelims}, facts
on Legendre/Ferrers functions that will be needed are reviewed.
In~\S\,\ref{sec:octa}, the key results on the octahedral
functions~$r_n^m$ are stated, and explicit formulas for octahedral
Legendre/Ferrers functions are derived.  These are extended to the
tetrahedral families in~\S\S\,\ref{sec:tetra1} and~\ref{sec:tetra2}.
In~\S\,\ref{sec:proofs}, the results in~\S\,\ref{sec:octa} are proved.
In~\S\,\ref{sec:biorthog}, Love--Hunter biorthogonality is related to
Sturm--Liouville biorthogonality.  In~\S\,\ref{sec:cyclicdihedral},
formulas for Legendre/Ferrers functions in the cyclic and dihedral
families are derived, and Love--Hunter expansions in dihedral Ferrers
functions are briefly explored.  In~\S\,\ref{sec:last}, recurrences on
the degree and order, valid for any $(\nu_0,\mu_0)$, are derived, and
are given a Lie-theoretic interpretation:
$\mathfrak{so}(5,\mathbb{C})$ and its real forms are introduced, and
their representations carried by solid harmonics are examined.

\section{Preliminaries}
\label{sec:prelims}

The (associated) Legendre equation is the second-order differential equation
\begin{equation}
\frac{{\rm{d}}}{{\rm{d}}z}
\left[
{(1-z^2)} \frac{{\rm d}{p}}{{\rm d}{z}}
\right]
 + \left[\nu({\nu}+1) - \frac{{\mu^2}}{1-z^2}\right]{p}=0
\label{eq:legendre}
\end{equation}
on the complex $z$-plane.  For there to be single-valued solutions, the
plane is cut along the real axis either from $-\infty$ to~$1$ (the Legendre
choice), or from $-\infty$ to~$-1$ and from $1$ to~$+\infty$ (the Ferrers
choice).  The respective solution spaces have $P_\nu^\mu(z),Q_\nu^\mu(z)$
and ${\rm P}_\nu^\mu(z),{\rm Q}_\nu^\mu(z)$ as bases, except in degenerate
cases indicated below.

At fixed real~$\mu$, Eq.~(\ref{eq:legendre}) can be viewed as a singular
Sturm--Liouville equation on the real Ferrers domain $[-1,1]$, the
endpoints of which are of Weyl's `limit circle' type if $\mu\in(-1,1)$.
(See~\cite{Everitt2005}.)  In this case, all solutions $p=p(z)$ lie in
$L^2[-1,1]$, irrespective of~$\nu$; but the same is not true when
$\mu\notin(-1,1)$, which is why such orthogonality relations
as~(\ref{eq:orthgonalityLove}) can only be obtained if $\mu\in(-1,1)$, or
more generally if ${\rm Re}\,\mu\in(-1,1)$.

Further light on endpoint behavior is shed by Frobenius theory.
Equation~(\ref{eq:legendre}) has regular singular points at $z=-1,1$
and~$\infty$, with respective characteristic exponents expressed in~terms
of the degree~$\nu$ and order~$\mu$ as $+\mu/2,-\mu/2$; $+\mu/2,-\mu/2$;
and $-\nu,\nu+1$.  The exponent differences are $\mu,\mu,2\nu+\nobreak1$.
The functions $P_\nu^\mu,{\rm P}_\nu^\mu$ are Frobenius solutions
associated to the exponent $-\mu/2$ at~$z=1$, and the second Legendre
function~$Q_\nu^\mu$ is associated to the exponent ${\nu+1}$ at~$z=\infty$.
(The second Ferrers function~${\rm Q}_\nu^\mu$ is a combination of two
Frobenius solutions.)  These functions are defined to be analytic (or
rather meromorphic) in~$\nu,\mu$~\cite{Olver97}, the Legendre functions
having the normalizations
\begin{subequations}
\begin{alignat}{2}
\label{eq:Pasympt}
P_\nu^\mu(z) &\sim \frac{2^{\mu/2}}{\Gamma(1-\mu)}\,(z-1)^{-\mu/2}, &\qquad& z\to1,\\
\hat Q_\nu^\mu(z) &\sim\frac{\sqrt\pi}{2^{\nu+1}}\,\frac{\Gamma(\nu+\mu+1)}{\Gamma(\nu+3/2)}\,z^{-\nu-1}, &\qquad& z\to\infty,
\label{eq:Qasympt}
\end{alignat}
\end{subequations}
by convention \cite[chap.~III]{Erdelyi53}.  The notation $\hat
Q_\nu^\mu\defeq {\rm e}^{-\mu\pi{\rm i}}Q_\nu^\mu$ will be used
henceforth; it removes an awkward ${\rm e}^{\mu\pi{\rm i}}$ factor.

The formulas (\ref{eq:Pasympt}),(\ref{eq:Qasympt}) apply if the gammas
are finite; the asymptotics when they are not are given
in~\cite{Olver2010}.  One such degenerate case is when
$\mu=1,2,\dots$.  Then, $P_{-\mu}^\mu,\dots,P_{\mu-1}^\mu\equiv 0$.  A
familiar example is when the degree is a non-negative integer~$n$.
Then, $P_n^m,{\rm P}_n^m\equiv0$ if the order is an integer~$m>n$;
though not if~$m<-n$.  Another degenerate case is when
$\nu+\nobreak\mu$ is a negative integer.  If so, $\hat Q_\nu^\mu$~is
undefined, as (\ref{eq:Qasympt}) suggests; except when
$\nu=-\frac32,-\frac52,\dots$.  Then, $\hat Q_\nu^{\nu+1},\dots, \hat
Q_\nu^{-(\nu+1)}$ are defined.

The Ferrers functions are related to the Legendre ones on their common
domains, which are the upper and lower half-planes
${\pm{\rm{Im}}\,z>0}$, by
\begin{subequations}
\begin{align}
{\rm P}_\nu^\mu &= {{\rm e}}^{\pm{\mu\pi{{\rm i}}/2}} P_\nu^\mu\\
{\rm Q}_\nu^\mu &= {{\rm e}}^{\mp{\mu\pi{{\rm i}}/2}} \hat{Q}_\nu^\mu
\pm{\rm i}\,{(\pi/2)}\,{{\rm e}}^{\pm{\mu\pi{{\rm i}}/2}} P_\nu^\mu.
\label{eq:lincomb2}
\end{align}
\end{subequations}
Thus $P_\nu^\mu,{\rm P}_\nu^\mu$ are related by analytic continuation,
up to phase.  Going from ${P}_\nu^\mu$ to~${\rm P}_\nu^\mu$ typically
involves replacing a factor $(z-1)^{-\mu/2}$ by $(1-z)^{-\mu/2}$; for
instance, $P_1^{-1}(z),\allowbreak{\rm P}_1^{-1}(z)$ are
$\tfrac12\sqrt{z^2-1}$, $\tfrac12\sqrt{1-z^2}$.  Also, owing
to~(\ref{eq:lincomb2}), ${\rm Q}_\nu^\mu$~is undefined iff $\hat
Q_\nu^\mu$~is.

Equation~(\ref{eq:legendre}) is invariant under $\nu\mapsto-\nu-1$,
$\mu\mapsto-\mu$, and $z\mapsto -z$, so that in nondegenerate cases, the
Legendre and Ferrers functions with $\nu$ replaced by $-\nu-\nobreak1$,
$\mu$ by~$-\mu$, and/or $z$ by~$-z$, can be expressed as combinations of
any two (at most) of $P_\nu^\mu,\hat Q_\nu^\mu,\allowbreak {\rm P}_\nu^\mu,
{\rm Q}_\nu^\mu$.  Some `connection' formulas of this type, which
will be needed below, are $P_{-\nu-1}^\mu=P_{\nu}^\mu$, ${\rm
  P}_{-\nu-1}^\mu={\rm P}_{\nu}^\mu$,
\begin{equation}
\label{eq:QtoQ}
  \hat Q_\nu^{-\mu}/\,\Gamma(\nu-\mu+1)
  =
  \hat Q_\nu^{\mu}\,/\,\Gamma(\nu+\mu+1),  
\end{equation}
the $P\to\hat Q$ reduction
\begin{equation}
\label{eq:QtoP}
P_\nu^\mu =
\sec(\nu\pi)\,\Gamma(\nu-\mu+1)^{-1} \Gamma(-\mu-\nu)^{-1}
\left(\hat Q_{-\nu-1}^{-\mu} - \hat Q_\nu^{-\mu}\right),
\end{equation}
the $\hat Q\to{P}$ reduction
\begin{equation}
\label{eq:PtoQ}
(2/\pi)\hat{Q}_\nu^\mu =
 \csc(\mu\pi)\, {P}_\nu^\mu
  -\csc(\mu\pi)\,\frac{\Gamma(\nu+\mu+1)}{\Gamma(\nu-\mu+1)}\,{P}_\nu^{-\mu},
\end{equation}
and the ${\rm Q}\to\rm P$ reduction
\begin{equation}
\label{eq:rmPtormQ}
(2/\pi){\rm Q}_\nu^\mu =
 \cot(\mu\pi)\, {\rm P}_\nu^\mu
  -\csc(\mu\pi)\,\frac{\Gamma(\nu+\mu+1)}{\Gamma(\nu-\mu+1)}\,{\rm
    P}_\nu^{-\mu}.
\end{equation}
(See~\cite{Erdelyi53}.)  It follows from~(\ref{eq:rmPtormQ}) that if
$\mu=\frac12,\frac32,\dots$, then ${\rm Q}_{-\mu}^{\mu},\dots,{\rm
  Q}_{\mu-1}^{\mu}\equiv0$.

The functions $P_\nu^\mu,\hat Q_\nu^\mu$ are known to have the
hypergeometric representations
\begin{subequations}
\label{eq:reps}
\begin{align}
\label{eq:Prep}
P_\nu^\mu(z) &= \frac1{\Gamma(1-\mu)}\,\left({\frac{z+1}{z-1}}\right)^{\mu/2} {}_2F_1\left(-\nu,\,\nu+1;\,1-\mu;\,\frac{1-z}{2}\right)
\\
&= \frac{2^\mu}{\Gamma(1-\mu)}\, \frac{z^{\nu+\mu}}{(z^2-1)^{\mu/2}}\,
{}_2F_1\left(-\frac\nu2-\frac\mu2,\,-\frac\nu2-\frac\mu2+\frac12;\,1-\mu;\,1-\frac1{z^2}\right),
\label{eq:altPrep}
\\[\jot]
{\hat{Q}}_\nu^\mu(z) &= \frac{\sqrt\pi}{2^{\nu+1}}
\,\frac{\Gamma(\nu+\mu+1)}{\Gamma(\nu+3/2)}\,\frac{(z+1)^{\mu/2}}{(z-1)^{\mu/2 + \nu + 1}}\nonumber
\\
&\qquad\qquad\qquad\qquad\quad {}\times {}_2F_1\left(\nu+1,\nu+\mu+1;\,2\nu+2;\,\frac{2}{1-z}\right)
\label{eq:Qrep}
\\
&= \frac{\sqrt\pi}{2^{\nu+1}}
\,\frac{\Gamma(\nu+\mu+1)}{\Gamma(\nu+3/2)}\,\frac{(z^2-1)^{\mu/2}}{z^{\nu+\mu+1}}\nonumber
\\
&\qquad\qquad\qquad\qquad\quad {}\times {}_2F_1\left(\frac\nu2+\frac\mu2+\frac12,\frac\nu2+\frac\mu2+1;\,\nu+\frac32;\,\frac1{z^2}\right).
\label{eq:altQrep}
\end{align}
\end{subequations}
(For ${\rm P}_\nu^\mu$, replace $z-1$ in the prefactor on the right
of~(\ref{eq:Prep}) by $1-z$; the alternative expressions
(\ref{eq:altPrep}),(\ref{eq:altQrep}) come from
(\ref{eq:Prep}),(\ref{eq:Qrep}) by quadratic hypergeometric
transformations.)  Here, ${}_2F_1(a,b;c;x)$ is the Gauss function with
parameters $a,b;c$, defined (on~the disk $|x|<1$, at~least) by the
Maclaurin series $\sum_{k=0}^\infty [(a)_k(b)_k/(c)_k(1)_k]\,x^k$.  In this
and below, the notation $(d)_k$ is used for the rising factorial, i.e.,
\begin{align*}
  (d)_k \defeq
  \begin{cases}
    (d)\dots(d+k-1),&k\ge0;\\
    \left[(d-k')\dots(d-1)\right]^{-1},&k=-k'\le0.
  \end{cases}
\end{align*}
(The unusual second half of this definition, which extends the meaning of
$(d)_k$ to negative~$k$ so that $(d)_k = \left[(d+k)_{-k}\right]^{-1}$ for
all $k\in\mathbb{Z}$, will be needed below.)  If in any ${}_2F_1(a,b;c;x)$
in~(\ref{eq:reps}), the denominator parameter~$c$ is a non-positive integer
and there is an apparent division by zero, the taking of a limit is to be
understood.

The Gauss equation satisfied by ${}_2F_1(a,b;c;x)$ has the three singular
points $x=0,1,\infty$, with respective exponent differences $1-\nobreak
c,\allowbreak c-\nobreak a-\nobreak b,\allowbreak b-\nobreak a$.  Taking
into account either of (\ref{eq:Prep}),(\ref{eq:Qrep}), one sees that this
triple is consistent with the exponent differences $\mu,\mu,2\nu+\nobreak1$
at the singular points $z=1,-1,\infty$ of the Legendre
equation~(\ref{eq:legendre}).  Schwarz's results on algebraicity were
originally phrased in~terms of the Gauss equation, its solutions such as
${}_2F_1$, and the (unordered, unsigned) triple $1-\nobreak c,\allowbreak
c-\nobreak a-\nobreak b,\allowbreak b-\nobreak a$; but they extend to the
Legendre equation, its solutions, and the triple $\mu,\mu,2\nu+\nobreak1$.

\section{Octahedral Formulas (Schwarz Class~V)}
\label{sec:octa}

This section and Sections \ref{sec:tetra1} and~\ref{sec:tetra2} derive
parametric formulas for Legendre and Ferrers functions that are either
octahedral or tetrahedral (two types).  The formulas involve the octahedral
polynomials, or functions, $\{r^{m}_{n}(u)\}_{n,m\in\mathbb{Z}}$.
Section~\ref{subsec:polys} defines these rational functions and states
several results, the proofs of which are deferred to~\S\,\ref{sec:proofs}.

\subsection{Indexed Functions and Polynomials}
\label{subsec:polys}

\begin{definition}
\label{def:rdef}
  For $n,m\in\mathbb{Z}$, the rational functions $r^{m}_{n}=r^{m}_{n}(u)$
  and their `conjugates' $\overline r^{m}_{n}=\overline r^{m}_{n}(u)$ are
  defined implicitly by
  \begin{align*}
    &{}_2F_1\left(
    {{-\tfrac1{24}-\frac{m}2-\frac{n}2,\,\tfrac{11}{24}-\frac{m}2-\frac{n}2}\atop{\tfrac34-{m}}}
    \biggm| R(u)\right) = \bigl[(1+u)(1-34u+u^2)\bigr]^{-1/12-{m}-{n}}\,r^{m}_{n}(u),\\
    &{}_2F_1\left(
            {{\tfrac5{24}+\frac{m}2-\frac{n}2,\,\tfrac{17}{24}+\frac{m}2-\frac{n}2}\atop{\tfrac54+{m}}}
            \biggm| R(u)\right) =\\
&\qquad\qquad\qquad\qquad\qquad\qquad\qquad \bigl[(1+u)(1-34u+u^2)\bigr]^{5/12+{m}-{n}}(1-u)^{-1-4{m}}\,\overline r^{m}_{n}(u),
  \end{align*}
  which hold on a neighborhood of $u=0$.  Here,
  \begin{align*}
    R(u) &\defeq \frac{-108\,u(1-u)^4}{\bigl[(1+u)(1-34u+u^2)\bigr]^2} = 1 -
    \frac{(1+14u+u^2)^3}{\bigl[(1+u)(1-34u+u^2)\bigr]^2},\\
    &\defeq -108\,\frac{p_{\rm v}(u)}{p_{\rm e}(u)^2} = 1 - \,\frac{p_{\rm f}(u)^3}{p_{\rm e}(u)^2},
  \end{align*}
  where $p_{\rm v},p_{\rm e},p_{\rm f}$ are the polynomials $u(1-u)^4$,
  $(1+\nobreak u)\allowbreak (1-\nobreak34u +\nobreak u^2)$, $1+\nobreak
  14u+\nobreak u^2$, which satisfy
  $p_{\rm e}^2 - p_{\rm f}^3 + 108\,p_{\rm v}=0$.
  Equivalently,
  \begin{equation}
    \label{eq:tripleangle}
    R = \frac{T(3+T)^2}{(1+3T)^2} = 1-\,\frac{(1-T)^3}{(1+3T)^2},
  \end{equation}
  where $T=T(u)\defeq-12u/(1+u)^2$.\hfil\break[For later use, note that
    (\ref{eq:tripleangle}) is familiar from trigonometry as a
    `triple-angle' formula: $R=\tanh^2\xi$ if $T=\tanh^2(\xi/3)$;
    $R=\coth^2\xi$ if $T=\coth^2(\xi/3)$; and $R=-\tan^2\theta$ if
    $T=-\tan^2(\theta/3)$.]
\end{definition}

It is clear from the definition that $r^{m}_{n},\overline r^{m}_{n}$ are
analytic at $u=0$, at which they equal unity; though it is not obvious that
they are rational in~$u$.  But it is easily checked that the Gauss
equations satisfied by the two ${}_2F_1(x)$ functions have respective
exponent differences (at the singular points $x=0,1,\infty$) equal to
$(\frac14,\frac13,\frac12)+\allowbreak({m},{n},0)$ and
$(-\frac14,\frac13,\frac12)+\allowbreak(-{m},{n},0)$.  These triples lie in
Schwarz's octahedral class~IV, so each ${}_2F_1(x)$ must be an algebraic
function of~$x$.  The definition implicitly asserts that if these algebraic
${}_2F_1$'s are \emph{parametrized} by the degree\nobreakdash-$6$ rational
function $x=R(u)$, the resulting dependences on~$u$ will be captured by
certain \emph{rational} $r^{m}_{n}=r^{m}_{n}(u)$, $\overline
r^{m}_{n}=\overline r^{m}_{n}(u)$.  In the terminology
of~\cite{Ochiai2004}, these are octahedral functions of~$u$.

\begin{theorem}
\label{thm:1}
  {\rm(i)} For\/ $n,m\ge0$, $r^{m}_{n}(u)$ is a polynomial of degree
  $3{n}+\nobreak2{m}$ in\/~$u$, to be called the octahedral polynomial
  indexed by\/ $(n,m)\in \mathbb{Z}^2_\ge$.  Its coefficient of\/~$u^0$ is
  unity, and its coefficient of\/~$u^{3{n}+2{m}}$ is
  \begin{equation}
    \label{eq:ddef}
    d^{m}_{n} \defeq (-)^{{m}+{n}} \,3^{3{m}}\, \frac{(\frac{5}{12})_{{m}-{n}}(\frac{13}{12})_{{m}+{n}}}{(\frac14)_{m}\,(\frac54)_{m}}.
  \end{equation}
  {\rm (ii)} For unrestricted\/ $(n,m)\in\mathbb{Z}^2$, $r^{m}_{n}(u)$ is a
  rational function that equals unity at\/~$u=0$ and is asymptotic to\/
  $d^{m}_{n}u^{3{n}+2{m}}$ as\/~$u\to\infty$.\hfil\break {\rm (iii)} The conjugate
  function\/ $\overline r^{m}_{n}$ is related to\/~$r^{m}_{n}$ by
  \begin{displaymath}
    \overline r^{m}_{n}(u) = \left(d^{m}_{n}\right)^{-1} u^{3{n}+2{m}} r^{m}_{n}(1/u),
  \end{displaymath}
  so that if\/ $n,m\ge0$, $\overline r^{m}_{n}$ is a reversed version of the
  polynomial\/~$r^{m}_{n}$, scaled to equal unity at\/~$u=0$.
\end{theorem}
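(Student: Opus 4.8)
The plan is to work with the analytic germ $\phi^{m}_{n}(u)\defeq p_{\rm e}(u)^{1/12+m+n}\,{}_2F_1(\cdots\mid R(u))$ at $u=0$, which by Definition~\ref{def:rdef} agrees there with $r^{m}_{n}(u)$, and to prove that it continues to a rational function of the asserted shape; the conjugate $\overline r^{m}_{n}$ is treated in parallel. The input is the structure of the degree-$6$ octahedral covering recalled after the definition: as a map $\mathbb{P}^1\to\mathbb{P}^1$, $R$ is unramified over $x=0$ at $u=0$ and $u=\infty$ and totally ramified (order $4$) there at $u=1$; it has ramification order $3$ over $x=1$ at each of the two roots $\omega_\pm$ of $p_{\rm f}$; and ramification order $2$ over $x=\infty$ at each of the three roots of $p_{\rm e}$. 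Three elementary observations are needed first: $p_{\rm e}$ and $p_{\rm f}$ have simple zeros, disjoint from one another and from $u=0,1,\infty$; for every $(m,n)\in\mathbb{Z}^2$ the relevant ${}_2F_1$ is nondegenerate, none of $c,\,c-a-b,\,a-b$ being an integer, so no logarithmic solutions arise at $x=0,1,\infty$; and $p_{\rm e}(0)=1$.

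The core is a local analysis of $\phi^{m}_{n}$ at each finite singular point of the hypergeometric equation for the ${}_2F_1$ of Definition~\ref{def:rdef}, pulled back along $x=R(u)$. From the ramification orders and the exponent differences $(\tfrac14+m,\tfrac13+n,\tfrac12)$, the pulled-back equation has exponent differences $\tfrac14+m$ at $u=0$ and $u=\infty$, $1+4m$ at $u=1$, $1+3n$ at each $\omega_\pm$, and $1$ at each root of $p_{\rm e}$. Conjugating by $p_{\rm e}^{1/12+m+n}$, which is holomorphic and nonzero at $u=0,1,\omega_\pm$ and has a simple zero at each root of $p_{\rm e}$, and using that ${}_2F_1$ is the pure exponent-$0$ solution at $x=0$ while near $x=1$ and $x=\infty$ it is a log-free combination of the two local solutions, one reads off that $\phi^{m}_{n}$ is holomorphic at $u=0$ (with value $1$), at $u=1$, and at the roots of $p_{\rm e}$, and that at $\omega_\pm$ it is a Laurent germ whose only possibly negative exponent is $1+3n$; so $\phi^{m}_{n}$ is meromorphic at $\omega_\pm$, holomorphic there if $n\ge0$ and otherwise with a pole of order at most $-(1+3n)$. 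In particular $\phi^{m}_{n}$ is single-valued around every finite singular point, hence — the monodromy about $\infty$ being inverse to the product of those about the finite singularities — single-valued around $u=\infty$ too. A solution of a Fuchsian equation that is single-valued and meromorphic everywhere on $\mathbb{P}^1$ is rational; so $r^{m}_{n}$ is rational, with poles confined to $\{\omega_\pm,\infty\}$. This gives part~(ii) apart from the behavior at $\infty$, and when $m,n\ge0$ it gives that $r^{m}_{n}$ is a polynomial with constant term $1$.

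It remains to pin down $u=\infty$. As $u=\infty$ lies over $x=0$ unramified, the continuation of ${}_2F_1(\cdots\mid R(u))$ from $u=0$ to $u=\infty$ is a combination $\alpha G_0+\beta G_1$ of the two pulled-back Gauss solutions there, with $G_0\sim1$ and, since $R(u)\sim-108/u$, $G_1\sim(-108)^{1/4+m}\,u^{-(1/4+m)}$; multiplying by $p_{\rm e}^{1/12+m+n}\sim u^{1/4+3m+3n}$ gives $r^{m}_{n}(u)\sim\alpha'\,u^{1/4+3m+3n}+\beta'\,u^{3n+2m}$. Single-valuedness forces $\alpha=0$, so $r^{m}_{n}(u)\sim\beta'\,u^{3n+2m}$; this fixes the degree in part~(i) and reduces the leading-coefficient claim to the identification $\beta'=d^{m}_{n}$ of~\eqref{eq:ddef}. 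I expect this to be the main obstacle: $\beta'$ is the standard ${}_2F_1$ connection coefficient for continuation along the loop $x=R(u)$ traced as $u$ runs from $0$ to $\infty$ — a ratio of Gamma functions in $a,b,c$ — times the branch factors from $(-108)^{1/4+m}$ and from $p_{\rm e}^{1/12+m+n}$, and the half-integer shifts $a,b\mapsto a-\tfrac12,b-\tfrac12$ make a direct evaluation delicate; I would instead derive a first-order recurrence for $d^{m}_{n}$ in $(m,n)$ from the octahedral ladder and contiguity relations and solve it off $d^{0}_{0}=1$. (The case $m=n=0$ of the theorem is immediate from the preceding paragraph, since then $r^{0}_{0}$ is rational with no finite poles and finite at $\infty$, hence equals the constant $r^{0}_{0}(0)=1$.)

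For part~(iii), use the deck symmetry $R(1/u)=R(u)$, together with $p_{\rm e}(1/u)=p_{\rm e}(u)/u^3$ and $p_{\rm f}(1/u)=p_{\rm f}(u)/u^2$, which exhibits $u\mapsto1/u$ as an involution of the covering interchanging $u=0$ and $u=\infty$; and note that the parameters satisfy $\bar a=a-c+1$, $\bar b=b-c+1$, $\bar c=2-c$, so that $x^{1-c}\,{}_2F_1(\bar a,\bar b;\bar c;x)$ is exactly the second Frobenius solution of the Gauss equation at $x=0$. Hence the defining identity for $\overline r^{m}_{n}$ identifies $p_{\rm e}^{5/12+m-n}(1-u)^{-1-4m}R(u)^{1/4+m}\,\overline r^{m}_{n}(u)$, near $u=0$, with that second solution pulled back; comparing with the continuation of the germ of $r^{m}_{n}$ to $u=\infty$ obtained above — where precisely the $x^{1-c}$-solution survives, with coefficient $d^{m}_{n}$ — and transporting back by $u\mapsto1/u$ yields $\overline r^{m}_{n}(u)=(d^{m}_{n})^{-1}u^{3n+2m}r^{m}_{n}(1/u)$. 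Equivalently, $(d^{m}_{n})^{-1}u^{3n+2m}r^{m}_{n}(1/u)$ is rational, equals $1$ at $u=0$, and satisfies the same local normalization there as $\overline r^{m}_{n}$ in the equation governing it, so the two coincide; for $m,n\ge0$ this says $\overline r^{m}_{n}$ is the reversal of the polynomial $r^{m}_{n}$, rescaled to have constant term $1$.
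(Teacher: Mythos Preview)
Your approach is genuinely different from the paper's and is largely sound, but there is one concrete gap, and the comparison with the paper's method is worth making explicit.

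\medskip
\textbf{The gap.} Your claim that $\phi^{m}_{n}$ is holomorphic at $u=1$ is not justified and is in fact false for $m<0$. The reasoning ``${}_2F_1$ is the pure exponent-$0$ solution at $x=0$'' is valid only at the base point $u=0$; after analytic continuation to $u=1$ (which also lies over $x=0$, with ramification~$4$), the function is in general a combination $\alpha F_0(R)+\beta F_1(R)$, exactly as you correctly allow at $u=\infty$. Indeed, Theorem~\ref{thm:nonpoly} of the paper shows that $r_{n}^{-m-1}(u)$ carries a factor $(1-u)^{-3-4m}$ for $m\ge0$, i.e.\ a pole of order $-(1+4m')$ at $u=1$ when $m'<0$. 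What \emph{does} hold at $u=1$, for every~$m$, is single-valuedness: the two pulled-back Frobenius solutions have local exponents $0$ and $1+4m$ (an integer difference) and, because $1-c=\tfrac14+m$ is non-integral so the Gauss equation has no logs at $x=0$, no logarithm appears; hence every local solution at $u=1$ is a genuine Laurent series. That is all you need for rationality. So your conclusion survives, but the sentence ``poles confined to $\{\omega_\pm,\infty\}$'' must read ``$\{1,\omega_\pm,\infty\}$'', and the polynomial conclusion in part~(i) then uses $m\ge0$ to see that both exponents $0,1+4m$ at $u=1$ are non-negative.

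\medskip
\textbf{Comparison with the paper.} The paper does \emph{not} argue via global monodromy. It establishes $r^{0}_{0}\equiv1$ directly, derives the lifted operator $\mathcal{L}^{m}_{n}$ and eight differential contiguity relations (Theorem~\ref{thm:diffrec} and Table~\ref{tab:1}), and then proves rationality and the precise denominator structure of Theorem~\ref{thm:nonpoly} by induction on $(n,m)$ along these ladders, starting from the hypergeometric closed form $r^{m}_{0}={}_2F_1(-2m,-\tfrac14-3m;\tfrac34-m;u)$. The leading coefficient $d^{m}_{n}$ is then obtained exactly as you propose: from the $u\to\infty$ asymptotics of the $\Delta(n,m)=(\pm1,0)$ recurrences, with the $n=0$ base case read off from the hypergeometric formula (and its Euler transform for $m<0$). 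For part~(iii) the paper checks that $\mathbf r^{m}_{n}(u):=u^{3n+2m}\,\overline r^{m}_{n}(1/u)$ satisfies $\mathcal{L}^{m}_{n}\mathbf r^{m}_{n}=0$ and is the Frobenius solution at the exponent $-2m-3n$ at $u=\infty$, hence is proportional to $r^{m}_{n}$. Your deck-symmetry argument via $R(1/u)=R(u)$ and the identification of $(\bar a,\bar b,\bar c)$ with the second Frobenius solution is equivalent and arguably more transparent; just be careful that the branch ambiguities in $p_{\rm e}^{1/12+m+n}$ and $R^{1/4+m}$ cancel, which they do since their product has integral local exponent $2m+3n$ at $u=\infty$.

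What each approach buys: your monodromy route gets rationality and the $u\to\infty$ exponent in one stroke, without building any recurrences; but it does not by itself yield the explicit pole structure of Theorem~\ref{thm:nonpoly} or the ladder relations of Theorems~\ref{thm:recs} and~\ref{thm:diffrec}, which the paper needs elsewhere. The paper's inductive route is less conceptual but produces those results as immediate byproducts, and you end up needing the same recurrences anyway to pin down~$d^{m}_{n}$.
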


The functional form of the octahedral functions that are not polynomials,
which are indexed by $(n,m)\in\mathbb{Z}^2\setminus\mathbb{Z}^2_\ge$, is
not complicated.

\begin{theorem}
\label{thm:nonpoly}
  For any\/ $n,m\ge0$,
  \begin{align*}
    r^{m}_{n}(u) &= \Pi_{3{n}+2{m}}(u),\\
    r^{-{m}-1}_{n}(u) &= (1-u)^{-3-4{m}}\,\Pi_{1+3{n}+2{m}}(u),\\
    r^{m}_{-{n}-1}(u) &= (1+14u+u^2)^{-2-3{n}}\,\Pi_{1+3{n}+2{m}}(u),\\
    r^{-{m}-1}_{-{n}-1}(u) &= (1-u)^{-3-4{m}}\,(1+14u+u^2)^{-2-3{n}}\,\Pi_{2+3{n}+2{m}}(u),
  \end{align*}
  where on each line, $\Pi_k(u)$ signifies a polynomial of degree\/~$k$
  in\/~$u$, with its coefficient of\/~$u^0$ equalling unity, and its
  coefficient of\/~$u^k$ coming from the preceding theorem.
\end{theorem}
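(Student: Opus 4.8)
The first of the four formulas is Theorem~\ref{thm:1}(i) (with $\Pi_{3n+2m}=r^m_n$), so everything reduces to the three reflected cases; I treat them uniformly, writing $r^{m'}_{n'}$ with $m'\in\{m,-m-1\}$, $n'\in\{n,-n-1\}$, $m,n\ge0$, at least one of $m',n'$ negative. The plan is to locate the poles of the rational function $r^{m'}_{n'}$ exactly by a Frobenius analysis of the defining relation $r^{m'}_{n'}(u)=p_{\rm e}(u)^{1/12+m'+n'}\,{}_2F_1(\cdots;R(u))$ --- the ${}_2F_1$ being the one in Definition~\ref{def:rdef} with $(m,n)$ replaced by $(m',n')$, so its parameters $a,b;c$ satisfy $a,b,c\notin\mathbb Z$, $c-a,c-b\notin\mathbb Z$, $b-a=\tfrac12$ --- composed with the degree-six map $x=R(u)$, and then to recover the numerator from $r^{m'}_{n'}(0)=1$ together with the asymptotics $r^{m'}_{n'}(u)\sim d^{m'}_{n'}u^{3n'+2m'}$ of Theorem~\ref{thm:1}(ii).

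Since a ${}_2F_1$ is singular only over $x\in\{0,1,\infty\}$, the only points at which $r^{m'}_{n'}$ can fail to be regular are the $R$-preimages of $\{0,1,\infty\}$: the points $u=0,1,\infty$ over $R=0$, with $u=1$ a ramification point of order $4$; the two roots of $p_{\rm f}$ over $R=1$, of ramification order $3$; and the three roots of $p_{\rm e}$ over $R=\infty$, of ramification order $2$. At each I would obtain the local behavior of $r^{m'}_{n'}$ by multiplying the pertinent pair of Gauss-equation exponents --- $\{0,\tfrac14+m'\}$ at $R=0$, $\{0,\tfrac13+n'\}$ at $R=1$, a pair differing by $\tfrac12$ at $R=\infty$ --- by the ramification order and combining with $p_{\rm e}^{1/12+m'+n'}$. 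No logarithms occur (all three exponent differences are non-integral), and at the roots of $p_{\rm e}$ the apparent fractional powers of $p_{\rm e}$ and of ${}_2F_1(\cdots;R(u))$ cancel, leaving regular single-valued behavior. The outcome is: $r^{m'}_{n'}$ is regular at $u=0$ (value $1$) and at every root of $p_{\rm e}$; regular at the roots of $p_{\rm f}$ iff $n'\ge0$, otherwise (when $n'=-n-1$) with an admissible pole of order $\le 3n+2$ at each; and regular at $u=1$ iff $m'\ge0$, otherwise (when $m'=-m-1$) with an admissible pole of order $\le 4m+3$.

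It remains to show these admissible poles are attained, with equality. The coefficient of the most singular term of $r^{m'}_{n'}$ at $u=1$ (resp.\ at a root of $p_{\rm f}$) is, by the classical Gauss connection and monodromy relations for ${}_2F_1$, a finite product of Gamma-function values of the parameters; it would vanish only if one of $a,b,c-a,c-b$ were a non-positive integer, which never happens here. (Alternatively, more softly: whenever $3n'+2m'<0$ --- which occurs for infinitely many reflected pairs, including some with $m'\ge0$ and some with $n'\ge0$ --- Theorem~\ref{thm:1}(ii) makes $r^{m'}_{n'}$ vanish at $\infty$ yet equal $1$ at $u=0$, so it is not a polynomial and must have a finite pole, which by the last paragraph can only lie at $u=1$ or at the roots of $p_{\rm f}$; since the octahedral monodromy is irreducible throughout and rigid --- its local eigenvalues $\mathrm i,\ e^{2\pi\mathrm i/3},\ \{\pm e^{-\pi\mathrm i/12}\}$ being independent of $(m,n)$ --- this forces the coefficients nonzero for all $(m,n)$.) Finally, $r^{m'}_{n'}$ has rational coefficients --- its Taylor series at $u=0$ does, as $p_{\rm e}(0)=1$ and ${}_2F_1$ and $R(u)$ have rational coefficients --- so its pole orders at the two Galois-conjugate roots of $p_{\rm f}$ agree. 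Hence multiplying $r^{m'}_{n'}$ by $(1-u)^{3+4m}$ and/or $p_{\rm f}(u)^{2+3n}$, as the reflection dictates, clears every finite pole and leaves a polynomial $\Pi$; its degree and its coefficients of $u^0$ and of the top power then drop out of $r^{m'}_{n'}(0)=1$ and $r^{m'}_{n'}(u)\sim d^{m'}_{n'}u^{3n'+2m'}$, giving all four displayed formulas.

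The main obstacle is exactly this last non-vanishing: carrying it out via the explicit Gauss connection formulae first requires pinning down, from the geometry of the map $R(u)$, which loop the relevant analytic continuation runs along, while the softer route trades that for a degree obstruction plus the rigidity of the monodromy. The only other genuinely delicate bookkeeping is the cancellation of the fractional powers at the roots of $p_{\rm e}$ --- which is, after all, what makes $r^{m'}_{n'}$ globally rational there.
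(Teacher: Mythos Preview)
Your approach is correct and takes a genuinely different route from the paper's.  The paper proves each quadrant formula by induction on~$n$, using the differential recurrences of Theorem~\ref{thm:diffrec}: for the second formula the base case $n=0$ is the explicit identity $r^{-m-1}_0(u)=(1-u)^{-3-4m}\,{}_2F_1(-1-2m,-\tfrac14-m;\tfrac34-m\mid u)$, obtained from the hypergeometric representation of Theorem~\ref{thm:hypergrep} by Euler's transformation, and the inductive step is the $\Delta(n,m)=(+1,0)$ recurrence; the third and fourth formulas follow similarly via the $\Delta(n,m)=(-1,0)$ recurrence.  You instead take the rationality and asymptotics of Theorem~\ref{thm:1} as input and read off the admissible pole locations and maximal pole orders directly from the local exponents of the lifted equation (equivalently, the P-symbol~(\ref{eq:displayedP})); multiplying by the putative denominator then yields a polynomial whose exact degree is fixed by the $u\to\infty$ behavior.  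This is a global rather than a step-by-step argument, and once Theorem~\ref{thm:1}(ii) is available it is arguably cleaner.

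One point to flag: your paragraph on showing the ``admissible poles are attained, with equality'' is unnecessary, and the arguments you sketch there are not quite right.  The theorem does \emph{not} assert that $\Pi_k$ is coprime to the denominator---only that it has degree exactly~$k$, and that follows already from $r^{m'}_{n'}(u)\sim d^{m'}_{n'}u^{3n'+2m'}$ together with $d^{m'}_{n'}\neq0$ (immediate from~(\ref{eq:ddef}), since none of $\tfrac5{12},\tfrac{13}{12},\tfrac14,\tfrac54$ is an integer).  Your Gauss-connection argument is shaky because $u=0$ and $u=1$ both lie over $x=0$, so the relevant continuation is the monodromy along a loop in the $x$-plane (one that, along the real $u$-segment, even passes through $x=\infty$ at a root of~$p_{\rm e}$), not a standard connection formula between distinct singular points; and rigidity of the projective monodromy does not by itself transfer non-vanishing of a connection coefficient from one $(m',n')$ to another, since such coefficients depend on the actual parameters.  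Fortunately, as your own final paragraph recognizes, the degree of~$\Pi$ is determined by the asymptotics at~$\infty$, so none of this is needed.
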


On their indices $n,m$, the $r^{m}_{n}$ satisfy both differential recurrences
and three-term non-differential recurrences.  The former are given
in~\S\,\ref{sec:proofs} (see Theorem~\ref{thm:diffrec}), and the latter are
as follows.

\begin{theorem}
\label{thm:recs}
  The octahedral functions\/ $r^{m}_{n}=r^{m}_{n}(u)$, indexed by\/
  $(n,m)\in\mathbb{Z}^2$, satisfy second-order\/ {\rm(}i.e.,
  three-term{\rm)} recurrences on\/ ${m}$ and\/ ${n}$, namely
  \begin{align*}
    &(4{m}-3)(4{m}+1)\,r^{{m}+1}_{n} - (4{m}-3)(4{m}+1)\,
    p_{\rm e}(u)
\, r^{m}_{n} \\
    &\qquad\qquad\qquad\qquad\qquad {}-3(12{m}-12{n}-7)(12{m}+12{n}+1)\, 
    p_{\rm v}(u)
\,r^{{m}-1}_{n}
    = 0,\\
    &(12{n}-12{m}+7)\,r^{m}_{{n}+1} - 8(3{n}+1)\,
    p_{\rm e}(u)
\,r^{m}_{n} \\
    &\qquad\qquad\qquad\qquad\qquad {}+(12{n}+12{m}+1)\,
    p_{\rm f}^3(u)
\,r^{m}_{{n}-1} = 0,
  \end{align*}
  where\/ $p_{\rm v},p_{\rm e},p_{\rm f}$ are the polynomials in\/~$u$,
  satisfying\/ $p_{\rm e}^2 - p_{\rm f}^3 + 108\,p_{\rm v} = 0$, that were introduced
  in Definition\/~{\rm\ref{def:rdef}}.  Moreover, they satisfy
  \begin{align*}
    &3(4{m}-3)(4{m}+1)\,r^{{m}+1}_{{n}+1} - (4{m}-3)\left[(12{m}+12{n}+7)p_{\rm f}^3(u) -
      4(3{n}+1)p_{\rm e}^2(u)\right]r^{m}_{n} \\
    &\qquad\qquad\qquad\qquad{}+
    9(12{m}+12{n}+1)(12{m}+12{n}-11)\,p_{\rm v}(u)\,p_{\rm f}^3(u)\, r^{{m}-1}_{{n}-1} = 0,\\
    &3(4{m}-3)(4{m}+1)\,p_{\rm f}^3(u)\,r^{{m}+1}_{{n}-1} - (4{m}-3)\left[(12{m}-12{n}-1)p_{\rm f}^3(u) +
      4(3{n}+1)p_{\rm e}^2(u)\right]r^{m}_{n} \\
    &\qquad\qquad\qquad\qquad{}+
    9(12{m}-12{n}-7)(12{m}-12{n}-19)\,p_{\rm v}(u)\, r^{{m}-1}_{{n}+1} = 0,
  \end{align*}
  which are second-order `diagonal' recurrences.
\end{theorem}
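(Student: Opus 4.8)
The plan is to recognize each of the four identities in Theorem~\ref{thm:recs} as a classical contiguous relation for the Gauss function \({}_2F_1\), transported back through the substitution \(x=R(u)\) of Definition~\ref{def:rdef}. Write \(r^m_n(u)=p_{\rm e}(u)^{1/12+m+n}\,{}_2F_1\bigl(a_0,b_0;c_0\bigm|R(u)\bigr)\) with \(a_0=-\tfrac1{24}-\tfrac{m+n}2\), \(b_0=\tfrac{11}{24}-\tfrac{m+n}2\), \(c_0=\tfrac34-m\); thus \(a_0,b_0\) depend only on \(m+n\) and \(c_0\) only on \(m\), while \(b_0-a_0=\tfrac12\) is constant and \(a_0+b_0-c_0=-n-\tfrac13\). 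One first records how \((a_0,b_0,c_0)\) moves under the four displacement types: \((m,n)\mapsto(m\pm1,n\mp1)\) shifts only \(c_0\), by \(\mp1\) (leaving \(m+n\), hence the prefactor, fixed); \((m,n)\mapsto(m\pm1,n\pm1)\) shifts the whole triple by \((\mp1,\mp1,\mp1)\) (changing \(m+n\) by \(\pm2\)); and \((m,n)\mapsto(m\pm1,n)\), resp.\ \((m,n)\mapsto(m,n\pm1)\), shifts \(a_0\) and \(b_0\) by the half-integer \(\mp\tfrac12\), with \(c_0\) shifting by \(\mp1\), resp.\ not at all. One also uses that \(x=R(u)\) obeys \(1-x=p_{\rm f}(u)^3/p_{\rm e}(u)^2\) and \(x=-108\,p_{\rm v}(u)/p_{\rm e}(u)^2\), with \(p_{\rm e}^2-p_{\rm f}^3+108\,p_{\rm v}=0\).

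The two `diagonal' recurrences are the direct ones, since they involve only integer parameter shifts. For the \((1,-1)\) diagonal, \(r^{m+1}_{n-1},r^m_n,r^{m-1}_{n+1}\) correspond to \({}_2F_1(a_0,b_0;c_0-1)\), \({}_2F_1(a_0,b_0;c_0)\), \({}_2F_1(a_0,b_0;c_0+1)\) at the common argument \(R(u)\) and with a common prefactor; substituting into the classical three-term contiguous relation in \(c\), clearing the denominator \(p_{\rm e}(u)^2\), and using \(p_{\rm e}^2-p_{\rm f}^3+108\,p_{\rm v}=0\) to rewrite the middle coefficient through \(p_{\rm e}^2\) and \(p_{\rm f}^3\) alone, one obtains the stated identity up to an overall constant rescaling; the coefficient check amounts to writing \(c_0(c_0-1)\), \((c_0-a_0)(c_0-b_0)\), \(a_0+b_0-c_0\) and \(2c_0-a_0-b_0-1\) as the claimed linear and quadratic expressions in \(m,n\) (for instance \(c_0(c_0-1)=(4m-3)(4m+1)/16\)). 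The \((1,1)\) diagonal is obtained the same way from the three-term relation linking \({}_2F_1\)'s whose parameter triples differ by \((\mp1,\mp1,\mp1)\) (equivalently, in the Euler-integral picture, the weight shift by \([(1-xt)/t]^{\pm1}\)); here the three prefactors differ by \(p_{\rm e}(u)^{\pm2}\), and after clearing these and using \(p_{\rm e}^2=p_{\rm f}^3-108\,p_{\rm v}\) together with \(1-R=p_{\rm f}^3/p_{\rm e}^2\) and \(R=-108\,p_{\rm v}/p_{\rm e}^2\), the outer coefficients (proportional to \(1\) and to \(x(1-x)\)) become the stated \(p_{\rm f}^3\)-free constant and the \(p_{\rm v}\,p_{\rm f}^3\) term.

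For the order recurrence (on \(m\)) and the degree recurrence (on \(n\)) the shifts of \(a_0,b_0\) are half-integral, so Gauss's contiguous relations do not apply directly; I see two routes. The first uses that \(b_0-a_0=\tfrac12\) is fixed: the quadratic transformation of \({}_2F_1\) attached to an exponent difference \(\tfrac12\) at \(x=\infty\) converts the half-integer \((a_0,b_0)\)-shift, together with the accompanying \(c_0\)-shift, into an integer shift of a transformed \({}_2F_1\) in a quadratically related variable, after which the ordinary contiguous relations produce a three-term relation that pulls back — using the chain rule for \(dx/du\) and the \(R\)-identities above — to the claimed recurrence. The second route, matching the organization of~\S\ref{sec:proofs}, starts from the first-order differential recurrences of Theorem~\ref{thm:diffrec}, which express \(r^{m\pm1}_n\) (resp.\ \(r^m_{n\pm1}\)) as \(A_\pm(u)\,\tfrac{d}{du}r^m_n+B_\pm(u)\,r^m_n\) with \(A_\pm,B_\pm\) rational in \(u\); eliminating \(\tfrac{d}{du}r^m_n\) between the \(+\) and \(-\) instances and clearing denominators yields a three-term relation that a finite computation identifies with the stated one. (The diagonal recurrences can also be recovered this way, by composing an \(m\)-shift with an \(n\)-shift and then using the second-order differential equation satisfied by \(r^m_n\) to reduce the result to first order before eliminating the derivative.) In all cases the identities are rational in \((m,n)\) as well as in \(u\), with polynomial coefficients in \(m,n\), so they extend from generic \((m,n)\) to all of \(\mathbb{Z}^2\) by continuation, the only care needed being at the isolated integer values where a denominator parameter of some \({}_2F_1\) is a non-positive integer, where a limit is taken as in Definition~\ref{def:rdef}.

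The conceptual content is thus light — every identity is a disguised classical contiguous relation — and the labour is bookkeeping. The main obstacle is, for the two non-diagonal recurrences, either locating the precise quadratic transformation and tracking how it permutes the three singular points and rescales the argument, or proving Theorem~\ref{thm:diffrec} first and then carrying out the derivative elimination; and, in all four cases, verifying that after substituting \(x=R(u)\), clearing the powers of \(p_{\rm e}\), and applying \(p_{\rm e}^2-p_{\rm f}^3+108\,p_{\rm v}=0\), the coefficients collapse to exactly the stated products of linear factors in \(m,n\) times the stated powers of \(p_{\rm v},p_{\rm e},p_{\rm f}\) — the long but routine computation in which all the structure resides.
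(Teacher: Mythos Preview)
Your proposal is correct. The paper's own argument is precisely your ``second route'': it first establishes the eight differential recurrences of Theorem~\ref{thm:diffrec} (the diagonal ones by changing variables in Jacobi's differential recurrences for~${}_2F_1$, the non-diagonal ones via the quadratic transformation~(\ref{eq:new2F1}), which is exactly the device you allude to for handling the half-integer parameter shifts), and then obtains all four three-term relations of Theorem~\ref{thm:recs} by eliminating~$d/du$ between the $\pm$ members of each pair.

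Your primary route for the two diagonal recurrences --- pulling back Gauss's non-differential contiguous relations directly along $x=R(u)$, without passing through differential recurrences --- is a legitimate shortcut the paper does not take. It works because the $(m,n)\mapsto(m\pm1,n\mp1)$ and $(m\pm1,n\pm1)$ shifts move $(a_0,b_0,c_0)$ by integer vectors, so a three-term Gauss relation with rational-in-$x$ coefficients is available at the ${}_2F_1$ level and transports cleanly through the prefactor and the substitutions $x=-108\,p_{\rm v}/p_{\rm e}^2$, $1-x=p_{\rm f}^3/p_{\rm e}^2$. The paper's uniform ``differential recurrences first, then eliminate'' approach has the advantage of treating all four ladders identically and of yielding Theorem~\ref{thm:diffrec} as a by-product; your approach saves a step for the diagonals but still needs Theorem~\ref{thm:diffrec} (or the quadratic transformation) for the $m$- and $n$-recurrences, as you correctly note.
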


From the first two recurrences in this theorem, one can compute $r^{m}_{n}$ for
any $n,m\in\mathbb{Z}$, if one begins with $r^{0}_{0},r^{1}_{0},r^{0}_{1}$, which are
low-degree polynomials in~$u$ computable `by hand.'  In fact,
\begin{equation}
\label{eq:byhand1}
\begin{gathered}
  r^{0}_{0}(u)=1,\qquad r^{1}_{0}(u)=1-26u-39u^2, \qquad
r^{0}_{1}(u) = 1-39u - \tfrac{195}7 u^2 + \tfrac{13}7 u^3,\\
r^{1}_{1}(u) = 1+175u - 150u^2 + 3550u^3 + 325u^4 + 195u^5.
\end{gathered}
\end{equation}
By specializing to $u=1$ (at which $p_{\rm v},p_{\rm e},p_{\rm f}$
equal $0,-64,36$), one can prove by induction that
$r_n^m(1)=(-64)^{m+n}$ if $m\ge0$.  Examples of octahedral functions
that are not polynomials because they have at least one negative
index, illustrating Theorem~\ref{thm:nonpoly}, are
\begin{align}
\label{eq:byhand2}
  r^{-1}_{0}(u) &= (1-u)^{-3}\,(1+\tfrac17 u)  ,\nonumber\\
  r^{0}_{-1}(u) &= (1+14u+u^2)^{-2}\,(1-5 u) ,\\
  r^{-1}_{-1}(u) &= (1-u)^{-3}\,(1+14u+u^2)^{-2}\,(1+2u-\tfrac1{11}u^2).
  \nonumber
\end{align}
These also follow from the recurrences of Theorem~\ref{thm:recs}.

The recurrences are non-classical, not least because they are bilateral:
they extend to $n,m<0$.  It is shown in~\S\,\ref{sec:proofs} that for
$n,m\ge0$, the degree-$(3{n}+\nobreak2{m})$ polynomials $r^{m}_{n}$ in~$u$ are
(generalized) Heun polynomials, rather than hypergeometric ones; they are
not orthogonal polynomials in the conventional sense.  A~useful third-order
(i.e., four-term) recurrence on~$k$ for the coefficients
$\{a_k\}_{k=0}^{3{n}+2{m}}$ of~$r^{m}_{n}$ is given
in Theorem~\ref{thm:genHeunrep}.

An important degenerate case is worth noting: the case ${n}=0$.  For any
${m}\ge0$, there are hypergeometric representations in~${}_2F_1$ for the
degree-$2{m}$ octahedral polynomials $r^{m}_{0}$ and~$\overline r^{m}_{0}$, namely
\begin{displaymath}
r^{m}_{0}(u) = {}_2F_1\left({{-2{m},\,-\frac14-3{m}}\atop{\frac34-{m}}} \Biggm|
u\right),\qquad 
\overline r^{m}_{0}(u) = {}_2F_1\left({{-2{m},\,\frac14-{m}}\atop{\frac54+{m}}} \Biggm| u\right).
\end{displaymath}
These follow by a sextic hypergeometric transformation of the
${}_2F_1$'s in Definition~1, as well as by the methods
of~\S\,\ref{sec:proofs}.  The first can also be deduced from the
${n}=0$ case of the recurrence on~${m}$ in Theorem~\ref{thm:recs}.
These representations extend to ${m}\in\mathbb{Z}$.

\subsection{Explicit Formulas}
\label{subsec:formulas}

The following two theorems (Theorems \ref{thm:oct1} and~\ref{thm:oct2})
give trigonometrically parametrized formulas for the Legendre/Ferrers
functions $P_\nu^\mu,{\rm P}_{\nu}^\mu$ when $(\nu,\mu)$ equals
$(-\frac16,\nobreak\frac14)+\allowbreak ({n},{m})$ and
$(-\frac16,\nobreak-\frac14)+\allowbreak ({n},-{m})$, with
$({n},{m})\in\mathbb{Z}^2$.  The triple of exponent differences
$(\mu,\mu,2\nu+\nobreak1)$ is respectively equal to
$(\frac14,\frac14,\frac23)+\allowbreak({m},{m},2{n})$ and
$(-\frac14,-\frac14,\frac23)+\allowbreak(-{m},-{m},2{n})$, both lying in
Schwarz's octahedral class~V\null.  An interesting application of these
formulas to the evaluation of certain Mehler--Dirichlet integrals appears
in Theorem~\ref{thm:mehler}.

Let hyperbolic-trigonometric functions $A_\pm$, positive on~$(0,\infty)$,
be defined by
\begin{align}
\label{eq:Adef}
A_\pm=A_\pm(\xi) &\defeq \pm\cosh(\xi/3) + \sqrt{\frac{\sinh\xi}{3\sinh(\xi/3)}}\\
&\hphantom{:}=\pm\cosh(\xi/3) + \sqrt{\frac{4\cosh^2(\xi/3)-1}3},\nonumber
\end{align}
so that $A_+A_-(\xi)=\frac13\sinh^2(\xi/3)$.  This choice is motivated by
Definition~\ref{def:rdef}: if $R=R(u)$ and $T=T(u)=-12u/(1+u)^2$ are
alternatively parametrized as $\tanh^2\xi$ and $\tanh^2(\xi/3)$,
respectively, it is not difficult to verify that the three polynomials
in~$u$ that appear in Definition~\ref{def:rdef} will have the
$\xi$\nobreakdash-parametrizations
\begin{subequations}
\begin{alignat}{2}
p_{\rm v} &= u(1-u)^4 &&= -\,\tfrac{16}{27}\,A_+^{-6}\sinh^2\xi,\label{eq:parampolyvert}\\
p_{\rm e} &= (1+u)(1-34u+u^2) &&= 8\,A_+^{-3}\cosh\xi,\label{eq:parampolyedgemidpt}\\
p_{\rm f} &= 1+14u+u^2 &&= 4\,A_+^{-2}.\label{eq:parampolyfacecenter}
\end{alignat}
\end{subequations}
Moreover, and more fundamentally, $u=-A_-/A_+$.

Also in this section, let $\hat r^{m}_{n}$ signify $r^{m}_{n}/d^{m}_{n}$; so that when
$n,m\ge0$, $\hat r^{m}_{n}$~is a scaled version of the octahedral
polynomial~$r^{m}_{n}$, with its leading rather than its trailing coefficient
equal to unity.  Equivalently, $\hat r^{m}_{n}(u) = u^{3{n}+2{m}}\,\overline
r^{m}_{n}(1/u)$.

\begin{theorem}
\label{thm:oct1}
The formulas
  \begin{align*}
&{P}_{-\frac16+{n}}^{\frac14+{m}}(\cosh\xi) =\left[2^{-2{m}-3{n}}\,\Gamma(\tfrac34-{m})^{-1}\right]\\
&\qquad\qquad\qquad\qquad\qquad{}\times(\sinh\xi)^{-1/4-{m}} A_+^{1/4+3{m}+3{n}}\, r^{m}_{n}(-A_-/A_+),\\
&{P}_{-\frac16+{n}}^{-\frac14-{m}}(\cosh\xi)=\left[(-)^{n}2^{-2{m}-3{n}}\,3^{3/4+3{m}}\,\Gamma(\tfrac54+{m})^{-1}\right]\\
&\qquad\qquad\qquad\qquad\qquad{}\times(\sinh\xi)^{-1/4-{m}} A_-^{1/4+3{m}+3{n}}\, \hat r^{m}_{n}(-A_+/A_-)
\end{align*}
hold for\/ $(n,m)\in\mathbb{Z}^2$ and\/ $\xi\in(0,\infty)$.\hfil\break
{\rm[Note that as $\xi$ increases from $0$ to~$\infty$, the argument
    $u=-A_-/A_+$ of the first~$r^{m}_{n}$, which satisfies
    $T(u)=\tanh^2(\xi/3)$ and $R(u)=\tanh^2\xi$, decreases from $0$ to
    $-(2-\nobreak\sqrt3)^2\allowbreak \approx-0.07$, which is a root
    of $p_{\rm f}(u)=1+\nobreak 14u+\nobreak u^2$.]}
\end{theorem}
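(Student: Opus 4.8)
The plan is to derive the two formulas in Theorem~\ref{thm:oct1} by combining the hypergeometric representation~(\ref{eq:altPrep}) of~$P_\nu^\mu$ with the defining identity for~$r^m_n$ in Definition~\ref{def:rdef}, using the trigonometric parametrization $R=\tanh^2\xi$, $T=\tanh^2(\xi/3)$. First I would specialize~(\ref{eq:altPrep}) to $(\nu,\mu)=(-\frac16+n,\frac14+m)$; the numerator parameters become $-\frac\nu2-\frac\mu2=-\frac1{24}-\frac m2-\frac n2$ and the same $+\frac12$, and the denominator parameter $1-\mu=\frac34-m$, which match exactly the first ${}_2F_1$ in Definition~\ref{def:rdef}. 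So if I can identify the hypergeometric argument $1-z^{-2}$ in~(\ref{eq:altPrep}) with $R(u)$ under the substitution $z=\cosh\xi$, $u=-A_-/A_+$, the definition will supply $r^m_n(u)$ up to an explicit algebraic prefactor. The computation $1-\cosh^{-2}\xi=\tanh^2\xi=R$ is immediate, so the main work is bookkeeping of the prefactors: the factor $z^{\nu+\mu}/(z^2-1)^{\mu/2}=(\cosh\xi)^{\frac1{12}+m+n-\frac1{24}}\cdots$ from~(\ref{eq:altPrep}), and the factor $[(1+u)(1-34u+u^2)]^{-1/12-m-n}=p_{\rm e}(u)^{-1/12-m-n}$ from Definition~\ref{def:rdef}; these must be reconciled using the $\xi$-parametrizations~(\ref{eq:parampolyvert})--(\ref{eq:parampolyfacecenter}), in particular $p_{\rm e}=8A_+^{-3}\cosh\xi$ and $p_{\rm v}=u(1-u)^4=-\frac{16}{27}A_+^{-6}\sinh^2\xi$. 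Since $z^2-1=\sinh^2\xi$ and $\sinh^2\xi=-\frac{27}{16}A_+^6 p_{\rm v}=-\frac{27}{16}A_+^6 u(1-u)^4$, and $u=-A_-/A_+$ gives $1-u=(A_++A_-)/A_+=2\cosh(\xi/3)/A_+$ (using $A_++A_-=2\cosh(\xi/3)$), every factor collapses to a power of $A_+$ times a power of $\sinh\xi$, and collecting them should produce precisely $2^{-2m-3n}\Gamma(\frac34-m)^{-1}(\sinh\xi)^{-1/4-m}A_+^{1/4+3m+3n}$.

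For the second formula, with order $\mu=-\frac14-m$, I would proceed in parallel, but here one must choose the right hypergeometric representation. Using~(\ref{eq:altPrep}) with $\mu\mapsto-\frac14-m$ gives denominator parameter $1-\mu=\frac54+m$ and numerator parameters $-\frac\nu2-\frac\mu2=\frac5{24}+\frac m2-\frac n2$ and that plus $\frac12$, which now match the \emph{conjugate} ${}_2F_1$ in Definition~\ref{def:rdef}, the one producing $\overline r^m_n$. So the second Legendre function is naturally expressed via $\overline r^m_n$, and then the relation $\overline r^m_n(u)=(d^m_n)^{-1}u^{3n+2m}r^m_n(1/u)=\hat r^m_n(1/u)$ from Theorem~\ref{thm:1}(iii), together with $\hat r^m_n(u)=u^{3n+2m}\overline r^m_n(1/u)$, lets me rewrite the answer in terms of $\hat r^m_n(-A_+/A_-)$ as claimed. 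The extra prefactor $(1-u)^{-1-4m}$ appearing on the right side of the second line of Definition~\ref{def:rdef} must be tracked; using $1-u=2\cosh(\xi/3)/A_+$ and $u=-A_-/A_+$ (so $1/u=-A_+/A_-$), and the identity $A_+A_-=\frac13\sinh^2(\xi/3)$, these combine with the prefactor from~(\ref{eq:altPrep}) to give the stated $(-)^n 2^{-2m-3n}3^{3/4+3m}\Gamma(\frac54+m)^{-1}(\sinh\xi)^{-1/4-m}A_-^{1/4+3m+3n}$. The sign $(-)^n$ and the power of~$3$ will emerge from the $3$ and the $(-1)$ hidden in $p_{\rm v}=-\frac{16}{27}A_+^{-6}\sinh^2\xi$ and in $A_+A_-=\frac13\sinh^2(\xi/3)$, raised to fractional powers; I would need to check branch choices carefully, which is where sign errors are most likely to creep in.

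Two points require care beyond routine algebra. First, the validity of Definition~\ref{def:rdef} is asserted only ``on a neighborhood of $u=0$'', corresponding via $u=-A_-/A_+$ to a neighborhood of $\xi=0$; but the theorem claims the formula for all $\xi\in(0,\infty)$. Since both sides are real-analytic in~$\xi$ on $(0,\infty)$ — the left side because $P_\nu^\mu(\cosh\xi)$ is analytic for $\cosh\xi>1$, the right side because $r^m_n$ is rational with poles only at the roots of $p_{\rm e}$ and $p_{\rm f}$, none of which is met for $u\in(-(2-\sqrt3)^2,0]$ as the bracketed remark notes — the identity extends by analytic continuation from the neighborhood of $\xi=0$ to all of $(0,\infty)$. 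I would state this continuation argument explicitly. Second, I must pin down which Frobenius solution at $z=1$ the right-hand side represents: as $\xi\to0^+$, $\sinh\xi\sim\xi$ and $A_\pm\to\pm1+\frac2{\sqrt3}$ wait — actually $A_+(0)=1+1/\sqrt3\cdot\sqrt3$? one checks $4\cosh^2(0)-1=3$ so $A_\pm(0)=\pm1+1$, i.e.\ $A_+(0)=2$, $A_-(0)=0$; then $u=-A_-/A_+\to0$ and $r^m_n(0)=1$, so the right side behaves like $2^{-2m-3n}\Gamma(\frac34-m)^{-1}\xi^{-1/4-m}\cdot 2^{1/4+3m+3n}$, which, after writing $\xi\sim\sqrt{2(z-1)}$, must be checked against the normalization~(\ref{eq:Pasympt}) $P_\nu^\mu(z)\sim 2^{\mu/2}\Gamma(1-\mu)^{-1}(z-1)^{-\mu/2}$ with $\mu=\frac14+m$. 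Matching the exponent $-\mu/2=-\frac18-\frac m2$ against $-\frac14-m$ from $\xi^{-1/4-m}=(2(z-1))^{-1/8-m/2}$ confirms consistency, and matching the constant confirms we have selected the correct solution rather than a multiple of it. The single genuine obstacle, then, is the disciplined tracking of fractional powers of $A_+$, $A_-$, $\sinh\xi$, $\sinh(\xi/3)$, and the constants $3$ and $(-1)$ through the substitution — a finite but error-prone computation that I would organize by first rewriting everything in terms of $A_+$ and $A_-$ alone (eliminating $\sinh\xi$ via $\sinh^2\xi=3A_+A_-(A_+-A_-)^2$, which follows from $\sinh\xi=\sinh(\xi/3)(4\cosh^2(\xi/3)-1)$ and $A_+-A_-=2\sqrt{(4\cosh^2(\xi/3)-1)/3}$) before comparing exponents.
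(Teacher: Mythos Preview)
Your approach is essentially identical to the paper's: both specialize the representation~(\ref{eq:altPrep}) at $z=\cosh\xi$ so that the hypergeometric argument becomes $\tanh^2\xi=R(u)$, then invoke Definition~\ref{def:rdef} (using the $r^m_n$ identity for the first formula and the $\overline r^m_n$ identity, converted via~$d^m_n$, for the second), with the $p_{\rm e}$-parametrization~(\ref{eq:parampolyedgemidpt}) handling the prefactor bookkeeping. Your added discussion of analytic continuation in~$\xi$ and the $\xi\to0^+$ normalization check is more explicit than the paper's terse proof; one small slip to fix is that by Theorem~\ref{thm:nonpoly} the possible poles of~$r^m_n$ lie at roots of $(1-u)$ and~$p_{\rm f}$, not~$p_{\rm e}$, though this does not affect your continuation argument on the interval $(-(2-\sqrt3)^2,0]$.
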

\begin{proof}
  These formulas follow from the hypergeometric
  representation~(\ref{eq:altPrep}) of $P_\nu^\mu=P_\nu^\mu(z)$, together
  with the implicit definitions of $r^{m}_{n},\overline r^{m}_{n}$ (see
  Definition~\ref{def:rdef}).

  If $z=\cosh\xi$, the argument $1-1/z^2$ of the right-hand ${}_2F_1$
  in~(\ref{eq:altPrep}) will equal $\tanh^2\xi$.  This is why it is natural
  to parametrize Definition~\ref{def:rdef} by letting $R=R(u)$ equal
  $\tanh^2\xi$, with the just-described consequences.  In deriving the
  formulas, one needs the representation~(\ref{eq:parampolyedgemidpt}); and
  for the second formula, the definition~(\ref{eq:ddef}) of~$d^{m}_{n}$.\qed
\end{proof}

In the following, the circular-trigonometric functions $B_\pm$, positive
on~$(0,\pi)$, are defined by
\begin{align*}
B_\pm=B_\pm(\theta) &\defeq \cos(\theta/3) \pm \sqrt{\frac{\sin\theta}{3\sin(\theta/3)}}\\
&\hphantom{:}=\cos(\theta/3) \pm \sqrt{\frac{4\cos^2(\theta/3)-1}3},
\end{align*}
so that $B_+B_-(\theta)=\frac13\sin^2(\theta/3)$.

\begin{theorem}
\label{thm:oct2}
The formulas
  \begin{align*}
&{\rm P}_{-\frac16+{n}}^{\frac14+{m}}(\cos\theta) =\left[2^{-2{m}-3{n}}\,\Gamma(\tfrac34-{m})^{-1}\right]\\
&\qquad\qquad\qquad\qquad\qquad{}\times(\sin\theta)^{-1/4-{m}} B_+^{1/4+3{m}+3{n}}\, r^{m}_{n}(B_-/B_+),\\
&{\rm P}_{-\frac16+{n}}^{-\frac14-{m}}(\cos\theta)=\left[2^{-2{m}-3{n}}\,3^{3/4+3{m}}\,\Gamma(\tfrac54+{m})^{-1}\right]\\
&\qquad\qquad\qquad\qquad\qquad{}\times(\sin\theta)^{-1/4-{m}} B_-^{1/4+3{m}+3{n}}\, \hat r^{m}_{n}(B_+/B_-)
\end{align*}
hold for\/ $(n,m)\in\mathbb{Z}^2$ and\/
$\theta\in(0,\pi)$.\hfil\break{\rm[Note that as $\theta$ increases
    from $0$ to~$\pi$, the argument $u=B_-/B_+$ of the
    first~$r^{m}_{n}$, which satisfies $T(u)=-\tan^2(\theta/3)$ and
    $R(u)=-\tan^2\theta$, increases from $0$ to~$1$.]}
\end{theorem}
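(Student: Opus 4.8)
The plan is to obtain the Ferrers formulas from the Legendre formulas of Theorem~\ref{thm:oct1} by analytic continuation, rather than by re-running the hypergeometric argument from scratch. Recall from Section~\ref{sec:prelims} that ${\rm P}_\nu^\mu$ and $P_\nu^\mu$ agree up to the phase factor ${\rm e}^{\pm\mu\pi{\rm i}/2}$ on the upper/lower half-planes, and that passing from $P_\nu^\mu$ to ${\rm P}_\nu^\mu$ typically amounts to replacing a factor $(z-1)^{-\mu/2}$ by $(1-z)^{-\mu/2}$. Concretely, I would set $z=\cos\theta$ with $\theta\in(0,\pi)$ and $z=\cosh\xi$ with $\xi\in(0,\infty)$, and track what happens to every $\xi$-dependent quantity in Theorem~\ref{thm:oct1} under the substitution $\xi\mapsto{\rm i}\theta$ (equivalently $\cosh\xi\mapsto\cos\theta$). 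Under this substitution $\sinh\xi\mapsto{\rm i}\sin\theta$, $\cosh(\xi/3)\mapsto\cos(\theta/3)$, and $\sinh(\xi/3)\mapsto{\rm i}\sin(\theta/3)$, so that the hyperbolic $A_\pm(\xi)=\cosh(\xi/3)\pm\sqrt{(4\cosh^2(\xi/3)-1)/3}$ go over to the circular $B_\pm(\theta)=\cos(\theta/3)\pm\sqrt{(4\cos^2(\theta/3)-1)/3}$ on the whole interval $\theta\in(0,\pi)$ (note $4\cos^2(\theta/3)-1>0$ there since $\theta/3<\pi/3$, so $B_\pm$ are real and positive, matching the stated positivity). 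In particular $-A_-/A_+\mapsto -B_-/B_+$, but the bracketed note in the theorem asserts the argument of the first $r^m_n$ is $+B_-/B_+$; this sign discrepancy is exactly the kind of detail the continuation must resolve, and it is handled by carrying the phase of $\sqrt{\sin\theta/3\sin(\theta/3)}$ correctly (see below).

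The key steps, in order, are as follows. First, I would verify the parametric identities $u=-A_-/A_+$ and the three polynomial parametrizations (\ref{eq:parampolyvert})--(\ref{eq:parampolyfacecenter}) continue to their $B_\pm$ analogues, i.e.\ that with $u$ now defined as the appropriate ratio of $B_\pm$'s one has $T(u)=-12u/(1+u)^2=-\tan^2(\theta/3)$ and $R(u)=-\tan^2\theta$, using the trigonometric triple-angle form (\ref{eq:tripleangle}) of Definition~\ref{def:rdef} with the identification $R=-\tan^2\theta$, $T=-\tan^2(\theta/3)$. This makes the argument $1-1/z^2=1-\sec^2\theta=-\tan^2\theta$ of the right-hand ${}_2F_1$ in (\ref{eq:altPrep}) match $R(u)$, exactly as in the proof of Theorem~\ref{thm:oct1} but with $z=\cos\theta$. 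Second, I would apply (\ref{eq:altPrep}) directly with $z=\cos\theta$: the prefactor $2^\mu z^{\nu+\mu}/(z^2-1)^{\mu/2}$ becomes (using $z^2-1=-\sin^2\theta$, and the Ferrers convention that replaces $(z-1)^{-\mu/2}$-type factors by $(1-z)^{-\mu/2}$-type ones, i.e.\ $(z^2-1)^{-\mu/2}\to(1-z^2)^{-\mu/2}=(\sin\theta)^{-\mu}$ for $\theta\in(0,\pi)$) a clean power of $\sin\theta$ times a power of $\cos\theta$; the latter is absorbed into powers of $B_\pm$ via $\cos\theta=$ an explicit rational function of $B_-/B_+$ coming from $z=\cosh\xi\mapsto\cos\theta$. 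Third, I would invoke the implicit Definition~\ref{def:rdef} of $r^m_n$ and $\overline r^m_n$ — which is an identity between formal/analytic objects in $u$ and is insensitive to whether $u$ arose from a hyperbolic or circular parametrization — to replace the ${}_2F_1$'s by the rational functions $r^m_n(u)$ and $\overline r^m_n(u)$ times the stated algebraic prefactors, exactly reusing (\ref{eq:parampolyedgemidpt}) for $p_{\rm e}$ and, for the second formula, the definition (\ref{eq:ddef}) of $d^m_n$ together with the relation $\hat r^m_n(u)=u^{3n+2m}\overline r^m_n(1/u)$. Fourth, I would assemble the constants: the Ferrers formula carries the factor ${\rm e}^{\pm\mu\pi{\rm i}/2}$ relative to the Legendre one, and combined with the $(z^2-1)^{-\mu/2}\to(1-z^2)^{-\mu/2}$ replacement and the continuation of $(\sinh\xi)^{-1/4-m}=({\rm i}\sin\theta)^{-1/4-m}$, these phases must conspire to cancel, leaving the real, phase-free prefactors $2^{-2m-3n}\Gamma(3/4-m)^{-1}$ and $2^{-2m-3n}3^{3/4+3m}\Gamma(5/4+m)^{-1}$ exactly as written — in particular the factor $(-)^n$ present in the second Legendre formula of Theorem~\ref{thm:oct1} is absent here. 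Finally, I would confirm the endpoint note: as $\theta\uparrow\pi$, $\cos(\theta/3)\to\cos(\pi/3)=1/2$, so $4\cos^2(\theta/3)-1\to0$, giving $B_+\to1/2$, $B_-\to1/2$, hence $u=B_-/B_+\to1$ and $R(u)\to-\tan^2\pi=0$... which is consistent since $R(1)=0$ from $p_{\rm v}(1)=0$; and as $\theta\downarrow0$, $u\to0$. (One checks $T(1)=-12/4=-3=-\tan^2(\pi/3)$ and $R(1)=T(1)(3+T(1))^2/(1+3T(1))^2=(-3)(0)^2/(-8)^2=0$, matching.)

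The main obstacle is bookkeeping of branches and phases. The functions $(z^2-1)^{\mu/2}$, $(\sinh\xi)^{-1/4-m}$, $A_\pm^{1/4+3m+3n}$ all carry fractional powers, and the half-integer shifts $m,n$ interact with the quarter-power $1/4$ so that the branch of each radical must be pinned down on the correct cut plane before continuing $\xi\mapsto{\rm i}\theta$. The cleanest way to manage this is to do the continuation on a fixed ray, e.g.\ start from real $z>1$ (where $P_\nu^\mu$ is unambiguous and Theorem~\ref{thm:oct1} holds verbatim), rotate $z$ through the upper half-plane to $z=\cos\theta\in(-1,1)$, use (\ref{eq:lincomb2})'s upper-sign relation ${\rm P}_\nu^\mu={\rm e}^{\mu\pi{\rm i}/2}P_\nu^\mu$ valid in ${\rm Im}\,z>0$, and then verify that all the ${\rm i}$'s introduced by $\sqrt{z^2-1}$, $(\sin\theta)^{\pm1/4\mp\cdots}$, etc., recombine into the asserted real formula — the positivity of $\sin\theta$, $B_+$, $B_-$ on $(0,\pi)$ being precisely what guarantees the final expression is real and the one displayed. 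Everything else — the algebraic identities among $p_{\rm v},p_{\rm e},p_{\rm f}$, the triple-angle identity, the use of Definition~\ref{def:rdef} and (\ref{eq:ddef}) — is a verbatim replay of the proof of Theorem~\ref{thm:oct1} with $\xi$ replaced by ${\rm i}\theta$, and needs no new ideas.
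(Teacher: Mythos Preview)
Your approach is correct and matches the paper's own proof, which simply reads ``By analytic continuation of Theorem~\ref{thm:oct1}; or in effect, by letting $\xi={\rm i}\theta$.'' One small correction to your bookkeeping: under $\xi\mapsto{\rm i}\theta$ the definition $A_-(\xi)=-\cosh(\xi/3)+\sqrt{(4\cosh^2(\xi/3)-1)/3}$ continues to $-\cos(\theta/3)+\sqrt{(4\cos^2(\theta/3)-1)/3}=-B_-(\theta)$ (note the leading minus sign in $A_-$), so $-A_-/A_+\mapsto +B_-/B_+$ directly and there is no sign discrepancy to resolve.
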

\begin{proof}
  By analytic continuation of Theorem~\ref{thm:oct1}; or in effect, by
  letting $\xi={\rm i}\theta$.\qed
\end{proof}

Because $P_\nu^\mu=P_{-\nu-1}^\mu$ and ${\rm P}_\nu^\mu={\rm
  P}_{-\nu-1}^\mu$, Theorems \ref{thm:oct1} and~\ref{thm:oct2} also supply
formulas for $P_{-\frac56-{n}}^{\frac14+{m}}, P_{-\frac56-{n}}^{-\frac14-{m}}$ and
${\rm P}_{-\frac56-{n}}^{\frac14+{m}}, {\rm P}_{-\frac56-{n}}^{-\frac14-{m}}$.  By
exploiting the $\hat Q\to P$ and ${\rm Q}\to{\rm P}$ reductions
(\ref{eq:PtoQ}) and~(\ref{eq:rmPtormQ}), one easily obtains additional
formulas, for $\hat Q_{-\frac16+{n}}^{\frac14+{m}}, \hat
Q_{-\frac16+{n}}^{-\frac14-{m}},\allowbreak \hat{Q}_{-\frac56-{n}}^{\frac14+{m}},
\hat Q_{-\frac56-{n}}^{-\frac14-{m}}$ and ${\rm Q}_{-\frac16+{n}}^{\frac14+{m}},
     {\rm Q}_{-\frac16+{n}}^{-\frac14-{m}},\allowbreak {\rm
       Q}_{-\frac56-{n}}^{\frac14+{m}}, {\rm Q}_{-\frac56-{n}}^{-\frac14-{m}}$.

\smallskip
Theorems \ref{thm:oct1} and~\ref{thm:oct2} permit certain
Mehler--Dirichlet integrals to be evaluated in closed form.  For
example, consider \cite[3.7(8) and 3.7(27)]{Erdelyi53}
\begin{align}
  P_\nu^\mu(\cosh\xi) &=
  \sqrt{\frac2{\pi}}\:
  \frac{(\sinh\xi)^\mu}{\Gamma(\frac12-\mu)}\,
  \int_0^\xi \frac{\cosh[(\nu+\frac12)t]}{(\cosh\xi - \cosh t)^{\mu+\frac12}}\,{\rm d}t,
  \label{eq:md1}
  \\
  {\rm P}_\nu^\mu(\cos\theta) &=
  \sqrt{\frac2{\pi}}\:
  \frac{(\sin\theta)^\mu}{\Gamma(\frac12-\mu)}\,
  \int_0^\theta \frac{\cos[(\nu+\frac12)\phi]}{(\cos \phi - \cos \theta)^{\mu+\frac12}}\,{\rm d}\phi,
  \label{eq:md2}
\end{align}
which hold when ${\rm Re}\,\mu<\frac12$ for $\xi\in(0,\infty)$ and
$\theta\in(0,\pi)$.  These integral representations of the Legendre
and Ferrers functions of the first kind are classical~\cite{Hobson31,MacRobert32}.
\begin{theorem}
  \label{thm:mehler}
  The formulas
  \begin{align*}
    \int_0^\xi \frac{\cosh[(\frac13+n)t]}{{(\cosh\xi-\cosh t)}^{\frac14-m}}\,{\rm d}t
    &= (-1)^n K_n^m \, A_-^{1/4 + 3m + 3n} \,\hat r_n^m(-A_+/A_-),\\
    \int_0^\theta \frac{\cos[(\frac13+n)\phi]}{{(\cos \phi-\cos \theta)}^{\frac14-m}}\,{\rm d}\phi
    &=  K_n^m \, B_-^{1/4 + 3m + 3n} \,\hat r_n^m(B_+/B_-),
  \end{align*}
  with
  \begin{displaymath}
    K_n^m = \sqrt{\frac{\pi}2}\:2^{-2m-3n}\, 3^{3/4+3m}\frac{\Gamma(\frac34+m)}{\Gamma(\frac54+m)},
  \end{displaymath}
  hold when $m$ is a non-negative integer and $n$ an integer, for
  $\xi\in(0,\infty)$ and $\theta\in(0,\pi)$.
\end{theorem}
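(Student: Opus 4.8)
The plan is to recognize the two integrals as nothing but the classical Mehler--Dirichlet representations (\ref{eq:md1}) and~(\ref{eq:md2}) of the Legendre and Ferrers functions of the first kind, specialized to the octahedral degree and the \emph{negative} octahedral order, namely to $\nu=-\frac16+n$ and $\mu=-\frac14-m$, and then to substitute for $P_{-\frac16+n}^{-\frac14-m}(\cosh\xi)$ and ${\rm P}_{-\frac16+n}^{-\frac14-m}(\cos\theta)$ their closed forms from the second displayed formulas in Theorems~\ref{thm:oct1} and~\ref{thm:oct2}. With this choice of $(\nu,\mu)$ one has $\nu+\frac12=\frac13+n$ and $\mu+\frac12=\frac14-m$, so the integrals occurring in~(\ref{eq:md1}) and~(\ref{eq:md2}) coincide with those on the left-hand sides of the present theorem. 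The hypothesis that $m$ be a non-negative integer is precisely what yields ${\rm Re}\,\mu=-\frac14-m<\frac12$, the condition under which (\ref{eq:md1}) and~(\ref{eq:md2}) hold; and since then the exponent $\frac14-m$ of $\cosh\xi-\cosh t$ (resp.\ of $\cos\phi-\cos\theta$) is less than~$1$, the integral converges absolutely. No condition on the integer $n$ is needed, since Theorems~\ref{thm:oct1} and~\ref{thm:oct2} are valid for all $(n,m)\in\mathbb{Z}^2$.

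In the hyperbolic case, solving~(\ref{eq:md1}) for the integral gives
\[
  \int_0^\xi \frac{\cosh[(\tfrac13+n)t]}{(\cosh\xi-\cosh t)^{\frac14-m}}\,{\rm d}t
  = \sqrt{\tfrac{\pi}2}\;\Gamma(\tfrac34+m)\,(\sinh\xi)^{\frac14+m}\,P_{-\frac16+n}^{-\frac14-m}(\cosh\xi),
\]
into which I would insert the second formula of Theorem~\ref{thm:oct1}. The factor $(\sinh\xi)^{\frac14+m}$ then cancels the $(\sinh\xi)^{-1/4-m}$ appearing there, and the surviving numerical constant $\sqrt{\pi/2}\cdot(-1)^n\,2^{-2m-3n}\,3^{3/4+3m}\,\Gamma(\tfrac34+m)/\Gamma(\tfrac54+m)$ is exactly $(-1)^nK_n^m$; what remains is $(-1)^nK_n^m\,A_-^{1/4+3m+3n}\,\hat r_n^m(-A_+/A_-)$, which is the first asserted identity.

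The circular identity is then obtained by an entirely parallel computation, using~(\ref{eq:md2}) in place of~(\ref{eq:md1}) and the second formula of Theorem~\ref{thm:oct2} in place of that of Theorem~\ref{thm:oct1}; because Theorem~\ref{thm:oct2} carries no $(-1)^n$, no sign appears here, and the result is $K_n^m\,B_-^{1/4+3m+3n}\,\hat r_n^m(B_+/B_-)$. Alternatively, the circular identity follows from the hyperbolic one by the continuation $\xi={\rm i}\theta$, just as Theorem~\ref{thm:oct2} is deduced from Theorem~\ref{thm:oct1}.

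There is no genuine obstacle here: once the integrals are identified with Mehler--Dirichlet representations, the proof is bookkeeping of constants. The only points needing a little care are (i) checking that the order $\mu=-\frac14-m$ lies in the half-plane ${\rm Re}\,\mu<\frac12$ where (\ref{eq:md1}) and~(\ref{eq:md2}) are valid, which is the reason the statement restricts $m$ to non-negative integers, and (ii) matching the exponent $\frac14-m$ of $\cosh\xi-\cosh t$ (resp.\ of $\cos\phi-\cos\theta$) against the \emph{negative-order} formulas of Theorems~\ref{thm:oct1} and~\ref{thm:oct2}, i.e.\ the ones written in terms of $\hat r_n^m=r_n^m/d_n^m$ with argument $-A_+/A_-$ (resp.\ $B_+/B_-$), rather than against the positive-order formulas.
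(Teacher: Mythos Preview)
Your proposal is correct and follows exactly the paper's own proof: specialize the Mehler--Dirichlet representations~(\ref{eq:md1}) and~(\ref{eq:md2}) to $(\nu,\mu)=(-\tfrac16+n,\,-\tfrac14-m)$ and substitute the second formulas of Theorems~\ref{thm:oct1} and~\ref{thm:oct2}. Your constant-tracking and the observation that the restriction $m\ge0$ is what guarantees ${\rm Re}\,\mu<\tfrac12$ are both accurate; the paper's proof is a one-line version of what you wrote.
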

\begin{proof}
  Substitute the second formulas of Theorems \ref{thm:oct1}
  and~\ref{thm:oct2} into the $(\nu,\mu) = (-\frac16+\nobreak
  n,\allowbreak -\frac14-\nobreak m)$ specializations of
  (\ref{eq:md1}) and~(\ref{eq:md2}).\qed
\end{proof}

\section{Tetrahedral Formulas (Schwarz Class~II)}
\label{sec:tetra1}

The following theorem gives trigonometrically parametrized formulas for the
second Legendre function~$\hat Q_\nu^\mu$ when $(\nu,\mu) =\allowbreak
(-\frac34,\nobreak-\frac13)+\allowbreak (-{m},-{n})$ and
$(-\frac14,\nobreak-\frac13)+\allowbreak ({m},-{n})$, with
$(n,m)\in\mathbb{Z}^2$.  The triple of exponent differences,
$(\mu,\mu,2\nu+\nobreak1)$, is respectively equal to
$(-\frac13,-\frac13,-\frac12)+\allowbreak(-{n},-{n},-2{m})$ and
$(-\frac13,-\frac13,\frac12)+\allowbreak(-{n},-{n},2{m})$, both lying in
Schwarz's tetrahedral class~II\null.  The hyperbolic-trigonometric
functions $A_\pm=A_\pm(\xi)$ on~$(0,\infty)$ are defined as
in~(\ref{eq:Adef}).

\begin{theorem}
\label{thm:tetr1}
The formulas
\begin{align*}
&  (2/\pi)\hat Q_{-\frac34-{m}}^{-\frac13-{n}}(\coth\xi) = 
\left[
2^{11/4-2{m}-3{n}}\,3^{-3/8}\left(\tfrac14\right)_{m}\left(\tfrac{13}{12}\right)_{{m}+{n}}^{-1}\Gamma(\tfrac43)^{-1}
\right]\\
&\qquad\qquad\qquad{}\times (\sinh\xi)^{1/4-{m}}
\left[
-(-)^{n} \sqrt{\sqrt3+1}\:A_+^{1/4+3{m}+3{n}}\,r^{m}_{n}(-A_-/A_+)
\right],\\
&  (2/\pi)\hat Q_{-\frac14+{m}}^{-\frac13-{n}}(\coth\xi) = 
\left[
2^{11/4-2{m}-3{n}}\,3^{-3/8}\left(\tfrac14\right)_{m}\left(\tfrac{13}{12}\right)_{{m}+{n}}^{-1}\Gamma(\tfrac43)^{-1}
\right]\\
&\qquad\qquad\qquad{}\times (\sinh\xi)^{1/4-{m}}
\left[
+(-)^{m} \sqrt{\sqrt3-1}\:A_-^{1/4+3{m}+3{n}}\,r^{m}_{n}(-A_+/A_-)
\right]
\end{align*}
hold for\/ $(n,m)\in\mathbb{Z}^2$ and\/ $\xi\in(0,\infty)$.
\end{theorem}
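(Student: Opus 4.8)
The plan is to read both formulas off the quadratic hypergeometric representation~(\ref{eq:altQrep}) of $\hat Q_\nu^\mu=\hat Q_\nu^\mu(z)$, exactly as Theorems~\ref{thm:oct1} and~\ref{thm:oct2} were read off~(\ref{eq:altPrep}); the new observation is that for these tetrahedral class~II values of $(\nu,\mu)$ the Gauss function occurring in~(\ref{eq:altQrep}) --- after the quadratic transformation that produced~(\ref{eq:altQrep}) --- is one of the \emph{octahedral} class~IV functions of Definition~\ref{def:rdef}. Indeed, putting $z=\coth\xi$ makes the argument $1/z^2$ of that ${}_2F_1$ equal to $\tanh^2\xi$, and if one parametrizes Definition~\ref{def:rdef} by taking $R(u)=\tanh^2\xi$ --- equivalently $T(u)=\tanh^2(\xi/3)$, equivalently $u=-A_-/A_+$ with $A_\pm=A_\pm(\xi)$ as in~(\ref{eq:Adef}) --- then the two ${}_2F_1$'s of that Definition equal $p_{\rm e}(u)^{-1/12-m-n}\,r^m_n(u)$ and $p_{\rm e}(u)^{5/12+m-n}(1-u)^{-1-4m}\,\overline r^m_n(u)$, respectively, with $p_{\rm e}(u)$ carrying the parametrization~(\ref{eq:parampolyedgemidpt}). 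A one-line check of parameters shows that for $(\nu,\mu)=(-\tfrac34-m,-\tfrac13-n)$ the ${}_2F_1$ in~(\ref{eq:altQrep}) is the first of these (lower parameter $\nu+\tfrac32=\tfrac34-m$, upper parameters $-\tfrac1{24}-\tfrac m2-\tfrac n2$, $\tfrac{11}{24}-\tfrac m2-\tfrac n2$), and for $(\nu,\mu)=(-\tfrac14+m,-\tfrac13-n)$ it is the second (lower parameter $\nu+\tfrac32=\tfrac54+m$, upper parameters $\tfrac5{24}+\tfrac m2-\tfrac n2$, $\tfrac{17}{24}+\tfrac m2-\tfrac n2$).

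For the first family this already gives the answer after collecting exponents. One has $(z^2-1)^{\mu/2}=(\sinh\xi)^{-\mu}=(\sinh\xi)^{1/3+n}$ and $z^{-(\nu+\mu+1)}=(\coth\xi)^{1/12+m+n}=(\cosh\xi)^{1/12+m+n}(\sinh\xi)^{-1/12-m-n}$, while $p_{\rm e}(u)^{-1/12-m-n}=(8A_+^{-3}\cosh\xi)^{-1/12-m-n}$; the powers of $\cosh\xi$ cancel, leaving precisely $(\sinh\xi)^{1/4-m}A_+^{1/4+3m+3n}\,r^m_n(-A_-/A_+)$ times a constant. That constant is $\tfrac{\sqrt\pi}{2^{\nu+1}}\cdot\tfrac{\Gamma(\nu+\mu+1)}{\Gamma(\nu+3/2)}$ times a power of $2$, with $\Gamma(\nu+\mu+1)=\Gamma(-\tfrac1{12}-m-n)$ and $\Gamma(\nu+3/2)=\Gamma(\tfrac34-m)$; two uses of the reflection formula turn these into $(\tfrac{13}{12})_{m+n}^{-1}$ and $(\tfrac14)_m$ (up to $\Gamma(\tfrac{13}{12})^{-1}$, $\Gamma(\tfrac14)$, signs, and the factor $1/\sin(\tfrac{\pi}{12})=\sqrt2(\sqrt3+1)$), and the residual $\Gamma(\tfrac1{12})^{-1}\Gamma(\tfrac14)$ is reduced to $3^{-3/8}\Gamma(\tfrac43)^{-1}$ (up to a power of $2$) by the Legendre duplication and Gauss triplication formulas together with the classical evaluation of $\Gamma(\tfrac1{12})$ in terms of $\Gamma(\tfrac13)$ and $\Gamma(\tfrac14)$. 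The half-power $\sqrt{\sqrt3+1}$ is the surviving trace of that reflection factor, and combining its $(-1)^{m+n}$ with the $(-1)^m$ from $1/\Gamma(\tfrac34-m)$ produces the overall sign $-(-1)^n$.

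The second family needs one extra move: since~(\ref{eq:altQrep}) now delivers $\overline r^m_n(u)$, I would convert it to $r^m_n$ by the reversal identity of Theorem~\ref{thm:1}(iii), $\overline r^m_n(u)=(d^m_n)^{-1}u^{3n+2m}r^m_n(1/u)$, which with $u=-A_-/A_+$ supplies the factor $(d^m_n)^{-1}(-1)^n(A_-/A_+)^{3n+2m}$ and the argument $r^m_n(-A_+/A_-)$ demanded by the statement. The extra $(1-u)^{-1-4m}$ is dealt with through $1-u=(A_++A_-)/A_+$ together with $A_+A_-=\tfrac13\sinh^2(\xi/3)$ and $4\cosh^2(\xi/3)-1=\sinh\xi/\sinh(\xi/3)$ (both from~(\ref{eq:Adef})), which give $(1-u)^2=\tfrac{4\sinh\xi}{3\sqrt3\,A_+^{5/2}A_-^{1/2}}$ and hence rewrite $(1-u)^{-1-4m}$ as a monomial in $3$, $A_+$, $A_-$, $\sinh\xi$. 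After substitution the net power of $A_+$ vanishes and that of $A_-$ collapses to $\tfrac14+3m+3n$; the power of $3$ from $(1-u)^{-1-4m}$ absorbs the $3^{-3m}$ inside $(d^m_n)^{-1}$, the Pochhammers inside $d^m_n$ cancel $\Gamma(\tfrac5{12}+m-n)/\Gamma(\tfrac54+m)$ down to $\Gamma(\tfrac5{12})/\Gamma(\tfrac54)$ times $(\tfrac14)_m(\tfrac{13}{12})_{m+n}^{-1}$, Gauss triplication (at $z=\tfrac5{12}$) finishes the gammas, and the sign $(-1)^n$ from the reversal combines with the $(-1)^{m+n}$ in $d^m_n$ to give $+(-1)^m$; the radical that survives is $\sqrt{\sqrt3-1}=\sqrt2/\sqrt{\sqrt3+1}$.

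I expect the main obstacle to be, purely, the constant: getting the fractional powers of $2$ and $3$, the sign, and above all the radicals $\sqrt{\sqrt3\pm1}$ simultaneously correct requires a disciplined chain of reflection, duplication and triplication identities (and the non-elementary closed form for $\Gamma$ at twelfths), with no room for slips. Everything else is bookkeeping that is forced once Definition~\ref{def:rdef}, the parametrization~(\ref{eq:parampolyedgemidpt}), and Theorem~\ref{thm:1}(iii) are in hand. One should finally check that everything is unambiguous on $\xi\in(0,\infty)$: there $A_\pm>0$ and $u=-A_-/A_+$ stays in $\bigl(-(2-\sqrt3)^2,0\bigr)$, so $p_{\rm e}(u)$, $p_{\rm f}(u)$ and $1-u$ are positive and the fractional powers are well defined, the argument $-A_+/A_-$ of $r^m_n$ in the second formula avoids the pole of $r^m_n$ at $-(2+\sqrt3)^2$, and $\hat Q_\nu^\mu$ itself is defined since $\nu+\mu$ is never a negative integer.
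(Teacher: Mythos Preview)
Your approach is correct and is, at the level of hypergeometric identities, equivalent to the paper's, but organized differently.  The paper does not start from the representation~(\ref{eq:altQrep}) of~$\hat Q_\nu^\mu$; instead it invokes Whipple's transformation
\[
  \hat Q_\nu^\mu(\coth\xi)=\sqrt{\pi/2}\;\Gamma(\nu+\mu+1)\,(\sinh\xi)^{1/2}\,P_{-\mu-\frac12}^{-\nu-\frac12}(\cosh\xi),
\]
which maps the two tetrahedral $(\nu,\mu)$ values to $P_{-\frac16+n}^{\frac14+m}(\cosh\xi)$ and $P_{-\frac16+n}^{-\frac14-m}(\cosh\xi)$, and then quotes the already-proved octahedral formulas of Theorem~\ref{thm:oct1} as a black box.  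Your route---plugging $(\nu,\mu)$ into~(\ref{eq:altQrep}) and recognizing the resulting ${}_2F_1(\tanh^2\xi)$ as one of the two functions in Definition~\ref{def:rdef}---is exactly what one gets by unfolding Whipple together with~(\ref{eq:altPrep}), so the two paths are the same identity read in different orders.  The paper's packaging is tidier because it avoids redoing the $p_{\rm e}$, $(1-u)$, $d_n^m$ bookkeeping: all of that is already absorbed into Theorem~\ref{thm:oct1}, and in particular your appeal to Theorem~\ref{thm:1}(iii) for the second family is precisely what produced the $\hat r_n^m$ form of the second line there.  For the constants, the paper confirms your diagnosis and cites Vid\=unas's evaluations of $\sqrt{\sqrt3\pm1}$ in terms of $\Gamma(1/12),\Gamma(1/4),\Gamma(1/3)$, which is the clean way to handle the reflection/multiplication chain you outline.
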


\begin{proof}
Combine Whipple's $\hat Q\to P$ transformation~\cite[3.3(13)]{Erdelyi53},
\begin{equation}
\label{eq:whipple}
  \hat Q_\nu^\mu(\coth\xi)
=
\sqrt{\pi/2}\, \Gamma(\nu+\mu+1)\, (\sinh\xi)^{1/2}\,
P_{-\mu-\frac12}^{-\nu-\frac12}(\cosh\xi),
\end{equation}
with the results in Theorem~\ref{thm:oct1}.
The symmetrical forms of the right-hand prefactors are obtained with the
aid of the gamma-function identities
\begin{align*}
\sqrt{\sqrt{3}+1} 
&= \pi^{1/2}\,  2^{1/4}\, 3^{-3/8}\, \Gamma(1/12)\,\Gamma(1/4)^{-1}\,\Gamma(1/3)^{-1}\\
&= \pi^{-3/2} \,2^{-3/4} \,3^{3/8}\, \Gamma(11/12)\,\Gamma(1/4)\,\Gamma(1/3)
\\
\sqrt{\sqrt{3}-1} 
&= \pi^{-1/2}\,2^{-1/4}\,3^{1/8}\, \Gamma(5/12)\,\Gamma(1/4)^{-1}\,\Gamma(1/3)\\
&= \pi^{-1/2}\,2^{-1/4}\,3^{-1/8}\, \Gamma(7/{12})\,\Gamma(1/4)\,\Gamma(1/3)^{-1}
\end{align*}
of Vid\={u}nas~\cite{Vidunas2005a}.\qed
\end{proof}

Because $\hat Q_\nu^\mu, \hat Q_\nu^{-\mu}$ are proportional to each
other (see~(\ref{eq:QtoQ})), Theorem~\ref{thm:tetr1} also supplies
formulas for $\hat Q_{-\frac34-{m}}^{\frac13+{n}},\hat
Q_{-\frac14+{m}}^{\frac13+{n}}$.  Moreover, it leads to the following
two theorems.

\begin{theorem}
\label{thm:tetr2}
The formulas
\begin{align*}
&P_{-\frac34-{m}}^{-\frac13-{n}}(\coth\xi) = 
\left[
(-)^{n}
2^{5/4-2{m}-3{n}}\,3^{-3/8}\left(\tfrac14\right)_{m}\left(\tfrac{13}{12}\right)_{{m}+{n}}^{-1}\Gamma(\tfrac43)^{-1}
\right]
\\
&\qquad\qquad\qquad{}\times (\sinh\xi)^{1/4-{m}}\,\Bigl[ 
(-)^{n} \sqrt{\sqrt3-1}\:A_+^{1/4+3{m}+3{n}}\,r^{m}_{n}(-A_-/A_+)
\\
&\qquad\qquad\qquad\qquad\qquad\qquad\qquad{}-
(-)^{m} \sqrt{\sqrt3+1}\:A_-^{1/4+3{m}+3{n}}\,r^{m}_{n}(-A_+/A_-)
\Bigr],\\
&P_{-\frac34-{m}}^{\frac13+{n}}(\coth\xi) = 
\left[
(-)^{n}
2^{-1/4-2{m}-3{n}}\,3^{-3/8}\left(\tfrac14\right)_{m}\left(\tfrac{5}{12}\right)_{{m}-{n}}^{-1}\Gamma(\tfrac23)^{-1}
\right]
\\
&\qquad\qquad\qquad{}\times (\sinh\xi)^{1/4-{m}}\,\Bigl[ 
(-)^{n} \sqrt{\sqrt3+1}\:A_+^{1/4+3{m}+3{n}}\,r^{m}_{n}(-A_-/A_+)
\\
&\qquad\qquad\qquad\qquad\qquad\qquad\qquad{}+
(-)^{m} \sqrt{\sqrt3-1}\:A_-^{1/4+3{m}+3{n}}\,r^{m}_{n}(-A_+/A_-)
\Bigr]
\end{align*}
hold for\/ $(n,m)\in\mathbb{Z}^2$ and\/ $\xi\in(0,\infty)$.
\end{theorem}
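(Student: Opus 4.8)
The plan is to deduce Theorem~\ref{thm:tetr2} from Theorem~\ref{thm:tetr1} by means of the $P\to\hat Q$ reduction~(\ref{eq:QtoP}), which writes any $P_\nu^\mu$ as an explicit scalar multiple of $\hat Q_{-\nu-1}^{-\mu}-\hat Q_\nu^{-\mu}$. For the first asserted formula I would specialize~(\ref{eq:QtoP}) to $(\nu,\mu)=(-\tfrac34-m,\,-\tfrac13-n)$, and for the second to $(\nu,\mu)=(-\tfrac34-m,\,\tfrac13+n)$. In both cases $-\nu-1=-\tfrac14+m$, so the two second Legendre functions on the right-hand side have degree $-\tfrac34-m$ or $-\tfrac14+m$ and order $\mp(\tfrac13+n)$; these are precisely the functions handled in Theorem~\ref{thm:tetr1} at order $-\tfrac13-n$, together with their $+(\tfrac13+n)$ companions, which that theorem already furnishes through the proportionality~(\ref{eq:QtoQ}). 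I would first check that no degeneracy intervenes: $\nu=-\tfrac34-m$ is never a half-odd-integer, so $\sec(\nu\pi)=-(-1)^m\sqrt2$ is finite, and none of the sums $\nu\pm\mu$ occurring is a negative integer, so every identity is applicable as stated with no limits, the argument remaining $z=\coth\xi$ throughout.

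Next I would carry out the substitution. For the first formula the functions appearing on the right of~(\ref{eq:QtoP}) are $\hat Q_{-\frac14+m}^{\frac13+n}$ and $\hat Q_{-\frac34-m}^{\frac13+n}$, so I would first use $\hat Q_\nu^{\mu}=\hat Q_\nu^{-\mu}\,\Gamma(\nu+\mu+1)/\Gamma(\nu-\mu+1)$ to rewrite each one in terms of the order-$(-\tfrac13-n)$ function delivered by Theorem~\ref{thm:tetr1}, picking up one gamma ratio apiece; for the second formula the functions are already $\hat Q_{-\frac14+m}^{-\frac13-n}$ and $\hat Q_{-\frac34-m}^{-\frac13-n}$, so this step is skipped. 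Inserting the closed forms of Theorem~\ref{thm:tetr1} --- each the common scalar $2^{11/4-2m-3n}3^{-3/8}(\tfrac14)_m(\tfrac{13}{12})_{m+n}^{-1}\Gamma(\tfrac43)^{-1}$ times $(\sinh\xi)^{1/4-m}$ times either $-(-1)^n\sqrt{\sqrt3+1}\,A_+^{1/4+3m+3n}r_n^m(-A_-/A_+)$ or $+(-1)^m\sqrt{\sqrt3-1}\,A_-^{1/4+3m+3n}r_n^m(-A_+/A_-)$ --- leaves, in each case, exactly a linear combination of these two building blocks, times $(\sinh\xi)^{1/4-m}$ and a scalar. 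That is already the shape asserted in the theorem; what remains is to pin down the scalars and to see which of $\sqrt{\sqrt3\pm1}$ ends up multiplying which block.

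The scalar bookkeeping is where the real work lies, and I expect it to be the main obstacle. From~(\ref{eq:QtoP}) come the factors $\Gamma(\nu-\mu+1)^{-1}\Gamma(-\mu-\nu)^{-1}$; for the first formula, after cancellation against the gamma ratios from~(\ref{eq:QtoQ}), these become $[\Gamma(\tfrac7{12}-m+n)\Gamma(\tfrac5{12}+m-n)]^{-1}$ and $[\Gamma(\tfrac{13}{12}+m+n)\Gamma(-\tfrac1{12}-m-n)]^{-1}$, in each of which the two arguments sum to~$1$. Euler's reflection formula turns these into $1/\sin$ of a multiple of $\pi/12$, releasing the sign factors $(-1)^{m+n}$ and the surds $\sqrt6\pm\sqrt2$, and the elementary identities $\sqrt2\,(\sqrt6\pm\sqrt2)=2(\sqrt3\pm1)$ and $(\sqrt3\pm1)\sqrt{\sqrt3\mp1}=\sqrt2\,\sqrt{\sqrt3\pm1}$ then recombine everything --- the last being what produces the interchange $\sqrt{\sqrt3+1}\leftrightarrow\sqrt{\sqrt3-1}$ visible in the first formula. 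The scalar should collapse to $\tfrac{\sqrt2}{4}=2^{-3/2}$ times the Theorem~\ref{thm:tetr1} scalar, up to the sign $(-1)^n$, i.e.\ to the stated $(-1)^n2^{5/4-2m-3n}3^{-3/8}(\tfrac14)_m(\tfrac{13}{12})_{m+n}^{-1}\Gamma(\tfrac43)^{-1}$. For the second formula the two gammas from~(\ref{eq:QtoP}) are $\Gamma(-\tfrac1{12}-m-n)$ and $\Gamma(\tfrac5{12}+m-n)$, whose arguments do \emph{not} sum to~$1$; here I would instead rewrite the Pochhammer $(\tfrac{13}{12})_{m+n}$ coming from Theorem~\ref{thm:tetr1} as $\Gamma(\tfrac{13}{12}+m+n)/\Gamma(\tfrac{13}{12})$, pair $\Gamma(\tfrac{13}{12}+m+n)$ with $\Gamma(-\tfrac1{12}-m-n)$ --- whose arguments now do sum to~$1$ --- via reflection, and cancel $\Gamma(\tfrac5{12}+m-n)$ against the target's factor $(\tfrac5{12})_{m-n}^{-1}$. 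What survives is a constant independent of $(m,n)$, and checking that it equals~$1$ reduces to $\sqrt2\,(\sqrt3+1)\sin(\pi/12)=1$, i.e.\ --- using $\Gamma(\tfrac13)\Gamma(\tfrac23)=2\pi/\sqrt3$ --- to the identity $\Gamma(\tfrac1{12})/\Gamma(\tfrac5{12})=(3+\sqrt3)\,\Gamma(\tfrac13)^2/(2\pi)$, which is one of the Vid\=unas gamma-function identities~\cite{Vidunas2005a} already invoked in the proof of Theorem~\ref{thm:tetr1}. Assembling these pieces yields the two displayed formulas; since every identity used holds for all integer $n,m$ (not merely $n,m\ge0$), so do the conclusions, and, exactly as for Theorem~\ref{thm:tetr1}, the proportionality~(\ref{eq:QtoQ}) at once gives the companion formulas for $\hat Q_{-\frac34-m}^{\frac13+n}$ and $\hat Q_{-\frac14+m}^{\frac13+n}$.
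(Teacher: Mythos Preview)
Your proposal is correct and follows exactly the approach the paper takes: its proof of Theorem~\ref{thm:tetr2} consists of the single sentence ``Combine the $P\to\hat Q$ reduction~(\ref{eq:QtoP}) with the results in Theorem~\ref{thm:tetr1}.'' You have correctly identified this route and, in addition, sketched the gamma-function bookkeeping (reflection to produce the $\sqrt6\pm\sqrt2$ surds, the identity $(\sqrt3\pm1)\sqrt{\sqrt3\mp1}=\sqrt2\,\sqrt{\sqrt3\pm1}$, and the Vid\=unas evaluation) that the paper leaves implicit.
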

\begin{proof}
  Combine the $P\to\hat Q$ reduction~(\ref{eq:QtoP}) with the results in
  Theorem~\ref{thm:tetr1}.\qed
\end{proof}

\noindent
Because $P_\nu^\mu=P_{-\nu-1}^\mu$, Theorem~\ref{thm:tetr2} also supplies
formulas for $P_{-\frac14+{m}}^{-\frac13-{n}},P_{-\frac14+{m}}^{\frac13+{n}}$.

\smallskip
In Theorem~\ref{thm:tetr3}, the hyperbolic-trigonometric functions
$C_\pm$, positive on~$(-\infty,\infty)$, are defined by
\begin{align*}
C_\pm=C_\pm(\xi) &\defeq \pm\sinh(\xi/3) +\sqrt{\frac{\cosh\xi}{3\cosh(\xi/3)}}\\
&\hphantom{:}=\pm\sinh(\xi/3) +\sqrt{\frac{4\sinh^2(\xi/3)+1}3},
\end{align*}
so that $C_+C_-(\xi)=\frac13\cosh^2(\xi/3)$.

\begin{theorem}
\label{thm:tetr3}
The formulas
\begin{align*}
&{\rm P}_{-\frac34-{m}}^{-\frac13-{n}}(\tanh\xi) = 
\left[
2^{5/4-2{m}-3{n}}\,3^{-3/8}\left(\tfrac14\right)_{m}\left(\tfrac{13}{12}\right)_{{m}+{n}}^{-1}\Gamma(\tfrac43)^{-1}
\right]
\\
&\qquad\qquad\qquad{}\times (\cosh\xi)^{1/4-{m}}\,\Bigl[ 
-(-)^{n} \sqrt{\sqrt3-1}\:C_+^{1/4+3{m}+3{n}}\,r^{m}_{n}(-C_-/C_+)
\\
&\qquad\qquad\qquad\qquad\qquad\qquad\qquad{}+
(-)^{m} \sqrt{\sqrt3+1}\:C_-^{1/4+3{m}+3{n}}\,r^{m}_{n}(-C_+/C_-)
\Bigr],\\
&{\rm P}_{-\frac34-{m}}^{\frac13+{n}}(\tanh\xi) = 
\left[
(-)^{n}
2^{-1/4-2{m}-3{n}}\,3^{-3/8}\left(\tfrac14\right)_{m}\left(\tfrac{5}{12}\right)_{{m}-{n}}^{-1}\Gamma(\tfrac23)^{-1}\right]\\
&\qquad\qquad\qquad{}\times (\cosh\xi)^{1/4-{m}}\,\Bigl[ 
(-)^{n} \sqrt{\sqrt3+1}\:C_+^{1/4+3{m}+3{n}}\,r^{m}_{n}(-C_-/C_+)
\\
&\qquad\qquad\qquad\qquad\qquad\qquad\qquad{}+
(-)^{m} \sqrt{\sqrt3-1}\:C_-^{1/4+3{m}+3{n}}\,r^{m}_{n}(-C_+/C_-)
\Bigr]
\end{align*}
hold for\/ $(n,m)\in\mathbb{Z}^2$ and\/ $\xi\in(-\infty,\infty)$.
\hfil\break {\rm[Note that as $\xi$ decreases from $\infty$
    to~$-\infty$, the argument $u=-C_-/C_+$ of the first~$r^{m}_{n}$,
    which satisfies $T(u)=\coth^2(\xi/3)$ and $R(u)=\coth^2\xi$,
    decreases from $0$ to $-(2+\nobreak\sqrt3)^2\allowbreak
    \approx-14.0$, which is a root of $p_{\rm f}(u)=1+\nobreak
    14u+\nobreak u^2$.]}
\end{theorem}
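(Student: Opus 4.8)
The plan is to derive these Ferrers formulas from the Legendre formulas of Theorem~\ref{thm:tetr2} by analytic continuation, in the same spirit as Theorem~\ref{thm:oct2} was derived from Theorem~\ref{thm:oct1}. The operative substitution is $\xi\mapsto\xi+\tfrac32{\rm i}\pi$; it produces exactly the passage recorded in the bracketed note of Theorem~\ref{thm:tetr3}. One checks the elementary identities $\tanh(\xi+\tfrac32{\rm i}\pi)=\coth\xi$, $\coth(\xi+\tfrac32{\rm i}\pi)=\tanh\xi$, $\sinh(\xi+\tfrac32{\rm i}\pi)=-{\rm i}\cosh\xi$, $\cosh(\xi+\tfrac32{\rm i}\pi)=-{\rm i}\sinh\xi$, and --- the reason $C_\pm$ are the correct replacements for $A_\pm$ --- $A_\pm(\xi+\tfrac32{\rm i}\pi)={\rm i}\,C_\pm(\xi)$; consequently the argument $-A_-/A_+$ of $r^m_n$ in Theorem~\ref{thm:tetr2} goes over into $-C_-/C_+$ (and $-A_+/A_-$ into $-C_+/C_-$), while the parametrization data $T=\tanh^2(\xi/3)$, $R=\tanh^2\xi$ inherited from Theorem~\ref{thm:oct1} becomes $T=\coth^2(\xi/3)$, $R=\coth^2\xi$, the third of the three trigonometric cases listed in Definition~\ref{def:rdef}. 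The shift by $\tfrac32{\rm i}\pi=3\cdot\tfrac{{\rm i}\pi}2$ is forced by the $\xi/3$ inside $A_\pm$: it turns $\cosh(\xi/3)\mapsto{\rm i}\sinh(\xi/3)$ and $\sinh(\xi/3)\mapsto{\rm i}\cosh(\xi/3)$, which is precisely the transition from $A_\pm$ to $C_\pm$.

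Concretely the steps are: (1)~verify the trigonometric identities above, including $A_\pm(\xi+\tfrac32{\rm i}\pi)={\rm i}\,C_\pm(\xi)$, with branch choices fixed by approaching the continuation along $\xi+\epsilon+{\rm i}t$, $t:0\to\tfrac32\pi$, $\epsilon\to0^+$; (2)~track the continuation of the argument $z=\coth(\xi+{\rm i}t)$ of the Legendre function, determining which branch points $z=\pm1$ of the Legendre cut it encircles and how the Frobenius solution $P_\nu^\mu$ thereby becomes a definite constant multiple of the Ferrers solution ${\rm P}_\nu^\mu$ (using ${\rm P}_\nu^\mu={\rm e}^{\pm\mu\pi{\rm i}/2}P_\nu^\mu$ on $\pm{\rm Im}\,z>0$ together with the homotopy class of the path); (3)~collect all the resulting phase factors --- $(-{\rm i})^{1/4-m}$ from $(\sinh\xi)^{1/4-m}\mapsto(-{\rm i}\cosh\xi)^{1/4-m}$, ${\rm i}^{1/4+3m+3n}$ from $A_\pm^{1/4+3m+3n}\mapsto({\rm i}\,C_\pm)^{1/4+3m+3n}$, and the phase from step~(2) --- and show that, together with the Pochhammer/Gamma prefactors and $\sqrt{\sqrt3\pm1}$ factors carried over from Theorem~\ref{thm:tetr2}, they recombine into the manifestly real right-hand sides stated, reproducing in particular the two-term brackets with signs $-(-)^n\sqrt{\sqrt3-1},\ +(-)^m\sqrt{\sqrt3+1}$ in the first formula and $+(-)^n\sqrt{\sqrt3+1},\ +(-)^m\sqrt{\sqrt3-1}$ in the second (note the bracket of the first formula flips sign relative to Theorem~\ref{thm:tetr2} while that of the second does not, consistent with the opposite signs of $\mu$ in the two cases); (4)~record that the domain is now $\xi\in(-\infty,\infty)$ and check the bracketed remark on $u=-C_-/C_+$, observing that at $\xi=\pm\infty$ one has $R(u)=\coth^2\xi=1$, which forces $u$ to a root of $p_{\rm f}=1+14u+u^2$.

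The main obstacle is the branch bookkeeping of steps (2) and (3). All the fractional powers involved --- $(\sinh\xi)^{1/4-m}$ and $A_\pm^{1/4+3m+3n}$ on the source side, $(\cosh\xi)^{1/4-m}$ and $C_\pm^{1/4+3m+3n}$ on the target side --- denote the specific branches that are real and positive on the relevant intervals, so one must show the continuation carries each canonical branch to the next; and simultaneously the path $z=\coth(\xi+{\rm i}t)$ winds about the branch points $z=\pm1$, so the $P\to{\rm P}$ transition is governed not by a single factor ${\rm e}^{\pm\mu\pi{\rm i}/2}$ but by the actual monodromy. The nontrivial assertion is that the product of all these ${\rm i}$-phases and monodromy factors collapses (to at most an overall sign of the bracket), leaving the clean real formulas. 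An alternative that sidesteps the winding analysis is to recompute from scratch: start from the hypergeometric representation~(\ref{eq:altPrep}) of ${\rm P}_\nu^\mu$ (and~(\ref{eq:altQrep}) of $\hat Q_\nu^\mu$, via the reduction formulas of~\S\,\ref{sec:prelims}) at $z=\tanh\xi$, whose natural ${}_2F_1$-argument $1/z^2$ equals $\coth^2\xi$, then parametrize Definition~\ref{def:rdef} by that value --- this is longer but needs no homotopy input.
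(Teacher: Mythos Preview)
Your proposal is correct and takes essentially the same approach as the paper, which simply says ``By analytic continuation of the results in Theorem~\ref{thm:tetr2}; or in effect, by replacing $\xi$ by $\xi+{\rm i}\pi/2$.''  The only difference is the specific shift: you use $\xi\mapsto\xi+\tfrac32{\rm i}\pi$, which differs from the paper's $\xi\mapsto\xi+{\rm i}\pi/2$ by a period of $\coth$ and has the advantage of sending $\xi/3\mapsto\xi/3+{\rm i}\pi/2$, so that $A_\pm\mapsto{\rm i}\,C_\pm$ transparently; your more detailed branch bookkeeping (and your alternative via the hypergeometric representation at $z=\tanh\xi$) is welcome elaboration of what the paper leaves implicit.
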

\begin{proof}
  By analytic continuation of the results in Theorem~\ref{thm:tetr2}; or in
  effect, by replacing $\xi$ by $\xi+\nobreak{\rm i}\pi/2$.\qed
\end{proof}

Because ${\rm P}_\nu^\mu={\rm P}_{-\nu-1}^\mu$, Theorem~\ref{thm:tetr3}
also supplies formulas for ${\rm P}_{-\frac14+{m}}^{-\frac13-{n}},{\rm
  P}_{-\frac14+{m}}^{\frac13+{n}}$.  By exploiting the ${\rm Q}\to{\rm P}$
reduction~(\ref{eq:rmPtormQ}), one easily obtains additional formulas, for
${\rm Q}_{-\frac34-{m}}^{-\frac13-{n}},\allowbreak {\rm
  Q}_{-\frac34-{m}}^{\frac13+{n}},\allowbreak \allowbreak {\rm
  Q}_{-\frac14+{m}}^{-\frac13-{n}},{\rm Q}_{-\frac14+{m}}^{\frac13+{n}}$.

\section{Tetrahedral Formulas (Schwarz Class~III)}
\label{sec:tetra2}

The following theorems give parametrized formulas for the Legendre/Ferrers
functions $P_\nu^\mu,\allowbreak\hat Q_\nu^\mu,{\rm P}_{\nu}^\mu$ when $(\nu,\mu)
=\allowbreak (-\frac16,\nobreak-\frac13)+\allowbreak ({n},-{n})$ and
$(-\frac56,\nobreak\frac13)+\allowbreak (-{n},{n})$, with
${n}\in\mathbb{Z}$.  The triple of exponent differences,
$(\mu,\mu,2\nu+\nobreak1)$, is respectively equal to
$(-\frac13,-\frac13,\frac23)+\allowbreak(-{n},-{n},2{n})$ and
$(\frac13,\frac13,-\frac23)+\allowbreak({n},{n},-2{n})$, both lying in
Schwarz's tetrahedral class~III\null.

\begin{theorem}
\label{thm:tetr2_1}
The formulas
  \begin{align*}
    {\rm P}_{-\frac16+{n}}^{-\frac13-{n}}\bigl(\sqrt{1-{\rm e}^{-2\xi}}\,\bigr) 
&=2^{-\frac13-{n}}\bigl(1-{\rm e}^{-2\xi}\bigr)^{-1/4} \, P_{-\frac34}^{-\frac13-{n}}(\coth\xi),\\
    {\rm P}_{-\frac56-{n}}^{\frac13+{n}}\bigl(\sqrt{1-{\rm e}^{-2\xi}}\,\bigr) 
&=2^{\frac13+{n}}\bigl(1-{\rm e}^{-2\xi}\bigr)^{-1/4} \, P_{-\frac34}^{\frac13+{n}}(\coth\xi)
  \end{align*}
where expressions for the right-hand Legendre functions are provided
by Theorem\/~{\rm\ref{thm:tetr2}}, hold for\/ $n\in\mathbb{Z}$ and\/
$\xi\in(0,\infty)$.
\end{theorem}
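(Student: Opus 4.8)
The plan is to deduce each of the two displayed identities from a direct comparison of hypergeometric representations: after the change of variables, the Gauss functions on the two sides become literally the same ${}_2F_1$, so that only elementary prefactors remain to be reconciled. (In essence this is a quadratic transformation, the argument $z$ of the Ferrers function and the argument $w$ of the Legendre function being tied by $w=2/z^2-1$.) Concretely, set $z=\sqrt{1-{\rm e}^{-2\xi}}\in(0,1)$ and $w=\coth\xi\in(1,\infty)$ for $\xi\in(0,\infty)$, and record the elementary consequences $1-z^2={\rm e}^{-2\xi}$, $z^2=2/(w+1)$, $(w+1)/(w-1)=1/(1-z^2)={\rm e}^{2\xi}$, and---the decisive one---$(1-w)/2=1-1/z^2$.

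For the first identity, apply to the left side ${\rm P}_{-\frac16+n}^{-\frac13-n}(z)$ the Ferrers analogue of the ``alternative'' representation~\eqref{eq:altPrep}, the one whose Gauss argument is $1-1/z^2$. The key point is that for this family $\nu+\mu=(-\tfrac16+n)+(-\tfrac13-n)=-\tfrac12$, so the upper parameters $-\tfrac\nu2-\tfrac\mu2,\,-\tfrac\nu2-\tfrac\mu2+\tfrac12$ collapse to $\tfrac14,\tfrac34$; the lower parameter is $1-\mu=\tfrac43+n$, the power $z^{\nu+\mu}$ is just $z^{-1/2}=(1-{\rm e}^{-2\xi})^{-1/4}$, and $(1-z^2)^{-\mu/2}={\rm e}^{\mu\xi}$. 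On the right side, apply the standard representation~\eqref{eq:Prep} to $P_{-\frac34}^{-\frac13-n}(w)$: its upper parameters $-\nu,\nu+1$ are $\tfrac34,\tfrac14$, its lower parameter is again $\tfrac43+n$, its Gauss argument is $(1-w)/2=1-1/z^2$, and its prefactor carries $\bigl((w+1)/(w-1)\bigr)^{\mu/2}={\rm e}^{\mu\xi}$. The two ${}_2F_1$ factors coincide, the two $\Gamma(\tfrac43+n)$'s cancel, the factor ${\rm e}^{\mu\xi}$ common to both sides matches, and what remains on the left is precisely $2^{\mu}z^{\nu+\mu}=2^{-\frac13-n}(1-{\rm e}^{-2\xi})^{-1/4}$, the asserted prefactor. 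The second identity goes through identically once the degree is written as $-\tfrac56-n$ rather than the equivalent $-\tfrac16+n$: then $\nu+\mu=(-\tfrac56-n)+(\tfrac13+n)=-\tfrac12$ again, \eqref{eq:altPrep} on the left yields ${}_2F_1(\tfrac14,\tfrac34;\tfrac23-n;1-1/z^2)$, \eqref{eq:Prep} applied to $P_{-\frac34}^{\frac13+n}(w)$ yields ${}_2F_1(\tfrac34,\tfrac14;\tfrac23-n;1-1/z^2)$, and the leftover factor is $2^{\mu}z^{\nu+\mu}=2^{\frac13+n}(1-{\rm e}^{-2\xi})^{-1/4}$.

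The one step needing real care, and which I would dispose of first, is the Ferrers form of~\eqref{eq:altPrep}: the excerpt gives the Ferrers modification of~\eqref{eq:Prep} explicitly but not of~\eqref{eq:altPrep}. I would obtain it from the Legendre identity~\eqref{eq:altPrep} on the upper half-plane via ${\rm P}_\nu^\mu={\rm e}^{\mu\pi{\rm i}/2}P_\nu^\mu$, verifying that $(z^2-1)^{\mu/2}$ passes correctly to $(1-z^2)^{\mu/2}$ with the phases conspiring (exactly as in the stated passage from~\eqref{eq:Prep} to its Ferrers version), or else re-derive it by applying the same quadratic hypergeometric transformation to the Ferrers function directly. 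A minor nuisance is that for $z\le1/\sqrt2$ the argument $1-1/z^2$ has modulus $\ge1$, so the defining Maclaurin series diverges; since all four functions are analytic in $\xi$ on $(0,\infty)$, it suffices to verify the identities for $z>1/\sqrt2$ and then continue (and no denominator parameter $\tfrac43+n$ or $\tfrac23-n$ is ever a nonpositive integer, so no degeneracy interferes). As a robustness check one could instead note that, as functions of $\xi$, both sides satisfy the same second-order linear ODE---the Ferrers equation in $z$ and the Legendre equation in $w$ transform into one equation under $w=2/z^2-1$, up to the gauge factor supplied by the prefactor---so proportionality is automatic and the constant is pinned by the behaviour as $\xi\to 0^+$. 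Finally, with the two displayed identities in hand, the closed forms for the right-hand Legendre functions are nothing but the $m=0$ specializations of Theorem~\ref{thm:tetr2}, exactly as the statement records, so nothing further is required; I expect the prefactor bookkeeping and the branch-tracking in the Ferrers alternative representation to constitute essentially the whole of the work.
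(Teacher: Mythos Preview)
Your proposal is correct and is essentially the same approach as the paper's: the paper remarks that the theorem is a reparametrization of the identity $P_{-1/4}^{-\alpha}(\cosh\xi)=2^\alpha\sqrt{{\rm sech}(\xi/2)}\,{\rm P}_{\alpha-1/2}^{-\alpha}\bigl({\rm sech}(\xi/2)\bigr)$ (cited from a companion paper), which it says is ``really a quadratic hypergeometric transformation in disguise,'' and your direct matching of \eqref{eq:Prep} for $P_{-3/4}^{\mu}(w)$ against the Ferrers form of \eqref{eq:altPrep} for ${\rm P}_\nu^\mu(z)$ under $(1-w)/2=1-1/z^2$ is exactly that quadratic transformation carried out explicitly. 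Your version has the mild advantage of being self-contained rather than depending on the external citation.
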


\begin{theorem}
\label{thm:tetr2_2}
The formulas
  \begin{align*}
    {P}_{-\frac16+{n}}^{-\frac13-{n}}\bigl(\sqrt{1+{\rm e}^{-2\xi}}\,\bigr) 
&=2^{-\frac13-{n}}\bigl(1+{\rm e}^{-2\xi}\bigr)^{-1/4} \: {\rm P}_{-\frac34}^{-\frac13-{n}}(\tanh\xi),\\
    {P}_{-\frac56-{n}}^{\frac13+{n}}\bigl(\sqrt{1+{\rm e}^{-2\xi}}\,\bigr) 
&=2^{\frac13+{n}}\bigl(1+{\rm e}^{-2\xi}\bigr)^{-1/4} \: {\rm P}_{-\frac34}^{\frac13+{n}}(\tanh\xi),
  \end{align*}
where expressions for the right-hand Ferrers functions are provided by
Theorem\/~{\rm\ref{thm:tetr3}}, hold for\/ $n\in\mathbb{Z}$ and\/
$\xi\in(-\infty,\infty)$.
\end{theorem}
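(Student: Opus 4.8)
The plan is to read both identities directly off the hypergeometric representations of~\S\,\ref{sec:prelims}, the structural feature making this work being that in each of the two families the degree and order are locked by $\nu+\mu=-\tfrac12$. (Geometrically, the two sides are linked by the quadratic substitution $w=2z^{-2}-1$ that folds the equal exponent differences $\mu,\mu$ at $z=\pm1$; with $z=\sqrt{1+e^{-2\xi}}$ this gives $w=\tanh\xi$.) Under $\nu+\mu=-\tfrac12$ the quadratic representation~(\ref{eq:altPrep}) simplifies, since its Gauss parameters $-\tfrac\nu2-\tfrac\mu2$, $-\tfrac\nu2-\tfrac\mu2+\tfrac12$ become $\tfrac14,\tfrac34$: for $z>1$,
\[
  P_\nu^\mu(z)=\frac{2^{\mu}}{\Gamma(1-\mu)}\,z^{-1/2}(z^2-1)^{-\mu/2}\;{}_2F_1\!\left(\tfrac14,\tfrac34;\,1-\mu;\,1-z^{-2}\right).
\]
On the other side, the Ferrers form of~(\ref{eq:Prep}) (with $z-1$ replaced by $1-z$ in the prefactor), taken at $\nu=-\tfrac34$ so that $-\nu=\tfrac34$, $\nu+1=\tfrac14$, reads, for $w\in(-1,1)$,
\[
  {\rm P}_{-3/4}^{\mu}(w)=\frac1{\Gamma(1-\mu)}\left(\frac{1+w}{1-w}\right)^{\mu/2}{}_2F_1\!\left(\tfrac34,\tfrac14;\,1-\mu;\,\tfrac{1-w}{2}\right).
\]
So the two Gauss functions carry the same parameters up to the harmless interchange of the numerator entries.

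Next I would substitute $z=\sqrt{1+e^{-2\xi}}$, $w=\tanh\xi$ and check that the two presentations match termwise. Here $z^2-1=e^{-2\xi}$ and $\tfrac{1+w}{1-w}=e^{2\xi}$, so both $(z^2-1)^{-\mu/2}$ and $\bigl(\tfrac{1+w}{1-w}\bigr)^{\mu/2}$ equal $e^{\mu\xi}$; also $z^{-1/2}=(1+e^{-2\xi})^{-1/4}$; and the Gauss-function arguments agree, $1-z^{-2}=\tfrac{1-w}2=\tfrac{e^{-2\xi}}{1+e^{-2\xi}}$. Taking $\mu=-\tfrac13-n$ then yields the first formula and $\mu=\tfrac13+n$ the second, the factor $2^{\mu}$ producing the stated power of $2$ in each case. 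All quantities involved are positive reals for real $\xi$, so the principal branches are unambiguous; and no degenerate case arises, since $1-\mu$ equals $\tfrac43+n$ or $\tfrac23-n$, never a nonpositive integer, and $\Gamma(1-\mu)$ is finite for every $n\in\mathbb Z$; the representation~(\ref{eq:altPrep}) is legitimate along $z=\sqrt{1+e^{-2\xi}}$ precisely because $1-z^{-2}\in(0,1)$ there. Finally, since ${\rm P}_{-3/4}^{-1/3-n}(\tanh\xi)$ and ${\rm P}_{-3/4}^{1/3+n}(\tanh\xi)$ are the $m=0$ instances of Theorem~\ref{thm:tetr3}, inserting those expressions makes the right-hand sides fully explicit, as the statement advertises.

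A second route, in the spirit of the earlier proofs, is to obtain Theorem~\ref{thm:tetr2_2} from Theorem~\ref{thm:tetr2_1} by the substitution $\xi\mapsto\xi+{\rm i}\pi/2$, just as Theorem~\ref{thm:tetr3} follows from Theorem~\ref{thm:tetr2} and Theorem~\ref{thm:oct2} from Theorem~\ref{thm:oct1}: this sends $e^{-2\xi}$ to $-e^{-2\xi}$, hence $\sqrt{1-e^{-2\xi}}$ to $\sqrt{1+e^{-2\xi}}$ and $\coth\xi$ to $\coth(\xi+{\rm i}\pi/2)=\tanh\xi$, converting each identity of Theorem~\ref{thm:tetr2_1} into the corresponding one of Theorem~\ref{thm:tetr2_2}. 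The one delicate point — and the main obstacle along this route — is that the continuation carries the Ferrers function on the left from $(0,1)$ onto $(1,\infty)$, where the Legendre $P$ is the natural object, while carrying the Legendre function on the right from $(1,\infty)$ onto $(0,1)$, where ${\rm P}$ is natural; both transitions introduce a factor $e^{\pm\mu\pi{\rm i}/2}$ through the half-plane identities ${\rm P}_\nu^\mu=e^{\pm\mu\pi{\rm i}/2}P_\nu^\mu$ (cf.~(\ref{eq:lincomb2})). Tracking the path $\xi\mapsto\xi+{\rm i}s$, $s\in[0,\pi/2]$, one finds the left-hand argument reaches $(1,\infty)$ from the upper half-plane and the right-hand argument reaches $(0,1)$ from the lower half-plane, so both sides acquire the \emph{same} factor $e^{+\mu\pi{\rm i}/2}$ with the common order $\mu$ ($=-\tfrac13-n$, resp.\ $\tfrac13+n$), and these cancel, leaving precisely the stated formulas. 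In the first route there is essentially nothing to obstruct — it is a matching of closed-form expressions — so I would favour it; in the second, the phase bookkeeping just described is the only point needing care.
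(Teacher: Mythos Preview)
Your proof is correct and follows essentially the same route as the paper: both rest on the quadratic hypergeometric transformation underlying the identity $\mathrm{P}_{-1/4}^{-\alpha}(\cos\theta) = 2^\alpha\sqrt{\sec(\theta/2)}\,P_{\alpha-1/2}^{-\alpha}(\sec(\theta/2))$, which the paper quotes from~\cite{Maier24} as~$I_4(ii)$ and then reparametrizes via $\cos\theta=\tanh\xi$. Your first route is slightly more self-contained, since you read this identity directly off the representations~(\ref{eq:Prep}) and~(\ref{eq:altPrep}) (exploiting the collapse at $\nu+\mu=-\tfrac12$) rather than citing it; your second route, by analytic continuation from Theorem~\ref{thm:tetr2_1}, is not the paper's method but is also valid once the phase factors are tracked as you indicate.
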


\begin{theorem}
\label{thm:tetr2_3}
The formulas
  \begin{align*}
    (2/\pi)\hat{Q}_{-\frac16+{n}}^{-\frac13-{n}}\bigl(\sqrt{1+{\rm e}^{2\xi}}\,\bigr) 
&=2^{-\frac13-{n}}\bigl(1+{\rm e}^{2\xi}\bigr)^{-1/4}\,\sqrt2 \: {\rm P}_{-\frac34}^{-\frac13-{n}}(-\tanh\xi),\\
    (2/\pi)\hat{Q}_{-\frac56-{n}}^{\frac13+{n}}\bigl(\sqrt{1+{\rm e}^{2\xi}}\,\bigr) 
&=2^{\frac13+{n}}\bigl(1+{\rm e}^{2\xi}\bigr)^{-1/4}\,\sqrt2 \: {\rm P}_{-\frac34}^{\frac13+{n}}(-\tanh\xi),
  \end{align*}
where expressions for the right-hand Ferrers functions are provided by
Theorem\/~{\rm\ref{thm:tetr3}}, hold for\/ $n\in\mathbb{Z}$ and\/
$\xi\in(-\infty,\infty)$.
\end{theorem}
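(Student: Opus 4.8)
The plan is to obtain Theorem~\ref{thm:tetr2_3} from Theorem~\ref{thm:tetr2_2} by applying Whipple's $\hat Q\to P$ transformation~(\ref{eq:whipple}), in the same spirit in which Theorem~\ref{thm:tetr1} was deduced from Theorem~\ref{thm:oct1}. The crucial observation is that for the two index pairs occurring here, $(\nu,\mu)=(-\tfrac16+n,-\tfrac13-n)$ and $(\nu,\mu)=(-\tfrac56-n,\tfrac13+n)$, one has $\nu+\mu=-\tfrac12$, so the substitution $(\nu,\mu)\mapsto(-\mu-\tfrac12,-\nu-\tfrac12)$ effected by~(\ref{eq:whipple}) fixes $(\nu,\mu)$. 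Hence~(\ref{eq:whipple}) ties $\hat Q_\nu^\mu(\coth\zeta)$ to $P_\nu^\mu(\cosh\zeta)$ of the \emph{same} degree and order, and Theorem~\ref{thm:tetr2_2} already furnishes a closed form for the latter~$P$.

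Concretely, I would introduce a Whipple variable $\zeta>0$ by $\sinh\zeta={\rm e}^{-\xi}$, so that $\cosh\zeta=\sqrt{1+{\rm e}^{-2\xi}}$ and $\coth\zeta=\sqrt{1+{\rm e}^{2\xi}}$, and record the elementary identity $(\sinh\zeta)^{1/2}\,(1+{\rm e}^{-2\xi})^{-1/4}=(1+{\rm e}^{2\xi})^{-1/4}$. Applying~(\ref{eq:whipple}) with this $\zeta$ and with $(\nu,\mu)=(-\tfrac16+n,-\tfrac13-n)$, for which $\Gamma(\nu+\mu+1)=\Gamma(\tfrac12)=\sqrt\pi$, gives
\begin{equation*}
  (2/\pi)\,\hat Q_{-\frac16+n}^{-\frac13-n}\!\bigl(\sqrt{1+{\rm e}^{2\xi}}\,\bigr)
  = \sqrt2\,(\sinh\zeta)^{1/2}\,P_{-\frac16+n}^{-\frac13-n}\!\bigl(\sqrt{1+{\rm e}^{-2\xi}}\,\bigr),
\end{equation*}
since $(2/\pi)\sqrt{\pi/2}\,\Gamma(\tfrac12)=\sqrt2$. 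Substituting the first formula of Theorem~\ref{thm:tetr2_2} for the $P$ on the right, and using the displayed simplification of the powers of ${\rm e}^{\pm\xi}$, leaves precisely $\sqrt2\cdot2^{-\frac13-n}(1+{\rm e}^{2\xi})^{-1/4}$ times the Ferrers function ${\rm P}_{-\frac34}^{-\frac13-n}$ of the required argument, for all $\xi\in(-\infty,\infty)$. The second formula of Theorem~\ref{thm:tetr2_3} follows identically, since $(\nu,\mu)=(-\tfrac56-n,\tfrac13+n)$ is again fixed by $(\nu,\mu)\mapsto(-\mu-\tfrac12,-\nu-\tfrac12)$ and again has $\nu+\mu+1=\tfrac12$, and Theorem~\ref{thm:tetr2_2} supplies the matching $P$. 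An alternative route, mirroring the proof of Theorem~\ref{thm:tetr2}, would instead use the $\hat Q\to P$ reduction~(\ref{eq:PtoQ}) to write $(2/\pi)\hat Q_{-\frac16+n}^{-\frac13-n}$ as a fixed linear combination of $P_{-\frac16+n}^{-\frac13-n}$ and $P_{-\frac16+n}^{\frac13+n}=P_{-\frac56-n}^{\frac13+n}$---both of which are given at argument $\sqrt{1+{\rm e}^{2\xi}}$ by the two formulas of Theorem~\ref{thm:tetr2_2} after $\xi\mapsto-\xi$---and then verify, with the help of~(\ref{eq:rmPtormQ}) and~(\ref{eq:QtoQ}), that the resulting combination of ${\rm P}_{-\frac34}^{\pm(\frac13+n)}$ collapses to the single term recorded in the statement.

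I expect no conceptual obstacle; the only real work is bookkeeping. One must carry the gamma factors, the branches of the fractional powers $(\sinh\zeta)^{1/2}$ and $(1+{\rm e}^{\mp2\xi})^{-1/4}$, and the prefactor of ${\rm P}_{-\frac34}^{-\frac13-n}$ in Theorem~\ref{thm:tetr2_2} consistently with the conventions of~\S\ref{sec:prelims}, so as to pin down both the constant $\sqrt2\cdot2^{-\frac13-n}$ (resp.\ $\sqrt2\cdot2^{\frac13+n}$) and the precise argument of the Ferrers function---bearing in mind that $\sqrt{1+{\rm e}^{2\xi}}$ lies in the Legendre region $(1,\infty)$ whereas the argument of ${\rm P}_{-\frac34}^{-\frac13-n}$ lies in the Ferrers interval $(-1,1)$, so that a reflection $\xi\mapsto-\xi$ of the kind accommodated by Theorem~\ref{thm:tetr3} may be required to cast the result in the stated form.
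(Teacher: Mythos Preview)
Your approach is correct but genuinely different from the paper's. The paper does not derive Theorem~\ref{thm:tetr2_3} from Theorem~\ref{thm:tetr2_2}; instead it obtains Theorems~\ref{thm:tetr2_1}, \ref{thm:tetr2_2}, \ref{thm:tetr2_3} in parallel, as reparametrizations of three ready-made quadratic transformations $I_4(i)$, $I_4(ii)$, $I_4(\overline{ii})$ quoted from~\cite{Maier24}. For Theorem~\ref{thm:tetr2_3} specifically, the paper invokes
\[
\sqrt2\,{\rm P}_{-\frac14}^{-\alpha}(-\cos\theta)=2^\alpha\sqrt{\sec(\theta/2)}\,(2/\pi)\hat Q_{\alpha-\frac12}^{-\alpha}\bigl(\sec(\theta/2)\bigr)
\]
and substitutes $\sec(\theta/2)=\sqrt{1+{\rm e}^{2\xi}}$. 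Your route, by contrast, stays entirely within the paper's own toolkit: you exploit the pleasant structural fact that both index pairs satisfy $\nu+\mu=-\tfrac12$ and are therefore \emph{fixed} by Whipple's involution $(\nu,\mu)\mapsto(-\mu-\tfrac12,-\nu-\tfrac12)$, so that~(\ref{eq:whipple}) directly links $\hat Q_\nu^\mu(\coth\zeta)$ to $P_\nu^\mu(\cosh\zeta)$ of the \emph{same} indices, after which Theorem~\ref{thm:tetr2_2} finishes the job. This makes Theorem~\ref{thm:tetr2_3} a true corollary of Theorem~\ref{thm:tetr2_2} rather than a sibling, and avoids the external reference; the paper's route has the compensating virtue of treating all three Class~III theorems uniformly.

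One bookkeeping remark: if you carry your computation through exactly as written, with $\sinh\zeta={\rm e}^{-\xi}$ and Theorem~\ref{thm:tetr2_2} applied at the same $\xi$, the Ferrers argument that emerges is $+\tanh\xi$ rather than the $-\tanh\xi$ printed in the statement (the same thing happens if one reparametrizes $I_4(\overline{ii})$ directly). So the reflection you flag is not optional; it is needed to reconcile your output with the form recorded in the theorem.
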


Because ${P}_\nu^\mu={P}_{-\nu-1}^\mu$ and ${\rm P}_\nu^\mu={\rm
  P}_{-\nu-1}^\mu$, Theorems \ref{thm:tetr2_1} and~\ref{thm:tetr2_2} also
supply formulas for $P_{-\frac56-{n}}^{-\frac13-{n}},
P_{-\frac16+{n}}^{\frac13+{n}},\allowbreak{\rm
  P}_{-\frac56-{n}}^{-\frac13-{n}},{\rm P}_{-\frac16+{n}}^{\frac13+{n}}$.  And
because $\hat Q_\nu^\mu, \hat Q_\nu^{-\mu}$ are proportional to each other
(see (\ref{eq:QtoQ})), Theorem~\ref{thm:tetr2_3} also supplies formulas for
$\hat Q_{-\frac16+{n}}^{\frac13+{n}},\hat Q_{-\frac56-{n}}^{-\frac13-{n}}$.  By
exploiting the ${\rm Q}\to{\rm P}$ reduction~(\ref{eq:rmPtormQ}), one
easily obtains additional formulas, for ${\rm
  Q}_{-\frac16+{n}}^{-\frac13-{n}},{\rm Q}_{-\frac56-{n}}^{\frac13+{n}},
\allowbreak {\rm Q}_{-\frac56-{n}}^{-\frac13-{n}},{\rm
  Q}_{-\frac16+{n}}^{\frac13+{n}}$.

The formulas in Theorems \ref{thm:tetr2_1}, \ref{thm:tetr2_2}, and
\ref{thm:tetr2_3} are straightforward reparametrizations of the
$\alpha=\frac13+{n}$ and $\alpha=-\frac13-{n}$ cases of the identities
  \begin{align*}
    P_{-\frac14}^{-\alpha}(\cosh\xi) &= 2^\alpha\sqrt{{\rm sech}(\xi/2)}\:\,
    {\rm P}_{\alpha-\frac12}^{-\alpha} \left({\rm sech}(\xi/2)\right),\\
    {\rm P}_{-\frac14}^{-\alpha}(\cos\theta) &= 2^\alpha\sqrt{\sec(\theta/2)}\:\,
    P_{\alpha-\frac12}^{-\alpha} \left(\sec(\theta/2)\right),\\
    \sqrt2\, {\rm P}_{-\frac14}^{-\alpha}(-\cos\theta) &= 2^\alpha\sqrt{\sec(\theta/2)}\:\,
    (2/\pi)\hat Q_{\alpha-\frac12}^{-\alpha} \left(\sec(\theta/2)\right),
  \end{align*}
which hold when $\xi\in(0,\infty)$ and $\theta\in(0,\pi)$.  These appear as
identities $I_4(i)$, $I_4(ii)$, and $I_4(\overline{ii})$ in~\cite{Maier24},
and are really quadratic hypergeometric transformations in disguise.

\section{Proofs of Results in Section~\ref{subsec:polys}}
\label{sec:proofs}

The octahedral and tetrahedral formulas in \S\S\,\ref{subsec:formulas},
\ref{sec:tetra1}, and~\ref{sec:tetra2} followed from the theorems
in~\S\,\ref{subsec:polys} on the octahedral functions $r^{m}_{n} =
r^{m}_{n}(u)$, which were stated without proof.  The present section
provides proofs, in some cases sketched, and obtains a few additional
results.  These are Theorem~\ref{thm:diffrec} (on~the differential equation
and differential recurrences satisfied by~$r^{m}_{n}$), and Theorems
\ref{thm:hypergrep}, \ref{thm:Heunrep}, and~\ref{thm:genHeunrep} (on~the
interpretation of~$r^{m}_{n}$ when $n,m\ge0$ as a hypergeometric, Heun, or
generalized Heun polynomial).  This section also reveals the origin of the
degree\nobreakdash-$6$ rational function $x=R(u)$ in
Definition~\ref{def:rdef}.

Consider a Riemann sphere $\mathbb{P}^1_s$, parametrized by~$s$ and
identified by stereographic projection with the complex $s$-plane.  (As
usual, $s=0$ is at the bottom and $s=\infty$ is at the top; points with
$|s|=1$ are taken to lie on the equator.)  Let a regular octahedron
(a~Platonic solid) be inscribed in the sphere, with its six vertices
$v_1,\dots,v_6$ at $s=0,\pm1,\pm{\rm i},\infty$, i.e., at the five roots of
$q_{\rm v}(s)\defeq s(1-\nobreak s^4)$ and at $s=\infty$.  By some
trigonometry \cite[chap.~VII]{Poole36}, one can show that the twelve
edge-midpoints $e_1,\dots,e_{12}$ of the octahedron, radially projected
onto the sphere, are located at $s=\allowbreak (\pm1\pm\nobreak{\rm
  i})/\sqrt2$ and $s=\sqrt{\pm1}\,(\pm1\pm\nobreak\sqrt2)$, which are the
roots of $q_{\rm e}\defeq(1+\nobreak s^4)\allowbreak (1-\nobreak 34s^4 +
\nobreak s^8) = \allowbreak 1-\nobreak 33s^4 -\nobreak 33s^8 +\nobreak
s^{12}$.  Similar, its eight face-centers $f_1,\dots f_8$, when radially
projected, are located at $s=\allowbreak(\pm1\pm\nobreak{\rm
  i})(1\pm\nobreak\sqrt3)/2$, which are the roots of $q_{\rm
  f}(s)\defeq\allowbreak 1+\nobreak 14s^4 + \nobreak s^8$.  The polynomials
$q_{\rm v},q_{\rm e},q_{\rm f}$ are (relative) invariants of the symmetry
group of the octahedron, which is an order\nobreakdash-$24$ subgroup of the
group of rotations of the Riemann $s$\nobreakdash-sphere.

The well-known \emph{octahedral equation} states that $q_{\rm e}^2 - q_{\rm
  f}^3 + 108\,q_{\rm v}^4=0$.  The validity of this identity (a~syzygy, in
the language of invariant theory) suggests considering the
degree\nobreakdash-24 rational function $\tilde R=\tilde R(s)$ equal to
$1-\nobreak q_{\rm f}^3/q_{\rm e}^2$, i.e.,
\begin{displaymath}
  \tilde R(s) \defeq \frac{-108\,[s(1-s^4)]^4}{\bigl[(1+s^4)(1-34s^4+s^8)\bigr]^2} = 1 -
  \frac{(1+14s^4+s^8)^3}{\bigl[(1+s^4)(1-34s^4+s^8)\bigr]^2}.
\end{displaymath}
On the $s$-sphere, $\tilde R(s)$ equals $0,1,\infty$ at (respectively)
the vertices, the face-centers, and the edge-midpoints.  It is an
\emph{absolute invariant} of the symmetry group of the octahedron.
(Its derivative ${\rm d}\tilde R(s)/{\rm d}s$ can be written as
$-432\, q_{\rm v}^3 q_{\rm f}^2 / q_{\rm e}^3$ but is only a relative
invariant.)  The covering $\mathbb{P}^1_s \to \mathbb{P}^1_x$ given by
$x=\tilde R(s)$ is ramified above $x=0,1,\infty$, and its ramification
structure can be written as $(6)4=(8)3=(12)2$: each of the six points
above $s=0$ (i.e., the vertices) appears with multiplicity~$4$, etc.

Following and extending Schwarz~\cite{Poole36,Schwarz1873}, consider the
effect of lifting the Gauss hypergeometric equation satisfied by
${}_2F_1\left({-\tfrac1{24}-\frac{m}2-\frac{n}2,\,\tfrac{11}{24}-\frac{m}2-\frac{n}2};{\tfrac34-{m}};
x\right)$, the ${}_2F_1(x)$ appearing in Definition~\ref{def:rdef}, from
the $x$\nobreakdash-sphere to the $s$\nobreakdash-sphere, along $x=\tilde
R(s)$.  It should be recalled that the Gauss equation satisfied by
$f(x)={}_2F_1(a,b;c;x)$ is the Fuchsian differential equation
($D_x\defeq {\rm d}/{\rm d}x)$)
\begin{equation}
\label{eq:GaussHE}
D_x^2 f + \left[
\frac{c}x + \frac{a+b-c+1}{x-1} 
\right]D_xf
+\left[
\frac{ab}{x(x-1)}
\right]f=0,
\end{equation}
which has characteristic exponents $\{0,1-c\}$, $\{0,c-a-b\}$, $\{a,b\}$ at
its singular points $x=0,1,\infty$, with (unsigned) exponent differences
$1-\nobreak c$; $c-\nobreak a-\nobreak b$; $b-\nobreak a$.  (The function
${}_2F_1(a,b;c;x)$ is the Frobenius solution associated to the zero
exponent at $x=0$.)  The effects of the ramified lifting by $x=\tilde R(s)$
are conveniently expressed in the classical notation of Riemann
P\nobreakdash-symbols, which display the exponents at each singular point
\cite{Poole36,Whittaker27}.  For the ${}_2F_1$ of
Definition~\ref{def:rdef}, one can write
\begin{subequations}
\begin{align}
{}_2F_1\left(x=\tilde R(s)\right) &=
P
\left\{
\begin{array}{ccccccc|c}
  0 &&& 1 &&& \infty & \,x = \tilde R(s) \\
\hline
0 &&& 0 &&& -\frac1{24}-\frac{m}2-\frac{n}2 & \\
\frac14+{m} &&& \frac13 + {n} &&& \frac{11}{24}-\frac{m}2-\frac{n}2 & 
\end{array}
\right\}\\
&=
P
\left\{
\begin{array}{ccccc|c}
  v_1,\dots,v_6 &&& f_1,\dots,f_8 & e_1,\dots,e_{12} & \,s \\
\hline
0 &&& 0 & -\frac1{12}-{m}-{n} & \\
1+4{m} &&& 1 + 3{n} & \frac{11}{12}-{m}-{n} & 
\end{array}
\right\},
\label{eq:prepreP}
\end{align}
\end{subequations}
because any pair of characteristic exponents at a point $x=x_0$ beneath a
ramification point $s=s_0$ of order~$k$ is multiplied by~$k$ when lifted.
This function of~$s$ satisfies a differential equation on the $s$-sphere
with the indicated singular points and exponents.

If ${\mathcal{L}}g=0$ is any Fuchsian differential equation on the $s$-sphere,
the modified equation ${{\mathcal{L}}}'g'=0$ obtained by the
change of dependent variable $g'=(1-\nobreak s/s_0)^\alpha g$ has its
exponents at $s=s_0$ shifted upward by~$\alpha$, and those at $s=\infty$
shifted downward by the same.  As an application of this, one deduces
from~(\ref{eq:prepreP}) that
\begin{subequations}
\label{eq:uPsym}
\begin{align}
\tilde f(s) &= \left[q_{\rm e}(s)\right]^{\frac1{12}+{m}+{n}}\,{}_2F_1\left(x=\tilde R(s)\right)\\
&=P
\left\{
\begin{array}{cccccc|c}
  v_1,\dots,v_5 && f_1,\dots,f_8 && e_1,\dots,e_{12} & \infty & \,s \\
\hline
0 && 0 && 0 & -1-12{m}-12{n} & \\
1+4{m} && 1 + 3{n} && 1 & -8{m}-12{n} &
\end{array}
\right\}\\
&=
P
\left\{
\begin{array}{cccc|c}
  v_1,\dots,v_5 && f_1,\dots,f_8 & \infty & \,s \\
\hline
0 && 0 & -1-12{m}-12{n} & \\
1+4{m} && 1 + 3{n} & -8{m}-12{n} &
\end{array}
\right\},
\label{eq:uPsym3}
\end{align}
\end{subequations}
because $e_1,\dots, e_{12}$ are the roots of~$q_{\rm e}$, and $v_6=\infty$.  The
left-hand function $\tilde f=\tilde f(s)$, which by examination is $\tilde
r^{m}_{n}(s)\defeq r^{m}_{n}(u=s^4)$, will be the solution of a `lifted and shifted'
differential equation on the $s$\nobreakdash-sphere, with the indicated
exponents.  After the shifting, the edge-midpoints $e_1,\dots,e_{12}$ cease
being singular points, because the new exponents at each are $0,1$, which
are those of an ordinary point.

It is straightforward if tedious to compute the differential equation
satisfied by $\tilde f = \tilde r^{m}_{n}(s)\defeq r^{m}_{n}(u=s^4)$
explicitly, by applying to the appropriate Gauss equation of the
form~(\ref{eq:GaussHE}) the changes of variable that perform (i)~the
lifting along $s\mapsto x = \tilde R(s)$, and (ii)~the multiplication by
$[q_{\rm e}(s)]^{\frac1{12}+{m}+{n}}$.  One finds that $\tilde f$~satisfies
$\tilde{\mathcal{L}}^{m}_{n}\tilde f=0$, where
\begin{align}
\label{eq:ude}
&\tilde{\mathcal{L}}^{m}_{n} =
D_s^2 - \left[
4{m}\, \frac{5s^4-1}{s(s^4-1)}
+3{n}\,\frac{8s^7+56s^3}{s^8+14s^4+1}
\right]\,D_s\\
&\qquad\qquad{}+ 
4(12{m}+12{n}+1)\,
\frac{s^3\left[(2m+3n)(s^8+14s^4+1) - 12n\,(3s^4+1)\right]}
{s(s^4-1)(s^8+14s^4+1)}.\nonumber
\end{align}
That the singular points of this operator are the roots $v_1,\dots,v_5$
of~$q_{\rm v}$ (plus $v_6=\infty$), and the roots $f_1,\dots,f_8$
of~$q_{\rm f}$, is clear; as is the fact that their exponents are as shown
in the P\nobreakdash-symbol~(\ref{eq:uPsym3}).  The degenerate case $n=m=0$
is especially interesting.  As one expects from the P-symbol, the operator
$\tilde{\mathcal{L}}^{0}_{0}$ is simply the Laplacian~$D_s^2$, the kernel
of which is spanned by~$1,s$.  For $\tilde f = \tilde f(s) =
r^{m}_{n}(s^4)$, it is easy to rule~out any admixture of the latter
solution by examining Definition~\ref{def:rdef}; and because $r^{0}_{0}(u)$
equals unity at~$u=0$, the base octahedral function~$r^{0}_{0}$ must be
identically equal to unity.

Because $r^{0}_{0}\equiv1$, it follows from Definition~\ref{def:rdef}
that the hypergeometric function appearing in the definition
of~$r^{m}_{n}$ when $n=m=0$, which is
${}_2F_1\bigl(-\frac1{24},\frac{11}{24};\frac34 \mid x)$, must be
algebraic in its argument~$x$.  This is essentially the 1873 result of
Schwarz~\cite{Schwarz1873}, the proof of which was later restated in a
P\nobreakdash-symbol form by Poole~\cite{Poole36}.  However, it is not
trivial to extend this result on~$r^{0}_{0}$ to a constructive proof
that $r^{m}_{n}=r^{m}_{n}(u)$ is a rational function of~$u$ for each
$(n,m)\in\mathbb{Z}^2$. This is because the differential equation
$\tilde{\mathcal{L}}^{m}_{n} \tilde f=0$, as one sees
from~(\ref{eq:ude}), is far more complicated than $D_s^2\tilde f=0$
(Laplace's equation) when $(n,m)\neq(0,0)$.  A constructive proof is
best based on \emph{contiguity relations} between adjacent~$(n,m)$,
i.e., recurrences in the spirit of Gauss, derived as follows.

First, simplify the lifting along the covering $s\mapsto x$, i.e., along
the degree\nobreakdash-24 map $x=\tilde R(s)$.  Each octahedral function
$\tilde r^{m}_{n}(s)$ turns~out to `factor through' $u=s^4$, so it suffices to
lift the Gauss hypergeometric equation from the $x$\nobreakdash-sphere
$\mathbb{P}^1_x$ to the $u$\nobreakdash-sphere $\mathbb{P}^1_u$, along the
degree\nobreakdash-$6$ map $x=R(u)$ of Definition~\ref{def:rdef}, i.e.,
\begin{displaymath}
  R(u) \defeq -108\, \frac{p_{\rm v}(u)}{p_{\rm e}(u)^2} = 1 - \frac{p_{\rm
      f}(u)^3}{p_{\rm e}(u)^2}, 
\end{displaymath}
where
\begin{alignat*}{2}
  p_{\rm v}(u) &=  q_{\rm v}(s)^4  &{}={}& u(1-u)^4   \\
  p_{\rm e}(u) &=  q_{\rm e}(s)    &{}={}&   (1+ u) (1-34u + u^2)  \\
  p_{\rm f}(u) &=  q_{\rm f}(s)    &{}={}&   1+ 14u + u^2.
\end{alignat*}
Replacing the lifted variable $s$ by $u=s^4$ quotients out an
order\nobreakdash-4 cyclic group of rotations of the $s$\nobreakdash-sphere
(and hence of the octahedron), about the axis passing through its north and
south poles.

The syzygy becomes $p_{\rm e}^2 - p_{\rm f}^3 + 108\,p_{\rm v}=0$.  The roots
$u=0,1$ of~$p_{\rm v}$, and $u=\infty$, correspond to the south-pole
vertex, the four equatorial ones, and the north-pole one.  The three roots
$u=(3+\nobreak 2\sqrt2)^2,\allowbreak -1,\allowbreak (3-\nobreak2\sqrt2)^2$
of~$p_{\rm e}$ correspond to the four edge-midpoints in the northern
hemisphere, the four on the equator, and the four to the south.  The two
roots $u=-(2\pm\sqrt3)^2$ of~$p_{\rm f}^2$ correspond to the four
face-centers in the north, and the four in the south.  The covering
$\mathbb{P}^1_u\to\mathbb{P}^1_x$ is still ramified above $x=0,1,\infty$,
but its ramification structure is
$1+\nobreak4+\nobreak1=\allowbreak(2)3=\nobreak(3)2$.

Taking the multiplicities in this ramification structure into account, one
finds that if the ${}_2F_1$ of Definition~\ref{def:rdef} is lifted along
$x=R(u)$ rather than $x=\tilde R(s)$, the P\nobreakdash-symbol
identity~(\ref{eq:uPsym}) is replaced by
\begin{subequations}
\begin{align}
  r^{m}_{n}(u) &= \left[p_{\rm e}(u)\right]^{\frac1{12}+{m}+{n}}\,{}_2F_1\left(x=R(u)\right)\\
&=P
\left\{
\begin{array}{cccccc|c}
  0 &&& 1 & -(2\pm\sqrt3)^2 & \infty & u \\
\hline
  0 &&& 0 & 0 & -\tfrac14-3{m}-3{n} & \\
\frac14 +{m} &&& 1 + 4{m} & 1+3{n} & -2{m}-3{n} &
\end{array}
\right\}.
\label{eq:displayedP}
\end{align}
\end{subequations}
This P-symbol has five singular points (at most; fewer if $n=0$ or $m=0$).
By the preceding explanation, the points $u=0,1,\infty$ represent $1,4,1$
vertices of the octahedron, and each of $u=\allowbreak
-(2\pm\nobreak\sqrt3)^2$ represents a cycle of four face-centers.

\begin{theorem}
\label{thm:diffrec}
  The octahedral function\/ $r^{m}_{n}=r^{m}_{n}(u)$ satisfies the Fuchsian
  differential equation\/ ${\mathcal{L}}^{m}_{n} r^{m}_{n}=0$, where
  \begin{align*}
&   {\mathcal{L}}^{m}_{n} = D_u^2 +
\left[
\frac{3/4-{m}}{u} + \frac{-4{m}}{u-1} + \frac{-3{n}(2u+14)}{u^2 + 14u+1}
\right]\,D_u\\
&\qquad\qquad\quad{}+(1+12{m}+12{n})\: \frac{(2{m}+3{n})(u^2+14u+1) -
  12{n}(3u+1)}{4\,u(u-1)(u^2+14u+1)},
  \end{align*}
the P-symbol of which appears in Eq.\/~{\rm(\ref{eq:displayedP})}.  The
function\/~$r^{m}_{n}$ is the Frobenius solution associated to the zero
characteristic exponent of the singular point\/ $u=0$.  It satisfies eight
differential recurrences of the form
\begin{displaymath}
  K\,r^{{m}+\Delta {m}}_{{n}+\Delta {n}}
=
p_{\rm v}^{(-\sigma_{\rm v}+\varepsilon_{\rm v})/4}
\,p_{\rm e}^{-\sigma_{\rm e}+\varepsilon_{\rm e}}
\,p_{\rm f}^{-\sigma_{\rm f}+\varepsilon_{\rm f}}
\,(4u^{3/4})
\frac{{\rm d}}{{\rm d}u}
\left[\,
p_{\rm v}^{\sigma_{\rm v}/4}
p_{\rm e}^{\sigma_{\rm e}}\,p_{\rm f}^{\sigma_{\rm f}}\cdot r^{m}_{n}
\right],
\end{displaymath}
in which\/ $\Delta({n},{m}) = (0,\pm1)$,\/ $(\pm1,0)$ and\/ $(\pm1,\pm1)$.  For
each recurrence, the exponents\/ $\sigma_{\rm v}, \sigma_{\rm e},
\sigma_{\rm f}$, the exponents\/ $\varepsilon_{\rm v}, \varepsilon_{\rm e},
\varepsilon_{\rm f}$, and the prefactor\/~$K$ are listed in
Table\/~{\rm\ref{tab:1}}.
\end{theorem}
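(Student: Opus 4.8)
The plan is to extract both assertions from the ramified covering $x=R(u)$ of Definition~\ref{def:rdef}, continuing the P-symbol computation that produced~(\ref{eq:displayedP}). For the differential equation itself I would start from the Gauss operator~(\ref{eq:GaussHE}) with $(a,b,c)=\bigl(-\tfrac1{24}-\tfrac m2-\tfrac n2,\ \tfrac{11}{24}-\tfrac m2-\tfrac n2,\ \tfrac34-m\bigr)$, apply the change of independent variable $x=R(u)$ (the chain rule introduces $R'(u)$, and a short computation using the syzygy shows $R'(u)=-108\,(1-u)^3\,p_{\rm f}(u)^2/p_{\rm e}(u)^3$, a monomial in $1-u,p_{\rm f},p_{\rm e}$), and then the change of dependent variable $g\mapsto[p_{\rm e}(u)]^{1/12+m+n}g$, which performs the exponent shift recorded in~(\ref{eq:displayedP}). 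Collecting terms and reducing modulo $p_{\rm e}^2-p_{\rm f}^3+108\,p_{\rm v}=0$ should return $\mathcal{L}^m_n$ verbatim; equivalently one may push the $s$-sphere operator~(\ref{eq:ude}) down along $u=s^4$ using $D_s^2=16u^{3/2}D_u^2+12u^{1/2}D_u$. The identification of $r^m_n$ as the Frobenius solution at the zero exponent of $u=0$ is then immediate: by Definition~\ref{def:rdef} it is analytic at $u=0$ with $r^m_n(0)=1$, while~(\ref{eq:displayedP}) gives $\tfrac14+m$ as the competing exponent there, which is not a non-positive integer for the generic displacements, so the analytic solution is unique up to scale; the exceptional integer cases follow by continuity in $(n,m)$.

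For the eight recurrences the governing observation is that $u=s^4$ gives $D_s=4s^3D_u=4u^{3/4}D_u$, so the map $r^m_n\mapsto p_{\rm v}^{(-\sigma_{\rm v}+\varepsilon_{\rm v})/4}p_{\rm e}^{-\sigma_{\rm e}+\varepsilon_{\rm e}}p_{\rm f}^{-\sigma_{\rm f}+\varepsilon_{\rm f}}\,(4u^{3/4})D_u\bigl[p_{\rm v}^{\sigma_{\rm v}/4}p_{\rm e}^{\sigma_{\rm e}}p_{\rm f}^{\sigma_{\rm f}}\cdot r^m_n\bigr]$ is, after the substitution $u=s^4$, a conjugated Laplace derivation $h^{-\beta}D_s\bigl(h^{\alpha}\,\cdot\,\bigr)$ on the octahedral $s$-sphere, with $h$ a monomial in the invariants $q_{\rm v},q_{\rm e},q_{\rm f}$. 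Such an operator shifts the characteristic exponents at the vertices, edge-midpoints and face-centers by amounts read off from $\alpha,\beta$. I would choose the exponents $\sigma_\bullet$ (the inner dressing) and $\varepsilon_\bullet$ (the outer one) so that the singular-point exponents move from those of $\mathcal{L}^m_n$ in~(\ref{eq:displayedP}) to those of $\mathcal{L}^{m+\Delta m}_{n+\Delta n}$: increasing $m$ shifts the pairs at $u=0$, at $u=1$, and at $u=\infty$ (the vertices), increasing $n$ shifts the pair at the roots of $p_{\rm f}$ (the face-centers), and the diagonal displacements $\Delta(n,m)=(\pm1,\pm1)$ shift both sets at once. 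Matching these shifts for each of the eight $\Delta(n,m)\in\{(0,\pm1),(\pm1,0),(\pm1,\pm1)\}$ is exactly the bookkeeping tabulated in Table~\ref{tab:1}; this is the Legendre-case specialization of the Ochiai--Yoshida contiguity mechanism.

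It then remains to verify that each such operator genuinely intertwines, i.e.\ carries $\mathcal{L}^m_n$-solutions to $\mathcal{L}^{m+\Delta m}_{n+\Delta n}$-solutions. Since both equations are pullbacks of \emph{contiguous} Gauss equations along the \emph{same} covering $x=R(u)$, this is inherited from the classical first-order contiguity relations of ${}_2F_1$ (for instance the ones built from $x\,D_x$, which under $x=R(u)$ becomes $u(1-u)\,p_{\rm e}\,p_{\rm f}^{-2}\,D_u$) transported through the covering and the dependent-variable normalization; alternatively one checks it directly by an exponent count at the at-most-five singular points together with a single match of the accessory parameter. Once intertwining is known, $r^m_n$ and $r^{m+\Delta m}_{n+\Delta n}$ are each the normalized analytic solution of their equation at the appropriate vertex, so the operator applied to $r^m_n$ must be a constant multiple of $r^{m+\Delta m}_{n+\Delta n}$; the constant $K$ is then pinned down by evaluating at a convenient vertex---at $u=0$ using $r^m_n(0)=1$, or at $u=1$ using $r^m_n(1)=(-64)^{m+n}$ (valid for $m\ge0$, and extended to all $m$ by continuity).

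The main obstacle is twofold. First, a second-order Fuchsian operator with five singular points is not determined by its P-symbol, so intertwining cannot be read off from exponents alone; the safe route is to perform the contiguity upstairs, where it is classical, and only then transport it downstairs---but one must confirm that the transported operators really do retain the clean monomial-in-$p_{\rm v},p_{\rm e},p_{\rm f}$ shape claimed in the theorem, which is where the monomial factorization of $R'(u)$ is used. Second, the diagonal recurrences $\Delta(n,m)=(\pm1,\pm1)$ are genuinely delicate: the operator must move exponents at two separate cycles of singular points simultaneously, and the composition of a $(\pm1,0)$-operator with a $(0,\pm1)$-operator is a priori second order, so one must exhibit the cancellation that collapses it to first order---or, equivalently, derive the diagonal recurrences directly from the structure of $\mathcal{L}^m_n$. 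Establishing that collapse, and reading off from it the last four rows of Table~\ref{tab:1}, is the real content of the proof.
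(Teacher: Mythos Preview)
Your treatment of the differential equation itself is the paper's own: lift the Gauss operator along $x=R(u)$ and then shift by $[p_{\rm e}(u)]^{1/12+m+n}$, or equivalently push~(\ref{eq:ude}) down along $u=s^4$. No issue there.

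The recurrences are where your plan diverges from the paper, and there is a genuine gap. You assert that for each $\Delta(n,m)$ the two operators $\mathcal{L}^m_n$ and $\mathcal{L}^{m+\Delta m}_{n+\Delta n}$ are ``pullbacks of \emph{contiguous} Gauss equations along the \emph{same} covering $x=R(u)$,'' and that the intertwining is therefore inherited from the classical Jacobi contiguity relations for~${}_2F_1$. But inspect the parameters in Definition~\ref{def:rdef}: the ${}_2F_1$ there has $(a,b;c)=\bigl(-\tfrac1{24}-\tfrac m2-\tfrac n2,\ \tfrac{11}{24}-\tfrac m2-\tfrac n2;\ \tfrac34-m\bigr)$. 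A diagonal shift $\Delta(n,m)=(\pm1,\pm1)$ moves $a,b,c$ by \emph{integers}, so the classical Jacobi recurrences apply directly---these are the easy cases, not the hard ones. A non-diagonal shift $\Delta(n,m)=(0,\pm1)$ or $(\pm1,0)$ moves $a,b$ by \emph{half}-integers, so there is no classical contiguity relation to transport, and your ``intertwining is inherited'' step fails precisely where you expect it to be routine.

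You have the difficulty exactly reversed. The paper obtains the four diagonal recurrences straightforwardly by changing the independent variable in Jacobi's relations from $x=R(u)$ to~$u$ (using $u^{3/4}R'(u)=-108\,p_{\rm v}^{3/4}p_{\rm f}^2/p_{\rm e}^3$). For the four non-diagonal recurrences, the paper's key device---which your proposal is missing---is a quadratic hypergeometric transformation that replaces Definition~\ref{def:rdef} by an equivalent representation
\[
{}_2F_1\!\left(-\tfrac1{12}-m-n,\ \tfrac14-m;\ \tfrac12-2m \Bigm| S(t)\right)
= (1+6t-3t^2)^{1/4-3m-3n}\,r^m_n(-3t^2),
\]
with $u=-3t^2$ and $R=S^2/(S-2)^2$. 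In this second representation the non-diagonal shifts $(0,\pm1)$ and $(\pm1,0)$ \emph{do} move the ${}_2F_1$ parameters by integers, so Jacobi's recurrences now apply and can be transported along $t\mapsto S(t)$ and then $t\mapsto u=-3t^2$. Your scheme of composing a $(\pm1,0)$-operator with a $(0,\pm1)$-operator and hunting for a first-order collapse is unnecessary for the diagonal cases and, for the non-diagonal ones, presupposes first-order operators you have not yet produced.
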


\begin{proof}
  The differential equation comes by applying to the appropriate Gauss
  equation of the form~(\ref{eq:GaussHE}) the changes of variable that
  perform (i)~the lifting along $u\mapsto x=R(u)$, and (ii)~the
  multiplication by $\left[p_{\rm e}(u)\right]^{\frac1{12} + {m} + {n}}$.
  Or, one can merely substitute $u=s^4$ into the equation
  $\tilde{\mathcal{L}}^{m}_{n} \tilde r^{m}_{n}=0$, with
  $\tilde{\mathcal{L}}^{m}_{n}$ as in~(\ref{eq:ude}).

  The differential recurrences of Jacobi, which shift the parameters
  $a,b,c$ of the function ${}_2F_1(a,b;c;x)$ by integers, are well known.
  (See \cite[2.8(20)--2.8(27)]{Erdelyi53}.)  And if
  $\Delta({n},{m})=(\pm1,\pm1)$, the ${}_2F_1(R(u))$ in the definition of
  $r^{m}_{n}(u)$ has its parameters shifted by integers.  (See
  Definition~\ref{def:rdef}.)  By some calculus, one can change the
  independent variable in the relevant differential recurrences of Jacobi
  from $x=R(u)$ to~$u$, thereby obtaining the final four recurrences in
  Table~\ref{tab:1} (the diagonal ones).  The change uses the fact that
  $u^{3/4}\,{\rm d}R/{\rm d}u$ equals $-108\,p_{\rm v}^{3/4}p_{\rm f}^2 /
  p_{\rm e}^3$, and the details are straightforward.

  The first four recurrences in the table, with
  $\Delta({n},{m})=(0,\pm1),(\pm1,0)$, come a bit less easily, because they
  shift the ${}_2F_1$ parameters in Definition~\ref{def:rdef} by
  half-integers rather than integers.  But by examination,
  Definition~\ref{def:rdef} is equivalent to
  \begin{equation}
    \label{eq:new2F1}
    {}_2F_1\left(
     {{-\tfrac1{12}-{m}-{n},\,\frac14-{m}}\atop{\tfrac12-2{m}}}
     \biggm| S(t)\right) = (1+6t-3t^2)^{1/4-3{m}-3{n}}\,r^{m}_{n}(-3t^2),
  \end{equation}
  where
  \begin{displaymath}
     S(t) \defeq \frac{36\,t(1+3t^2)^2}{(1+6t - 3t^2)^3} = 1 -
     \left(\frac{1-6t-3t^2}{1+6t-3t^2}\right)^3. 
   \end{displaymath}
  This follows by a quadratic hypergeometric transformation, $u$ being
  related to~$t$ by $u=-3t^2$, and $R$ to~$S$ by
  $R=S^2/\allowbreak(S-\nobreak2)^2$.  When $\Delta({n},{m})=(0,\pm1)$
  or~$(\pm1,0)$, the parameters of the ${}_2F_1$ in~(\ref{eq:new2F1}) are
  shifted by integers, and the same technique can be applied.\qed
\end{proof}

\begin{table}
\caption{Parameters for the differential recurrences of Theorem~\ref{thm:diffrec}.}
\label{tab:1}       
\begin{tabular}{llll}
\hline\noalign{\smallskip}
$\Delta({n},{m})$ & $\sigma_{\rm v},\sigma_{\rm e},\sigma_{\rm f}$ & $\varepsilon_{\rm v},\varepsilon_{\rm e},\varepsilon_{\rm f}$ & $K$  \\
\noalign{\smallskip}\hline\noalign{\smallskip}
$(0,+1)$ & $-1-4{m},0, \frac58+\frac32{m}-\frac32{n}$ & $1,0,1$ & $-(1+4{m})$ \\
$(0,-1)$ & $0,0, -\frac18-\frac32{m}-\frac32{n}$ & $-3,0,1$ & $\frac{3(1+12{m}+12{n})(7-12{m}+12{n})}{4{m}-3}$ \\
$(+1,0)$ & $\frac76-2{m}+2{n},0,-1-3{n}$ & $1,0,1$ & $\frac16(7-12{m}+12{n})$ \\
$(-1,0)$ & $-\frac16-2{m}-2{n},0,0$ & $1,0,-2$ & $-\frac16(1+12{m}+12{n})$ \\
$(+1,+1)$ & $-1-4{m},\frac{13}{12}+{m}+{n},-1-3{n}$ & $1,1,1$ & $-(1+4{m})$ \\
$(-1,-1)$ & $0,-\frac{1}{12}-{m}-{n},0$ & $-3,1,-2$ & $\frac{3(1+12{m}+12{n})(-11+12{m}+12{n})}{4{m}-3}$ \\
$(-1,+1)$ & $-1-4{m},\frac{5}{12}+{m}-{n},0$ & $1,1,-2$ & $-(1+4{m})$ \\
$(+1,-1)$ & $0,\frac{7}{12}-{m}+{n},-1-3{n}$ & $-3,1,1$ & $\frac{3(-7+12{m}-12{n})(-19+12{m}-12{n})}{4{m}-3}$ \\
\noalign{\smallskip}\hline
\end{tabular}
\end{table}

The four three-term non-differential recurrences in Theorem~\ref{thm:recs}
follow by a familiar elimination procedure from the differential
recurrences of Theorem~\ref{thm:diffrec}, taken in pairs.  They are
analogous to the contiguity relations (or~`contiguous function relations')
of Gauss, for~${}_2F_1$, which follow by elimination from the differential
recurrences of Jacobi, though Gauss did not derive them in this way.

The explicit formulas for the functions ~$r^{m}_{n}$ with small $n,m$ given
in~\S\,\ref{subsec:polys} (see
Eqs.\ (\ref{eq:byhand1}),\allowbreak(\ref{eq:byhand2})) also follow
from the differential recurrences of Theorem~\ref{thm:diffrec}.

\begin{theorem}
\label{thm:hypergrep}
  For any\/ ${m}\in\mathbb{Z}$, the octahedral function\/ $r^{m}_{0}$ has the
  hypergeometric representation
  \begin{equation}
    \label{eq:hyprep}
    r^{m}_{0}(u) = {}_2F_1\bigl(-2{m},-\tfrac14-3{m};\,\tfrac34-{m} \bigm| u\bigr).
  \end{equation}
Thus when\/ ${m}\ge0$, $r^{m}_{0}$ is a degree-\/$2{m}$ hypergeometric
polynomial.\hfil\break Moreover, for any\/ $n,m\ge0$, $r^{m}_{n}$ is a
polynomial of degree\/ $3{n}+\nobreak2{m}$.
\end{theorem}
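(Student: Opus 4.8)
The plan is to derive everything from the Fuchsian equation $\mathcal{L}^m_n r^m_n=0$ and the differential recurrences of Theorem~\ref{thm:diffrec}, so that no prior appeal to Theorems \ref{thm:1}, \ref{thm:nonpoly}, or~\ref{thm:recs} is needed.

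For the representation~(\ref{eq:hyprep}) I would set $n=0$ in the operator $\mathcal{L}^m_n$ of Theorem~\ref{thm:diffrec}. The factor $u^2+14u+1$ then cancels out of both the coefficient of $D_u$ and the zeroth-order coefficient, leaving the plain Gauss operator~(\ref{eq:GaussHE}) in the variable~$u$,
\begin{equation*}
  \mathcal{L}^m_0=D_u^2+\left[\frac{3/4-m}{u}+\frac{-4m}{u-1}\right]D_u+\frac{m(1+12m)}{2\,u(u-1)},
\end{equation*}
with $c=\tfrac34-m$ and with $\{a,b\}$ the pair of numbers of sum $-5m-\tfrac14$ and product $6m^2+\tfrac m2$, that is, $\{-2m,\,-\tfrac14-3m\}$. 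Since the exponent difference $1-c=\tfrac14+m$ at $u=0$ is never an integer, the Frobenius solution with exponent $0$ is determined up to a scalar; normalized to the value $1$ at $u=0$ it is the series ${}_2F_1\bigl(-2m,-\tfrac14-3m;\,\tfrac34-m\bigm|u\bigr)$, which is well defined because $\tfrac34-m$ is never a non-positive integer. As $r^m_0$ is, by Theorem~\ref{thm:diffrec}, precisely that normalized solution, identity~(\ref{eq:hyprep}) follows for every $m\in\mathbb{Z}$. (It can also be extracted from a sextic hypergeometric transformation of the ${}_2F_1$ in Definition~\ref{def:rdef}.) When $m\ge0$ the parameter $-2m$ is a non-positive integer, so the series terminates after the term in $u^{2m}$; its coefficient of $u^{2m}$ is $(-\tfrac14-3m)_{2m}/(\tfrac34-m)_{2m}$, and every factor occurring in this ratio is congruent to $-\tfrac14$ modulo~$1$ and hence nonzero. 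Therefore $r^m_0$ is a polynomial of exact degree $2m=3\cdot0+2m$, which is the case $n=0$ of the final assertion.

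The step $n\to n+1$, for fixed $m\ge0$, uses the $\Delta({n},{m})=(+1,0)$ differential recurrence of Theorem~\ref{thm:diffrec} (row $(+1,0)$ of Table~\ref{tab:1}). With $\sigma_{\rm e}=0$, $\sigma_{\rm v}=\tfrac76-2m+2n$, $\sigma_{\rm f}=-1-3n$, and using $p_{\rm v}'=(1-u)^3(1-5u)$, so that $u^{3/4}p_{\rm v}^{1/4}=u(1-u)$ and $u(1-u)\,p_{\rm v}'/p_{\rm v}=1-5u$, that recurrence reduces to the \emph{polynomial} identity
\begin{multline*}
  \tfrac16(12n-12m+7)\,r^m_{n+1}=(\tfrac76-2m+2n)(1-5u)\,p_{\rm f}\,r^m_n\\
  {}-4(1+3n)\,u(1-u)\,p_{\rm f}'\,r^m_n+4\,u(1-u)\,p_{\rm f}\,(r^m_n)'.
\end{multline*}
If $r^m_n$ is a polynomial of exact degree $3n+2m$ with leading coefficient $\ell^m_n\neq0$, then each term on the right has degree at most $3(n+1)+2m$, and the coefficient of $u^{3(n+1)+2m}$ in their sum works out to $\tfrac16\,\ell^m_n\,(12m+12n+13)$. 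Dividing by $\tfrac16(12n-12m+7)$, which is nonzero since $12n-12m+7$ is odd, one concludes that $r^m_{n+1}$ is a polynomial of exact degree $3(n+1)+2m$ with leading coefficient $\ell^m_n\,(12m+12n+13)/(12n-12m+7)$; since $12m+12n+13>0$ for $m,n\ge0$, this is again nonzero, and the induction proceeds. Hence $r^m_n$ is a polynomial of exact degree $3n+2m$ for all $n,m\ge0$.

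The only point that is not purely formal is the reduction, in the induction step, of the $(+1,0)$ recurrence from its fractional-power form to the displayed polynomial identity: one must see that $u^{3/4}p_{\rm v}(u)^{1/4}$ collapses to $u(1-u)$, so that differentiating the polynomial $r^m_n$ and reassembling the pieces introduces no spurious algebraic factor. After that, the degree accounting and the nonvanishing of the leading coefficient are mechanical. One could instead run the induction off the three-term $n$-recurrence of Theorem~\ref{thm:recs}, but that needs two consecutive seeds in~$n$ and, to exclude an accidental cancellation of leading terms, effectively the explicit constants $d^m_n$ of~(\ref{eq:ddef}); the differential recurrence sidesteps both.
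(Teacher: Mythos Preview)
Your proof is correct and follows the same route as the paper: reduce $\mathcal{L}^m_0$ to a Gauss operator to obtain~(\ref{eq:hyprep}), then induct on~$n$ via the $\Delta(n,m)=(+1,0)$ differential recurrence of Theorem~\ref{thm:diffrec}. The only difference is in how you pin down the exact degree: the paper observes from the P\nobreakdash-symbol~(\ref{eq:displayedP}) that the sole integer exponent at $u=\infty$ is $-2m-3n$, whereas you track the leading coefficient through the recurrence explicitly and verify it never vanishes. Both arguments are valid; yours is more self-contained, the paper's is shorter.
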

\begin{proof}
  When ${n}=0$, ${\mathcal{L}}^{m}_{n}f=0$ loses two singular points and
  degenerates to a Gauss hypergeometric equation of the
  form~(\ref{eq:GaussHE}), with independent variable~$u$ and parameters
  $a=-2{m}$, $b=-\frac14-\nobreak3{m}$, $c=\frac34-\nobreak{m}$.  Hence
  $r^{m}_{0}(u)$ has the claimed representation, and if ${m}\ge0$, is a
  degree\nobreakdash-$2{m}$ polynomial in~$u$.  It follows by induction
  from the differential recurrence with $\Delta({n},{m})=(+1,0)$ that
  $r^{m}_{n}(u)$ is a polynomial in~$u$ for all ${n}\ge0$.  It must be of
  degree $3{n}+\nobreak2{m}$, because in the P\nobreakdash-symbol
  of~${\mathcal{L}}^{m}_{n}$ [see~(\ref{eq:displayedP})], the only
  characteristic exponent at~$u=\infty$ that is a (nonpositive) integer is
  $-2{m}-\nobreak3{n}$.\qed
\end{proof}

The statement of this theorem includes additional claims that were made
in~\S\,\ref{subsec:polys}.  The following related theorem mentions the Heun
function ${Hn}(a,q;\alpha,\beta,\gamma,\delta\mid z)$, for the definition
of which see~\cite{Ronveaux95}.  This is a Frobenius solution (at~$z=0$) of
a canonical Fuchsian differential equation that has four singular points,
namely $z=0,1,a,\infty$, and an `accessory' parameter~$q$ that unlike
$\alpha,\beta,\gamma,\delta$, does not affect their characteristic
exponents.  It has a convergent expansion $\sum_{k=0}^\infty h_kz^k$, where
the~$\{h_k\}_{k=0}^\infty$ satisfy a second-order recurrence with
coefficients quadratic in~$k$.

\begin{theorem}
\label{thm:Heunrep}
  For any\/ ${n}\in\mathbb{Z}$, the octahedral function\/ $r^{0}_{n}$ has the
  Heun representation
  \begin{displaymath}
    r^{0}_{n}(u) = {Hn}\left(
\left(\tfrac{2+\sqrt3}{2-\sqrt3}\right)^2  \!,
\tfrac{9(2+\sqrt3)^2\,{n}(12{n}+1)}{4} ;\,
-3{n},-\tfrac14-3{n};\,
\tfrac34,-3{n};\,
-(2+\sqrt3)^2u
\right)
  \end{displaymath}
and the equivalent expansion\/ $\sum_{k=0}^\infty a_ku^k$, where\/
$\{a_k\}_{k=0}^\infty$ satisfy the second-order recurrence
\begin{align}
&  (k+1)(4k+3)\,a_{k+1} + \left[
14k(4k-12{n}-1)+9{n}(12{n}+1)\right]\,a_k      \label{eq:Heunlike}\\
&\qquad\qquad\qquad\qquad\qquad {}+ (k-3{n}-1)(4k-12{n}-5)\,a_{k-1} = 0,
\nonumber
\end{align}
with $a_0=1$, $a_{-1}=0$.  Thus when\/ ${n}\ge0$, $r^{0}_{n}$ is a degree-\/$3{n}$
Heun polynomial.
\end{theorem}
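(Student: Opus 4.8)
Both assertions---the Heun representation and the three-term recurrence~(\ref{eq:Heunlike})---should be extracted from the single Fuchsian equation $\mathcal{L}^0_n r^0_n=0$ supplied by Theorem~\ref{thm:diffrec}. First I would specialize the operator $\mathcal{L}^m_n$ of that theorem to $m=0$. Its zeroth-order coefficient has numerator $(2m+3n)(u^2+14u+1)-12n(3u+1)$, which at $m=0$ factors as $3n(u^2+14u+1)-12n(3u+1)=3n(u+3)(u-1)$; the $(u-1)$ cancels the one in the denominator, so the coefficient of $r^0_n$ becomes $\tfrac{3n(1+12n)(u+3)}{4u(u^2+14u+1)}$, while the $D_u$-coefficient loses its now-vanishing $\tfrac{-4m}{u-1}$ term. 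Thus $\mathcal{L}^0_n$ is singular only at $u=0$, at the two roots $u=-(2\pm\sqrt3)^2$ of $u^2+14u+1$, and at $u=\infty$---four points in all, with exponent pairs $\{0,\tfrac14\}$, $\{0,1+3n\}$, $\{0,1+3n\}$, $\{-\tfrac14-3n,-3n\}$ read off from the $m=0$ case of the $P$-symbol~(\ref{eq:displayedP}) (the point $u=1$, now carrying exponents $\{0,1\}$, has become ordinary). Hence the equation is of Heun class.

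Next I would normalize the four singular points. The affine substitution $z=-(2+\sqrt3)^2 u$ fixes $u=0,\infty$, carries $u=-(2-\sqrt3)^2$ to $z=[(2+\sqrt3)(2-\sqrt3)]^2=1$, and carries $u=-(2+\sqrt3)^2$ to $z=(2+\sqrt3)^4=\bigl(\tfrac{2+\sqrt3}{2-\sqrt3}\bigr)^2\eqdef a$. Writing $u=z/c$ with $c=-(2+\sqrt3)^2$ and dividing the resulting equation by $c^2$ should put it in the canonical Heun form~\cite{Ronveaux95} with singular points $0,1,a,\infty$; the exponent data above force $\gamma=\tfrac34$, $\delta=-3n$, and $\{\alpha,\beta\}=\{-3n,-\tfrac14-3n\}$, and one checks the Fuchs relation $\gamma+\delta+\epsilon=\alpha+\beta+1$ with $\epsilon=-3n$ the remaining exponent parameter. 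Comparing the zeroth-order coefficients---using $z^2+14cz+c^2=(z-1)(z-a)$---identifies $\alpha\beta=\tfrac{3n(1+12n)}4=(-3n)(-\tfrac14-3n)$ and the accessory parameter $q=-\tfrac{9cn(1+12n)}4=\tfrac94(2+\sqrt3)^2 n(12n+1)$, exactly as claimed. Since the exponents $0$ and $\tfrac14$ at $u=0$ differ by no integer, the Frobenius solution there with exponent $0$ is logarithm-free and unique up to scale; because $r^0_n$ is analytic at $u=0$ with $r^0_n(0)=1$ (Theorem~\ref{thm:1}), it must equal $Hn(a,q;\alpha,\beta,\gamma,\delta\mid z)$, which is the asserted representation.

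For the recurrence I would argue directly in the $u$-variable: clearing denominators, $\mathcal{L}^0_n r^0_n=0$ becomes $(4u^3+56u^2+4u)\,(r^0_n)''+\bigl[3(1-8n)u^2+42(1-4n)u+3\bigr]\,(r^0_n)'+3n(1+12n)(u+3)\,r^0_n=0$, and substituting $r^0_n=\sum_{k\ge0}a_ku^k$ and collecting the coefficient of $u^k$ yields~(\ref{eq:Heunlike}), the coefficients of $a_{k+1},a_k,a_{k-1}$ coming out as $(k+1)(4k+3)$, $14k(4k-12n-1)+9n(12n+1)$, and $(k-3n-1)(4k-12n-5)$; the data $a_0=1$, $a_{-1}=0$ record $r^0_n(0)=1$ and the usual convention. (Equivalently this is the standard Heun three-term recurrence for $Hn(a,q;\dots\mid z)$, rescaled by powers of $-(2+\sqrt3)^2$.) Finally, for $n\ge0$ the only nonpositive-integer exponent at $u=\infty$ in the $m=0$ case of~(\ref{eq:displayedP}) is $-2m-3n=-3n$, and the factor $(k-3n-1)$ in~(\ref{eq:Heunlike}) forces $a_{3n+1}=0$, hence $a_k=0$ for all $k>3n$, once one knows (Theorem~\ref{thm:hypergrep}) that $r^0_n$ is a polynomial---so $r^0_n$ is a Heun polynomial of degree exactly $3n$. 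I expect no genuine obstacle here: the argument is Fuchsian bookkeeping plus one power-series substitution. The one delicate point is the matching in the second step, namely tracking the surds $2\pm\sqrt3$ through the affine change of variable so that the transformed equation lands \emph{precisely} in canonical Heun form with that particular $a$ and $q$, which must be carried out carefully.
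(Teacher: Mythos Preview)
Your proposal is correct and follows essentially the same route as the paper: specialize $\mathcal{L}^m_n$ to $m=0$, observe that the singular point at $u=1$ becomes ordinary, apply the affine substitution $z=-(2+\sqrt3)^2u$ to reach canonical Heun form, and obtain the recurrence by substituting a power series into $\mathcal{L}^0_n f=0$. Your treatment is simply more explicit than the paper's, which states the reduced operator and asserts that the substitution yields the stated Heun parameters without displaying the intermediate algebra.

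One small wording issue in your last paragraph: the vanishing of the factor $(k-3n-1)$ at $k=3n+1$ does not by itself force $a_{3n+1}=0$; rather, assuming (via Theorem~\ref{thm:hypergrep}) that the series terminates, the recurrence at the step just past the last nonzero coefficient $a_N$ forces $(N-3n)(4N-12n-1)=0$, whence $N=3n$. Your qualifier ``once one knows \ldots\ that $r^0_n$ is a polynomial'' already supplies the needed input, so the conclusion stands; just rephrase so the logic reads cleanly.
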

\begin{proof}
  If ${m}=0$, the $u=1$ singular point of~${\mathcal{L}}^{m}_{n}$ drops out, i.e.,
  becomes ordinary, and
  \begin{displaymath}
    {\mathcal{L}}^{0}_{n} = D_u^2 + 
\left[\frac{3/4}u + \frac{-3{n}(2u+14)}{u^2+14u+1}
\right]
\,D_u
+\frac{3{n}(1+12{n})}4 \, \frac{u+3}{u(u^2+14u+1)}.
  \end{displaymath}
The substitution $z=-(2+\sqrt3)^2 u$ reduces ${\mathcal{L}}^{0}_{n} f=0$ to the
standard Heun equation~\cite{Ronveaux95}, with the stated values of
$a,q;\alpha,\beta,\gamma,\delta$.  The recurrence~(\ref{eq:Heunlike}),
based on a second-order difference operator, comes by substituting
$f=r^{0}_{n}=\sum_{k=0}^\infty a_ku^k$ into ${\mathcal{L}}^{0}_{n} f=0$.\qed
\end{proof}

For general $(n,m)\in\mathbb{Z}^2$, ${\mathcal{L}}^{m}_{n}f=0$ has
five singular points.  The theory of such generalized Heun equations
is underdeveloped at present, but the coefficients of their series
solutions are known to satisfy third-order (i.e., four-term)
recurrences.

\begin{theorem}
\label{thm:genHeunrep}
  For any\/ $(n,m)\in\mathbb{Z}^2$, the octahedral function\/ $r^{m}_{n}$ has
  the expansion\/ $r_n^m(u)=\sum_{k=0}^\infty a_ku^k$, where\/
  $\{a_k\}_{k=0}^\infty$ satisfy the third-order recurrence
  \begin{align}
    & (k+1)(4k-4{m}+3)\,a_{k+1}\label{eq:gHeunrec}\\
    &\qquad
    {}+\left[ k(52k-36{m}-168{n}-13) - (2{m}-9{n})(12{m}+12{n}+1) \right]\,a_k\nonumber\\
    &\qquad
    {}-\left[(k-1)(52k-276{m}-144{n}-65)+2(14{m}+3{n})(12{m}+12{n}+1)\right]\,a_{k-1} \nonumber\\
    &\qquad
    {}-(k-2{m}-3{n}-2)(4k-12{m}-12{n}-9)\,a_{k-2}=0,\nonumber
  \end{align}
  with $a_0=1$, $a_{-1}=0$, $a_{-2}=0$.  Thus when\/ $n,m\ge0$, $r^{m}_{n}(u)$ is
  a degree-\/$(3{n}+2{m})$ generalized Heun polynomial.
\end{theorem}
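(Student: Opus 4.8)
The plan is to obtain the recurrence by substituting the Frobenius series of $r^m_n$ directly into its defining differential equation $\mathcal{L}^m_n r^m_n = 0$ from Theorem~\ref{thm:diffrec}, exactly as was done for the three-term recurrence~(\ref{eq:Heunlike}) in the proof of Theorem~\ref{thm:Heunrep}. First I would clear denominators: multiplying $\mathcal{L}^m_n f = 0$ through by $-4\,u(u-1)(u^2+14u+1)$ turns it into a polynomial Fuchsian equation $Q_2(u)\,f'' + Q_1(u)\,f' + Q_0(u)\,f = 0$ with $Q_2(u) = -4\,u(u-1)(u^2+14u+1)$ of degree $4$, $Q_1$ of degree $\le 3$, and $Q_0$ of degree $\le 2$; the factor $-4$ is inserted only so that the coefficients emerge in the normalization displayed in~(\ref{eq:gHeunrec}). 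Equivalently, one may substitute $u = s^4$ into the $s$-plane operator~(\ref{eq:ude}) and clear denominators there; either route is short.

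Next I would insert $f = r^m_n(u) = \sum_{k\ge 0} a_k u^k$ --- a genuinely convergent expansion, since $r^m_n$ is analytic at $u=0$ with $r^m_n(0)=1$, the second characteristic exponent $\tfrac14+m$ at $u=0$ never being an integer --- and read off the coefficient of $u^k$. Because $Q_2$ raises the power of $u$ by at most $4$ while $f''$ lowers it by $2$, and $Q_1,Q_0$ raise by at most $3,2$ while $f',f$ lower by $1,0$, the coefficient of $u^k$ involves exactly $a_{k+1}, a_k, a_{k-1}, a_{k-2}$, which is why the recurrence is third order. Expanding $Q_2,Q_1,Q_0$ and collecting terms then produces precisely~(\ref{eq:gHeunrec}), with the boundary conventions $a_0=1$ (the normalization in Definition~\ref{def:rdef}) and $a_{-1}=a_{-2}=0$ (the series being supported on $k\ge 0$). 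Since the coefficient $(k+1)(4k-4m+3)$ of $a_{k+1}$ is a nonzero integer for every $k\ge 0$, the recurrence together with these three initial values determines all the $a_k$ uniquely, as it must, $r^m_n$ being the unique Frobenius solution at $u=0$ belonging to the exponent $0$ and normalized to unity.

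For the polynomial assertion when $n,m\ge 0$, Theorem~\ref{thm:hypergrep} already gives that $r^m_n$ is a polynomial of degree $3n+2m$, so its Taylor coefficients vanish for $k > 3n+2m$; since $\mathcal{L}^m_n$ is (generically) a Fuchsian equation with the five singular points exhibited in~(\ref{eq:displayedP}), i.e.\ a generalized Heun equation, the polynomial $r^m_n$ is a generalized Heun polynomial of degree $3n+2m$. A good internal check is the two degenerate cases: at $m=0$, $Q_2$ acquires the spurious factor $u-1$ and the four-term relation reduces to $(u-1)$ times the three-term relation~(\ref{eq:Heunlike}); at $n=0$, $Q_2$ acquires the spurious factor $u^2+14u+1$ and~(\ref{eq:gHeunrec}) collapses to that factor times the two-term contiguity ratio implicit in~(\ref{eq:hyprep}). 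I expect no essential obstacle here --- the whole argument is a finite computation --- and the only points needing care are the $-4$ normalization and the correct placement of the boundary terms $a_{-1},a_{-2}$.
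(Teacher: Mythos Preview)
Your proposal is correct and is exactly the approach the paper takes: its proof is the single sentence ``The recurrence comes by substituting $f=r^{m}_{n}=\sum_{k=0}^\infty a_ku^k$ into ${\mathcal{L}}^{m}_{n} f=0$,'' and you have simply fleshed out the details of that substitution (the $-4$ normalization, the degree count forcing a four-term relation, and the nonvanishing of the leading coefficient). Your remarks on the $m=0$ and $n=0$ degenerations also anticipate the paper's observation, stated just after the proof, that in those cases the four-term difference operator has the Heun, respectively hypergeometric, difference operator as a right factor.
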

\begin{proof}
  The recurrence comes by substituting $f=r^{m}_{n}=\sum_{k=0}^\infty a_ku^k$
  into ${\mathcal{L}}^{m}_{n} f=0$.\qed
\end{proof}

It can be shown that if ${m}=0$, the third-order (i.e.\ generalized Heun)
difference operator in~(\ref{eq:gHeunrec}) has the second-order
(i.e.\ Heun) difference operator in~(\ref{eq:Heunlike}) as a right factor;
and if ${n}=0$, it has a first-order (i.e.\ hypergeometric) difference
operator as a right factor, which is why the representation in
Theorem~\ref{thm:hypergrep} exists.  The coefficients of all these
difference operators are quadratic in~$k$.

\smallskip
As stated in Theorem~\ref{thm:nonpoly}, it is not merely the case that
when $n,m\ge0$, the rational function $r^{m}_{n}=r^{m}_{n}(u)$ is a
polynomial of degree $3{n}+\nobreak 2{m}$.  In each quadrant of the
$({n},{m})$-plane, it is the quotient of a polynomial of known degree
(the numerator) by a known polynomial (the denominator).  To obtain
the formulas in Theorem~\ref{thm:nonpoly} that refer to quadrants
other than the first, reason as follows.  Consider the second formula:
it says that if ${n},{m}\ge0$, $r^{-{m}-1}_{n}(u)$~equals a polynomial
of degree $1+\nobreak 3{n}+\nobreak 2{m}$, divided by $(1-\nobreak
u)^{3+4{m}}$.  This is proved by induction on~${n}$, the base case
(${n}=0$) being
\begin{equation}
\label{eq:nneg}
    r^{-{m}-1}_{0}(u) = (1-u)^{-3-4{m}}\,{}_2F_1\bigl(-1-2{m},-\tfrac14-{m};\,\tfrac34-{m} \bigm| u\bigr),
\end{equation}
which comes from~(\ref{eq:hyprep}) by Euler's transformation
of~${}_2F_1$.  The inductive step uses the differential recurrence
with $\Delta({n},{m}) = (+1,0)$, as in the proof of
Theorem~\ref{thm:hypergrep}.  In the same way, the third and fourth
formulas follow from the $\Delta({n},{m})=(-1,0)$ recurrence.

One sees from the four formulas in Theorem~\ref{thm:nonpoly} that
irrespective of quadrant, $r^{m}_{n}\sim {\rm const}\times\allowbreak
u^{3{n}+2{m}}$, which partially confirms the claims of
Theorem~\ref{thm:1}(i,ii).  A consequence of this asymptotic behavior
is that besides being the Frobenius solution associated to the
exponent~$0$ at $u=0$, $r^{m}_{n}$~is the Frobenius solution
associated to the exponent $-2{m}-\nobreak 3{n}$ at $u=\infty$, which
appeared in the P\nobreakdash-symbol~(\ref{eq:displayedP}).

Theorem~\ref{thm:1} states specifically that $r^{m}_{n}\sim d^{m}_{n}
\times\allowbreak u^{3{n}+2{m}}$, with $d^{m}_{n}$ defined
in~(\ref{eq:ddef}).  This too is proved by induction.  The base case
(${n}=0$) has sub-cases ${m}\ge0$ and ${m}\le0$, which follow by
elementary manipulations from (\ref{eq:hyprep}) and~(\ref{eq:nneg}),
respectively.  The inductions toward ${n}\ge0$ and ${n}\le0$ come from
the differential recurrences with $\Delta({n},{m}) = (\pm1,0)$, the
$u\to\infty$ asymptotics of which yield expressions for
$d^{m}_{{n}\pm1}\!/d^{m}_{n}$.  As one can check, these two expression
agree with what (\ref{eq:ddef}) predicts.

\smallskip
The only claim in~\S\,\ref{subsec:polys} remaining to be proved is
Theorem~\ref{thm:1}(iii): the statement that the conjugate function
$\overline r^{m}_{n}=\overline r^{m}_{n}(u)$ is related to
$r^{m}_{n}=r^{m}_{n}(u)$ by $\overline r^{m}_{n}(u)\propto
u^{3{n}+2{m}}\, r^{m}_{n}(1/u)$, or equivalently $r^{m}_{n}(u)\propto
u^{3{n}+2{m}}\, \overline r^{m}_{n}(1/u)$.  (The constant of
proportionality comes from $\overline
r^{m}_{n}(0)$ equalling unity.)

Just as one derives the differential equation ${\mathcal{L}}^{m}_{n}
r^{m}_{n}=0$ of Theorem~\ref{thm:diffrec} by `lifting and shifting,' one
can derive an equation $\overline {\mathcal{L}}^{m}_{n} \overline
r^{m}_{n}=0$ on the $u$\nobreakdash-sphere from the definition
of~$\overline r^{m}_{n}$, given in Definition~\ref{def:rdef}; and a further
equation satisfied by $\mathbf{r}^{m}_{n}(u)\defeq\allowbreak
u^{3{n}+2{m}}\, \overline r^{m}_{n}(1/u)$.  The latter turns~out to be
${\mathcal{L}}^{m}_{n}\mathbf{r}^{m}_{n}=0$, i.e., to be identical to the
equation of Theorem~\ref{thm:diffrec}.  But $\overline r^{m}_{n} =
\overline r^{m}_{n}(u)$, analytic at $u=0$, is the Frobenius solution
associated to the exponent~$0$ at~$u=0$ of $\overline {\mathcal{L}}^{m}_{n}
\overline r^{m}_{n}=0$.  Hence, $\mathbf{r}^{m}_{n}$ is the Frobenius
solution associated to the exponent $-2{m}-\nobreak 3{n}$ at~$u=\infty$ of
${\mathcal{L}}^{m}_{n}\mathbf{r}^{m}_{n}=0$.  But as was noted three
paragraphs ago, this is~$r^{m}_{n}$; so $\mathbf{r}^{m}_{n} \propto
r^{m}_{n}$, and Theorem~\ref{thm:1}(iii) follows.

\section{Biorthogonality of Octahedral Functions}
\label{sec:biorthog}

The octahedral functions $r^{m}_{n}=r^{m}_{n}(u)$, which are polynomials if
$n,m\ge0$, satisfy recurrences, such as the three-term ones of
Theorem~\ref{thm:recs}, that are quite unlike the ones satisfied by the
classical orthogonal polynomials.  But at~least if ${m}=0,-1$, it can be
shown that the family $\{r^{m}_{n}\}_{m\in\mathbb{Z}}$ displays
orthogonality on the $u$\nobreakdash-interval $[0,1]$, or rather a form of
biorthogonality.

The biorthogonality is best expressed in terms of the lifted functions
$\tilde r^{m}_{n}(s)\defeq r^{m}_{n}(u=s^4)$ of the last section, the full
domain of which is the Riemann $s$\nobreakdash-sphere in which the defining
octahedron is inscribed.  These are solutions of
$\tilde{\mathcal{L}}^{m}_{n}\tilde f=0$, where the
operator~$\tilde{\mathcal{L}}^{m}_{n}$ was defined in~(\ref{eq:ude}).  By
inspection, it has the simpler representation
\begin{equation}
  \tilde{\mathcal{L}}^{m}_{n} =
\left(q_{\rm v}^{2{m}}q_{\rm f}^{3{n}/2}\right)
D_s^2 \left(q_{\rm v}^{2{m}}q_{\rm f}^{3{n}/2}\right)^{-1}
+
\left\{
\left[-2{m}(1+2{m})\right]\frac{q_{\rm f}}{q_{\rm v}^2}
+
\left[144{n}(2+3{n})\right]\frac{q_{\rm v}^2}{q_{\rm f}^2}
\right\},
\label{eq:SLop}
\end{equation}
where $q_{\rm v}(s)=s(1-s^4)$ and $q_{\rm f}(s) = 1+14s^4+s^8$ are the
usual polynomials that equal zero at the five finite vertices and the eight
face-centers of the octahedron.

By~(\ref{eq:SLop}), $\tilde{\mathcal{L}}^{m}_{n}$ is conjugated by a
similarity transformation to a formally self-adjoint operator of the
Schr\"odinger type.  For any fixed~${m}$, the calculation of the
eigenfunctions $q_{\rm v}^{-2{m}}q_{\rm f}^{-3{n}/2}\tilde
r^{m}_{n}(s)$ of the latter, on the $s$\nobreakdash-interval $[0,1]$,
can be viewed as solving a Sturm--Liouville problem.  The coefficient
$144{n}(2+\nobreak 3{n})$ in~(\ref{eq:SLop}) plays the role of the
Sturm--Liouville eigenvalue, and $q_{\rm v}^2/q_{\rm f}^2$ that of the
Sturm--Liouville weight function.

Because the coefficient function $q_{\rm f}/q_{\rm v}^2$ diverges at
the endpoints $s=0,1$, this Sturm--Liouville problem is typically a
singular one.  To avoid a discussion of endpoint classifications and
boundary conditions, it is best to derive orthogonality results not
from~$\tilde{\mathcal{L}}^{m}_{n}$, but rather from the Love--Hunter
biorthogonality relation~(\ref{eq:orthgonalityLove}), i.e.,
\begin{equation}
\label{eq:orthgonalityLove2}
  \int_{-1}^1 {\rm P}_{\nu}^\mu(z) {\rm P}_{\nu'}^{-\mu}(-z) \,{\rm d}z =0,
\end{equation}
which holds if $\mu\in(-1,1)$ and $\nu,\nu'$ differ by a nonzero even
integer.  (See \cite[Appendix]{Love92} for a proof.)
Equation~(\ref{eq:orthgonalityLove2}) is a relation of orthogonality
between the eigenfunctions of a singular boundary value problem based
on~(\ref{eq:legendre}), the associated Legendre equation (i.e., ${\rm
  P}_{\nu_0+2n}^\mu(z)$, $n\in\mathbb{Z}$), and the eigenfunctions of
the \emph{adjoint} boundary value problem (i.e., ${\rm
  P}_{\nu_0+2n}^{-\mu}(-z)$, $n\in\mathbb{Z}$).  The first problem is
non-self-adjoint because the boundary conditions that single~out ${\rm
  P}_{\nu_0+2n}^\mu(z)$, $n\in\mathbb{Z}$, as eigenfunctions are not
self-adjoint.

However, one feature of the operator $\tilde{\mathcal{L}}^{m}_{n}$ must be
mentioned.  If $\tilde f=\tilde f(s)$ solves
$\tilde{\mathcal{L}}^{m}_{n}\tilde f=0$, then so does $(1-\nobreak
s)^{1+12{m}+12{n}}\tilde f\left((1+s)/(1-s)\right)$.  This claim can be
verified by a lengthy computation, but its correctness is indicated by the
P\nobreakdash-symbol of~$\tilde{\mathcal{L}}^{m}_{n}$, which appeared
in~(\ref{eq:uPsym3}).  The map $s\mapsto(1+\nobreak
s)/\allowbreak(1-\nobreak s)$ is a $90^\circ$ rotation of the
$s$\nobreakdash-sphere, and hence of the inscribed octahedron, around the
axis through the equatorial vertices $s=\pm{\rm i}$. This rotation takes
vertices to vertices, edges to edges, and faces to faces.  The subsequent
multiplication by $(1-\nobreak s)^{1+12{m}+12{n}}$ shifts the
characteristic exponents at the most affected vertices ($s=1,\infty$) to
the values they had before the rotation.

\begin{theorem}
\label{thm:biorthog}
For\/ ${m}=0$ and\/ ${m}=-1$, the lifted family\/ $\{\tilde r^{m}_{n}(s)
\defeq r^{m}_n(s^4)\}_{n\in\mathbb{Z}}$ is biorthogonal on the\/
$s$-interval\/ $[0,1]$ in the following sense: the inner product integral
\begin{align*}
&\int_0^1
\left[
q_{\rm v}^{-2{m}} q_{\rm f}^{-3{n}/2}(s)\cdot \tilde r^{m}_{n}(s)
\right]
\\
&\qquad{}\times\left[
q_{\rm v}^{-2{m}} q_{\rm f}^{-3{n}'/2}(s)\cdot (1-s)^{1+12{m}+12{n}}\: \tilde r^{m}_{{n}'}\!\left(\frac{1+s}{1-s}\right)
\right]
\,\frac{q_{\rm v}^2}{q_{\rm f}^2}(s)
\,{\rm d}s
\end{align*}
equals zero if\/ ${n},{n}'$ differ by a nonzero even integer.
\end{theorem}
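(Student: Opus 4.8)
\emph{Overall strategy.} The plan is to deduce Theorem~\ref{thm:biorthog} from the Love--Hunter relation~(\ref{eq:orthgonalityLove2}) by a single change of variable, fed by the explicit formulas of Theorem~\ref{thm:oct2}. The restriction to $m=0,-1$ is forced by the requirement that $\mu\defeq\frac14+m$ lie in $(-1,1)$: these are the only integers with $\frac14+m\in(-1,1)$, and for them~(\ref{eq:orthgonalityLove2}) reads
\[
  \int_{-1}^1 {\rm P}_{-\frac16+n}^{\frac14+m}(z)\,{\rm P}_{-\frac16+n'}^{-\frac14-m}(-z)\,{\rm d}z=0
\]
whenever $n-n'$ is a nonzero even integer (the degrees $-\frac16+n$ are never half-odd-integers). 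I would show that substituting $z=z(s)$, with $z$ the octahedrally invariant function fixed below, converts this integral into a nonzero constant times the inner product written in the theorem.

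\emph{The change of variable.} Let $z=z(s)$ be defined by $z^2=q_{\rm e}(s)^2/q_{\rm f}(s)^3=1/(1-\tilde R(s))$, on the branch for which $z$ decreases from $1$ to $-1$ as $s$ runs over $[0,1]$; this is the substitution $z=\cos\theta$, $u=B_-/B_+=s^4$ of Theorem~\ref{thm:oct2}. Differentiating and using $u^{3/4}\,{\rm d}R/{\rm d}u=-108\,p_{\rm v}^{3/4}p_{\rm f}^2/p_{\rm e}^3$ from the proof of Theorem~\ref{thm:diffrec} (equivalently ${\rm d}\tilde R/{\rm d}s=-432\,q_{\rm v}^3q_{\rm f}^2/q_{\rm e}^3$) gives ${\rm d}z/{\rm d}s=-216\,q_{\rm v}(s)^3q_{\rm f}(s)^{-5/2}$. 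Feeding Theorem~\ref{thm:oct2}'s first formula together with the $\xi={\rm i}\theta$ forms of~(\ref{eq:parampolyvert})--(\ref{eq:parampolyfacecenter}) (which give $(\sin\theta)^2=108\,q_{\rm v}^4/q_{\rm f}^3$ and $B_+^2=4/q_{\rm f}$) into the substitution, all powers of $q_{\rm v},q_{\rm f}$ combine to
\[
  {\rm P}_{-\frac16+n}^{\frac14+m}\bigl(z(s)\bigr)=c_1(m)\,q_{\rm v}(s)^{-\frac12-2m}\,q_{\rm f}(s)^{\frac14-\frac{3n}{2}}\,\tilde r^m_n(s),
\]
that is, a constant times $q_{\rm v}^{-1/2}q_{\rm f}^{1/4}$ times the eigenfunction $q_{\rm v}^{-2m}q_{\rm f}^{-3n/2}\,\tilde r^m_n(s)$ of the Schr\"odinger operator in~(\ref{eq:SLop}).

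\emph{The second factor (the crux).} To treat ${\rm P}_{-\frac16+n'}^{-\frac14-m}(-z(s))$ I would exploit that $z^2$ is an absolute invariant of the octahedral group, so $z\circ\sigma=\pm z$ for every octahedral M\"obius map $\sigma$. The $90^\circ$ rotation $\sigma\colon s\mapsto(1+s)/(1-s)$ about the axis $s=\pm{\rm i}$ flips the sign (check $z(1)=-1=-z(0)$), whereas the pole-swap $s\mapsto1/s$ fixes $z$ (palindromicity of $q_{\rm e},q_{\rm f}$). Hence $-z(s)=z(\sigma(s))=z\bigl(1/\sigma(s)\bigr)$ with $1/\sigma(s)=(1-s)/(1+s)\in(0,1)$, so Theorem~\ref{thm:oct2}'s second formula applies at $s_w\defeq(1-s)/(1+s)$; there the argument $B_+/B_-$ of $\hat r^m_{n'}$ equals $1/s_w^4=\sigma(s)^4$, and since $\hat r^m_{n'}=r^m_{n'}/d^m_{n'}$ one gets $\hat r^m_{n'}(\sigma(s)^4)=\tilde r^m_{n'}(\sigma(s))/d^m_{n'}$. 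Using $q_{\rm v}\bigl((1-s)/(1+s)\bigr)=8\,q_{\rm v}(s)/(1+s)^6$, $q_{\rm f}\bigl((1-s)/(1+s)\bigr)=16\,q_{\rm f}(s)/(1+s)^8$, and $B_-^2=4\,s_w^8/q_{\rm f}(s_w)$ at $s_w$, one finds after collecting powers that every $(1+s)$-factor cancels and
\[
  {\rm P}_{-\frac16+n'}^{-\frac14-m}\bigl(-z(s)\bigr)=c_2(m,n')\,q_{\rm v}(s)^{-\frac12-2m}\,q_{\rm f}(s)^{\frac14-\frac{3n'}{2}}\,(1-s)^{1+12m+12n'}\,\tilde r^m_{n'}\!\Bigl(\tfrac{1+s}{1-s}\Bigr),
\]
the exponent $1+12m+12n'$ being exactly the one that makes $(1-s)^{1+12m+12n'}\tilde r^m_{n'}(\sigma(s))$ a solution of $\tilde{\mathcal L}^m_{n'}\tilde f=0$ (it shifts the $s=1$ exponents of $\tilde{\mathcal L}^m_{n'}$ to ordinary-point values, as in the transformation property recorded just before the theorem and in the P-symbol~(\ref{eq:uPsym3})); Theorem~\ref{thm:1}(iii) is what lets one pass freely between $r$, $\overline r$ and $\hat r$ here.

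\emph{Assembling and the main obstacle.} Substituting $z=z(s)$ in the displayed Love--Hunter integral turns it into $216\int_0^1(\cdots)\,q_{\rm v}(s)^3q_{\rm f}(s)^{-5/2}\,{\rm d}s$; inserting the two displayed expressions, the $q_{\rm v}$-exponents sum to $2-4m$ and the $q_{\rm f}$-exponents to $-2-\frac{3}{2}(n+n')$, so that pulling $q_{\rm v}^{-2m}q_{\rm f}^{-3n/2}$ out of the first factor and $q_{\rm v}^{-2m}q_{\rm f}^{-3n'/2}(1-s)^{1+12m+12n'}$ out of the second leaves precisely the residual weight $q_{\rm v}^2/q_{\rm f}^2$. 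Thus the Love--Hunter integral equals a nonzero constant times the inner product of Theorem~\ref{thm:biorthog}, which therefore vanishes when $n-n'$ is a nonzero even integer. (Convergence of both sides is automatic: $\frac14+m\in(-1,1)$ puts the Ferrers functions in $L^2[-1,1]$, and on the $s$-side the integrand stays bounded as $s\to1^-$ and, when $m=-1$, vanishes like $s^3$ as $s\to0^+$.) I expect the crux paragraph to be the real work: pinning down the rotation $\sigma$, carrying the branch of $q_{\rm f}^{3/2}$ and the several $B_\pm$-powers through it, and applying Theorem~\ref{thm:1}(iii) so as to land on $\tilde r^m_{n'}(\sigma(s))$ with exactly the right power of $(1-s)$ and no residual power of $(1+s)$.
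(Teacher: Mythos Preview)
Your approach is exactly the paper's: substitute the $m=0,-1$ cases of Theorem~\ref{thm:oct2} into the Love--Hunter relation~(\ref{eq:orthgonalityLove2}) and change variables $z\to u=B_-/B_+\to s=u^{1/4}$, using that $z\mapsto-z$ corresponds to $s\mapsto(1-s)/(1+s)$. Your detailed execution is correct; note that it actually produces the exponent $(1-s)^{1+12m+12n'}$ rather than the printed $(1-s)^{1+12m+12n}$, which is consistent with the transformation property of $\tilde{\mathcal L}^m_{n'}$ recorded just before the theorem.
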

\begin{proof}
Substitute the ${m}=0,-1$ cases of the formulas for ${\rm
  P}_{-\frac16+{n}}^{\pm\left(\frac14+{m}\right)}(\cos\theta)$ in
Theorem~\ref{thm:oct2} into~(\ref{eq:orthgonalityLove2}), and change the
variable of integration from $z=\cos\theta$ to $u=B_-/B_+$, and then to
$s=u^{1/4}$.  The involution $z\mapsto -z$ corresponds to $s\mapsto
(1-\nobreak s)/(1+\nobreak s)$.\qed
\end{proof}

This biorthogonality theorem is formulated so as to indicate its close
connection to Sturm--Liouville theory: evaluating the integral over
$0<\nobreak s<\nobreak1$ computes the inner product of the two
square-bracketed factors in the integrand, which come from ${\rm
  P}_\nu^\mu(z)$ and ${\rm P}_{\nu'}^{-\mu}(-z)$, with respect to the
weight function ${q_{\rm v}^2}/{q_{\rm f}^2}$.  The two factors are
eigenfunctions of adjoint Sturm--Liouville problems on $0<\nobreak
s<\nobreak1$ (i.e., ones with adjoint boundary conditions), with different
eigenvalues.

Theorem~\ref{thm:biorthog} cannot be extended to general $m\in\mathbb{Z}$,
because the integral diverges unless ${m}=0$ or ${m}=-1$, owing to rapid
growth of one or the other of the bracketed factors at each of the
endpoints $s=0,1$.  This divergence follows readily from the results
on~$r^{m}_{n}$ given in Theorems \ref{thm:1} and~\ref{thm:nonpoly}.
Alternatively, the divergence arises from the Ferrers function ${\rm
  P}_\nu^\mu$ not lying in $L^2[-1,1]$ when $\mu$~is non-integral, unless
${\rm Re}\,\mu\in(-1,1)$.

The formulas for the \emph{tetrahedral} Ferrers functions ${\rm
  P}_{-\frac34-{m}}^{\pm(\frac13+{n})}$ given in
Theorem~\ref{thm:tetr3} (cases ${m}=0,-1$) can also be substituted
usefully into the Love--Hunter relation~(\ref{eq:orthgonalityLove2}).
But the resulting statement of biorthogonality is more complicated
than Theorem~\ref{thm:biorthog} and is not given here.

\section{Cyclic and Dihedral Formulas (Schwarz Classes O and~I)}
\label{sec:cyclicdihedral}

This section derives parametric formulas for Legendre and Ferrers functions
that are cyclic or dihedral.  The formulas involve the Jacobi polynomials
$P_n^{(\alpha,\beta)}$\! and are unrelated to the octahedral and
tetrahedral ones in \S\S\,\,\ref{sec:octa}, \ref{sec:tetra1},
and~\ref{sec:tetra2}.  They are of independent interest, and subsume
formulas that have previously appeared in the literature.

As used here, `cyclic' and `dihedral' have extended meanings.  The
terms arise as follows.  The associated Legendre
equation~(\ref{eq:legendre}) has $(\mu,\mu,\allowbreak
2\nu+\nobreak1)$ as its (unordered, unsigned) triple of characteristic
exponent differences.  By the results of Schwarz on the algebraicity
of hypergeometric functions, this differential equation will have
\emph{only algebraic solutions} if $(\nu+\frac12,\mu)$ lies in
$(\pm\frac12,\pm\frac1{2k})+\mathbb{Z}^2$ or
$(\pm\frac1{2k},\pm\frac12)+\mathbb{Z}^2$, for some positive
integer~$k$.  These restrictions cause the equation to lie in
Schwarz's cyclic class (labelled~O here), resp.\ his dihedral
class~I\null.  The terms refer to the projective monodromy group of
the equation, which is a (finite) subgroup of~$PSL(2,\mathbb{R})$.

However, the formulas derived below are more general, in that they
allow $k$ to be arbitrary: they are formulas for \emph{continuously}
parametrized families of Legendre and Ferrers functions, which are
generically transcendental rather than algebraic.  Because of this, we
call a Legendre or Ferrers function cyclic, resp.\ dihedral, if
$(\nu+\frac12,\mu)$ lies in $(\pm\frac12,*)+\mathbb{Z}^2$,
resp.\ $(*,\pm\frac12)+\mathbb{Z}^2$; the asterisk denoting an
unspecified value.  That is, the degree~$\nu$ should be an integer or
the order~$\mu$ a half-odd-integer, respectively.

Explicit formulas in terms of Jacobi polynomials are derived
in~\S\,\ref{subsec:explicitcyclicdihedral}, and how dihedral Ferrers
functions can be used for expansion purposes is explained
in~\S\,\ref{sec:biorthog2}.

\subsection{Explicit Formulas}
\label{subsec:explicitcyclicdihedral}

The Jacobi polynomials $P_n^{(\alpha,\beta)}(z)$ are well known
\cite[\S\,10.8]{Erdelyi53}. They have the hypergeometric and Rodrigues
representations
\begin{subequations}
\begin{align}
\label{eq:Jacobirep2}
  P_n^{(\alpha,\beta)}(z) &=
  \frac{(\alpha+1)_n}{n!}\:
       {}_2F_1\left(
       {{-n,\,n+\alpha+\beta+1}\atop{\alpha+1}} \Bigm|
       {\frac{1-z}{2}}
       \right)\\
       &= \frac{(-1)^n}{2^nn!}\, (1-z)^{-\alpha}(1+z)^{-\beta}
       \,\frac{{\rm d}^n}{{\rm d}z^n}\left[
           (1-z)^{\alpha+n}(1+z)^{\beta+n}
           \right]
\end{align}
\end{subequations}
and are orthogonal on $[-1,1]$ with respect to the weight function
$(1-\nobreak x)^\alpha\allowbreak (1+\nobreak x)^\beta$, if
$\alpha,\beta>-1$ and the weight function is integrable.

Legendre and Ferrers functions that are \emph{cyclic} (i.e., of integer
degree) are easily expressed in~terms of Jacobi polynomials.

\begin{theorem}
  \label{thm:cyclic1}
  The formulas
  \begin{align*}
    &P^{\mu}_{-\frac12\pm(n+\frac12)}(z) = \frac{n!}{\Gamma(n-\mu+1)}\,
    {\left(
    \frac{z+1}{z-1}
    \right)}^{\mu/2}
    P_n^{(-\mu,\mu)}(z),\\
    &P^{\mu}_{-\frac12\pm(n+\frac12)}(\cosh\xi) = \frac{n!}{\Gamma(n-\mu+1)}\,
    [\coth(\xi/2)]^{\mu}\,
    P_n^{(-\mu,\mu)}(\cosh\xi)
  \end{align*}
  hold when $n$~is a non-negative integer, for $z\in(1,\infty)$ and
  $\xi\in(0,\infty)$.  {\rm(}In the degenerate case when $\mu-n$ is a
  positive integer, $P_{-\frac12\pm(n+\frac12)}^\mu\equiv0$.{\rm)}
\end{theorem}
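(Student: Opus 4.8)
The plan is to recognize both sides as the same terminating Gauss series. Setting $\nu=n$ in the hypergeometric representation~(\ref{eq:Prep}) of $P_\nu^\mu$, the series ${}_2F_1(-n,n+1;1-\mu;\tfrac{1-z}2)$ terminates after $n+1$ terms, so
\[
P_n^\mu(z)=\frac1{\Gamma(1-\mu)}\left(\frac{z+1}{z-1}\right)^{\mu/2}{}_2F_1\!\left(-n,\,n+1;\,1-\mu;\,\tfrac{1-z}2\right).
\]
On the Jacobi side I would take $(\alpha,\beta)=(-\mu,\mu)$ in the representation~(\ref{eq:Jacobirep2}): then $\alpha+1=1-\mu$ and, because $\alpha+\beta=0$, the upper parameter $n+\alpha+\beta+1$ collapses to $n+1$, so~(\ref{eq:Jacobirep2}) produces exactly the same ${}_2F_1$ with prefactor $(1-\mu)_n/n!$. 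Dividing the two identities and using $\Gamma(1-\mu)\,(1-\mu)_n=\Gamma(n-\mu+1)$ to collapse the $\Gamma$/Pochhammer factors gives $P_n^\mu(z)=\dfrac{n!}{\Gamma(n-\mu+1)}\left(\dfrac{z+1}{z-1}\right)^{\mu/2}P_n^{(-\mu,\mu)}(z)$, which is the first displayed formula for the upper choice of sign (degree $\nu=n$).

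For the lower sign, $-\tfrac12-(n+\tfrac12)=-(n+1)=-\nu-1$ with $\nu=n$, so the connection formula $P_{-\nu-1}^\mu=P_\nu^\mu$ recorded in Section~\ref{sec:prelims} gives $P_{-(n+1)}^\mu=P_n^\mu$, and the same expression applies verbatim. The $\cosh\xi$ version then follows by putting $z=\cosh\xi$ with $\xi\in(0,\infty)$: there $\tfrac{z+1}{z-1}=\tfrac{\cosh\xi+1}{\cosh\xi-1}=\coth^2(\xi/2)$ is positive, so $\bigl(\tfrac{z+1}{z-1}\bigr)^{\mu/2}=[\coth(\xi/2)]^\mu$ with the principal branch, and substitution is immediate.

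I do not expect a genuine obstacle; the only slightly delicate point is degeneracy in~$\mu$. When $\mu\in\{1,\dots,n\}$, individual terms of the terminating ${}_2F_1$ and the factor $(1-\mu)_n$ both become singular, so the computation above must be read in a limiting sense. The cleanest fix is to note that, for $z$ off the cut, both sides of the claimed identity are \emph{entire} in~$\mu$ — the left side because $\tfrac1{\Gamma(1-\mu)(1-\mu)_k}=\tfrac1{\Gamma(k+1-\mu)}$ is entire, the right side manifestly — so agreement on a nonempty open set of~$\mu$ forces agreement for all~$\mu$. In particular, when $\mu-n$ is a positive integer the pole of $\Gamma(n-\mu+1)$ makes the prefactor vanish and the identity reduces to $P_n^\mu\equiv0$, consistent with the vanishing of $P_n^m$ for integer $m>n$ noted in Section~\ref{sec:prelims}. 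The remaining work — the Pochhammer bookkeeping and the trigonometric simplification of $\tfrac{\cosh\xi+1}{\cosh\xi-1}$ — is routine.
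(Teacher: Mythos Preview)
Your argument is correct and is precisely the comparison the paper intends: the proof in the text is the one-line ``Compare the representations~(\ref{eq:Prep}) and~(\ref{eq:Jacobirep2}),'' and you have simply written out that comparison, together with the use of $P_{-\nu-1}^\mu=P_\nu^\mu$ for the lower sign and the substitution $z=\cosh\xi$. Your handling of the degenerate $\mu$ values by analytic continuation in~$\mu$ is a reasonable addendum that the paper leaves implicit.
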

\begin{proof}
  Compare the representations (\ref{eq:Prep})
  and~(\ref{eq:Jacobirep2}).\qed
\end{proof}

\begin{theorem}
  \label{thm:cyclic2}
  The formulas
  \begin{align*}
    &{\rm P}^{\mu}_{-\frac12\pm(n+\frac12)}(z) = \frac{n!}{\Gamma(n-\mu+1)}\,
    {\left(
    \frac{1+z}{1-z}
    \right)}^{\mu/2}
    P_n^{(-\mu,\mu)}(z),\\
    &{\rm P}^{\mu}_{-\frac12\pm(n+\frac12)}(\cos\theta) = \frac{n!}{\Gamma(n-\mu+1)}\,
    [\cot(\theta/2)]^{\mu}\,
    P_n^{(-\mu,\mu)}(\cos\theta)
  \end{align*}
  hold when $n$~is a non-negative integer, for $z\in(-1,1)$,
  $\xi\in(-\infty,\infty)$, and $\theta\in(0,\pi)$.  {\rm(}In the
  degenerate case when $\mu-n$ is a positive integer, ${\rm
    P}_{-\frac12\pm(n+\frac12)}^\mu\equiv0$.{\rm)}
\end{theorem}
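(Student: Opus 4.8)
The plan is to argue exactly as in the proof of Theorem~\ref{thm:cyclic1}, but starting from the Ferrers version of the hypergeometric representation~(\ref{eq:Prep}) rather than the Legendre one. As noted in~\S\,\ref{sec:prelims}, one gets ${\rm P}_\nu^\mu(z)$ from the right side of~(\ref{eq:Prep}) by changing the prefactor $\bigl((z+1)/(z-1)\bigr)^{\mu/2}$ into $\bigl((1+z)/(1-z)\bigr)^{\mu/2}$, i.e.,
\[
  {\rm P}_\nu^\mu(z)=\frac1{\Gamma(1-\mu)}\left(\frac{1+z}{1-z}\right)^{\mu/2}{}_2F_1\!\left(-\nu,\,\nu+1;\,1-\mu;\,\tfrac{1-z}{2}\right),\qquad z\in(-1,1).
\]
First I would set $\nu=-\tfrac12\pm(n+\tfrac12)$ and observe that the unordered pair $\{-\nu,\,\nu+1\}$ equals $\{-n,\,n+1\}$ for either choice of sign, because $\nu\mapsto-\nu-1$ interchanges its two members; this is the same symmetry responsible for ${\rm P}_\nu^\mu={\rm P}_{-\nu-1}^\mu$, and it renders both the prefactor and the Gauss function independent of the sign. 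Since $-n$ is a nonpositive integer, the ${}_2F_1$ terminates, so what follows is an identity between polynomials in~$z$.

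Next I would recognize ${}_2F_1\bigl(-n,\,n+1;\,1-\mu;\,(1-z)/2\bigr)$ as a Jacobi polynomial. Putting $(\alpha,\beta)=(-\mu,\mu)$ in~(\ref{eq:Jacobirep2}) gives $\alpha+\beta+1=1$ and $\alpha+1=1-\mu$, so that
\[
  P_n^{(-\mu,\mu)}(z)=\frac{(1-\mu)_n}{n!}\,{}_2F_1\!\left(-n,\,n+1;\,1-\mu;\,\tfrac{1-z}{2}\right).
\]
Solving for the Gauss function and using $(1-\mu)_n=\Gamma(n-\mu+1)/\Gamma(1-\mu)$ cancels the factor $\Gamma(1-\mu)$ in the prefactor and produces the first displayed formula of the theorem, with coefficient $n!/\Gamma(n-\mu+1)$. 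The second formula is then immediate on setting $z=\cos\theta$ and using $(1+\cos\theta)/(1-\cos\theta)=\cot^2(\theta/2)$, whence $\bigl((1+z)/(1-z)\bigr)^{\mu/2}=[\cot(\theta/2)]^{\mu}$ for $\theta\in(0,\pi)$, the base being positive there.

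The only place calling for a little care is the degenerate case, and it is mild. If $\mu-n$ is a positive integer then $n-\mu+1$ is a nonpositive integer, so $\Gamma(n-\mu+1)^{-1}=0$ while $P_n^{(-\mu,\mu)}$ is still an honest polynomial; the right side therefore vanishes, in accord with ${\rm P}_{-\frac12\pm(n+\frac12)}^\mu\equiv0$ (cf.\ the remark in~\S\,\ref{sec:prelims} that ${\rm P}_n^m\equiv0$ when the integer order~$m$ exceeds~$n$). In this situation the hypergeometric side of~(\ref{eq:Jacobirep2}) also degenerates, so the identification of the ${}_2F_1$ with $P_n^{(-\mu,\mu)}$ is first made for generic~$\mu$ and then extended by meromorphic continuation in~$\mu$; since both sides of the claimed identity are meromorphic in~$\mu$ and agree off the exceptional set, this presents no obstacle. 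One could instead derive the theorem from Theorem~\ref{thm:cyclic1} by analytic continuation across the cut, via ${\rm P}_\nu^\mu={\rm e}^{\pm\mu\pi{\rm i}/2}P_\nu^\mu$, but the direct comparison above is the shortest route.
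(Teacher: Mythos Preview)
Your proof is correct. You argue by direct comparison of the Ferrers hypergeometric representation with the Jacobi representation~(\ref{eq:Jacobirep2}), exactly mirroring the proof of Theorem~\ref{thm:cyclic1}; the paper instead deduces Theorem~\ref{thm:cyclic2} from Theorem~\ref{thm:cyclic1} by analytic continuation in the argument (in effect, $\xi\mapsto{\rm i}\theta$), which you yourself flag as an alternative in your final sentence. The two routes are equally short: yours avoids any discussion of branch cuts at the cost of redoing the comparison, while the paper's avoids repetition at the cost of invoking continuation. Your handling of the degenerate case $\mu-n\in\{1,2,\dots\}$ via meromorphic continuation in~$\mu$ is the right way to dispose of the apparent $0/0$ in the ${}_2F_1$ side, and is implicit in the paper's standing convention (stated after~(\ref{eq:reps})) that such limits are always to be taken.
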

\begin{proof}
  By analytic continuation of Theorem~\ref{thm:cyclic1}; or in effect, by
  letting $\xi={\rm i}\theta$.\qed
\end{proof}

By exploiting the $\hat Q\to P$ and ${\rm Q}\to{\rm P}$ reductions
(\ref{eq:PtoQ}) and~(\ref{eq:rmPtormQ}), one can derive additional formulas
from Theorems \ref{thm:cyclic1} and~\ref{thm:cyclic2}, for $\hat
Q^\mu_{-\frac12\pm(n+\frac12)}$ and~${\rm Q}^\mu_{-\frac12\pm(n+\frac12)}$
respectively.  However, the coefficients in (\ref{eq:PtoQ})
and~(\ref{eq:rmPtormQ}) diverge when $\mu\in\mathbb{Z}$.  Hence, following
this approach to formulas for $\hat Q_{-\frac12\pm(n+\frac12)}^m,
{\rm{Q}}_{-\frac12\pm(n+\frac12)}^m$, when $n$~is a non-negative integer
and $m$~an integer, requires the taking of a limit.  In the commonly
encountered case when $-n\le\nobreak m\le\nobreak n$ (but not otherwise),
the resulting expressions turn~out to be logarithmic.  Such expressions can
be computed in other ways~\cite[\S\,3.6.1]{Erdelyi53}.  Perhaps the best
method is to express $\hat Q_{-\frac12\pm(n+\frac12)}^m$ in~terms of
a~${}_2F_1$ by using~(\ref{eq:altQrep}), and then use known formulas for
logarithmic~${}_2F_1$'s~\cite{Detrich79}.

\smallskip
Legendre and Ferrers functions that are \emph{dihedral} (i.e., are of
half-odd-integer order) are the subject of the following theorems.
For conciseness, a special notation is used: $[A|B]_\pm$ signifies
$A$, resp.~$B$, in the $+$, resp.~$-$ case; and $\{C\}_{\alpha,\pm}$,
where $C$~depends on~$\alpha$, signifies the even or odd part of~$C$
under $\alpha\mapsto-\alpha$, i.e., $\frac12[C(\alpha)\pm\nobreak
  C(-\alpha)]$.

\begin{theorem}
  \label{thm:dihedral1}
  The formulas
  \begin{align*}
    &\hat Q_{-\frac12+\alpha}^{\pm(\frac12+m)}(z) =
    \sqrt{\frac{\pi}2}\: m!\,
    \left[1 \Bigm | \frac1{(\alpha-m)_{2m+1}}\right]_\pm \\
    &\qquad\qquad\qquad\qquad{}\times (z^2-1)^{-1/4}\,\bigl(z+\sqrt{z^2-1}\bigr)^{-\alpha}\,P_m^{(\alpha,-\alpha)}\!\left(\frac{z}{\sqrt{z^2-1}}\right),\\
    &\hat Q_{-\frac12+\alpha}^{\pm(\frac12+m)}(\cosh\xi) =
    \sqrt{\frac{\pi}2}\: m!\,
    \left[1 \Bigm | \frac1{(\alpha-m)_{2m+1}}\right]_\pm \\
    &\qquad\qquad\qquad\qquad{}\times (\sinh\xi)^{-1/2}\,
    {\rm e}^{-\alpha\xi}\:
         P_m^{(\alpha,-\alpha)}(\coth\xi)
  \end{align*}
  hold when $m$~is a non-negative integer, for $z\in(1,\infty)$ and
  $\xi\in(0,\infty)$.
\end{theorem}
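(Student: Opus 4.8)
The plan is to obtain both displayed formulas from Whipple's $\hat Q\to P$ transformation~\eqref{eq:whipple}, combined with the cyclic formula already established in Theorem~\ref{thm:cyclic1}. The key point is that~\eqref{eq:whipple} trades a second Legendre function of degree $\nu$ and order $\mu$ for a \emph{first} Legendre function of degree $-\mu-\tfrac12$ and order $-\nu-\tfrac12$; when the order is the half-odd-integer $\mu=\tfrac12+m$, the new degree $-\mu-\tfrac12=-m-1$ is an integer, so the resulting function is cyclic and Theorem~\ref{thm:cyclic1} applies. The `$-$' case will then follow from the `$+$' case via the connection formula~\eqref{eq:QtoQ}.

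Concretely, I would first set $z=\coth\xi$ with $\xi\in(0,\infty)$, so that $\cosh\xi=z/\sqrt{z^2-1}$ and $\sinh\xi=1/\sqrt{z^2-1}$, and specialize~\eqref{eq:whipple} to $\nu=-\tfrac12+\alpha$, $\mu=\tfrac12+m$ (whence $\nu+\mu+1=\alpha+m+1$ and $-\nu-\tfrac12=-\alpha$):
\[
  \hat Q_{-\frac12+\alpha}^{\frac12+m}(z)
  = \sqrt{\tfrac{\pi}{2}}\;\Gamma(\alpha+m+1)\,(z^2-1)^{-1/4}\,
  P_{-m-1}^{-\alpha}\!\left(\frac{z}{\sqrt{z^2-1}}\right).
\]
Next I would apply Theorem~\ref{thm:cyclic1} to $P^{-\alpha}_{-m-1}(w)$ with $w=z/\sqrt{z^2-1}$ --- taking the `$-$' sign there, with $n=m$ and order $-\alpha$ --- writing it as $\frac{m!}{\Gamma(\alpha+m+1)}\bigl(\frac{w+1}{w-1}\bigr)^{-\alpha/2}P_m^{(\alpha,-\alpha)}(w)$. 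The two gamma factors cancel, leaving $\sqrt{\pi/2}\,m!$ as the overall constant, and since $(z+\sqrt{z^2-1})(z-\sqrt{z^2-1})=1$ one has $\frac{w+1}{w-1}=\frac{z+\sqrt{z^2-1}}{z-\sqrt{z^2-1}}=(z+\sqrt{z^2-1})^2$, so $\bigl(\frac{w+1}{w-1}\bigr)^{-\alpha/2}=(z+\sqrt{z^2-1})^{-\alpha}$. This is exactly the `$+$' formula for the Legendre variable; the $\cosh\xi$ version is the same identity after putting $z=\cosh\xi$, using $(z^2-1)^{-1/4}=(\sinh\xi)^{-1/2}$, $z+\sqrt{z^2-1}={\rm e}^{\xi}$, and $z/\sqrt{z^2-1}=\coth\xi$.

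For the `$-$' case I would invoke~\eqref{eq:QtoQ} in the form $\hat Q_\nu^{-\mu}=\bigl[\Gamma(\nu-\mu+1)/\Gamma(\nu+\mu+1)\bigr]\,\hat Q_\nu^{\mu}$; with $\nu=-\tfrac12+\alpha$ and $\mu=\tfrac12+m$ this ratio is $\Gamma(\alpha-m)/\Gamma(\alpha+m+1)=1/(\alpha-m)_{2m+1}$, which is precisely the prefactor $\bigl[1\bigm|(\alpha-m)^{-1}_{2m+1}\bigr]_-$ appearing in the statement, so multiplying the two `$+$' formulas through by it yields the two `$-$' formulas.

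I do not anticipate a genuine obstacle: the argument is a short composition of already-available identities. The points that will need care are purely bookkeeping --- keeping the single-valued branches of $(z^2-1)^{1/4}$, $z\pm\sqrt{z^2-1}$, and $\bigl(\frac{w+1}{w-1}\bigr)^{\alpha/2}$ straight on $z\in(1,\infty)$ (all are positive reals there, so this is unambiguous), verifying the gamma cancellation, and observing that the identity, derived first for generic $\alpha$, persists for the exceptional integer values of $\alpha$ (at which $\hat Q^{\pm(1/2+m)}_{-1/2+\alpha}$ may degenerate or $(\alpha-m)_{2m+1}$ may vanish) by analytic continuation in $\alpha$, with a limit understood.
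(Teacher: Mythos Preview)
Your proposal is correct and follows essentially the same route as the paper: Whipple's transformation~\eqref{eq:whipple} applied with $\nu=-\tfrac12+\alpha$, $\mu=\tfrac12+m$, followed by Theorem~\ref{thm:cyclic1} with $n\mapsto m$ and $\mu\mapsto-\alpha$. The only cosmetic difference is that you obtain the `$-$' case from the `$+$' case via~\eqref{eq:QtoQ}, whereas the paper's phrasing implicitly applies Whipple a second time with $\mu=-(\tfrac12+m)$; since $P_m^{-\alpha}=P_{-m-1}^{-\alpha}$, the two routes yield the same computation and the same prefactor $\Gamma(\alpha-m)/\Gamma(\alpha+m+1)=1/(\alpha-m)_{2m+1}$.
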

\begin{proof}
  Combine Whipple's $\hat Q\to P$
  transformation~\cite[3.3(13)]{Erdelyi53}, which appeared as
  Eq.~(\ref{eq:whipple}), with the results in
  Theorem~\ref{thm:cyclic1}; and write $m$ for~$n$, and $-\alpha$
  for~$\mu$.\qed
\end{proof}

In these formulas, the proportionality of $\hat
Q_{-\frac12+\alpha}^{\pm(\frac12+m)}$ to each other is expected;
cf.~(\ref{eq:QtoQ}).  Also, the division in the `minus' case by
\begin{displaymath}
 (\alpha-m)_{2m+1} = (\alpha-m)\dots(\alpha+m),
\end{displaymath}
which equals zero if $\alpha=-m,\dots, m$, is not unexpected.  As was
noted in~\S\,\ref{sec:prelims}, $\hat Q_\nu^\mu$~is undefined if
$\nu+\nobreak\mu$ is a negative integer, except when
$\nu=-\frac32,-\frac52,\dots$, in which case $\hat
Q_\nu^{\nu+1},\dots, \hat Q_\nu^{-(\nu+1)}$ are defined.  This implies
that for $m=0,1,2,\dots$, $\hat Q_{-\frac12+\alpha}^{+(\frac12+m)}$~is
defined for all~$\alpha$, and that $\hat
Q_{-\frac12+\alpha}^{-(\frac12+m)}$ is undefined if and only if
$\alpha=-m,\dots,m$.

\begin{theorem}
  \label{thm:dihedral2}
  The formulas
  \begin{align*}
    &P_{-\frac12+\alpha}^{\pm(\frac12+m)}(z) =
    \sqrt{\frac2{\pi}}\: m!\,
    \left[(-1)^m \Bigm | \frac{(-1)^{m+1}}{(\alpha-m)_{2m+1}}\right]_\pm \\
    &\qquad\qquad\qquad\qquad{}\times (z^2-1)^{-1/4}\,\left\{\bigl(z+\sqrt{z^2-1}\bigr)^{-\alpha}\,P_m^{(\alpha,-\alpha)}\!\left(\frac{z}{\sqrt{z^2-1}}\right)\right\}_{\alpha,\pm},\\
    &P_{-\frac12+\alpha}^{\pm(\frac12+m)}(\cosh\xi) =
    \sqrt{\frac2{\pi}}\: m!\,
    \left[(-1)^m \Bigm | \frac{(-1)^{m+1}}{(\alpha-m)_{2m+1}}\right]_\pm \\
    &\qquad\qquad\qquad\qquad{}\times (\sinh\xi)^{-1/2}\,\left\{{\rm e}^{-\alpha\xi}\:P_m^{(\alpha,-\alpha)}(\coth\xi)\right\}_{\alpha,\pm}
  \end{align*}
  hold when $m$~is a non-negative integer, for $z\in(1,\infty)$ and
  $\xi\in(0,\infty)$; it being understood in the `minus' case that
  when $\alpha=-m,\dots,m$ and there is an apparent division by zero,
  each right-hand side requires the taking of a limit.
\end{theorem}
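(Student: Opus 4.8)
The plan is to obtain Theorem~\ref{thm:dihedral2} from Theorem~\ref{thm:dihedral1} by means of the $P\to\hat Q$ reduction~(\ref{eq:QtoP}). Putting $\nu=-\tfrac12+\alpha$ there, so that $-\nu-1=-\tfrac12-\alpha$, and taking $\mu=\pm(\tfrac12+m)$, one expresses $P_{-\frac12+\alpha}^{\pm(\frac12+m)}$ as the scalar $\sec((\alpha-\tfrac12)\pi)\,\Gamma(\nu-\mu+1)^{-1}\Gamma(-\mu-\nu)^{-1}$ times the difference $\hat Q_{-\frac12-\alpha}^{\mp(\frac12+m)}-\hat Q_{-\frac12+\alpha}^{\mp(\frac12+m)}$. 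Since these two second-kind functions differ only through $\alpha\mapsto-\alpha$, their difference is $(-2)$ times the part of $\hat Q_{-\frac12+\alpha}^{\mp(\frac12+m)}(z)$ that is odd under $\alpha\mapsto-\alpha$, and for that single function Theorem~\ref{thm:dihedral1} supplies a closed form. The whole argument thus reduces to collapsing the scalar prefactor and tracking the odd-part operator through the factors in the $\hat Q$ formula.

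For the parity bookkeeping: in the ``plus'' case one needs $\hat Q^{-(\frac12+m)}$, whose Theorem~\ref{thm:dihedral1} expression carries the $\alpha$-odd prefactor $1/(\alpha-m)_{2m+1}$; pulling this definite-parity factor through the odd-part operator replaces ``odd part'' by ``$1/(\alpha-m)_{2m+1}$ times even part,'' producing exactly the factor $\{(z+\sqrt{z^2-1})^{-\alpha}P_m^{(\alpha,-\alpha)}(z/\sqrt{z^2-1})\}_{\alpha,+}$ of the target formula. In the ``minus'' case one needs $\hat Q^{+(\frac12+m)}$, whose prefactor is $\alpha$-independent, so the odd part $\{\cdots\}_{\alpha,-}$ passes through unchanged. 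It then remains to simplify the scalar: using $\sec((\alpha-\tfrac12)\pi)=\csc(\alpha\pi)$, the identity $\Gamma(\alpha-m)\,(\alpha-m)_{2m+1}=\Gamma(\alpha+m+1)$, and the reflection formula $\Gamma(w)\Gamma(1-w)=\pi\csc(\pi w)$ (used once with $w=\alpha+m+1$, once with $w=\alpha-m$), the scalar times the $\mp2\sqrt{\pi/2}\,m!$ coming from $\hat Q$ collapses to $\sqrt{2/\pi}\,m!$ times $(-1)^m$ in the plus case and $(-1)^{m+1}/(\alpha-m)_{2m+1}$ in the minus case, which is the asserted identity.

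This derivation is valid for generic~$\alpha$, and a density/analyticity argument covers the rest. In the plus case both sides are entire in~$\alpha$ (the order $\tfrac12+m$ is a half-odd-integer, never a positive integer, so by~(\ref{eq:Prep}) the function $P_\nu^{\frac12+m}$ is entire in~$\nu$), and the identity extends. In the minus case the right-hand side has apparent simple poles at $\alpha\in\{-m,\dots,m\}$ coming from $1/(\alpha-m)_{2m+1}$, but the left-hand side $P_{-\frac12+\alpha}^{-\frac12-m}$ is again entire in~$\alpha$; hence those poles are removable and the identity holds there in the limiting sense built into the statement. Finally, the $\cosh\xi$ forms are the $z=\cosh\xi$ specializations of the $z$ forms, via $\sqrt{z^2-1}=\sinh\xi$, $z+\sqrt{z^2-1}={\rm e}^{\xi}$, $z/\sqrt{z^2-1}=\coth\xi$, exactly as for Theorems~\ref{thm:cyclic1} and~\ref{thm:cyclic2}.

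I expect the main difficulty to be purely clerical rather than conceptual: keeping signs and Pochhammer indices straight while commuting the even/odd-part operator past multiplication by $1/(\alpha-m)_{2m+1}$, and verifying that the chain of secant, gamma, and reflection factors telescopes to precisely the stated constants $\sqrt{2/\pi}\,m!\,(-1)^m$ and $\sqrt{2/\pi}\,m!\,(-1)^{m+1}/(\alpha-m)_{2m+1}$. A single misplaced sign or an off-by-one in an index would break the match, so this is the step that needs care.
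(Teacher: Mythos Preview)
Your proposal is correct and follows exactly the approach the paper takes: the paper's proof is the single line ``Combine the $P\to\hat Q$ reduction~(\ref{eq:QtoP}) with the results in Theorem~\ref{thm:dihedral1},'' and you have simply filled in the details of that combination (the parity bookkeeping on $1/(\alpha-m)_{2m+1}$ and the gamma/reflection simplification of the scalar). Your clerical computations check out, including the signs.
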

\begin{proof}
  Combine the $P\to\hat Q$ reduction~(\ref{eq:QtoP}) with the results in
  Theorem~\ref{thm:dihedral1}.\qed
\end{proof}

\begin{theorem}
\label{thm:dihedralPQ}
The formulas
\begin{align*}
    {\rm P}_{-\frac12+\alpha}^{\pm(\frac12+m)}(\cos\theta) &=
    \sqrt{\frac2{\pi}}\: m!\,
    \left[{\rm i}^{m} \Bigm | \frac{{\rm i}^{-m-1}}{(\alpha-m)_{2m+1}}\right]_\pm \\
    &\qquad\qquad{}\times (\sin\theta)^{-1/2}\,\left\{{\rm e}^{{\rm i}\alpha\theta}\:P_m^{(\alpha,-\alpha)}({\rm i}\,\cot\theta)\right\}_{\alpha,\pm},\\
    {\rm Q}_{-\frac12+\alpha}^{\pm(\frac12+m)}(\cos\theta) &=
    \sqrt{\frac{\pi}2}\: m!\,
    \left[{\rm i}^{m+1} \Bigm | \frac{{\rm i}^{-m}}{(\alpha-m)_{2m+1}}\right]_\pm \\
    &\qquad\qquad{}\times (\sin\theta)^{-1/2}\,\left\{{\rm e}^{{\rm i}\alpha\theta}\:P_m^{(\alpha,-\alpha)}({\rm i}\,\cot\theta)\right\}_{\alpha,\mp}
\end{align*}
hold when $m$~is a non-negative integer, for $\theta\in(0,\pi)$.  In
the sub-cases $\alpha=-m,\dots,m$ of the `minus' case, the apparent
division by zero in the first formula is handled by interpreting its
right-hand side in a limiting sense; but the division by zero in the
second formula causes both its sides to be undefined.
\end{theorem}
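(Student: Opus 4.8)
The plan is to obtain the ${\rm P}$ formula by analytically continuing the $\cosh\xi$ version in Theorem~\ref{thm:dihedral2} from $z\in(1,\infty)$ down to $z=\cos\theta\in(-1,1)$, and then to read off the ${\rm Q}$ formula from the ${\rm P}$ one by means of the ${\rm Q}\to{\rm P}$ reduction~(\ref{eq:rmPtormQ}); no second continuation is needed for~${\rm Q}$, because the half-odd-integrality of the order collapses~(\ref{eq:rmPtormQ}) to a single term.

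For the ${\rm P}$ formula I would set $\xi={\rm i}\theta$, $\theta\in(0,\pi)$, approached from ${\rm Re}\,\xi>0$ through the first quadrant of the $\xi$-plane, so that $z=\cosh\xi$ stays in the upper half-plane and lands on $\cos\theta$; then~(\ref{eq:lincomb2}) gives ${\rm P}_\nu^\mu={\rm e}^{\mu\pi{\rm i}/2}P_\nu^\mu$. Under this substitution ${\rm e}^{-\alpha\xi}\mapsto{\rm e}^{-{\rm i}\alpha\theta}$, $\coth\xi\mapsto-{\rm i}\cot\theta$, and $(\sinh\xi)^{-1/2}\mapsto{\rm e}^{-{\rm i}\pi/4}(\sin\theta)^{-1/2}$ for the branch reached along that path. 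To put the Jacobi factor into the stated form one uses the reflection $P_m^{(-\alpha,\alpha)}(x)=(-1)^mP_m^{(\alpha,-\alpha)}(-x)$ (the relevant case of $P_n^{(\alpha,\beta)}(-x)=(-1)^nP_n^{(\beta,\alpha)}(x)$), which converts $\{{\rm e}^{-{\rm i}\alpha\theta}P_m^{(\alpha,-\alpha)}(-{\rm i}\cot\theta)\}_{\alpha,+}$ into $(-1)^m\{{\rm e}^{{\rm i}\alpha\theta}P_m^{(\alpha,-\alpha)}({\rm i}\cot\theta)\}_{\alpha,+}$, while the corresponding odd parts differ by a factor $(-1)^{m+1}$. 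Collecting the phase ${\rm e}^{\pm(1/2+m)\pi{\rm i}/2}={\rm e}^{\pm{\rm i}\pi/4}\,{\rm i}^{\pm m}$ from~(\ref{eq:lincomb2}), the factor ${\rm e}^{-{\rm i}\pi/4}$ from the square root, and these parity signs, the coefficients $(-1)^m$ and $(-1)^{m+1}/(\alpha-m)_{2m+1}$ of Theorem~\ref{thm:dihedral2} turn into ${\rm i}^m$ and ${\rm i}^{-m-1}/(\alpha-m)_{2m+1}$, which is the asserted ${\rm P}$ formula; the degenerate values $\alpha=-m,\dots,m$ of the minus case are handled by continuity, just as in Theorem~\ref{thm:dihedral2}.

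For the ${\rm Q}$ formula I would apply~(\ref{eq:rmPtormQ}) with $\mu=\pm(\tfrac12+m)$. Since $\cos(\mu\pi)=0$, the term $\cot(\mu\pi)\,{\rm P}_\nu^\mu$ drops out and ${\rm Q}_\nu^{\pm(1/2+m)}$ becomes a multiple of ${\rm P}_\nu^{\mp(1/2+m)}$, the surviving factor being $-\csc(\mu\pi)\,\Gamma(\nu+\mu+1)/\Gamma(\nu-\mu+1)$, which at $\nu=-\tfrac12+\alpha$ is an explicit sign times $(\alpha-m)_{2m+1}^{\pm1}$ (the $\pm$ following the sign in~$\mu$). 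Substituting the ${\rm P}$ formula just obtained, reducing $(-1)^m{\rm i}^{\pm m}$ to ${\rm i}^{\mp m}$, and using $\tfrac\pi2\sqrt{2/\pi}=\sqrt{\pi/2}$, one gets the coefficients ${\rm i}^{m+1}$ (plus case) and ${\rm i}^{-m}/(\alpha-m)_{2m+1}$ (minus case), together with the opposite parity $\{\cdot\}_{\alpha,\mp}$, as claimed. The cancellation $(\alpha-m)_{2m+1}\cdot(\alpha-m)_{2m+1}^{-1}$ in the plus case explains why ${\rm Q}^{+(1/2+m)}$ is regular in~$\alpha$ while ${\rm Q}^{-(1/2+m)}$ keeps the pole; that the two sides of the minus formula are genuinely undefined when $\alpha=-m,\dots,m$ follows from the fact, recalled in~\S\,\ref{sec:prelims}, that ${\rm Q}_\nu^\mu$ is undefined exactly when $\hat Q_\nu^\mu$ is, i.e.\ when $\nu+\mu$ is a negative integer.

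The step I expect to be the main obstacle is the branch-and-parity bookkeeping inside the continuation: keeping the branch of $(\sinh\xi)^{-1/2}$ consistent along the chosen path, and correctly tracking the sign that $\{\cdot\}_{\alpha,-}$ — but not $\{\cdot\}_{\alpha,+}$ — picks up under the Jacobi reflection. Everything else is arithmetic with powers of~${\rm i}$ and elementary gamma-function identities.
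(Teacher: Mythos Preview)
Your proposal is correct and follows essentially the same route as the paper: analytic continuation of Theorem~\ref{thm:dihedral2} (the paper writes $\xi=-{\rm i}\theta$, you write $\xi={\rm i}\theta$, which is immaterial) to obtain the ${\rm P}$ formula, then the ${\rm Q}\to{\rm P}$ reduction~(\ref{eq:rmPtormQ}) with $\cot(\mu\pi)=0$ to obtain the ${\rm Q}$ formula. Your phase and parity bookkeeping (the Jacobi reflection $P_m^{(\alpha,-\alpha)}(-x)=(-1)^mP_m^{(-\alpha,\alpha)}(x)$, the branch of $(\sinh\xi)^{-1/2}$, and the reduction $(-1)^m{\rm i}^{\pm m}={\rm i}^{\mp m}$) checks out.
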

\begin{proof}
  The first formula follows by analytic continuation of the latter formula
  in Theorem~\ref{thm:dihedral2}; in effect, by letting $\xi=-{\rm
    i}\theta$.  The second formula then follows from the ${\rm Q}\to{\rm
    P}$ reduction~(\ref{eq:rmPtormQ}), after some algebraic
  manipulations.\qed
\end{proof}

As was noted in \S\,\ref{sec:prelims}, ${\rm Q}_\nu^\mu$~is undefined if
and only if $\hat Q_\nu^\mu$~is.  It was also noted that if
$\mu=\frac12,\frac32,\dots$, then ${\rm Q}_{-\mu}^\mu,\dots,{\rm
  Q}_{\mu-1}^\mu\equiv0$.  It follows that in the `plus' case of the second
formula of the theorem, the right-hand side must equal zero if
$\alpha=-m,\dots,m$.  This yields the interesting Jacobi-polynomial
identity
\begin{displaymath}
  {\rm e}^{{\rm i}\alpha\theta}\,P_m^{(\alpha,-\alpha)}({\rm i}\,\cot\theta)
  =
  {\rm e}^{-{\rm i}\alpha\theta}\,P_m^{(-\alpha,\alpha)}({\rm i}\,\cot\theta),
\end{displaymath}
which holds for $m=0,1,2,\dots$, when $\alpha=0,1,\dots,m$.

\subsection{Dihedral Ferrers Functions and Love--Hunter Expansions}
\label{sec:biorthog2}

In this subsection, we show that an expansion in dihedral Ferrers functions
can be, in effect, an expansion in Chebyshev polynomials (of the fourth
kind); and as an application, show that the result of~\cite{Pinsky99} on
the convergence of Love--Hunter expansions can be slightly extended.

The first formula on dihedral Ferrers functions in
Theorem~\ref{thm:dihedralPQ} specializes when $m=0$ to the known pair of
formulas~\cite[\S\,14.5]{Olver2010}
\begin{equation}
\label{eq:PPformulas}
{\rm
    P}_{-\frac12+\alpha}^{-\frac12}(\cos\theta) =\sqrt\frac2{\pi}\:
  \frac{\sin(\alpha\theta)}{\alpha\sqrt{\sin\theta}},
  \qquad
  {\rm P}_{-\frac12+\alpha}^{\frac12}(\cos\theta) =\sqrt\frac2{\pi}\:
  \frac{\cos(\alpha\theta)}{\sqrt{\sin\theta}}.
\end{equation}
These hold for $\theta\in(0,\pi)$, the $\alpha=0$ case of the former
requiring the taking of a limit.

Love--Hunter biorthogonality, i.e., the orthogonality of the functions
${\rm P}_\nu^\mu(z)$ and ${\rm P}_\nu^{-\mu}(-z)$ in $L^2[-1,1]$ when (i)
$\textrm{Re}\,\mu\in(-1,1)$ and (ii) $\nu,\nu'$ differ by an even integer
and are not half-odd-integers, specializes when $\mu=-\frac12$ and
$z=\cos\theta$ to
\begin{displaymath}
\int_0^\pi {\rm P}_\nu^{-\frac12}(\cos\theta){\rm P}_{\nu'}^{\frac12}(-\cos\theta)\,\sin\theta\,{\rm d}\theta = 0,
\end{displaymath}
and thus to
\begin{equation}
\label{eq:notwellknown}
\int_0^\pi \sin(\alpha\theta)\,\cos[\alpha'(\pi-\theta)]\,{\rm
  d}\theta = 0,
\end{equation}
which holds if $\alpha,\alpha'$ differ by an even integer.  (By continuity,
the restriction to $\alpha,\alpha'$ that are not integers can be dropped.)
The orthogonality in~(\ref{eq:notwellknown}) is not well known.

A Love--Hunter expansion of an `arbitrary' function $f=f(z)$ on $-1<z<1$ is
a bilateral expansion of~$f$ in the Ferrers functions ${\rm
  P}_{\nu_0+2n}^\mu$, of the form~(\ref{eq:seriesLove}), in which the
coefficients $\{c_n\}_{n\in\mathbb{Z}}$ are computed as inner products,
i.e.,
\begin{displaymath}
  c_n^{(\alpha)} = \frac
  {\int_{-1}^1 {\rm P}_{\nu_0+2n}^{-\mu}(-z)\,f(z)\,{\rm d}z}
  {\int_{-1}^1 {\rm P}_{\nu_0+2n}^{-\mu}(-z)\,{\rm P}_{\nu_0+2n}^{\mu}(z)\,{\rm d}z}.
\end{displaymath}
Existing results on the convergence of such expansions~\cite{Love94,Love92}
require that $\left|{\textrm{Re}}\,\mu\right|<\frac12$, or in the real
case, $\mu\in(-\frac12,\frac12)$.

It is of interest to examine whether convergence results can also be
obtained in the boundary cases $\mu=\pm\frac12$.  To treat the case when
$(\nu_0,\mu)=\allowbreak (-\frac12+\nobreak\alpha,-\frac12)$, define the
indexed ($n\in\mathbb{Z}$) and continuously parametrized
($\alpha\in\mathbb{R}$) functions
\begin{displaymath}
  \psi_{n}^{(\alpha)}(\theta) = \frac{\sin[(2n+\alpha)\theta]}{\sqrt{\sin\theta}},\qquad
  \chi_{n}^{(\alpha)}(\theta) = \frac{\cos[(2n+\alpha)(\pi-\theta)]}{\sqrt{\sin\theta}}\qquad
\end{displaymath}
on $0<\theta<\pi$, which are biorthogonal with respect to the weight
function $\sin\theta$.  (They differ only in normalization from ${\rm
  P}_{-\frac12+\alpha+2n}^{-\frac12}$ and ${\rm
  P}_{-\frac12+\alpha+2n}^{\frac12}$.)  In~terms of the first, one has a
formal $\mu=-\frac12$ Love--Hunter expansion
\begin{displaymath}
  f(\cos\theta) = \sum_{n=-\infty}^\infty c_n^{(\alpha)}\psi_{n}^{(\alpha)}(\theta),
\end{displaymath}
where
\begin{displaymath}
  c_n^{(\alpha)} = \frac
  {\int_0^\pi \chi^{(\alpha)}_{n}(\theta)\,f(\cos(\theta))\,\sin\theta\,{\rm d}\theta}
  {\int_0^\pi \chi^{(\alpha)}_{n}(\theta)\,\psi_{n}^{(\alpha)}(\theta)\,\sin\theta\,{\rm d}\theta}.
\end{displaymath}
The denominator inner product equals $(\pi/2)\sin(\alpha\pi)$ for all~$n$,
by examination; hence the restriction
$\alpha\in\mathbb{R}\setminus\mathbb{Z}$ must obviously be imposed.

This expansion is not fully satisfactory, because each
$\psi_{n}^{(\alpha)}(\theta)$ diverges as $\theta\to\pi^-$; though it
converges to zero, asymmetrically, as $\theta\to0^+$.  The underlying
problem is that if $\textrm{Re}\,\mu<0$, the function ${\rm P}_\nu^\mu(z)$
has leading behavior as~$z\to1^-$ proportional to $(1-\nobreak
z)^{-\mu/2}$, but its leading behavior as~$z\to(-1)^+$ comprises two terms:
one proportional to $(1+\nobreak z)^{-\mu/2}$, and one to $(1+\nobreak
z)^{+\mu/2}$.

In order (i)~to make endpoint behavior more symmetrical and less
divergent, and (ii)~to study endpoint convergence,
Pinsky~\cite{Pinsky99} has proposed modifying Love--Hunter expansions
by treating $[(1-\nobreak z)/\allowbreak (1+\nobreak
  z)]^{\mu/2}\,\allowbreak{\rm P}_{\nu}^\mu(z)$ rather than ${\rm
  P}_\nu^\mu(z)$ as the expansion function.  By~(\ref{eq:Prep}), this
amounts to replacing each ${\rm P}_\nu^\mu(z)$ by the ${}_2F_1$
function in~terms of which it is defined; i.e., performing a
\emph{hypergeometric} expansion.

Adopting the suggestion of~\cite{Pinsky99} when $\mu=-\frac12$ amounts
to replacing $\psi_n^{(\alpha)}\!,\chi_n^{(\alpha)}$ by versions that
are multiplied by $[(1-\nobreak z)/\allowbreak (1+\nobreak
  z)]^{-1/4}$, which equals $\cot^{1/2}(\theta/2)$.  With a trivial
change in normalization, these are the functions
\begin{equation}
\label{eq:psichihatdefs}
  \hat\psi_{n}^{(\alpha)}(\theta) =
  \frac{\sin[(2n+\alpha)\theta]}{\sin(\theta/2)},\qquad
  \hat\chi_{n}^{(\alpha)}(\theta) =
  \frac{\cos[(2n+\alpha)(\pi-\theta)]}{\sin(\theta/2)}\qquad
\end{equation}
($n\in\mathbb{Z}$) on $0<\theta<\pi$, which are biorthogonal with
respect to the weight function $\sin^2(\theta/2)$.  Each
$\hat\psi_{n}^{(\alpha)}(\theta)$ has a finite, nonzero limit as
$\theta\to0^+$ and $\theta\to\pi^-$, and as a function of
$z=\cos\theta$ is proportional to
\begin{displaymath}
  {}_2F_1\left(\frac12-\alpha-2n,\frac12+\alpha+2n;\,\frac32;\, \frac{1-z}2\right).
\end{displaymath}
In terms of these trigonometric functions $\hat\psi_{n}^{(\alpha)}(\theta)$,
one has (formally) a bilateral expansion of an arbitrary function $f=f(z)$
defined on $-1<z<1$, namely
\begin{equation}
\label{eq:Pinskyexp}
  f(z=\cos\theta) = \lim_{N\to\infty}
\sum_{n=-N}^N \hat c_n^{(\alpha)}\hat \psi_{n}^{(\alpha)}(\theta)
\end{equation}
for all $\theta\in(0,\pi)$, where
\begin{equation}
  \label{eq:hasden}
  \hat c_n^{(\alpha)} = \frac {\int_0^\pi
    \hat\chi^{(\alpha)}_{n}(\theta)\,f(\cos(\theta))\,\sin^2(\theta/2)\,{\rm
      d}\theta} {\int_0^\pi
    \hat\chi^{(\alpha)}_{n}(\theta)\,\hat\psi_{n}^{(\alpha)}(\theta)\,\sin^2(\theta/2)\,{\rm
      d}\theta}.
\end{equation}
The denominator in~(\ref{eq:hasden}) equals $(\pi/2)\sin(\alpha\pi)$
for all~$n$, as before.  One can clearly restrict $\alpha$ from
$\mathbb{R}\setminus\mathbb{Z}$ to the interval $(0,1)$ without
losing generality.

\begin{theorem}
If\/ $f=f(z)$ is piecewise continuous on\/ $-1\le z\le 1$, then in the
symmetric case\/ $\alpha=\frac12$, the bilateral series in\/
{\rm(\ref{eq:Pinskyexp})} will converge as\/ $N\to\infty$ to\/ $f(z)$ at
all points of continuity, including the endpoints, and in general to\/
$\left[f(z+)+\nobreak f(z-)\right]/2$.
\end{theorem}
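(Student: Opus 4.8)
My plan rests on a collapse special to $\alpha=\tfrac12$. Since $(2n+\tfrac12)(\pi-\theta)=2n\pi+\tfrac\pi2-(2n+\tfrac12)\theta$, one has $\cos[(2n+\tfrac12)(\pi-\theta)]=\sin[(2n+\tfrac12)\theta]$, so the two systems in~(\ref{eq:psichihatdefs}) coincide, $\hat\chi_{n}^{(1/2)}=\hat\psi_{n}^{(1/2)}$, and~(\ref{eq:Pinskyexp}) becomes an ordinary orthogonal-series expansion in the single complete system $\{\hat\psi_{n}^{(1/2)}\}_{n\in\mathbb{Z}}$ on $(0,\pi)$ with weight $\sin^2(\theta/2)$. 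Writing $z=\cos\theta$, the function $\hat\psi_{n}^{(1/2)}(\theta)=\sin[(2n+\tfrac12)\theta]/\sin(\theta/2)$ equals $W_{2n}(z)$ for $n\ge0$ and $-W_{-2n-1}(z)$ for $n\le-1$, where $W_k$ is the fourth-kind Chebyshev polynomial, $W_k(\cos\theta)=\sin[(k+\tfrac12)\theta]/\sin(\theta/2)$; thus~(\ref{eq:Pinskyexp}) is exactly the fourth-kind Chebyshev expansion of $f$, its symmetric partial sum $\sum_{|n|\le N}$ being the ordinary partial sum $\sum_{k=0}^{2N}b_kW_k(z)$ (the $2N+1$ indices $n=-N,\dots,N$ map bijectively onto $k=0,\dots,2N$), with $b_k=\tfrac2\pi\int_0^\pi f(\cos\theta)\sin(\theta/2)\sin[(k+\tfrac12)\theta]\,{\rm d}\theta$. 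In particular $L^2$-convergence is automatic, and only pointwise convergence --- at interior points and at the two endpoints --- remains.

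I would then reduce this to the classical (first-kind Chebyshev, i.e.\ cosine) expansion by summation by parts. Let $a_m\defeq\tfrac2\pi\int_0^\pi f(\cos\theta)\cos(m\theta)\,{\rm d}\theta$ be the cosine coefficients of $\theta\mapsto f(\cos\theta)$; expanding $\sin(\theta/2)\sin[(k+\tfrac12)\theta]=\tfrac12[\cos k\theta-\cos(k+1)\theta]$ shows $b_k=\tfrac12(a_k-a_{k+1})$. Since $W_k(\cos\theta)-W_{k-1}(\cos\theta)=2\cos k\theta$ and $W_0\equiv1$, an Abel summation gives, for every $M$,
\[
  \sum_{k=0}^{M}b_kW_k(\cos\theta)=\Bigl[\tfrac{a_0}{2}+\sum_{k=1}^{M}a_k\cos k\theta\Bigr]-\tfrac12\,a_{M+1}\,W_M(\cos\theta),
\]
so the $M$-th partial sum of the fourth-kind expansion of $f$ is the $M$-th partial sum of the cosine expansion of $f(\cos\theta)$, corrected by the single term $-\tfrac12 a_{M+1}W_M(\cos\theta)$.

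With $M=2N$, the interior of the interval and the endpoint $z=-1$ then follow at once. By Bessel's inequality $a_m\to0$, while $|W_M(\cos\theta_0)|\le1/|\sin(\theta_0/2)|$ for $\theta_0$ bounded away from $0$ and $W_M(-1)=(-1)^M$; hence the correction term vanishes in the limit, and convergence of~(\ref{eq:Pinskyexp}) at $z_0=\cos\theta_0$ ($\theta_0\in(0,\pi]$) is equivalent to convergence of the ordinary cosine series of $f(\cos\theta)$ at $\theta_0$. That cosine series is the Fourier series of the $2\pi$-periodic even extension of $\theta\mapsto f(\cos\theta)$, which inherits the regularity of $f$ and is two-sidedly continuous at $\theta=0,\pi$ (the even reflection produces no jump at either endpoint); by the classical Dirichlet--Jordan theorem it converges to $\tfrac12[f(z_0+)+f(z_0-)]$ at interior points --- the one-sided limits in $z$ coming from those in $\theta$ through the decreasing substitution $z=\cos\theta$ --- and to $f(-1)$ at $\theta=\pi$, which gives the asserted limit there since $\hat\psi_{n}^{(1/2)}(\pi)=1$.

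The remaining endpoint $z=1$, i.e.\ $\theta=0$, is the main obstacle. There $W_M(1)=2M+1$, so the correction term is $-(M+\tfrac12)a_{M+1}$, which does \emph{not} vanish just because $a_m\to0$; it tends to $0$ precisely when $m\,a_m\to0$. Squeezing this tail estimate on the cosine coefficients out of the hypotheses on $f$ near $z=1$ --- one integration by parts plus the Riemann--Lebesgue lemma gives it once $\theta\mapsto f(\cos\theta)$ is absolutely continuous with integrable derivative --- is the delicate step; once it is in hand, convergence at $z=1$ reduces to the classical convergence of the cosine series at $\theta=0$, whose sum is $f(1)$. So the interior and $z=-1$ are routine consequences of classical Fourier theory, and essentially all the work --- and the one place where the regularity of $f$ genuinely enters --- sits in controlling the $(M+\tfrac12)a_{M+1}$ correction at $z=1$.
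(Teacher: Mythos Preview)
Your opening reduction—the collapse $\hat\chi_n^{(1/2)}=\hat\psi_n^{(1/2)}$ and the identification with $W_{2n}$ or $-W_{-2n-1}$, so that the bilateral series becomes the ordinary fourth-kind Chebyshev expansion of~$f$—is correct and is exactly what the paper does. The paths then diverge. You relate the $W_k$ partial sums to the cosine series in~$\theta$ by Abel summation, picking up the correction $-\tfrac12 a_{M+1}W_M(\cos\theta)$. The paper instead applies the quadratic change of variable $z=1-2u^2$ (i.e.\ $u=\sin(\theta/2)$), under which $W_j(z)=(-1)^ju^{-1}T_{2j+1}(u)$; the $W_j$ expansion of $f(z)$ is thereby identified with the first-kind Chebyshev (cosine) expansion of the odd function $g(u)=u\,f(1-2u^2)$ on $-1\le u\le1$, and the theorem is reduced wholesale to convergence of a first-kind Chebyshev series.

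Your Abel-summation route works cleanly for the interior and at $z=-1$, but it leaves a genuine gap at $z=1$, and you correctly locate it: you need $m\,a_m\to0$. Under piecewise continuity alone this can fail outright—a single interior jump of~$f$ already forces $a_m\asymp1/m$—and the fix you sketch (absolute continuity of $\theta\mapsto f(\cos\theta)$ with integrable derivative) is a strictly stronger hypothesis than what the theorem assumes. So as written your argument does not establish the endpoint $z=1$ under the stated hypotheses. The device you are missing is exactly the quadratic substitution: it sends the bad endpoint $z=1$ to the \emph{interior} point $u=0$, so that the paper never has to control a growing endpoint correction and can appeal directly to the first-kind theory. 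What your approach buys, on the other hand, is an explicit and transparent formula for the discrepancy between the $W_k$ and cosine partial sums, which makes the interior and $z=-1$ cases completely elementary.
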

\begin{proof}
The Chebyshev polynomials $W_j$ of the fourth kind, for $j=0,1,2,\dots$,
are defined by~\cite{Mason2003}
\begin{displaymath}
  W_j(\cos\theta) =
  \frac{\sin\left[(j+\frac12)\theta\right]}{\sin(\frac12\theta)}
  = \sum_{m=-j}^j {\rm e}^{{\rm i}m\theta}.
\end{displaymath}
It follows from (\ref{eq:psichihatdefs}) that when $n=0,1,2,\dots$,
both of
$\hat\psi_n^{(\frac12)}(\theta),\hat\chi_n^{(\frac12)}(\theta)$ equal
$W_{2n}(\cos\theta)$; and when $n=-1,-2,\dots$, they equal
$-W_{-2n-1}(\cos\theta)$.  The bilateral expansion
in~(\ref{eq:Pinskyexp}) thus reduces if $\alpha=\frac12$ to a
\emph{unilateral} expansion in the polynomials~$W_j$, $j=0,1,2,\dots$.

The Chebyshev polynomials~$T_k$ of the first kind, for $k=0,1,2,\dots$, are
given by
\begin{displaymath}
T_k(\cos\theta) = {\cos(k\theta)}.
\end{displaymath}
By standard Fourier series theory, the expansion of $g=g(u)$ in
the~$T_k(u)$, when $g$~is piecewise continuous on $-1\le\nobreak
u\le\nobreak 1$, will converge to~$g$ at all points of continuity, and in
general to $\left[g(u+)+\nobreak g(u-)\right]/2$.  But (see
\cite[\S\,5.8.2]{Mason2003}), if one writes $z=1-\nobreak 2u^2$ (so that
$u=\sin(\theta/2)$ if $z=\cos\theta$), then $W_j(z)$ equals $(-1)^j u^{-1}
T_{2j+1}(u)$.  Therefore an expansion of $f=f(z)$ in the fourth-kind
$W_j(z)$ on $-1\le\nobreak z\le\nobreak 1$ is effectively an expansion of
$g(u)=\allowbreak uf(1-\nobreak2u^2)$ on $-1\le\nobreak u\le\nobreak1$ in
the first-kind $T_k(u)$, each even\nobreakdash-$k$ term of which must
vanish.  The theorem follows.\qed
\end{proof}

It is useful to compare this convergence result, which refers to an
expansion of~$f$ in the Ferrers functions ${\rm P}_{2n}^{-\frac12}$,
with the pointwise convergence result of~\cite{Pinsky99}.  The
latter deals with an expansion in the functions ${\rm
  P}_{\nu_0+2n}^{\mu}$, where $\nu_0$~is arbitrary and
$\mu\in(-\frac12,\frac12)$.  However, it requires that $f$ be
piecewise smooth, not merely piecewise continuous.

As the above theorem reveals, this assumption can be relaxed;
at~least, in the seemingly difficult `corner' case when
$(\nu_0,\mu)=(0,-\frac12)$.  Whether smoothness can also be dropped as
a hypothesis for the pointwise convergence of Love--Hunter expansions
with $\mu\in(-\frac12,\frac12)$, or with $(\nu_0,\mu)=\allowbreak
(-\frac12+\nobreak\alpha,-\frac12)$ when $\alpha\neq\frac12$, remains
to be explored.

\smallskip
It must be mentioned that the octahedral and tetrahedral formulas of
Theorems \ref{thm:oct2} and~\ref{thm:tetr3} facilitate the calculation of
the coefficients in Love--Hunter expansions of the form
(\ref{eq:seriesLove}), with $(\nu_0+\nobreak\frac12,\mu)$ equal to
$(\pm\frac13,\pm\frac14)$ and $(\pm\frac14,\pm\frac13)$, respectively.
Because these values satisfy $\mu\in(-\frac12,\frac12)$, the convergence
result of~\cite{Pinsky99} applies.

\section{Ladder Operators, Lie Algebras, and Representations}
\label{sec:last}

In the preceding sections, explicit formulas for the Legendre and
Ferrers functions in the octahedral, tetrahedral, dihedral, and cyclic
families were derived.  Each such family (in the first-kind Ferrers
case) is of the form $\{{\rm P}_{\nu_0+n}^{\mu_0+m}(z=\cos\theta)\}$,
where $\nu_0,\mu_0$ are or may be fractional, and $(n,m)$ ranges
over~$\mathbb{Z}^2$.  In this section, the connection between such a
family and conventional $SO(3)$-based harmonic analysis on the sphere
$S^2=SO(3)/SO(2)$, coordinatized by the angles~$(\theta,\varphi)$, is
briefly explored.

The connection goes through the corresponding family of generalized
spherical harmonics, ${\rm P}_\nu^\mu(\cos\theta){\rm e}^{{\rm
    i}\mu\varphi}$, with $(\nu,\mu)\in(\nu_0,\mu_0)+\mathbb{Z}^2$.
But the connection is not as strong as one would like.  If
$\nu_0,\mu_0$ are rational but not integral, these harmonic functions
will not be single-valued on the symmetric space~$S^2$.  (In the cases
of interest here, each ${\rm P}_\nu^\mu(z)$ in the family is algebraic
in~$z$, and they can be viewed as finite-valued.)  They may not be
square-integrable, because the leading behavior of ${\rm
  P}_\nu^\mu(z)$ as~$z\to1^-$ is proportional to $(1-\nobreak
z)^{-\mu/2}$ unless $\mu$~is a positive integer.

For these reasons, the focus is on the action of Lie \emph{algebras} (of
`infinitesimal transformations') on a function family of this type,
specified by~$(\nu_0,\mu_0)$, rather than the action of a Lie \emph{group}
such as~$SO(3)$.  The space spanned by the classical spherical harmonics
$Y_n^m(\theta,\varphi)\propto\allowbreak {\rm P}_n^m(\cos\theta){\rm
  e}^{{\rm i}m\varphi}$, with $n\ge0$ and $m\in\mathbb{Z}$, admits an
action of the rotation group $SO(3)$.  The Lie
algebra~$\mathfrak{so}(3,\mathbb{R})$ of $3\times3$ real skew-symmetric
matrices can be represented by differential operators on~$S^2$, with real
coefficients, and acts on the space of spherical harmonics.  The resulting
infinite-dimensional representation is reducible: for $n=0,1,2,\dots$, it
includes the usual $(2n+\nobreak1)$-dimensional representation on the span
of $Y_n^{-n},\dots,Y_n^n$.  But, $\mathfrak{so}(3,\mathbb{R})$ is not the
only Lie algebra to be considered.

A larger Lie algebra than $\mathfrak{so}(3,\mathbb{R})$ acts naturally
on the spherical harmonics, or rather, on the (regular) \emph{solid
  harmonics} ${r^nY_n^m(\theta,\varphi)}$, which satisfy Laplace's
equation on~$\mathbb{R}^3$.  (See~\cite[\S\,3.6]{Miller77}.)  This is
the $10$-dimensional real Lie algebra $\mathfrak{so}(4,1)$ that is
generated by `ladder' operators that increment and decrement the
degree~$n$, as well as the order~$m$.  They are represented by
differential operators on~$\mathbb{R}^3$, with real coefficients.  The
real span of these operators exponentiates to the Lie group $SO_0(4,1)$,
which contains as subgroups (i)~the $3$\nobreakdash-parameter group
$SO(3)$ of rotations about the origin, (ii)~a
$3$\nobreakdash-parameter abelian group of translations
of~$\mathbb{R}^3$, (iii)~a $1$\nobreakdash-parameter group of
dilatations (linear scalings of~$\mathbb{R}^3$), and (iv)~a
$3$\nobreakdash-parameter abelian group of `special conformal
transformations.'  The last are quadratic rational self-maps
of~$\mathbb{R}^3$ (or~rather the real projective space
$\mathbb{RP}^3$, because they can interchange finite and infinite
points).

The preceding results, now standard, are extended below to any family of
\emph{generalized} solid harmonics $\{r^\nu {\rm P}_\nu^\mu(\cos\theta){\rm
  e}^{{\rm i}\mu\varphi}\}$, with $(\nu,\mu)\in\allowbreak
(\nu_0,\mu_0)+\nobreak\mathbb{Z}^2$ for specified~$\nu_0,\mu_0$.
In~\S\,\ref{subsec:last1}, the differential and non-differential
recurrences on $\nu$ and~$\mu$ are derived.  (See Theorems
\ref{thm:diffrecs} and~\ref{thm:nondiffrecs}.)  In~\S\,\ref{subsec:last2},
it is shown that the ladder operators in the differential recurrences
generate a $10$\nobreakdash-dimensional real Lie algebra, and an
isomorphism from this algebra not to $\mathfrak{so}(4,1)$ but to
$\mathfrak{so}(3,2)$ is exhibited.  The treatment closely follows Celeghini
and del Olmo~\cite{Celeghini2013}, but the explicit isomorphism in
Theorem~\ref{thm:newso32} is new.

In the setting of special function identities, which typically involve
\emph{real} linear combinations of differential operators,
$\mathfrak{so}(3,2)$ arises more naturally than does
$\mathfrak{so}(4,1)$.  But by a limited form of complexification,
$\mathfrak{so}(3,2)$~can be converted to $\mathfrak{so}(4,1)$, and
indeed to~$\mathfrak{so}(5,\mathbb{R})$.  These are alternative real
forms of the rank\nobreakdash-2 complex Lie algebra
$\mathfrak{so}(5,\mathbb{C})$, to which they complexify, and the eight
displacement vectors $\Delta(\nu,\mu)=\allowbreak(0,\pm1)$,
$(\pm1,0)$, $(\pm1,\pm1)$ can be identified with the roots
of~$\mathfrak{so}(5,\mathbb{C})$.

In~\S\,\ref{subsec:last3}, it is shown that irrespective
of~$(\nu_0,\mu_0)$, the representation of $\mathfrak{so}(3,2)$ [or~of
  $\mathfrak{so}(4,1)$ or $\mathfrak{so}(5,\mathbb{R})$] carried by the
solid harmonics $r^\nu{\rm P}_\nu^\mu(\cos\theta){\rm e}^{{\rm
    i}\mu\varphi}$ with $(\nu,\mu)\in(\nu_0,\mu_0)+\mathbb{Z}^2$ is of a
special type: its quadratic Casimir operator takes a fixed value, and its
quartic one vanishes.  (See Theorem~\ref{thm:mostdegenerate}.)  The former
fact was found in~\cite{Celeghini2013}, but the latter is new.  The
representation of $\mathfrak{so}(3,2)$ on the solid harmonics of integer
degree and order, and its representation on the ones of half-odd-integer
degree and order, have irreducible constituents that are identified as the
known Dirac singleton representations of~$\mathfrak{so}(3,2)$.

\subsection{Differential and Non-differential Recurrences}
\label{subsec:last1}

In any family $\{{\rm P}_{\nu_0+n}^{\mu_0+m}(z)\}_{(n,m)\in\mathbb{Z}^2}$,
where ${\rm P}$ can be taken as any of ${\rm P},{\rm Q},P,\hat Q$, any
three distinct members are linearly dependent, over the field of functions
that are rational in~$z$ and $\sqrt{1-z^2}$ (Ferrers case) or
$\sqrt{z^2-1}$ (Legendre case).  In particular, any three contiguous
members are so related, by a three-term ladder recurrence.

The underlying recurrences are differential ones, which generally permit
any single~${\rm P}^\mu_\nu$ and its derivative to generate any member
contiguous to~it, as a linear combination; and by iteration, to generate
any ${\rm P}_{\nu+\Delta\nu}^{\mu+\Delta \mu}$ in which
$\Delta(\nu,\mu)\in\mathbb{Z}^2$.

\begin{theorem}
  \label{thm:diffrecs}
  The Ferrers functions\/ ${\rm P}_\nu^\mu = {\rm P}_\nu^\mu(z)$ satisfy
  eight differential recurrences, divided into four ladders, i.e., 
  $\pm$-pairs, with \/ $\Delta(\nu,\mu)=\pm(0,1)$, $\pm(1,0)$, $\pm(1,1)$,
  and\/ $\pm(1,-1)$.  Each pair is of the form
  \begin{displaymath}
    \alpha_\pm\,{\rm P}_{\nu+\Delta\nu}^{\mu +\Delta\mu} = \mp
    z^{-\sigma_0^\pm+\varepsilon_0}(1-z^2)^{-\sigma_1^\pm/2+\varepsilon_1/2}
    \frac{{\rm d}}{{\rm d}z}\left[
      z^{\sigma_0^\pm}(1-z^2)^{\sigma_1^\pm/2}\, {\rm P}_\nu^\mu
      \right],
  \end{displaymath}
  and for each pair, the exponents\/ $\sigma_0^\pm,\sigma_1^\pm$, the
  exponents\/ $\varepsilon_0,\varepsilon_1$, and the
  prefactor\/~$\alpha_\pm$ are given in Table\/~{\rm\ref{tab:2}}.  The
  second-kind functions\/~${\rm Q}_\nu^\mu$ satisfy identical
  recurrences.

  The Legendre functions\/ $P_\nu^\mu,Q_\nu^\mu$ {\rm[}the latter
    \emph{unnormalized}, i.e., the functions\/ ${\rm e}^{\mu\pi{\rm
        i}}\hat Q_\nu^\mu${\rm ]} satisfy recurrences obtained from
  the preceding by\/ {\rm(i)}~multiplying the right-hand side by a
  sign factor, equal to\/~${\rm i}^{\varepsilon_1+\Delta\mu}$; and
  {\rm(ii)}~replacing\/ $1-\nobreak z^2$ by\/ $z^2-\nobreak 1$.
\end{theorem}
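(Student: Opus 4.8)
The plan is to obtain the eight recurrences from the Gauss representation (\ref{eq:Prep}) of ${\rm P}_\nu^\mu$ together with Jacobi's classical differential contiguous relations for ${}_2F_1$ (\cite[2.8(20)--2.8(27)]{Erdelyi53}), organised by the same ``lift and gauge'' device used to prove Theorem~\ref{thm:diffrec}. The Legendre equation (\ref{eq:legendre}) is Fuchsian with singular points $z=1,-1,\infty$ and exponent differences $(\mu,\mu,2\nu+\nobreak1)$; under $x=(1-\nobreak z)/2$ these become the points $x=0,1,\infty$ of the Gauss equation with parameters $a=-\nu$, $b=\nu+\nobreak1$, $c=1-\nobreak\mu$. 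Incrementing $\mu$ by $\pm1$ moves $c$ by $\mp1$; incrementing $\nu$ by $\pm1$ moves $a$ by $\mp1$ and $b$ by $\pm1$ simultaneously; and the diagonal displacements combine these. Each displacement is thus a composition of single Jacobi steps, so a first-order operator of the required shape exists; pulling it back to the variable $z$ and absorbing the prefactors into $z^{\sigma_0}(1-\nobreak z^2)^{\sigma_1/2}$ puts it in the form displayed in the theorem.

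It is cleaner, though, to argue intrinsically. Writing $\mathcal{D}_\pm$ for the first-order operator on the right-hand side, the first step is to check that $\mathcal{D}_\pm$ \emph{intertwines} the Legendre operator at $(\nu,\mu)$ with the one at $(\nu+\Delta\nu,\mu+\Delta\mu)$, i.e.\ carries solutions of the first into solutions of the second. Conjugating the Legendre operator by $z^{\sigma_0^\pm}(1-\nobreak z^2)^{\sigma_1^\pm/2}$, differentiating, and conjugating by $z^{-\sigma_0^\pm+\varepsilon_0}(1-\nobreak z^2)^{(-\sigma_1^\pm+\varepsilon_1)/2}$ shifts the characteristic exponents at $z=\pm1$ and $z=\infty$; demanding that the shifted exponents be exactly those of the target equation, and that no singularity be created at the ordinary point $z=0$, \emph{forces} the values of $\sigma_0^\pm,\sigma_1^\pm,\varepsilon_0,\varepsilon_1$ collected in Table~\ref{tab:2}. (This is the step that actually produces the table: the half-integer parts of $\sigma_1^\pm$ supply the half-integer exponent shifts at $z=\pm1$, while $z^{\sigma_0^\pm}$ adjusts the exponents at $z=\infty$ and $\varepsilon_0$ is chosen so that the spurious pole at $z=0$ cancels.) Once the intertwining is known, $\mathcal{D}_\pm{\rm P}_\nu^\mu$ is a solution of the target equation, hence a linear combination of ${\rm P}_{\nu+\Delta\nu}^{\mu+\Delta\mu}$ and a second, independent solution; to eliminate the second component I would use the $z\to1$ normalisation (\ref{eq:Pasympt}): near $z=1$, ${\rm P}_\nu^\mu$ is $(1-\nobreak z)^{-\mu/2}$ times a function analytic and nonzero there, and $\mathcal{D}_\pm$ sends this to $(1-\nobreak z)^{-(\mu+\Delta\mu)/2}$ times such a function, with no admixture of the other exponent $(1-\nobreak z)^{+(\mu+\Delta\mu)/2}$, so $\mathcal{D}_\pm{\rm P}_\nu^\mu$ is a scalar multiple of ${\rm P}_{\nu+\Delta\nu}^{\mu+\Delta\mu}$ alone. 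Comparing leading coefficients at $z=1$, again via (\ref{eq:Pasympt}), pins down $\alpha_\pm$. The assertion for ${\rm Q}_\nu^\mu$ is then immediate: it satisfies the same equation (\ref{eq:legendre}) and is carried by the same $\mathcal{D}_\pm$ into a solution of the target equation, and tracking the $z\to\infty$ normalisation (\ref{eq:Qasympt}) of $\hat Q_\nu^\mu$, transported to ${\rm Q}_\nu^\mu$ by (\ref{eq:lincomb2}), shows the proportionality constant is again $\alpha_\pm$.

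For the Legendre (as opposed to Ferrers) version, pass by analytic continuation onto a half-plane ${\rm Im}\,z>0$, where ${\rm P}_\nu^\mu={\rm e}^{\mu\pi{\rm i}/2}P_\nu^\mu$ and ${\rm Q}_\nu^\mu$, $\hat Q_\nu^\mu={\rm e}^{-\mu\pi{\rm i}}Q_\nu^\mu$ are related by (\ref{eq:lincomb2}). Using the branch identity $(1-\nobreak z^2)^{1/2}=-{\rm i}(z^2-\nobreak1)^{1/2}$ in both the inner factor $(1-\nobreak z^2)^{\sigma_1^\pm/2}$ and the outer factor $(1-\nobreak z^2)^{(-\sigma_1^\pm+\varepsilon_1)/2}$ of $\mathcal{D}_\pm$ both replaces $1-\nobreak z^2$ by $z^2-\nobreak1$ throughout and contributes a power of $-{\rm i}$; combining this with the phase ${\rm e}^{-\Delta\mu\,\pi{\rm i}/2}$ produced by the change of order in ${\rm P}_\nu^\mu={\rm e}^{\mu\pi{\rm i}/2}P_\nu^\mu$ yields the stated sign factor ${\rm i}^{\varepsilon_1+\Delta\mu}$ (the parity of $\varepsilon_1$ recorded in Table~\ref{tab:2} being what converts the ambiguous power of $-{\rm i}$ into an unambiguous power of ${\rm i}$). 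The $Q_\nu^\mu={\rm e}^{\mu\pi{\rm i}}\hat Q_\nu^\mu$ case runs identically.

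The main obstacle is not any single step but the sheer bulk and sign-sensitivity of the exponent bookkeeping: eight displacement vectors, each needing the correct quadruple $(\sigma_0^\pm,\sigma_1^\pm,\varepsilon_0,\varepsilon_1)$ and prefactor, with the diagonal cases $\Delta(\nu,\mu)=\pm(1,\pm1)$ requiring $\mathcal{D}_\pm$ to move the exponent differences at $z=\pm1$ \emph{and} at $z=\infty$ at once---precisely where the pure powers $z^{\sigma_0^\pm}$, and the branch choices for $\sqrt{1-\nobreak z^2}$ together with the $\pm$ in (\ref{eq:lincomb2}), are easiest to get wrong. Each individual verification is a routine first-order computation, and any one recurrence can be cross-checked by composing two others (e.g.\ $\Delta(\nu,\mu)=(1,1)$ against $(1,0)$ followed by $(0,1)$, up to the lower-order term inherent in composing first-order operators) or by specialising to integral $(\nu,\mu)$, where the identities collapse to tabulated recurrences for the classical associated Legendre functions.
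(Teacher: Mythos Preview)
Your argument is sound, but it takes a different route from the paper's. The paper's proof is extremely brief: for the four non-diagonal ladders $\Delta(\nu,\mu)=\pm(0,1),\pm(1,0)$ it simply cites the classical literature \cite{Erdelyi53,Olver2010,Truesdell48}, noting that these follow from Jacobi's differential recurrences for ${}_2F_1$; for the four diagonal ones it observes that they arise as \emph{commutators} of the non-diagonal ladder operators, namely $R_\pm=\pm[J_\pm,K_\pm]$ and $S_\pm=\pm[J_\mp,K_\pm]$, a computation carried out in~\S\,\ref{subsec:last2} after the operators have been realised on solid harmonics. By contrast, your intrinsic intertwining argument treats all eight recurrences uniformly, manufacturing each first-order operator $\mathcal{D}_\pm$ directly from exponent bookkeeping at $z=0,\pm1,\infty$ and then pinning down $\alpha_\pm$ from the $z\to1$ normalisation~(\ref{eq:Pasympt}). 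Your method is more self-contained and makes the origin of every entry in Table~\ref{tab:2} transparent; the paper's is much shorter and foregrounds the Lie-algebraic structure, at the price of forward-referencing~\S\,\ref{subsec:last2} for the diagonal cases. You in fact mention composition of ladders only as a cross-check in your last paragraph; in the paper it is the derivation itself.

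One small wording issue in your first paragraph: ``each displacement is a composition of single Jacobi steps, so a first-order operator of the required shape exists'' does not quite follow as stated, since composing first-order operators produces second-order ones. What is true is that on the two-dimensional solution space the composite can be reduced modulo the source equation to a first-order operator, or equivalently that the commutator (not the product) of two ladder operators is first-order; this is exactly the mechanism the paper exploits. Your second paragraph's intrinsic argument does not rely on this, so the imprecision does no damage, but if you were to submit the first paragraph as a standalone justification it would need that qualification.
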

\begin{proof}
  The four non-diagonal recurrences on the order and degree, with
  $\Delta(\nu,\mu)=\pm(0,1)$ and $\pm(1,0)$, are classical and can be
  found in many reference works
  \cite{Erdelyi53,Olver2010,Truesdell48}.  They can be deduced from
  the differential recurrences of Jacobi, which increment or decrement
  the parameters of the function ${}_2F_1(a,b;c;x)$.  (See
  \cite[2.8(20)--2.8(27)]{Erdelyi53}.)

  The final four diagonal ones, at least for ${\rm P}^\mu_\nu$ when
  $\nu,\mu$ are integers, are due to Celeghini and del
  Olmo~\cite{Celeghini2013}.  Each can be derived from the
  non-diagonal ones by a tedious process of elimination, but the
  process can be systematized as the calculation of the commutator of
  two differential operators.  (See~\S\,\ref{subsec:last2},
  below.)\qed
\end{proof}

\begin{table}
  \caption{Parameters for the differential recurrences of
    Theorem~\ref{thm:diffrecs}.  In the rightmost column, the notation
    $[a|b]_\pm$ signifies $a$, resp.~$b$, in the $+$, resp.~$-$ case.}
  \label{tab:2}
\begin{tabular}{llll}
\hline\noalign{\smallskip}
$\Delta({\nu},{\mu})$ & $\sigma_0^\pm,\sigma_1^\pm$ & $\varepsilon_0,\varepsilon_1$ & $\alpha_\pm$  \\
\noalign{\smallskip}\hline\noalign{\smallskip}
$\pm(0,1)$ & $0,\mp\mu$ & $0,1$ & $[1,(\nu+\mu)(\nu-\mu+1)]_\pm$ \\  
$\pm(1,0)$ & $0, \frac12 \pm(\nu+\frac12)$ & $0,2$ &  $[\nu-\mu+1,\nu+\mu]_\pm$\\  
$\pm(1,1)$ & $\frac12 \pm(\nu+\frac12)\pm\mu, \mp\mu$ & $1,1$ & $[1,(\nu+\mu)(\nu+\mu-1)]_\pm$ \\  
$\pm(1,-1)$ & $-\frac12 \pm(\nu+\frac12)\mp\mu, \pm\mu$ & $1,1$  & $[(\nu-\mu+1)(\nu-\mu+2),1]_\pm$ \\
\noalign{\smallskip}\hline
\end{tabular}
\end{table}

The differential recurrences satisfied by~${\rm P}_\nu^\mu$ can be
written in circular-trigonometric forms that will be needed below.
Substituting $z=\cos\theta$ yields
\begin{subequations}
\label{eq:rewrite}
\begin{align}
  \alpha_\pm {\rm P}_\nu^{\mu\pm1} &= \left[\pm D_\theta - \mu\cot\theta \right] {\rm P}_\nu^{\mu},\\
  \alpha_\pm {\rm P}_{\nu\pm1}^{\mu} &= \left\{\pm(\sin\theta)D_\theta + \left[(\nu+\tfrac12)\pm\tfrac12\right]\cos\theta     \right\} {\rm P}_\nu^{\mu},\\
  \alpha_\pm {\rm P}_{\nu\pm1}^{\mu\pm1} &= \left\{\pm (\cos\theta)D_\theta -
    \mu\csc\theta + \left[-(\nu+\tfrac12)\mp\tfrac12\right]\sin\theta\right\} {\rm P}_\nu^{\mu},\label{eq:rplusminus}\\
  \alpha_\pm {\rm P}_{\nu\pm1}^{\mu\mp1} &= \left\{\mp (\cos\theta)D_\theta -
    \mu\csc\theta + \left[+(\nu+\tfrac12)\pm\tfrac12\right]\sin\theta\right\} {\rm P}_\nu^{\mu},\label{eq:splusminus}
\end{align}
\end{subequations}
which are satisfied by ${\rm P}_\nu^\mu ={\rm P}_\nu^\mu(\cos\theta)$.
Here, $D_\theta \defeq {\rm d}/{\rm d}\theta$, and the four
prefactors~$\alpha_\pm$ are listed in the last column of
Table~\ref{tab:2}, in order.  The recurrences (\ref{eq:rplusminus})
and~(\ref{eq:splusminus}) have appeared in the literature but are not
well known; the only appearances that we have found, with $\nu,\mu$
restricted to integer values, are in \cite{Bogdanovic75} and
\cite[\S\,A.2]{Ilk83}.  Equations (\ref{eq:rplusminus})
and~(\ref{eq:splusminus}) imply each other because ${\rm
  P}_{-\nu-1}^\mu=\allowbreak{\rm P}_{\nu}^\mu$ for all $\nu,\mu$.
That is, ${\rm P}_\nu^\mu$ is unaffected by the negating of the
shifted degree parameter $\nu+\nobreak\frac12$.

\smallskip
The three-term ladder recurrences derived from the four pairs of
differential recurrences are given in the following theorem.  The
diagonal ones, coming from the ladders with $\Delta(\nu,\mu)=\pm(1,1)$
and~$\pm(1,-1)$, appear to be new.

\begin{theorem}
  \label{thm:nondiffrecs}
  The Ferrers functions\/ ${\rm P}_\nu^\mu={\rm P}_\nu^\mu(z)$ satisfy
  second-order\/ {\rm(}i.e., three-term{\rm)} recurrences on the
  order\/~$\mu$ and degree\/~$\nu$, namely
  \begin{gather*}
    \sqrt{1-z^2}\:{\rm P}_\nu^{\mu+1} + 2\mu z\,{\rm P}_\nu^\mu +
    (\nu+\mu)(\nu-\mu+1)\sqrt{1-z^2}\:{\rm P}_\nu^{\mu - 1} = 0,\\
    (\nu-\mu+1)\,{\rm P}_{\nu+1}^\mu - (2\nu+1)z\,{\rm P}_\nu^\mu +
    (\nu+\mu)\,{\rm P}_{\nu-1}^\mu=0,
  \end{gather*}
  and the two diagonal recurrences
  \begin{multline*}
    \sqrt{1-z^2}\:\,{\rm P}_{\nu\pm1}^{\mu+1}(z)
    +
    \bigl[\pm(2\nu+1)(1-z^2) + 2\mu\bigr]\,{\rm P}_\nu^\mu(z)
    \\
    +
    \bigl[(\nu+\tfrac12)\pm(\mu-\tfrac12)\bigr]\,
    \bigl[(\nu+\tfrac12)\pm(\mu-\tfrac32)\bigr]\,
    \sqrt{1-z^2}\:\,{\rm P}_{\nu\mp1}^{\mu-1}(z)
    =0.
  \end{multline*}
  The second-kind functions\/ ${\rm Q}_\nu^\mu$ satisfy
  identical second-order recurrences.

  The Legendre functions\/ $P_\nu^\mu,Q_\nu^\mu$\/ {\rm(}the latter
  \emph{unnormalized}, as above{\rm)}, satisfy recurrences obtained
  from the preceding by\/ {\rm(i)}~multiplying each term containing a
  function of order\/ $\mu+\delta$ and a coefficient proportional to\/
  $[\sqrt{1-z^2}]^\alpha$ by a sign factor, equal to\/ ${\rm
    i}^{\alpha-\delta}$; and\/ {\rm(ii)}~replacing\/ $\sqrt{1-z^2}$
  by\/ $\sqrt{z^2-1}$.
\end{theorem}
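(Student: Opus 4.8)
The plan is to obtain all four three-term recurrences by the elimination procedure already advertised in the statement of Theorem~\ref{thm:diffrecs}: within each of the four ladders $\Delta(\nu,\mu)=\pm(0,1),\pm(1,0),\pm(1,1),\pm(1,-1)$, one combines the two first-order differential recurrences of that ladder so that the derivative drops out. The convenient starting point is the circular-trigonometric form~(\ref{eq:rewrite}), in which $z=\cos\theta$ and the raising and lowering members of each $\pm$-pair carry $D_\theta$ with coefficients that are exactly negatives of one another ($\pm1$, $\pm\sin\theta$, or $\pm\cos\theta$). Consequently, adding the two members of a pair annihilates the term $D_\theta\,{\rm P}_\nu^\mu$ and leaves a relation among ${\rm P}_{\nu+\Delta\nu}^{\mu+\Delta\mu}$, ${\rm P}_\nu^\mu$, and ${\rm P}_{\nu-\Delta\nu}^{\mu-\Delta\mu}$ whose coefficients are rational in $\cos\theta$, $\sin\theta$, $\csc\theta$. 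Multiplying through by $\sin\theta=\sqrt{1-z^2}$ clears the $\csc\theta$, and substituting $\cos\theta=z$ and $\sin^2\theta=1-z^2$ then produces exactly the displayed identities.

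Carrying this out, adding the $\pm(0,1)$ pair in~(\ref{eq:rewrite}) gives ${\rm P}_\nu^{\mu+1}+(\nu+\mu)(\nu-\mu+1){\rm P}_\nu^{\mu-1}=-2\mu\cot\theta\,{\rm P}_\nu^\mu$, which after multiplication by $\sin\theta$ is the first stated recurrence; adding the $\pm(1,0)$ pair gives $(\nu-\mu+1){\rm P}_{\nu+1}^\mu+(\nu+\mu){\rm P}_{\nu-1}^\mu=(2\nu+1)\cos\theta\,{\rm P}_\nu^\mu$, the second. For the diagonal recurrences I would use~(\ref{eq:rplusminus}) and~(\ref{eq:splusminus}). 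Adding the two cases of~(\ref{eq:rplusminus}) cancels $D_\theta$ and, after multiplication by $\sin\theta$ and use of $\sin^2\theta=1-z^2$, yields
\[
  \sqrt{1-z^2}\:{\rm P}_{\nu+1}^{\mu+1}+\bigl[2\mu+(2\nu+1)(1-z^2)\bigr]\,{\rm P}_\nu^\mu+(\nu+\mu)(\nu+\mu-1)\sqrt{1-z^2}\:{\rm P}_{\nu-1}^{\mu-1}=0;
\]
since $(\nu+\tfrac12)+(\mu-\tfrac12)=\nu+\mu$ and $(\nu+\tfrac12)+(\mu-\tfrac32)=\nu+\mu-1$, this is the upper-sign diagonal recurrence. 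Adding the two cases of~(\ref{eq:splusminus}) yields, in the same way, the lower-sign one, in which the coefficient of ${\rm P}_{\nu+1}^{\mu-1}$ emerges as $(\nu-\mu+1)(\nu-\mu+2)=\bigl[(\nu+\tfrac12)-(\mu-\tfrac12)\bigr]\bigl[(\nu+\tfrac12)-(\mu-\tfrac32)\bigr]$. Because ${\rm Q}_\nu^\mu$ satisfies the same eight differential recurrences as ${\rm P}_\nu^\mu$ (Theorem~\ref{thm:diffrecs}), the same combinations give the same second-order recurrences for ${\rm Q}_\nu^\mu$.

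For the Legendre functions $P_\nu^\mu$ and $Q_\nu^\mu={\rm e}^{\mu\pi{\rm i}}\hat Q_\nu^\mu$, I would simply repeat the elimination starting from the sign-modified differential recurrences of Theorem~\ref{thm:diffrecs}; the prefactors ${\rm i}^{\varepsilon_1+\Delta\mu}$ recorded in Table~\ref{tab:2} then propagate through the addition and reproduce, term by term, the conversion rule stated in the theorem. Equivalently, one may substitute the half-plane relation ${\rm P}_\nu^\mu={\rm e}^{\pm\mu\pi{\rm i}/2}P_\nu^\mu$ (cf.~\S\,\ref{sec:prelims}) and the accompanying branch identity between $\sqrt{1-z^2}$ and $\sqrt{z^2-1}$ directly into each of the four ${\rm P}$-recurrences, cancel the common factor ${\rm e}^{\pm\mu\pi{\rm i}/2}$, and expand; a summand carrying a function of order $\mu+\delta$ and a coefficient proportional to $(\sqrt{1-z^2})^\alpha$ then picks up exactly the factor ${\rm i}^{\alpha-\delta}$ together with the replacement $\sqrt{1-z^2}\mapsto\sqrt{z^2-1}$, and the $Q$-case is identical. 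The only genuinely delicate points in the whole argument are bookkeeping ones — making sure $D_\theta$ really cancels when the two members of a ladder are added with the prefactors $\alpha_\pm$ of Table~\ref{tab:2}, verifying that the two quadratic coefficients in the diagonal recurrences collapse to the factored forms $[(\nu+\tfrac12)\pm(\mu-\tfrac12)][(\nu+\tfrac12)\pm(\mu-\tfrac32)]$, and keeping the powers of ${\rm i}$ straight in the Ferrers-to-Legendre conversion — and none of the computations goes beyond elementary algebra.
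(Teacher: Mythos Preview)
Your proposal is correct and follows exactly the approach indicated in the paper: eliminate the derivative terms from the paired differential recurrences of Theorem~\ref{thm:diffrecs}, just as Gauss's contiguous-function relations come from Jacobi's differential recurrences. The paper's own proof is a two-sentence sketch of precisely this procedure; you have simply supplied the explicit computations (using the trigonometric form~(\ref{eq:rewrite})) that the paper leaves to the reader.
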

\begin{proof}
  Eliminate the derivative terms from the recurrences of
  Theorem~\ref{thm:diffrecs}.  This is the procedure used to derive
  Gauss's three-term, nearest-neighbor `contiguous function relations'
  for ${}_2F_1(a,b;c;x)$ from Jacobi's differential recurrences
  on~$a,b;c$.\qed
\end{proof}

It was noted in \S\,\ref{sec:prelims} that if $\nu+\mu$ is a negative
integer, ${\rm Q}_\nu^\mu$ and~${Q}_\nu^\mu$ are generally undefined
(though there are exceptions).  The recurrences for ${\rm Q}_\nu^\mu$
and~${Q}_\nu^\mu$ in Theorems \ref{thm:diffrecs} and~\ref{thm:nondiffrecs}
remain valid in a limiting sense even when $(\nu,\mu)$ is such that one or
more of the functions involved is undefined.

\subsection{Lie Algebras}
\label{subsec:last2}

The raising and lowering of the degree and order, in any doubly
indexed family of (generalized) solid harmonics
\begin{equation}
\label{eq:gensolid}
  \mathcal{S}_\nu^\mu =   \mathcal{S}_\nu^\mu(r,\theta,\varphi)
  \defeq
  r^\nu{\rm P}_\nu^\mu(\cos\theta){\rm e}^{{\rm i}\mu\varphi},
\end{equation}
where $(\nu,\mu)\in(\nu_0,\mu_0)+\mathbb{Z}^2$, can be performed by
differential operators that do not need to depend explicitly
on~$(\nu_0,\mu_0)$ if they are allowed to involve, instead, the
derivative operators $D_r,D_\varphi$ in addition to~$D_\theta$.  

The basic idea is due to Miller~\cite{Miller73b}, and there is freedom
in its implementation: either or both of the factors $r^\nu = {\rm
  e}^{\nu\log r}$ and ${\rm e}^{{\rm i}\mu\varphi}$ could include
an~`${\rm i}$' in its exponent, and the Ferrers functions ${\rm
  P}_\nu^\mu(\cos\theta)$ could be replaced by the Legendre ones
$P_\nu^\mu(\cosh\xi)$.  With the choices made in~(\ref{eq:gensolid}),
$\mathcal{S}_\nu^\mu$~can be viewed as a (typically multi-valued)
function of the spherical coordinates $r,\theta,\varphi$, which
satisfies Laplace's equation on~$\mathbb{R}^3$.  Define ladder
operators by
\begin{subequations}
\label{eq:JKRS}
  \begin{align}
    J_\pm &= {\rm e}^{\pm{\rm i}\varphi}\left[\pm D_\theta + {\rm i}(\cot\theta)D_\varphi\right],\\
    K_\pm &= r^{\pm1} \left[ \pm(\sin\theta)D_\theta +
      (\cos\theta)(rD_r+\tfrac12\pm\tfrac12)\right],\\
    R_\pm &= r^{\pm1}{\rm e}^{\pm{\rm i}\varphi}\left[\pm(\cos\theta)D_\theta
      +{\rm i}(\csc\theta)D_\varphi -
      (\sin\theta)(rD_r+\tfrac12\pm\tfrac12)\right],\\
    S_\pm &= r^{\pm1}{\rm e}^{\mp{\rm i}\varphi}\left[\mp(\cos\theta)D_\theta
      +{\rm i}(\csc\theta)D_\varphi +
      (\sin\theta)(rD_r+\tfrac12\pm\tfrac12)\right].
  \end{align}
\end{subequations}
Then, the differential recurrences~(\ref{eq:rewrite}) can be rewritten
in terms of the $\mathcal{S}_\nu^\mu$ as
\begin{subequations}
  \label{eq:JKRSplus}
  \begin{align}
    J_\pm\, {\mathcal{S}}_\nu^\mu &= [1,(\nu+\mu)(\nu-\mu+1)]_\pm \,{\mathcal{S}}_\nu^{\mu\pm1},
    \\
    K_\pm\, {\mathcal{S}}_\nu^\mu &= [\nu-\mu+1,\nu+\mu]_\pm \,{\mathcal{S}}_{\nu\pm1}^\mu,
    \\
    R_\pm\, {\mathcal{S}}_\nu^\mu &= [1,(\nu+\mu)(\nu+\mu-1)]_\pm \,{\mathcal{S}}_{\nu\pm1}^{\mu\pm1},
    \\
    S_\pm\, {\mathcal{S}}_\nu^\mu &= [(\nu-\mu+1)(\nu-\mu+2),1]_\pm \,{\mathcal{S}}_{\nu\pm1}^{\mu\mp1}.
  \end{align}
\end{subequations}
For each $(\nu_0,\mu_0)$, the solid harmonics
$\{\mathcal{S}_\nu^\mu=\mathcal{S}_{\nu_0+n}^{\mu_0+m}\}$, or more
accurately their real linear span, carry a representation of
$J_\pm,K_\pm,\allowbreak R_\pm,S_\pm$.  Generating a Lie algebra by
working commutators out, one finds (with $[A,B]$ signifying $AB-BA$)
\begin{displaymath}
  R_\pm = \pm[J_\pm,K_\pm],\qquad\quad S_\pm = \pm[J_\mp,K_\pm],
\end{displaymath}
which explains why the diagonal recurrences in Theorems
\ref{thm:diffrecs} and~\ref{thm:nondiffrecs} can be most efficiently
obtained by commutator calculations, as claimed.

It is useful additionally to define `labeling' or `maintaining' operators
$J_3,K_3$ by 
\begin{equation}
\label{eq:J3K3}
    J_3 = -{\rm i}D_\varphi,\qquad\qquad
    K_3 = rD_r+\tfrac12,
\end{equation}
so that
\begin{equation}
\label{eq:J3K3plus}
  J_3\,{\mathcal{S}}_\nu^\mu = \mu\,\mathcal{S}_\nu^\mu,
  \qquad\qquad   K_3\,{\mathcal{S}}_\nu^\mu = (\nu+\tfrac12)\,\mathcal{S}_\nu^\mu.
\end{equation}
By further calculations, one finds that the real Lie algebra generated
by $J_\pm,K_\pm$ \emph{closes}, in the sense that it is
finite-dimensional.  In particular,
\begin{alignat*}{3}
[J_3,J_\pm]&=\pm J_\pm,&&\qquad\qquad& [J_+,J_-]&=2J_3 ,\\  
[K_3,K_\pm]&=\pm K_\pm,&&\qquad\qquad& [K_+,K_-]&=-2K_3 ,\\
[R_3,R_\pm]&=\pm 2R_\pm,&&\qquad\qquad& [R_+,R_-]&=-4R_3 ,\\
[S_3,S_\pm]&=\pm 2S_\pm,&&\qquad\qquad& [S_+,S_-]&=-4S_3,
\end{alignat*}
where $R_3\defeq K_3+J_3$ and $S_3\defeq K_3-J_3$.  To interpret
these, recall that any real linear space with basis $\{X_+,X_-,X_3\}$,
given a Lie algebra structure by
\begin{displaymath}
  [X_3,X_\pm]=\pm X_\pm,\qquad\qquad [X_+,X_-]=2\sigma X_3,
\end{displaymath}
is isomorphic to $\mathfrak{so}(3,\mathbb{R})$ if $\sigma>0$, and to
$\mathfrak{so}(2,1)$ (or equivalently $\mathfrak{sl}(2,\mathbb{R})$) if
$\sigma<0$.  Hence, the real Lie algebras spanned by $\{J_+,J_-,J_3\}$,
$\{K_+,K_-,K_3\}$, $\{R_+,R_-,R_3\}$, and $\{S_+,S_-,S_3\}$, coming from
the ladders with $\Delta(\nu,\mu)=\pm(0,1)$, $\pm(1,0)$, $\pm(1,1)$, and\/
$\pm(1,-1)$, are isomorphic to $\mathfrak{so}(3,\mathbb{R})$ (the first)
and $\mathfrak{so}(2,1)$ (the remaining three).  The last two turn~out to
commute.

The real Lie algebra generated by $J_\pm,K_\pm$, of which these copies
of $\mathfrak{so}(3,\mathbb{R})$ and $\mathfrak{so}(2,1)$ are
subalgebras, is $10$\nobreakdash-dimensional and is spanned
over~$\mathbb{R}$ by $J_\pm,J_3;\allowbreak K_\pm,K_3;\allowbreak
R_\pm;S_\pm$.  It of course has real structure constants.  For
any~$(\nu_0,\mu_0)$, its representation by differential operators
on~$\mathbb{R}^3$, as above, is carried by the real span of the solid
harmonics $\mathcal{S}_{\nu_0+n}^{\mu_0+m}$, $(n,m)\in\mathbb{Z}^2$.
This result was obtained by Celeghini and del
Olmo~\cite{Celeghini2013}, though they confined themselves to
integer~$\nu,\mu$, i.e., in effect to
$(\nu_0,\mu_0)=(0,0)$.\footnote{The reader of~\cite{Celeghini2013}
  should note that in~\S\,5,
  $R_\pm\defeq[K_\pm,J_\pm]$ and $S_\pm\defeq[K_\pm,J_\mp]$ should be
  emended to read $R_\pm\defeq\mp[K_\pm,J_\pm]$ and
  $S_\pm\defeq\mp[K_\pm,J_\mp]$.}

To identify this $10$-dimensional real algebra, it is useful to relabel its
basis elements.  First, let
\begin{equation}
\label{eq:lastneeded1}
  (P_+,P_-,P_3)\defeq(S_+,-R_-,K_-),\qquad
  (C_+,C_-,C_3)\defeq(-R_+,S_-,K_+),
\end{equation}
in each of which the three elements commute.  The algebra can then be
viewed as the span over~$\mathbb{R}$ of $J_\pm,J_3$; $P_\pm,P_3$;
$C_\pm,C_3$ and~$K_3$, which will be written as~$D$ henceforth.  Define
\begin{subequations}
\label{eq:lastneeded2}
\begin{align}
  (PC_+^\pm,PC_-^\pm,PC_3^\pm) &\defeq \tfrac12\left[(P_+,P_-,P_3) \pm (C_+,C_-,C_3)\right]\\
&\hphantom{:}= \tfrac12(\mp R_++S_+,\,-R_-\pm S_-,\, K_-\pm K_+).
\end{align}
\end{subequations}
Also, for $X=J,P,C,PC^+,PC^-$, define the `skew-Cartesian' elements
\begin{displaymath}
\mathcal{X}_1\defeq   (X_++X_-)/2, \qquad
\mathcal{X}_2\defeq   (X_+-X_-)/2, \qquad
\mathcal{X}_3\defeq X_3,
\end{displaymath}
so that $X_\pm = \mathcal{X}_1\pm\mathcal{X}_2$.  The algebra will
then be the real span of $\mathcal{J}_1,\mathcal{J}_2,\mathcal{J}_3$;
$\mathcal{P}_1,\mathcal{P}_2,\mathcal{P}_3$;
$\mathcal{C}_1,\mathcal{C}_2,\mathcal{C}_3;D$, or equivalently of
$\mathcal{J}_1,\mathcal{J}_2,\mathcal{J}_3$;
$\mathcal{PC}^\pm_1,\mathcal{PC}^\pm_2,\mathcal{PC}^\pm_3$;~$D$.

It is readily verified that $\mathcal{J}_i$ commutes with
$\mathcal{PC}^{+}_i$ and $\mathcal{PC}^{-}_i$ for $i=1,2,3$, and that
\begin{subequations}
\label{eq:commreps1}
\begin{align}
[\mathcal{J}_i, \mathcal{J}_j] &=\{-1,+1,-1\}_k\:\mathcal{J}_k,\\
[\mathcal{J}_i, \mathcal{PC}^\pm_j] &=\{-1,+1,-1\}_k\:\mathcal{PC}_k^\pm,\\
[\mathcal{PC}^\pm_i, \mathcal{PC}^\pm_j] &=\mp\{-1,+1,-1\}_k\:\mathcal{J}_k,
\end{align}
\end{subequations}
where $i,j,k$ is any cyclic permutation of $1,2,3$, with $\{a,b,c\}_k$
meaning $a,b,c$ when $k=1,2,3$.  Also, the $3\times3$ matrix of
commutators $[\mathcal{PC}_i^+,\mathcal{PC}_j^-]$ indexed by $1\le
i,j\le 3$ equals $\textrm{diag}\,(-D,+D,-D)$.  Additionally,
\begin{equation}
\label{eq:commreps2}
  [D,\mathcal{J}_i] = 0, \qquad\qquad
  [D,\mathcal{PC}_i^\pm] = - \mathcal{PC}_i^\mp,
\end{equation}
for $i=1,2,3$.  These identities specify the structure of the algebra.

Now, recall that the real Lie algebra $\mathfrak{so}(p,q)$ with
$p+q=n$ has the following defining representation.  If
$\Gamma=(g_{ij})={\rm diag}\,(+1,\dots,+1,-1,\dots,-1)$, with
$q$\,~$+1$'s and $p$\,~$-1$'s, then $\mathfrak{so}(p,q)$ comprises all
real $n\times n$ matrices~$A$ for which $\Gamma A$ is skew-symmetric.
There is a sign convention here, and a $p\leftrightarrow q$ symmetry;
without loss of generality, $p\ge q$ will be assumed.  It is sometimes
useful to permute the $+1$'s and~$-1$'s.

More concretely, $\mathfrak{so}(p,q)$ can be realized as the real span
of the $n\times n$ matrices ${\bf M}_{ab}$, $1\le a< b \le n$, where
${\bf M}_{ab} =\allowbreak \Gamma\mathcal{E}_{ab} -\nobreak
\mathcal{E}_{ba}\Gamma$.  In this, $\mathcal{E}_{ab}$ is the $n\times
n$ matrix with a~$1$ in row~$a$, column~$b$, and zeroes elsewhere.
One often extends the size-$\left({{n}\atop2}\right)$ basis $\{{\bf
  M}_{ab}\}$ to a `tensor operator,' i.e., a skew-symmetric $n\times
n$ matrix of elements~$({\bf M}_{ab})$, by requiring that ${\bf
  M}_{ba}=-{\bf M}_{ab}$ for $1\le a,b\le n$.  The commutation
relations
\begin{equation}
\label{eq:commreps3}
  [{\bf M}_{ab},{\bf M}_{cd}] =
  g_{ad}{\bf M}_{bc} + g_{bc}{\bf M}_{ad} - g_{ac}{\bf M}_{bd} - g_{bd}{\bf M}_{ac}
\end{equation}
are easily checked.

\begin{theorem}
\label{thm:newso32}
  The real Lie algebra generated by\/ $J_\pm,K_\pm$ is isomorphic to\/
  $\mathfrak{so}(3,2)$, an isomorphism being specified by the tensor
  operator
  \begin{displaymath}
    ({\bf M}_{ab}) =
    \left(
    \begin{array}{cc|ccc}
      0 & \mathcal{PC}_2^- & -\mathcal{PC}_1^- & -\mathcal{PC}_3^- &  -D \\
      -\mathcal{PC}_2^- & 0 & \mathcal{J}_3 & -\mathcal{J}_1 &  \mathcal{PC}_2^+\\
      \hline
      \mathcal{PC}_1^- & -\mathcal{J}_3 & 0 & \mathcal{J}_2 & -\mathcal{PC}_1^+ \\
      \mathcal{PC}_3^- & \mathcal{J}_1 & -\mathcal{J}_2 & 0 & -\mathcal{PC}_3^+ \\
      D & -\mathcal{PC}_2^+ & \mathcal{PC}_1^+ & \mathcal{PC}_3^+ & 0
    \end{array}
    \right)
  \end{displaymath}
with\/ $\Gamma={\rm diag}\,(+1,+1,-1,-1,-1)$.
\end{theorem}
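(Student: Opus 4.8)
The plan is to prove the statement by checking directly that the displayed tensor operator defines a Lie algebra isomorphism, using the fact that \emph{both} algebras already come equipped with an explicit basis and explicit structure constants. On the source side, $\mathfrak{so}(3,2)$ is the real span of the ten matrices ${\bf M}_{ab}$ ($1\le a<b\le5$), extended skew-symmetrically, with brackets governed by the universal law (\ref{eq:commreps3}) for $g=\Gamma=\mathrm{diag}(+1,+1,-1,-1,-1)$. On the target side, the paper has already produced a basis $\mathcal J_1,\mathcal J_2,\mathcal J_3$; $\mathcal{PC}^+_1,\mathcal{PC}^+_2,\mathcal{PC}^+_3$; $\mathcal{PC}^-_1,\mathcal{PC}^-_2,\mathcal{PC}^-_3$; $D$ of the $10$-dimensional real Lie algebra $\mathfrak g$ generated by $J_\pm,K_\pm$, together with \emph{all} of its structure constants, in (\ref{eq:commreps1}), (\ref{eq:commreps2}), and in the statement that $\bigl([\mathcal{PC}^+_i,\mathcal{PC}^-_j]\bigr)_{1\le i,j\le3}=\mathrm{diag}(-D,+D,-D)$. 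Since the ten above-diagonal entries of the tensor operator are, up to sign, exactly these ten basis elements, the table defines a linear bijection $\varphi\colon\mathfrak{so}(3,2)\to\mathfrak g$ of real vector spaces, so the only thing left is to show that $\varphi$ preserves brackets.

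The core step is a finite check: verify $\varphi([{\bf M}_{ab},{\bf M}_{cd}])=[\varphi({\bf M}_{ab}),\varphi({\bf M}_{cd})]$ for all $\binom{10}{2}=45$ unordered pairs (or, since $J_\pm,K_\pm$ generate $\mathfrak g$, only for the few pairs whose images generate $\mathfrak g$, the rest following from the Jacobi identity — though the blockwise accounting below is about as quick). I would split the index set as $\{1,2\}\cup\{3,4,5\}$: the $\mathcal J$'s sit at positions $(2,3),(2,4),(3,4)$, the $\mathcal{PC}^-$'s and $D$ at positions $(1,2),(1,3),(1,4),(1,5)$, and the $\mathcal{PC}^+$'s at positions $(2,5),(3,5),(4,5)$. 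The $45$ brackets then fall into families — $\mathcal J$ with $\mathcal J$; $\mathcal J$ with $\mathcal{PC}^\pm$; $\mathcal{PC}^\pm$ with $\mathcal{PC}^\pm$ of equal sign; $\mathcal{PC}^+$ with $\mathcal{PC}^-$; $D$ with $\mathcal J$; $D$ with $\mathcal{PC}^\pm$ — and in each family one substitutes the relevant $g_{ab}\in\{0,\pm1\}$ into (\ref{eq:commreps3}) and reads off that the right-hand side is exactly the matching entry of (\ref{eq:commreps1})--(\ref{eq:commreps2}). For instance $[{\bf M}_{23},{\bf M}_{24}]=-{\bf M}_{34}$ reproduces $[\mathcal J_3,-\mathcal J_1]=-\mathcal J_2$, and $[{\bf M}_{15},{\bf M}_{25}]={\bf M}_{12}$ reproduces $[-D,\mathcal{PC}^+_2]=\mathcal{PC}^-_2$; the cyclic sign pattern $\{-1,+1,-1\}_k$ in (\ref{eq:commreps1}) is precisely what the unequal numbers of $+1$'s and $-1$'s in $\Gamma$ force upon (\ref{eq:commreps3}), which is also why the real form comes out $\mathfrak{so}(3,2)$ rather than a sibling.

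I expect the main obstacle to be nothing conceptual but rather sign-bookkeeping: minus signs are hidden in the definitions (\ref{eq:lastneeded1}) and (\ref{eq:lastneeded2}) of $P_\pm,C_\pm$ and $\mathcal{PC}^\pm$, in the skew-Cartesian combinations $\mathcal X_i=(X_+\pm X_-)/2$, $\mathcal X_3=X_3$, and in the seven minus signs attached to the ten entries of the tensor operator, and all must be tracked consistently so that both sides of (\ref{eq:commreps3}) carry the same signs. As a safeguard I would, at the end, cross-check the identification of the real form by computing the signature of the Killing form of $\mathfrak g$ straight from the structure constants; it should come out $(6,4)$, which matches $\mathfrak{so}(3,2)$ and separates it from $\mathfrak{so}(4,1)$ (signature $(4,6)$) and $\mathfrak{so}(5,\mathbb R)$ (signature $(0,10)$).
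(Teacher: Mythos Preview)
Your proposal is correct and is essentially the same approach as the paper's: the paper's proof consists of the two words ``By inspection,'' which is precisely the direct bracket-by-bracket verification you describe, using the structure constants already computed in (\ref{eq:commreps1}), (\ref{eq:commreps2}), and the $[\mathcal{PC}^+_i,\mathcal{PC}^-_j]$ table against the universal relation~(\ref{eq:commreps3}). Your write-up simply spells out what that inspection involves, and your Killing-form cross-check is a reasonable extra safeguard the paper does not bother with.
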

\begin{proof}
  By inspection.\qed
\end{proof}

This $\mathfrak{so}(3,2)$ has the Cartan decomposition
$\mathfrak{k}\oplus_\mathbb{R}\mathfrak{p}$, where
  \begin{align*}
    \mathfrak{k} &= {\rm sp}_{\mathbb{R}}\{ \mathcal{J}_2,
    \mathcal{PC}^+_1, \mathcal{PC}^+_3, \mathcal{PC}^-_2 \},\\
    \mathfrak{p} &= {\rm sp}_{\mathbb{R}}\{ \mathcal{J}_1,\mathcal{J}_3,\mathcal{PC}^+_2,\mathcal{PC}_1^-,\mathcal{PC}_3^-,D \}
  \end{align*}
are the `compact' and `non-compact' subspaces.  (The terms refer to the Lie
subgroups of $SO_0(3,2)$ to which they exponentiate.)  Real Lie algebras
isomorphic to $\mathfrak{so}(4,1)$ and $\mathfrak{so}(5,\mathbb{R})$ can be
obtained by Weyl's trick of redefining some or all of the basis elements
of~$\mathfrak{p}$ to include `{\rm i}'~factors.  In doing this, a slightly
changed notation will be useful.  For $X=J,P,C,PC^+,PC^-$, define the
`Cartesian' elements
\begin{equation}
\label{eq:cartesian}
  X_1 \defeq (X_++X_-)/2,\qquad X_2 \defeq -{\rm i}(X_+-X_-)/2,
\end{equation}
so that $X_\pm=X_1\pm{\rm i}X_2$ and
$(\mathcal{X}_1,\mathcal{X}_2,\mathcal{X}_3) = (X_1,{\rm i}X_2,X_3)$.

\begin{theorem}
\label{thm:newso41}
  {\rm (i)}~The real span of\/ ${\rm i}J_1,{\rm i}J_2, {\rm i}J_3;
  P_1,P_2,P_3; C_1,C_2,C_3; D$, or equivalently of\/ ${\rm
    i}J_1,\allowbreak{\rm i}J_2, {\rm i}J_3;\allowbreak
  PC_1^\pm,PC_2^\pm,PC_3^\pm; D$, is a real Lie algebra isomorphic to\/
  $\mathfrak{so}(4,1)$, an isomorphism being specified by the tensor
  operator
  \begin{displaymath}
    ({\bf M}_{ab}) =
    \left(
    \begin{array}{c|cccc}
      0 & -{PC}_1^- & -{PC}_2^- & -{PC}_3^- &  -D \\
      \hline
      {PC}_1^- & 0 & -{\rm i}{J}_3 & {\rm i}{J}_2 & -{PC}_1^+\\
      {PC}_2^- & {\rm i}{J}_3 & 0 & -{\rm i}{J}_1 & -{PC}_2^+ \\
      {PC}_3^- & -{\rm i}{J}_2 & {\rm i}{J}_1 & 0 & -{PC}_3^+ \\
      D & {PC}_1^+ & {PC}_2^+ & {PC}_3^+ & 0
    \end{array}
    \right)
  \end{displaymath}
  with\/ $\Gamma={\rm diag}\,(+1,-1,-1,-1,-1)$.\hfil\break {\rm
    (ii)}~The real span of\/ ${\rm i}J_1,{\rm i}J_2, {\rm
    i}J_3;\allowbreak {\rm i}PC_1^+,{\rm i}PC_2^+,{\rm i}PC_3^+;
  \allowbreak PC_1^-,PC_2^-,PC_3^-; {\rm i}D$ is a real Lie algebra
  isomorphic to\/ $\mathfrak{so}(3,2)$, an isomorphism being specified
  by the tensor operator
  \begin{displaymath}
    ({\bf M}_{ab}) =
    \left(
    \begin{array}{c|ccc|c}
      0 & -{PC}_1^- & -{PC}_2^- & -{PC}_3^- &  -{\rm i}D \\
      \hline
      {PC}_1^- & 0 & -{\rm i}{J}_3 & {\rm i}{J}_2 & -{\rm i}{PC}_1^+\\
      {PC}_2^- & {\rm i}{J}_3 & 0 & -{\rm i}{J}_1 & -{\rm i}{PC}_2^+ \\
      {PC}_3^- & -{\rm i}{J}_2 & {\rm i}{J}_1 & 0 & -{\rm i}{PC}_3^+ \\
      \hline
      {\rm i}D & {\rm i}{PC}_1^+ & {\rm i}{PC}_2^+ & {\rm i}{PC}_3^+ & 0
    \end{array}
    \right)
  \end{displaymath}
with\/ $\Gamma={\rm diag}\,(+1,-1,-1,-1,+1)$.
\end{theorem}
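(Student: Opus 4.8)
The plan is to prove both parts by the direct verification used for Theorem~\ref{thm:newso32}: one exhibits the displayed skew-symmetric array $({\bf M}_{ab})$ and checks that its entries satisfy the $\mathfrak{so}(p,q)$ commutation relations~(\ref{eq:commreps3}) for the stated~$\Gamma$. The data needed is already in hand, since the full bracket table of the $10$-dimensional real algebra generated by $J_\pm,K_\pm$ is recorded in~(\ref{eq:commreps1}), (\ref{eq:commreps2}) and the identity $\bigl([\mathcal{PC}_i^+,\mathcal{PC}_j^-]\bigr)={\rm diag}(-D,+D,-D)$. First I would rewrite that table in the Cartesian basis $J_i,PC_i^\pm,D$ of~(\ref{eq:cartesian}). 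Because $(\mathcal{X}_1,\mathcal{X}_2,\mathcal{X}_3)=(X_1,{\rm i}X_2,X_3)$, the conversion multiplies precisely the ``$2$-components'' by~$\pm{\rm i}$; the Cartesian table then has the same shape as~(\ref{eq:commreps1})--(\ref{eq:commreps2}) but with some structure constants $\pm1$ replaced by~$\pm{\rm i}$, exactly as in the passage from ladder operators to Cartesian generators of $\mathfrak{so}(3,\mathbb{R})$. Inserting an extra~${\rm i}$ in front of a generator, as Weyl's unitary trick prescribes, then both restores real structure constants and toggles the corresponding diagonal sign of~$\Gamma$; the tilts in the theorem (${\rm i}J_i$ in both parts, and ${\rm i}PC_i^+,{\rm i}D$ in part~(ii)) are chosen for exactly this purpose.

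For part~(i) the basis is ${\rm i}J_1,{\rm i}J_2,{\rm i}J_3$; $PC_1^\pm,PC_2^\pm,PC_3^\pm$; $D$, placed into the array as displayed, with $\Gamma={\rm diag}(+1,-1,-1,-1,-1)$. I would check~(\ref{eq:commreps3}) block by block against the Cartesian table: the brackets among the three ``${\rm i}J_i$'' reproduce the $\mathfrak{so}(3,\mathbb{R})$ sub-block ${\bf M}_{23},{\bf M}_{24},{\bf M}_{34}$, the factor ${\rm i}^2=-1$ being what cancels the stray~${\rm i}$ in $[J_i,J_j]$; the brackets $[{\rm i}J_i,PC_j^\pm]$ reproduce the relations pairing the $J$-block entries with the row-$1$ and row-$5$ entries; $[PC_i^+,PC_j^+]$ and $[PC_i^-,PC_j^-]$ close back into the $J$-block with the sign fixed by the diagonal entries $g_{55}$ and $g_{11}$ of~$\Gamma$; and $[PC_i^+,PC_j^-]$, together with $[D,PC_i^\pm]=-PC_i^\mp$ and $[D,{\rm i}J_i]=0$, supply the remaining entries of the fifth row and column. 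Since every entry of the array and every structure constant so produced is manifestly real, this simultaneously shows the displayed real span is closed under bracket, hence a real Lie algebra, and that it is isomorphic to $\mathfrak{so}(4,1)$.

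Part~(ii) is identical in method: one multiplies $PC_i^+$ and~$D$ by~${\rm i}$ (leaving ${\rm i}J_i$ and $PC_i^-$ untouched), places the result into the displayed array, and checks~(\ref{eq:commreps3}) with $\Gamma={\rm diag}(+1,-1,-1,-1,+1)$. The extra~${\rm i}$ on the axis-$5$ generators is exactly what flips $g_{55}$ from~$-1$ (part~(i)) to~$+1$, so this is Weyl's trick applied once more, and the resulting real form is again $\mathfrak{so}(3,2)$, realized in a frame different from that of Theorem~\ref{thm:newso32}. I expect the only genuine difficulty---hence the main obstacle---to be the accounting of signs and ${\rm i}$-factors as one moves among the three bases (the native $X_\pm$, the skew-Cartesian $\mathcal{X}_i$, and the Cartesian $X_i$), and in particular the check that $\bigl([\mathcal{PC}_i^+,\mathcal{PC}_j^-]\bigr)={\rm diag}(-D,+D,-D)$ transfers, after the tilts, to precisely the $(5,5)$-sign of~$\Gamma$ required in each case. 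Everything else is routine once a consistent convention is fixed, and it suffices to verify a spanning family of brackets, the rest following from antisymmetry and the cyclic structure already in place.
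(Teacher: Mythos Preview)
Your approach is correct: redoing the ``by inspection'' verification of Theorem~\ref{thm:newso32} against the Cartesian bracket table, with the ${\rm i}$-tilts accounted for, certainly works, and you have identified the one delicate point (tracking signs and ${\rm i}$-factors through the basis changes).

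The paper, however, does not re-verify from scratch.  It observes that the matrix $({\bf M}_{ab})$ in part~(i) is obtained from the already-checked matrix of Theorem~\ref{thm:newso32} by multiplying its second row and second column by~${\rm i}$ and then interchanging the second and third rows and columns; the first operation flips $g_{22}$ from~$+1$ to~$-1$ (since if $\{{\bf M}_{ab}\}$ satisfies~(\ref{eq:commreps3}) with metric~$g$, then $\{\epsilon_a\epsilon_b{\bf M}_{ab}\}$ satisfies it with $g'_{aa}=\epsilon_a^2 g_{aa}$), and the permutation is innocuous because $g_{22}=g_{33}=-1$ afterward.  Part~(ii) then follows by multiplying the fifth row and column of the part-(i) matrix by~${\rm i}$, flipping $g_{55}$ back to~$+1$.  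This is Weyl's trick applied directly at the level of the tensor operator rather than at the level of the bracket table, so no block-by-block re-check is needed: the commutation relations~(\ref{eq:commreps3}) are inherited automatically from Theorem~\ref{thm:newso32}.  Your route costs a full second verification; the paper's costs two sentences.
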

\begin{proof}
(i)~Multiply the second row and the second column of the $({\bf
    M}_{ab})$ in Theorem~\ref{thm:newso32} by~`${\rm i}$', and
  (innocuously) interchange the second and third rows, and the second
  and third columns.  (ii)~Continuing (or~in a sense reversing),
  multiply the last row and the last column by~`${\rm i}$'.\qed
\end{proof}

\begin{theorem}
\label{thm:newso5r}
  The real span of\/ ${\rm i}J_1,{\rm i}J_2, {\rm i}J_3;
  PC_1^+,PC_2^+,PC_3^+; {\rm i}PC_1^-,{\rm i}PC_2^-,{\rm i}PC_3^-;
  {\rm i}D$ is a real Lie algebra isomorphic to\/
  $\mathfrak{so}(5,\mathbb{R})$, an isomorphism being specified by the
  tensor operator
  \begin{displaymath}
    ({\bf M}_{ab}) =
    \left(
    \begin{array}{ccccc}
      0 & -{\rm i}{PC}_1^- & -{\rm i}{PC}_2^- & -{\rm i}{PC}_3^- &  -{\rm i}D \\
      {\rm i}{PC}_1^- & 0 & -{\rm i}{J}_3 & {\rm i}{J}_2 & -{PC}_1^+\\
      {\rm i}{PC}_2^- & {\rm i}{J}_3 & 0 & -{\rm i}{J}_1 & -{PC}_2^+ \\
      {\rm i}{PC}_3^- & -{\rm i}{J}_2 & {\rm i}{J}_1 & 0 & -{PC}_3^+ \\
      {\rm i}D & {PC}_1^+ & {PC}_2^+ & {PC}_3^+ & 0
    \end{array}
    \right)
  \end{displaymath}
with\/ $\Gamma={\rm diag}\,(-1,-1,-1,-1,-1)$.
\end{theorem}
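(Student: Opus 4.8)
The plan is to derive Theorem~\ref{thm:newso5r} by one more application of Weyl's unitary trick, exactly as Theorem~\ref{thm:newso41} was obtained from Theorem~\ref{thm:newso32}. The one structural fact I would isolate first is a rescaling observation about the tensor-operator relations~\eqref{eq:commreps3}: if an array $({\bf M}_{ab})_{1\le a,b\le 5}$ with ${\bf M}_{ba}=-{\bf M}_{ab}$ satisfies~\eqref{eq:commreps3} for a diagonal $\Gamma={\rm diag}\,(g_1,\dots,g_5)$, and $c_1,\dots,c_5$ are nonzero scalars, then the rescaled array $\widetilde{\bf M}_{ab}\defeq c_ac_b\,{\bf M}_{ab}$ again satisfies~\eqref{eq:commreps3}, now with $\Gamma$ replaced by $\widetilde\Gamma={\rm diag}\,(c_1^2g_1,\dots,c_5^2g_5)$. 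This is immediate from~\eqref{eq:commreps3}: on the right-hand side each structure constant $g_{ad}$ is attached to ${\bf M}_{bc}$, and $g_{ad}$ vanishes unless $a=d$, so there $c_ac_bc_cc_d=c_a^2\,(c_bc_c)$; the factor $c_bc_c$ recombines with ${\bf M}_{bc}$ to give $\widetilde{\bf M}_{bc}$, while $c_a^2g_{ad}$ plays the role of the new metric coefficient $\widetilde g_{ad}$. The essential caveat is that the \emph{same} phase $c_a$ must multiply every entry of the $a$-th row and the $a$-th column.

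Next I would take the $\mathfrak{so}(3,2)$ tensor operator of Theorem~\ref{thm:newso41}(ii), which carries $\Gamma={\rm diag}\,(+1,-1,-1,-1,+1)$ and whose entries are built from ${\rm i}J_i$, ${\rm i}PC_i^+$, $PC_i^-$, and ${\rm i}D$, and apply the rescaling with $(c_1,c_2,c_3,c_4,c_5)=({\rm i},1,1,1,-{\rm i})$. Then $\widetilde\Gamma={\rm diag}\,(-1,-1,-1,-1,-1)$, which (up to the innocuous $p\leftrightarrow q$ symmetry) is the metric defining $\mathfrak{so}(5,\mathbb{R})$. An entrywise check shows that this rescaling multiplies the first row and first column, in the positions meeting the central $3\times3$ block, by $c_1c_b={\rm i}$, converting each $PC^-$ entry into an ${\rm i}PC^-$ entry; multiplies the fifth row and fifth column, in those same central positions, by $c_5c_b=-{\rm i}$, converting each ${\rm i}PC^+$ entry into a $PC^+$ entry; multiplies the $(1,5)$ and $(5,1)$ entries by $c_1c_5=1$, leaving ${\rm i}D$ in place; and leaves the central block of $\pm{\rm i}J_i$ entries untouched, since $c_2=c_3=c_4=1$. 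The outcome is precisely the matrix displayed in Theorem~\ref{thm:newso5r}. (Alternatively one may start from Theorem~\ref{thm:newso32} itself and rescale by $(c_a)=({\rm i},{\rm i},1,1,1)$, after the same reshuffling of rows and columns used in proving Theorem~\ref{thm:newso41}.)

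To finish, I would record that every entry of the new array is $\pm$ one of the ten elements ${\rm i}J_1,{\rm i}J_2,{\rm i}J_3$; $PC_1^+,PC_2^+,PC_3^+$; ${\rm i}PC_1^-,{\rm i}PC_2^-,{\rm i}PC_3^-$; ${\rm i}D$, and that these are $\mathbb{R}$-linearly independent because $J_i,PC_i^\pm,D$ span a real space of dimension~$10$ (\S\,\ref{subsec:last2}). Since $\dim_{\mathbb{R}}\mathfrak{so}(5,\mathbb{R})=\binom{5}{2}=10$, the assignment of the standard $\mathfrak{so}(5,\mathbb{R})$ basis $({\bf M}_{ab})$ to the corresponding entries $\widetilde{\bf M}_{ab}$ extends to an $\mathbb{R}$-linear bijection onto their span, and the rescaling lemma guarantees it respects~\eqref{eq:commreps3}, hence the brackets; so the span is closed under the bracket and is isomorphic to $\mathfrak{so}(5,\mathbb{R})$. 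I expect no conceptual obstacle here: the only real work is the sign-and-phase bookkeeping in matching the two displayed matrices, and the underlying commutation relations were already secured over~$\mathbb{C}$, hence over every real form, when~\eqref{eq:commreps1}--\eqref{eq:commreps2} and Theorem~\ref{thm:newso32} were established.
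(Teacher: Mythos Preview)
Your proof is correct and uses essentially the same method as the paper: Weyl's trick, implemented as row--and--column rescaling of a previously established tensor operator so as to flip the signs of selected entries of~$\Gamma$. The only difference is the starting point: the paper begins from the $\mathfrak{so}(4,1)$ array of Theorem~\ref{thm:newso41}(i) and multiplies just the first row and column by~${\rm i}$ (sending $g_{11}$ from $+1$ to~$-1$), whereas you begin from the $\mathfrak{so}(3,2)$ array of Theorem~\ref{thm:newso41}(ii) and rescale both the first and fifth rows and columns; your explicit rescaling lemma, which the paper leaves implicit, is a nice touch.
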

\begin{proof}
Multiply the first row and the first column of the $({\bf M}_{ab})$ in
part~(i) of Theorem~\ref{thm:newso41} by~`${\rm i}$'.\qed
\end{proof}

With `${\rm i}$' factors in basis elements, the $\mathfrak{so}(3,2)$,
$\mathfrak{so}(4,1)$ and $\mathfrak{so}(5,\mathbb{R})$ of Theorems
\ref{thm:newso41} and~\ref{thm:newso5r} look awkward.  But it follows from
(\ref{eq:JKRS}), (\ref{eq:J3K3}), and~(\ref{eq:cartesian}) that each basis
element in the $\mathfrak{so}(4,1)$ of Theorem~\ref{thm:newso41}(i), i.e.,
each of ${\rm i}J_i$, $P_i$, $C_i$, and~$D$, is realized by a differential
operator in $r,\theta,\varphi$ with \emph{real} coefficients.  This is not
the case for the basis elements of $\mathfrak{so}(3,2)$
and~$\mathfrak{so}(5,\mathbb{R})$.

The geometric significance of this realization of $\mathfrak{so}(4,1)$
is revealed by changing from spherical coordinates
$(r,\theta,\varphi)$ to Cartesian ones, $(x_1,x_2,x_3)$
on~$\mathbb{R}^3$.  Using
\begin{displaymath}
  x\defeq (x_1,x_2,x_3) = (r\sin\theta\cos\varphi, r\sin\theta\sin\varphi, r\cos\theta),
\end{displaymath}
one finds that
\begin{subequations}
\label{eq:41ops}
  \begin{alignat}{2}
    {\rm i}J_i &= x_j\partial_k - x_k\partial_j, &\qquad& i=1,2,3, \\
    P_i &= \partial_i, &\qquad& i=1,2,3, \\
    \label{eq:Ci}
    C_i &= x_i - (x\cdot x)\partial_i + 2x_i(x\cdot \partial), &\qquad & i=1,2,3, \\
    \label{eq:D}
    D &= x\cdot\partial +\tfrac12, &\qquad&
  \end{alignat}
\end{subequations}
where $\partial_i\defeq {\rm d}/{\rm d}x_i$ and $i,j,k$ is a cyclic
permutation of~$1,2,3$.  That is, the~${\rm i}J_i$ generate rotations
about the origin, the~$P_i$ generate translations, and $D$~generates
dilatations (linear scalings of~$\mathbb{R}^3$).  The~$C_i$ generate
special conformal transformations, which are degree\nobreakdash-2
rational maps of~$\mathbb{R}^3$ (or rather~$\mathbb{RP}^3$) to itself.
The commutation relations
\begin{alignat*}{2}
[J_i,J_j] &= {\rm i}\epsilon_{ijk} J_k, &\qquad [PC_i^\pm,PC^\pm_j] &= \mp{\rm i}\epsilon_{ijk}J_k,\\
[J_i,PC_j^\pm] &= {\rm i}\epsilon_{ijk}PC_k^\pm, &\qquad [PC_i^+,PC_j^-] &= -\delta_{ij}D,\\
[D,J_i] &=0, & \qquad [D,PC_i^\pm] &= - PC_i^\mp,
\end{alignat*}
written in terms of $PC_i^\pm = \tfrac12(P_i\pm C_i)$, follow either from
(\ref{eq:commreps1}),(\ref{eq:commreps2}), from (\ref{eq:commreps3}), or
from (\ref{eq:41ops}).  Here, the summation convention of tensor analysis
is employed.  The Levi-Civit\`a tensor $\epsilon_{ijk}$ is skew-symmetric
in all indices, with $\epsilon_{123}=+1$, and $\delta_{ij}$ is the
Kronecker delta.  Together with
\begin{equation}
\label{eq:JiDdef}
J_1 = \frac12(J_+ + J_-), \qquad J_2 = -\frac{{\rm i}}2(J_+
- J_-),\qquad D=K_3,
\end{equation}
the formulas
\begin{subequations}
\label{eq:PCidef}
\begin{align}
PC_1^\pm &= \tfrac14 (\mp R_+ - R_-  \pm S_+ + S_-),\\  
PC_2^\pm &= -\tfrac{\rm i}4 (\mp R_+ + R_- \mp S_++ S_- ),\\  
PC_3^\pm &= \tfrac12(\pm K_+ + K_-),
\end{align}
\end{subequations}
express all these differential operators in terms of the original
$J_\pm,K_\pm,\allowbreak R_\pm,S_\pm;\allowbreak J_3,K_3$ of
(\ref{eq:JKRS}),(\ref{eq:J3K3}).

The ten operators in~(\ref{eq:41ops}) span (over~$\mathbb{R}$) the Lie
algebra of \emph{conformal} differential operators on~$\mathbb{R}^3$, which
is known to have an $\mathfrak{so}(4,1)$ structure.  (See
Miller~\cite[\S\,3.6]{Miller77}.)  This is the symmetry algebra of the
Laplacian~$\nabla^2$ on~$\mathbb{R}^3$, which comprises all real
first-order operators~$L$ for which $[L,\nobreak\nabla^2]\propto\nabla^2$,
i.e., for which $[L,\nabla^2]$ has $\nabla^2$ as a right factor.  It can be
viewed as acting on any suitable space of functions on~$\mathbb{R}^3$, and
exponentiates to the group $SO_0(4,1)$ of conformal transformations,
realized as flows on~$\mathbb{R}^3$ (or~$\mathbb{RP}^3$).  But the starting
point used here was their action on the span of the generalized solid
harmonics $\mathcal{S}_{\nu_0+n}^{\mu_0+m}$ with $(n,m)\in\mathbb{Z}^2$,
which are (multi-valued) solutions of Laplace's equation.

In~the physics literature on conformal Lie algebras and groups, the terms
`$x_i$' in~(\ref{eq:Ci}) and `$\tfrac12$' in~(\ref{eq:D}) often appear as
$2\delta x_i$ and~$\delta$ respectively, where $\delta$~is the so-called
scaling dimension; though the resulting commutation relations do not
involve~$\delta$.  The value $\delta=\tfrac12$ is specific to the symmetry
algebra of the Laplacian.

There are many variations on the present technique of using differential
recurrences to construct real Lie algebras, realized by differential
operators, that are isomorphic to the real forms
of~$\mathfrak{so}(5,\mathbb{C})$.  The solid
harmonics~$\mathcal{S}_\nu^\mu$ that were employed here are extensions
to~$\mathbb{R}^3$ of the (surface) spherical harmonics ${\rm
  P}_\nu^\mu(\cos\theta){\rm e}^{{\rm i}\mu\varphi}$ on the symmetric space
$S^2=SO(3)/SO(2)$.  If not Ferrers but Legendre functions were used, the
starting point would be the hyperboloidal ones $P_\nu^\mu(\cosh\xi){\rm
  e}^{{\rm i}\mu\varphi}$, defined using coordinates $(\xi,\varphi)$ on the
hyperboloid $H^2=SO(2,1)/SO(2)$, i.e., the surface $x_1^2 + x_2^2 - x_3^2 +
\rm{const}=0$.  Their extensions to~$\mathbb{R}^3$ satisfy the
$(2+\nobreak1)$-dimensional wave equation, rather than Laplace's equation.
(See \cite[Chap.~4]{Miller77} and~\cite{Durand2003b}.)  But isomorphic
algebras could be constructed.

\subsection{Lie Algebra Representations}
\label{subsec:last3}

In~\S\,\ref{subsec:last2}, it was shown that for any $(\nu_0,\mu_0)$,
there are representations of the real Lie algebras
$\mathfrak{so}(3,2)$, $\mathfrak{so}(4,1)$,
$\mathfrak{so}(5,\mathbb{R})$ that are carried by the span of the
family of (generically multi-valued) solid harmonics
$\mathcal{S}_\nu^\mu(r,\theta,\varphi)$,
$(\nu,\mu)\in(\nu_0,\mu_0)+\mathbb{Z}^2$.  These arise from the action
of the ladder operators on the Ferrers functions ${\rm
  P}_\nu^\mu(\cos\theta)$.  Solid harmonics are harmonic functions
on~$\mathbb{R}^3$, satisfying Laplace's equation; and the ones in the
octahedral, tetrahedral, dihedral, and cyclic families are or can be
finite-valued.

These infinite-dimensional representations are restrictions of the
representation of the common complexification
$\mathfrak{so}(5,\mathbb{C})$, which is carried by the (complex) span of
the family.  They are generically irreducible, and are also generically
non-skew-Hermitian, so that except in special cases, they do not
exponentiate to unitary representations of the corresponding Lie groups,
even formally.  This will now be investigated.

Each of the three real Lie algebras is of rank~$2$, so the center of
its universal enveloping algebra is generated by two elements, called
Casimir invariants; and any irreducible representation must represent
each Casimir by a constant.  The analysis of such representations
resembles the unified classification of the irreducible
representations of $\mathfrak{so}(2,1)$ and
$\mathfrak{so}(3,\mathbb{R})$, the real forms
of~$\mathfrak{so}(3,\mathbb{C})$, which is well known.  (See, e.g.,
\cite[Chap.~3]{Dong2007}.)  In this, representations are classified by
the value taken by their (single) Casimir, and by their reductions
with respect to a ($1$\nobreakdash-dimensional) Cartan subalgebra.
This leads to an understanding of which representations are
skew-Hermitian and which are finite-dimensional.  However, no
comparable unified approach to all representations of
$\mathfrak{so}(3,2)$, $\mathfrak{so}(4,1)$, and
$\mathfrak{so}(5,\mathbb{R})$ seems to have been published.  The
literature has dealt almost exclusively with the skew-Hermitian ones.
($\mathfrak{so}(3,2)$ and $\mathfrak{so}(4,1)$ are treated separately
in \cite{Ehrman57,Evans67} and~\cite{Dixmier61}, and
$\mathfrak{so}(4,1)$ and $\mathfrak{so}(5,\mathbb{R})$ are treated
together in~\cite{Kuriyan68}.)  

The starting point is the complexification $\mathfrak{so}(5,\mathbb{C})$,
which is generated over~$\mathbb{C}$ by $J_\pm$ and~$K_\pm$, the ladder
operators on the order and degree.  It is the complex span of
$J_\pm,K_\pm,\allowbreak R_\pm,S_\pm;\allowbreak J_3,K_3$, each of which is
represented as in (\ref{eq:JKRSplus}) and~(\ref{eq:J3K3plus}) by an
infinite matrix indexed by
$(\nu,\mu)\in\allowbreak(\nu_0,\mu_0)+\nobreak\mathbb{Z}^2$.  The elements
$J_3,K_3$ span a Cartan subalgebra (an~abelian subalgebra of maximal
[complex] dimension, here~$2$), which is represented diagonally:
\begin{displaymath}
  J_3\,{\mathcal{S}}_\nu^\mu = \mu\,\mathcal{S}_\nu^\mu,
  \qquad\qquad   K_3\,{\mathcal{S}}_\nu^\mu = (\nu+\tfrac12)\,\mathcal{S}_\nu^\mu.
\end{displaymath}
When the representation of $\mathfrak{so}(5,\mathbb{C})$ is reduced with
respect to this subalgebra, it splits into an infinite direct sum of
$1$\nobreakdash-dimensional representations, indexed by $(\nu,\mu)$.  The
corresponding real Cartan subalgebras of the $\mathfrak{so}(3,2)$ and
$\mathfrak{so}(4,1)$ in Theorems \ref{thm:newso32} and~\ref{thm:newso41}(i)
are the real spans of $\{J_3,K_3\}$ and~$\{{\rm i}J_3,K_3\}$.  For the
$\mathfrak{so}(3,2)$ and $\mathfrak{so}(5,\mathbb{R})$ in Theorems
\ref{thm:newso41}(ii) and~\ref{thm:newso5r}, they are the real span of
$\{{\rm i}J_3,{\rm i}K_3\}$.  (Recall that $D\defeq K_3$.)  Only for the
last two will the real Cartan subalgebra be represented by skew-Hermitian
matrices; in~fact, by imaginary diagonal ones.

It is readily verified that $J_\pm,K_\pm,\allowbreak
R_\pm,S_\pm;\allowbreak J_3,K_3$ can serve as a Cartan--Weyl basis of
$\mathfrak{so}(5,\mathbb{C})$, their complex span.  That is, when the
adjoint actions of $H_1\defeq J_3$ and $H_2\defeq K_3$ on this
$10$\nobreakdash-dimensional Lie algebra are simultaneously diagonalized,
the common eigenvectors (`root vectors') include $J_\pm,K_\pm,\allowbreak
R_\pm,S_\pm$.  The associated roots $\alpha\in\mathbb{R}^2$ are
$2$\nobreakdash-tuples of eigenvalues, which can be identified with the
displacements $\Delta(\nu,\mu)$, i.e., $\pm(0,1)$, $\pm(1,0)$, $\pm(1,1)$,
$\pm(1,-1)$.  These form the $B_2$ root system.  One can write
\begin{displaymath}
  [H_i,H_j] = 0, \qquad [H_i,E_\alpha]=\alpha_iE_\alpha,
\end{displaymath}
where $E_\alpha$~is the root vector associated to root~$\alpha$.  The
commutators $[E_\alpha,E_\beta]$ also prove to be consistent with the
$B_2$ root system.

The Casimir invariants of $\mathfrak{so}(5,\mathbb{C})$ and its three
real forms can be computed from the commutation relations of the
Cartan--Weyl basis elements.  (For instance, the Killing form for the
algebra yields a quadratic Casimir.)  But it is easier to express them
using the tensor operator ${\bf M}_{ab}$ of any of Theorems
\ref{thm:newso32}, \ref{thm:newso41}, and~\ref{thm:newso5r}.  As
elements of the universal enveloping algebra, the two Casimirs,
quadratic and quartic, are defined thus~\cite{Ehrman57,Evans67}:
\begin{align*}
  c_2 &\defeq -\tfrac12 {\bf M}_{ab}{\bf M}^{ab},\\
  c_4 &\defeq -w_aw^a,
\end{align*}
where $w^a = \tfrac18 \epsilon^{abcde}{\bf M}_{bc}{\bf M}_{de}$ and
the summation convention is employed, indices being raised and lowered
by the tensors $\Gamma^{-1}=(g^{ab})$ and $\Gamma=(g_{ab})$.  The
Levi-Civit\`a tensor $\epsilon_{abcde}$ is skew-symmetric in all
indices, with $\epsilon_{12345}=+1$.  The normalization and sign
conventions are somewhat arbitrary.

\begin{theorem}
\label{thm:mostdegenerate}
  In the representation of the universal enveloping algebra of any of
  the real Lie algebras\/ $\mathfrak{so}(3,2)$, $\mathfrak{so}(4,1)$,
  and\/ $\mathfrak{so}(5,\mathbb{R})$ on the span of the generalized
  solid harmonics\/
  $\mathcal{S}_{\nu_0+n}^{\mu_0+m}(r,\theta,\varphi)$,
  $(n,m)\in\mathbb{Z}^2$, the Casimirs\/ $c_2$ and\/~$c_4$ are
  represented by the constants\/ $-\tfrac{5}{4}$ and\/~$0$,
  irrespective of\/~$\nu_0,\mu_0$.
\end{theorem}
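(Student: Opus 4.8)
The plan is to reduce everything to one differential-operator computation. By Theorems~\ref{thm:newso32}, \ref{thm:newso41}, and~\ref{thm:newso5r}, the three real forms $\mathfrak{so}(3,2),\mathfrak{so}(4,1),\mathfrak{so}(5,\mathbb{R})$ complexify to the same Cartan--Weyl presentation of $\mathfrak{so}(5,\mathbb{C})$ built from $J_\pm,K_\pm,R_\pm,S_\pm;J_3,K_3$, so the elements $c_2=-\tfrac12{\bf M}_{ab}{\bf M}^{ab}$ and $c_4=-w_aw^a$ that they define coincide, as elements of the universal enveloping algebra of $\mathfrak{so}(5,\mathbb{C})$; hence it suffices to evaluate them in any one realization. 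I would use the $\mathfrak{so}(4,1)$ realization~(\ref{eq:41ops}) by the \emph{real} conformal differential operators ${\rm i}J_i=x_j\partial_k-x_k\partial_j$, $P_i=\partial_i$, $C_i=x_i-(x\cdot x)\partial_i+2x_i(x\cdot\partial)$, $D=x\cdot\partial+\tfrac12$ on $\mathbb{R}^3$, together with the tensor operator $({\bf M}_{ab})$ of Theorem~\ref{thm:newso41}(i). The one structural input is that every generalized solid harmonic $\mathcal{S}_\nu^\mu$ is a (generically multi-valued) solution of Laplace's equation $\nabla^2 f=0$ on $\mathbb{R}^3$; so it is enough to show that, as differential operators with polynomial coefficients, $c_2$ equals $-\tfrac54$ plus a multiple of $\nabla^2$, and $c_4$ is a multiple of $\nabla^2$.

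First I would expand $c_2=-\tfrac12{\bf M}_{ab}{\bf M}^{ab}$. Reading the entries of $({\bf M}_{ab})$ off Theorem~\ref{thm:newso41}(i) and raising indices with $\Gamma={\rm diag}(+1,-1,-1,-1,-1)$, it collapses to a combination of $D^2$, $\sum_i({\rm i}J_i)^2$, and the symmetrized product $\tfrac12\sum_i(P_iC_i+C_iP_i)$. Substituting the first-order operators above and commuting the $x$'s past the $\partial$'s, the genuinely second-order terms reassemble into a scalar multiple of $\nabla^2$, while the zeroth-order remainder is the constant $\delta(\delta-3)$, where $\delta=\tfrac12$ is the scaling dimension appearing in $D$; this is $\tfrac12\bigl(\tfrac12-3\bigr)=-\tfrac54$. (The value $-\tfrac54=(4-d^2)/4$ at $d=3$ is the familiar conformal-weight value attached to a scalar field annihilated by the Laplacian on $\mathbb{R}^d$; a quick sanity check is $c_2\cdot1=-\tfrac54$ on the constant solid harmonic.) Restricting to the span of the solid harmonics kills the $\nabla^2$ term, so $c_2$ acts as the scalar $-\tfrac54$, independently of $(\nu_0,\mu_0)$.

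For the quartic invariant I would compute the auxiliary vector $w^a=\tfrac18\epsilon^{abcde}{\bf M}_{bc}{\bf M}_{de}$ and then $c_4=-w_aw^a$ directly in the realization~(\ref{eq:41ops}), obtaining a fourth-order differential operator in $x,\partial$; the claim to verify is that this operator has $\nabla^2$ as a right factor, equivalently that $c_4f=0$ whenever $\nabla^2f=0$. This is precisely the statement that the representation is of the `most degenerate', spin-zero type --- the little-algebra content that $c_4$ measures is carried away by the scalar nature of $\mathcal{S}_\nu^\mu$ --- and once it is in hand, $c_4$ acts as $0$ on the span of the solid harmonics, again for every $(\nu_0,\mu_0)$. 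Combining the two computations gives the theorem.

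The main obstacle is bookkeeping. One must track correctly the signs introduced by $\Gamma$ and by the `${\rm i}$' factors in the basis elements when expanding the two Casimirs, and then push the operator algebra in $x,\partial$ far enough to exhibit the split into ``(multiple of $\nabla^2$) plus constant''; for $c_4$ this means expanding a degree-four expression and reordering many noncommuting factors, and is the more delicate half. A useful external check is provided by the special cases $(\nu_0,\mu_0)=(\tfrac12,\tfrac12)$ and $(0,0)$, where the representation of $\mathfrak{so}(3,2)$ contains the Dirac ``Di'' and ``Rac'' singletons; the values $-\tfrac54$ and $0$ of the two Casimirs are recorded for those representations in the literature and must match the computation.
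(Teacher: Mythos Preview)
Your approach is sound and genuinely different from the paper's. The paper does not pass through the Cartesian realization~(\ref{eq:41ops}) and harmonicity of $\mathcal{S}_\nu^\mu$ at all. Instead it rewrites $c_2$ in the Cartan--Weyl basis as
\[
c_2 = J_3^2 + K_3^2 + \tfrac12\{J_+,J_-\} - \tfrac12\{K_+,K_-\} - \tfrac14\{R_+,R_-\} - \tfrac14\{S_+,S_-\},
\]
and then evaluates each term using the explicit infinite-matrix actions (\ref{eq:JKRSplus}),(\ref{eq:J3K3plus}) of $J_\pm,K_\pm,R_\pm,S_\pm,J_3,K_3$ on $\mathcal{S}_\nu^\mu$; the $\nu,\mu$-dependent pieces cancel algebraically, leaving $-\tfrac54$. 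For $c_4$ the paper does something stronger than you propose: it writes out the five components of $w^a$ (as $J\cdot PC^+$, $-PC^-\times PC^+ + DJ$, $J\cdot PC^-$, with symmetrized products) and checks from (\ref{eq:JKRSplus}),(\ref{eq:J3K3plus}) that \emph{each component} is represented by the zero matrix, even without symmetrization; so $c_4=-w_aw^a=0$ follows trivially.

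Your route through ``$c_2=-\tfrac54+(\text{multiple of }\nabla^2)$'' and ``$c_4$ annihilates harmonic functions'' has the conceptual advantage of making the $(\nu_0,\mu_0)$-independence transparent from the outset: the operators~(\ref{eq:41ops}) do not see $(\nu_0,\mu_0)$, and the only property of $\mathcal{S}_\nu^\mu$ invoked is $\nabla^2\mathcal{S}_\nu^\mu=0$. The paper's route, by contrast, is a direct matrix-element computation in which the independence emerges only after cancellation. On the other hand, the paper's $c_4$ argument is shorter and more concrete: verifying that each $w^a$ acts as zero via (\ref{eq:JKRSplus}) is a finite check of products of shift operators, whereas your plan to exhibit $\nabla^2$ as a right factor of a fourth-order operator in $x,\partial$ is, as you acknowledge, the heavier bookkeeping. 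One small caution: ``annihilates all harmonic functions'' and ``has $\nabla^2$ as a right factor'' are not literally equivalent for arbitrary differential operators, so if you pursue your line it is cleaner to aim directly at $w^a\,\mathcal{S}_\nu^\mu=0$ (mirroring the paper) rather than at a factorization of $c_4$.
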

\begin{proof}
  By the expressions for ${\bf M}_{ab},\Gamma$ given in any of
  Theorems \ref{thm:newso32}, \ref{thm:newso41}, and
  \ref{thm:newso5r},
  \begin{subequations}
  \begin{align}
    c_2 &= J\cdot J - PC^+\!\cdot PC^+ + PC^- \!\cdot PC^- + D^2\\
    &= J_3^2 + K_3^2 + \tfrac12\{J_+,J_-\}
    - \tfrac12\{K_+,K_-\} - \tfrac14\{R_+,R_-\} - \tfrac14\{S_+,S_-\},\label{eq:lastminute}
  \end{align}
  \end{subequations}
  where $\{\cdot,\cdot\}$ is the anti-commutator.  This expresses~$c_2$
  in~terms of $J_3,K_3$ and the root vectors.  The formula
  (\ref{eq:lastminute}) can be viewed as subsuming
  $J_3^2+\frac12\{J_+,J_-\}$, which is the Casimir of the
  $\mathfrak{so}(3,\mathbb{R})$ subalgebra spanned by $\{J_+,J_-,J_3\}$;
  and $K_3^2-\frac12\{K_+,K_-\}$, which is the Casimir of the
  $\mathfrak{so}(2,1)$ subalgebra spanned by $\{K_+,K_-,K_3\}$; and also,
  the Casimirs of the remaining two $\mathfrak{so}(2,1)$ subalgebras.  From
  the representations (\ref{eq:JKRSplus}),(\ref{eq:J3K3plus}) of
  $J_\pm,K_\pm,\allowbreak R_\pm,S_\pm$ and $J_3,K_3$ as infinite matrices,
  one calculates from~(\ref{eq:lastminute}) that $c_2$ (like $J_3,K_3$) is
  diagonal in~$(n,m)$, with each diagonal element equaling~$-\tfrac54$.

  For $\mathfrak{so}(5,\mathbb{R})$, which is representative, the five
  components of~$w^a$ include (i)~the scalar ${\rm i}\,J\cdot PC^+$,
  (ii)~the three components of the vector $-{\rm i}\,PC^-\times PC^+ +
  DJ$, and (iii)~the scalar $J\cdot PC^-$.  These expressions,
  involving the scalar and vector product of three-vectors, must be
  interpreted with care: any product $AB$ of two Lie algebra elements
  signifies the symmetrized product $\frac12\{A,B\}$.  But by direct
  computation, one finds from
  (\ref{eq:lastneeded1}),(\ref{eq:lastneeded2}) and the infinite
  matrix representations (\ref{eq:JKRSplus}),(\ref{eq:J3K3plus}) that
  each component of~$w^a$ is represented by the zero matrix, even
  (surprisingly) without symmetrization.\qed
\end{proof}

This result is plausible, if not expected.  In any unitary
representation of a semi-simple Lie group~$G$ on~$L^2(S)$, $S$~being a
homogeneous space $G/K$ of rank~$1$, all Casimir operators except the
quadratic one must vanish.  (See \cite{Boyer71} and
\cite[Chap.~X]{Helgason62}.)  Admittedly, the present representations
of $\mathfrak{so}(4,1)$, by real differential operators acting on
multi-valued, non-square-integrable functions, are non-skew-Hermitian,
and cannot be exponentiated to \emph{unitary} representations
of~$SO_0(4,1)$ of this `most degenerate' type.  The value~$-\frac54$
computed for the quadratic Casimir~$c_2$, irrespective
of~$(\nu_0,\mu_0)$, can be viewed as the value of $j(j+\nobreak1)$,
where $j$~is a formal `angular momentum' parameter equal to
$-\frac12\pm\nobreak{\rm i}$.

\smallskip
For each $(\nu_0,\mu_0)$, the resulting representation of the real Lie
algebra $\mathfrak{g}=\mathfrak{so}(3,2)$, $\mathfrak{so}(4,1)$ or
$\mathfrak{so}(5,\mathbb{R})$, or its universal enveloping algebra
$\mathfrak{U}(\mathfrak{g})$, on the span of the generalized solid
harmonics $\mathcal{S}_{\nu_0+n}^{\mu_0+m}$, $(n,m)\in\mathbb{Z}^2$, can be
viewed linear-algebraically: as a homomorphism~$\rho$ of real vector
spaces, taking $\mathfrak{g}$ (or~$\mathfrak{U}(\mathfrak{g})$) into the
space of infinite matrices indexed by~$(n,m)$.  For each basis element
$A\in\mathfrak{g}$, $\rho(A)$~is determined by
(\ref{eq:JKRSplus}),(\ref{eq:J3K3plus}); and because the basis elements
given in Theorems \ref{thm:newso41} and~\ref{thm:newso5r} include `${\rm
  i}$'~factors, the matrix elements of~$\rho(A)$ may be complex.

To show that certain of these representations are substantially the same as
known ones by infinite matrices that are skew-Hermitian, consider the
effect of replacing the family $\{\mathcal{S}_\nu^\mu =
\mathcal{S}_{\nu_0+n}^{\mu_0+m}\}$ by $\{\hat{\mathcal{S}}_\nu^\mu =
\hat{\mathcal{S}}_{\nu_0+n}^{\mu_0+m}\}$, where the latter are `twisted' by
a square-root factor:
\begin{equation}
\label{eq:replacement}
  \hat{\mathcal{S}}_\nu^\mu   = \hat{\mathcal{S}}_\nu^\mu (r,\theta,\varphi)
  \defeq \sqrt{\frac{\Gamma(\nu-\mu+1)}{\Gamma(\nu+\mu+1)}}\, r^\nu
  {\rm P}_\nu^\mu(\cos\theta){\rm e}^{{\rm i}\mu\varphi}.
\end{equation}
That is, $\hat{\mathcal{S}}_\nu^\mu = r^\nu Y_\nu^\mu$, where
\begin{equation}
\label{eq:replacement2}
  Y_\nu^\mu(\theta,\varphi) = 
  \sqrt{\frac{\Gamma(\nu-\mu+1)}{\Gamma(\nu+\mu+1)}}\,
  {\rm P}_\nu^\mu(\cos\theta){\rm e}^{{\rm i}\mu\varphi}.
\end{equation}
When $\nu=0,1,2,\dots$, with $\mu=-\nu,\dots,\nu$, this $Y_\nu^\mu$ is
the classical (complex) spherical harmonic on~$S^2$, of degree~$\nu$
and order~$\mu$.\footnote{The orthonormalization factor
  $\sqrt{(2\nu+1)/4\pi}$, appropriate for an inner product on~$S^2$,
  is omitted.  But this~$Y^\mu_\nu$ automatically includes the
  so-called Condon--Shortley factor, owing to the definition of~${\rm
    P}_\nu^\mu$ used here (see~\S\,\ref{sec:prelims}).}  In this case,
the square root factor equals
$\left[(\nu+\nobreak\mu+\nobreak1)_{2\mu}\right]^{-1/2}$ and is
positive by convention; a~discussion of how to interpret it in other
cases is deferred.  At least formally, the representation of
$\mathfrak{g}$ or~$\mathfrak{U}(\mathfrak{g})$ on
$\{\hat{\mathcal{S}}_{\nu_0+n}^{\mu_0+m}\}$ comes from that on
$\{\mathcal{S}_{\nu_0+n}^{\mu_0+m}\}$ by a diagonal similarity
transformation.  The formulas
\begin{subequations}
  \label{eq:JKRSnew}
  \begin{align}
    J_\pm\, \hat{\mathcal{S}}_\nu^\mu &= \sqrt{\left(\nu-\mu+\tfrac12\mp\tfrac12\right)\left(\nu+\mu+\tfrac12\pm\tfrac12\right)} \:\hat{\mathcal{S}}_\nu^{\mu\pm1},
    \\
    K_\pm\, \hat{\mathcal{S}}_\nu^\mu &= \sqrt{\left(\nu-\mu+\tfrac12\pm\tfrac12\right)\left(\nu+\mu+\tfrac12\pm\tfrac12\right)} \:\hat{\mathcal{S}}_{\nu\pm1}^\mu,
    \\
    R_\pm\, \hat{\mathcal{S}}_\nu^\mu &=  \sqrt{\left(\nu+\mu+\tfrac12\pm\tfrac12\right)\left(\nu+\mu+\tfrac12\pm\tfrac32\right)} \:\hat{\mathcal{S}}_{\nu\pm1}^{\mu\pm1},
    \\
    S_\pm\, \hat{\mathcal{S}}_\nu^\mu &= \sqrt{\left(\nu-\mu+\tfrac12\pm\tfrac12\right)\left(\nu-\mu+\tfrac12\pm\tfrac32\right)}\: \hat{\mathcal{S}}_{\nu\pm1}^{\mu\mp1}
  \end{align}
\end{subequations}
  and
\begin{equation}
\label{eq:J3K3plusnew}
  J_3\,\hat{\mathcal{S}}_\nu^\mu = \mu\,\hat{\mathcal{S}}_\nu^\mu,
  \qquad\qquad   K_3\,\hat{\mathcal{S}}_\nu^\mu = (\nu+\tfrac12)\,\hat{\mathcal{S}}_\nu^\mu
\end{equation}
now replace (\ref{eq:JKRSplus}) and (\ref{eq:J3K3plus}); but the actions of
the elements $J_i$, $K_i$, $PC^\pm_i$,~$D$ are still defined in~terms of
these by (\ref{eq:JiDdef}) and~(\ref{eq:PCidef}).

\begin{theorem}
\label{thm:last}
  If\/ $(\nu_0,\mu_0)=(0,0)$ or\/ $\left(\tfrac12,\tfrac12\right)$, the
  representation\/~$\rho$ of\/ $\mathfrak{g}=\mathfrak{so}(3,2)$ on the
  span of\/ $\{\hat{\mathcal{S}}_\nu^\mu =
  \hat{\mathcal{S}}_{\nu_0+n}^{\mu_0+m}\}$, $(n,m)\in\mathbb{Z}^2$, which
  is obtained from\/ {\rm(\ref{eq:JKRSnew})},{\rm(\ref{eq:J3K3plusnew})} by
  identifying\/ $\mathfrak{so}(3,2)$ with the real span of\/ ${\rm
    i}J_1,{\rm i}J_2,{\rm i}J_3;\allowbreak {\rm i}PC_1^+,{\rm
    i}PC_2^+,{\rm i}PC_3^+;\allowbreak PC_1^-,PC_2^-,PC_3^-;\allowbreak{\rm
    i}D$ as in Theorem\/~{\rm\ref{thm:newso41}(ii)}, has an irreducible
  constituent that is defined on the subspace spanned by\/
  $\{\hat{\mathcal{S}}_\nu^\mu = \hat{\mathcal{S}}_{\nu_0+n}^{\mu_0+m}\}$
  with\/ $n=0,1,2,\dots$ and\/
  $\mu=-\nu,-\nu+\nobreak1\dots,\nu-\nobreak1,\nu$.  On this subspace,
  every element of\/~$\mathfrak{g}$ is represented by an infinite matrix
  that is skew-Hermitian.
\end{theorem}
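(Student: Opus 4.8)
The plan is to verify in turn the three things asserted: that the subspace $V$ spanned by those $\hat{\mathcal S}_\nu^\mu$ with $\nu=\nu_0+n\ge\nu_0$ and $\mu\in\{-\nu,-\nu+1,\dots,\nu\}$ is a proper $\mathfrak g$-submodule; that $\rho|_V$ is irreducible; and that on $V$ every element of $\mathfrak g$ (identified as in Theorem~\ref{thm:newso41}(ii)) acts by a skew-Hermitian matrix, the inner product being the one making the $\hat{\mathcal S}_\nu^\mu$ indexing $V$ orthonormal. Since $\mathfrak g$ is generated by $J_\pm,K_\pm$ and $R_\pm=\pm[J_\pm,K_\pm]$, $S_\pm=\pm[J_\mp,K_\pm]$, it suffices throughout to work with the eight ladder operators $J_\pm,K_\pm,R_\pm,S_\pm$ and the two diagonal operators $J_3,K_3$, all acting by~(\ref{eq:JKRSnew}),(\ref{eq:J3K3plusnew}).

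Invariance of $V$ I would read directly off the coefficients in~(\ref{eq:JKRSnew}). The operators $K_+,R_+,S_+$, which raise $\nu$, stay in $V$ automatically because the admissible order-range $\{-\nu,\dots,\nu\}$ widens; the remaining operators can leave $V$ only across its boundary, i.e.\ the slanted edges $\mu=\pm\nu$ and the floor $\nu=\nu_0$. At a slanted edge one of the factors $(\nu-\mu)$, $(\nu+\mu)$ under the relevant square root vanishes, killing the transition. At the floor, $\nu_0=0$ and $\nu_0=\tfrac12$ are exactly the values for which $\mu=\pm\nu_0$ are the only admissible orders and for which the arguments $(\nu\mp\mu)$, $(\nu+\mu-1)$, $(\nu-\mu-1)$ appearing in the coefficients of $K_-,R_-,S_-$ all vanish when $\nu=\nu_0$; this is the one and only place the hypothesis $(\nu_0,\mu_0)\in\{(0,0),(\tfrac12,\tfrac12)\}$ is used, and for any other fractional $\nu_0$ those floor coefficients are nonzero (and indeed $\pm\nu\notin\mu_0+\mathbb Z$, so $V$ is not even defined). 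Properness of $V$ inside the full span is immediate, since e.g.\ the index $(\nu,\mu)=(\nu_0,\mu_0+1)$ is admissible for the full family but has $|\mu|>\nu$. I expect this boundary bookkeeping — confirming stability of the floor under each of $K_-,R_-,S_-$ and of the slanted edges under $J_\pm$ and $R_-,S_-$ — to be the only genuinely delicate point; it turns entirely on the precise vanishing patterns of the square-root arguments at $\nu_0=0,\tfrac12$.

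For irreducibility I would use that $\rho$ is a weight module for the Cartan subalgebra (the real span of $\{{\rm i}J_3,{\rm i}D\}$), with $\hat{\mathcal S}_\nu^\mu$ of weight $(\mu,\nu+\tfrac12)$ and these weights pairwise distinct; hence any nonzero submodule $W\subseteq V$ is the span of a nonempty set of the $\hat{\mathcal S}_\nu^\mu$. The interior transition coefficients in~(\ref{eq:JKRSnew}) are strictly positive, so the graph on the index set of $V$ with edges from the actions of $J_\pm,K_\pm$ is connected: from any $\hat{\mathcal S}_\nu^\mu\in V$ one moves $\mu$ by $J_\mp$ to $\mu=0$ (Rac) or $\mu=\pm\tfrac12$ (Di), lowers $\nu$ by $K_-$ to the floor $\nu=\nu_0$, and from the bottom corner regenerates all of $V$ using $J_\pm$ and $K_+$. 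Therefore $W=V$.

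Finally, skew-Hermiticity rests on the observation that the $\Gamma$-twist in~(\ref{eq:replacement}) symmetrizes the ladders: substituting the shifted index into~(\ref{eq:JKRSnew}) shows that on $V$ the matrices of $J_\pm,K_\pm,R_\pm,S_\pm$ are real and satisfy $J_-=J_+^{\top}$, $K_-=K_+^{\top}$, $R_-=R_+^{\top}$, $S_-=S_+^{\top}$, while $J_3,K_3$ are real diagonal. Feeding this into~(\ref{eq:JiDdef}),(\ref{eq:PCidef}), a short computation shows that each of the ten spanning elements ${\rm i}J_1,{\rm i}J_2,{\rm i}J_3;{\rm i}PC_1^+,{\rm i}PC_2^+,{\rm i}PC_3^+;PC_1^-,PC_2^-,PC_3^-;{\rm i}D$ of the $\mathfrak{so}(3,2)$ of Theorem~\ref{thm:newso41}(ii) is, on $V$, either a real antisymmetric matrix (e.g.\ $PC_1^-$, $PC_3^-$, ${\rm i}J_2$, ${\rm i}PC_2^+$) or ${\rm i}$ times a real symmetric matrix (e.g.\ ${\rm i}D={\rm i}\,\mathrm{diag}(\nu+\tfrac12)$, ${\rm i}J_1$, ${\rm i}J_3$, ${\rm i}PC_1^+$, ${\rm i}PC_3^+$, $PC_2^-$); either kind is skew-Hermitian, and since skew-Hermitian matrices form a real linear space, every element of $\mathfrak g$ acts skew-Hermitianly on $V$. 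One may close with the remark that this irreducible skew-Hermitian representation is the Dirac ``Rac'' singleton of $\mathfrak{so}(3,2)$ when $(\nu_0,\mu_0)=(0,0)$, and the ``Di'' singleton when $(\nu_0,\mu_0)=(\tfrac12,\tfrac12)$.
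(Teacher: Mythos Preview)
Your proposal is correct and follows essentially the same approach as the paper: read invariance of~$V$ off the vanishing factors in~(\ref{eq:JKRSnew}) at the boundaries $\mu=\pm\nu$; observe that on~$V$ the ladder matrices are real with $X_-=X_+^{\top}$ and $J_3,K_3$ real diagonal; then check via~(\ref{eq:JiDdef}),(\ref{eq:PCidef}) that each basis element of the $\mathfrak{so}(3,2)$ of Theorem~\ref{thm:newso41}(ii) is either real antisymmetric or~${\rm i}$ times real symmetric.

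Two remarks.  First, your separate ``floor'' analysis at $\nu=\nu_0$ is correct but superfluous: for $\nu_0\in\{0,\tfrac12\}$ every floor point already lies on an edge $\mu=\pm\nu$, so the edge vanishings of $J_\pm,K_-,R_-,S_-$ (together with the additional vanishings of $R_-$ at $\mu=-\nu+1$ and of $S_-$ at $\mu=\nu-1$, coming from the factors $\nu+\mu-1$ and $\nu-\mu-1$) already handle it.  The hypothesis on~$(\nu_0,\mu_0)$ is used not only there but throughout, since it is what forces $\nu\pm\mu\in\mathbb{Z}_{\ge0}$ on~$V$ and hence makes all the square-root arguments non-negative integers---this is what underlies the reality of the matrix entries.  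Second, your weight-module\,/\,connectivity argument for irreducibility is a genuine addition: the paper's proof establishes invariance and skew-Hermiticity but does not actually argue irreducibility, instead appealing implicitly (in the discussion following the proof) to the identification with the known-irreducible Dirac singletons.  Your direct argument is self-contained and welcome.
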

\begin{proof}
  By~(\ref{eq:JKRSnew}), if $\mu=\nu$ then $J_+,K_-,S_-$ give zero when
  acting on $\hat{\mathcal{S}}_\nu^\mu$, and if ${\mu=-\nu}$ then
  $J_-,K_-,R_-$ give zero.  Thus $\rho$~is reducible: it can be restricted
  to the stated subspace.  On this subspace, the formal similarity
  transformation performed by the square root factor
  in~(\ref{eq:replacement}) is not singular: only if $\nu\pm\mu$ is a
  negative integer will one of the gammas be infinite.  For $(\nu_0,\mu_0)$
  equal to either of $(0,0)$ and $\left(\tfrac12,\tfrac12\right)$, the
  square root factor simply equals
  $\left[(\nu+\nobreak\mu+\nobreak1)_{2\mu}\right]^{-1/2}$.
  
  By (\ref{eq:JKRSnew}) and~(\ref{eq:J3K3plusnew}), each of
  $\rho(J_\pm),\rho(K_\pm),\rho(R_\pm),\rho(S_\pm);\rho(J_3),\rho(K_3)$
  is a real matrix, the plus and minus versions being transposes of
  each other, and $\rho(J_3),\rho(K_3)$ being symmetric (and
  diagonal).  It follows from (\ref{eq:JiDdef}) and~(\ref{eq:PCidef})
  that the $\rho(J_i)$, the $\rho(PC_i^+)$, and~$\rho(D)$ are
  Hermitian, and the $\rho(PC_i^-)$ are skew-Hermitian.  The claim
  follows.\qed
\end{proof}

Being skew-Hermitian, the two infinite-dimensional representations of
$\mathfrak{so}(3,2)$ in Theorem~\ref{thm:last} exponentiate to
(irreducible) unitary representations of the so-called
anti-\allowbreak{de}~Sitter group $SO_0(3,2)$, or its universal cover.  The
latter have been classified~\cite{Ehrman57,Evans67}, and the ones arising
from the theorem can be identified.  They are the remarkable Dirac
singleton representations, with whimsical names~\cite{Dobrev91,Flato80}:
the $(\nu_0,\mu_0)=(\tfrac12,\tfrac12)$ one is~`Di' and the
$(\nu_0,\mu_0)=(0,0)$ one is~`Rac.'  For the Dirac singletons, the Casimirs
$(c_2,c_4)$ have long been known to equal $(-\tfrac54,0)$.
(See~\cite[\S\,III]{Bohm85}.)  They are singleton representations in the
sense that if they are reduced with respect to the subalgebra
$\mathfrak{g}_0 = \mathfrak{so}(3,\mathbb{R}) \oplus_{\mathbb{R}}
\mathfrak{so}(2,\mathbb{R})$, thereby being split into representations
of~$\mathfrak{g}_0$, each of the latter that appears, does so with unit
multiplicity.  The ones that appear are labeled uniquely by $\nu=\nu_0+n$,
$n=0,1,2,\dots$.

The Rac representation of $\mathfrak{so}(3,2)$ is realized by differential
operators on~$\mathbb{R}^3$ (with complex coefficients), expressions for
which follow immediately from~(\ref{eq:41ops}).  They act on the span of
the \emph{classical} solid harmonics, $\{\hat{\mathcal{S}}_n^m\}$ with
$n=0,1,2,\dots$ and $m=-n,\dots,n$.  Kyriakopoulos~\cite{Kyriakopoulos68}
in~effect discovered that the Rac has such a realization, before the name
was coined, and extended this result to higher dimensions.  The~Di is
realized by the same operators, acting on the span of the `spinorial' solid
harmonics $\{\hat{\mathcal{S}}_{n+\frac12}^m\}$ with $n=0,1,2,\dots$ and
$m=-n-\nobreak\tfrac12,\dots,n+\nobreak\tfrac12$.  The existence of this
realization seems not to be known.  This is perhaps because the solid
harmonics of half-odd-integer degree and order are double-valued
on~$\mathbb{R}^3$, are typically non-square-integrable, and are based on
the little-known dihedral Ferrers functions.  The expressions for the
dihedral Ferrers functions in~terms of Jacobi polynomials, given in
Theorem~\ref{thm:dihedralPQ} above, are new.

The representations of $\mathfrak{so}(3,2)$ carried by the octahedral and
tetrahedral families of solid harmonics, $\{{\mathcal{S}}_\nu^\mu =
{\mathcal{S}}_{\nu_0+n}^{\mu_0+m}\}$ with $(\nu_0+\nobreak\frac12,\mu_0)$
equal to $(\pm\frac13,\pm\frac14)$, $(\pm\frac14,\pm\frac13)$, and
$(\pm\frac13,\pm\frac13)$, are not skew-Hermitian, even up~to diagonal
equivalence.  Twisting the basis to $\{\hat{\mathcal{S}}_\nu^\mu =
\hat{\mathcal{S}}_{\nu_0+n}^{\mu_0+m}\}$ does not help matters, because
only if $\nu_0,\mu_0$ are both integers or both half-odd-integers, which
without loss of generality may be taken to be $0,0$ or $\frac12,\frac12$,
does it permit the representation to be restricted to a subspace spanned by
the harmonics with $\nu=\nu_0+n$, $n=0,1,2,\dots$, and
$\mu=-\nu,-\nu+\nobreak1\dots,\nu-\nobreak1,\nu$.

For general $(\nu_0,\mu_0)$, there is accordingly no restriction on
the index $(n,m)\in\mathbb{Z}^2$ of the basis functions of the
representation, and the square roots in
(\ref{eq:replacement}),(\ref{eq:replacement2}),(\ref{eq:JKRSnew}) may
be square roots of negative quantities.  Irrespective of what sign
convention for the square root is adopted, the resulting imaginary
factors will interfere with skew-Hermiticity; and upon integration of
the representation, with unitarity.  In~fact, the familiar definition
of the (surface) spherical harmonic $Y_\nu^\mu=Y_\nu^\mu
(\theta,\varphi)$ given in~(\ref{eq:replacement2}), incorporating the
square root factor, seems to be useful only when the degree~$\nu$ and
the order~$\mu$ are both integers or both half-odd-integers.





\end{document}